\documentclass[a4paper,11pt,reqno]{article}
\usepackage[margin=1.2in]{geometry}
\usepackage{dsfont}
\usepackage{amsmath}
\usepackage{amsfonts}
\usepackage{amsthm}
\usepackage{amssymb}
\usepackage[colorlinks]{hyperref}
\usepackage[matrix,arrow,curve,frame]{xy}
\usepackage{enumerate}
\usepackage{mathtools}
\usepackage{setspace}
\usepackage{pgfplots}
\pgfplotsset{compat=1.13}
\usepackage{braids}
\usepackage{tikz}
\usepackage{tcolorbox}

 %or whatever

%\setlength{\cftbeforetoctitleskip}{-3em}
%
\usepackage{xcolor}
\definecolor{darkgreen}{rgb}{0,0.5,0}
\definecolor{darkblue}{rgb}{0,0.1,0.5}

\usepackage{tikz-cd}

\hypersetup{
    colorlinks=true, % make the links colored
    linkcolor=darkblue, % color TOC links in blue
    urlcolor=blue, % color URLs in red
    linktoc=all, % 'all' will create links for everything in the TOC
    citecolor=darkgreen
    }

\setlength{\parskip}{1em}

%\frenchspacing %Keine längeren Leerzeichen hinter Punkten

%Footnote Dagger
\usepackage[symbol]{footmisc}

%Remove brackets from \theorem*, \lemma*
\usepackage{thmtools}
\newtheoremstyle{introTheorems}% name of the style to be used
  {\topsep}% measure of space to leave above the theorem. E.g.: 3pt
  {\topsep}% measure of space to leave below the theorem. E.g.: 3pt
  {\itshape}% name of font to use in the body of the theorem
  {0pt}% measure of space to indent
  {\bfseries}% name of head font
  %{. ---}% punctuation between head and body
  {}
  { }% space after theorem head; " " = normal interword space
  {\thmname{#1}
  %\thmnumber{ #2}
  %\textnormal{\thmnote{ (#3)}}
  \textnormal{\thmnote{#3}.}
  }

\newtheorem{theorem}{Theorem}[section]
\newtheorem{assumption}[theorem]{Assumption}

\newtheorem{conjecture}[theorem]{Conjecture}
\newtheorem{corollary}[theorem]{Corollary}
\newtheorem{definition}[theorem]{Definition}
\newtheorem{example}[theorem]{Example}
\newtheorem{lemma}[theorem]{Lemma}
\newtheorem{problem}[theorem]{Problem}

\newtheorem{remark}[theorem]{Remark}
\theoremstyle{introTheorems}
\newtheorem{introTheorem}{Theorem}
\newtheorem{introLemma}{Lemma}
\newtheorem{introExample}{Example}

%Diagrams
\usepackage{calc}
%%%algebra morphisms
\newcommand{\grau}{gray!60}
\newenvironment{grform}{\begin{tikzpicture}[intext]}{\end{tikzpicture}}
\tikzset{
norm/.style = {ultra thick, color = #1},
norm/.default = black,
intext/.style = {baseline = (current bounding box.center)},
}

%%%%%%%%% Lines

\newcommand{\vLine}[5]{
  \draw[ultra thick, color = #5, rounded corners] (#1 , #2) -- (#1 , #2 * 0.7 + #4 * 0.3 ) -- ( #3 , #2 * 0.3 + #4 * 0.7 ) -- (#3 ,#4);
}
\newcommand{\vLineO}[5]{
  \draw[line width = 6pt , color = white, rounded corners] (#1 , #2) -- (#1 , #2 * 0.7 + #4 * 0.3 ) -- ( #3 , #2 * 0.3 + #4 * 0.7 ) -- (#3 ,#4);
  \draw[ultra thick, color = #5, rounded corners] (#1 , #2) -- (#1 , #2 * 0.7 + #4 * 0.3 ) -- ( #3 , #2 * 0.3 + #4 * 0.7 ) -- (#3 ,#4);
}

%%%%%%% Structural morphisms

\newcommand{\dMult}[5]{
\draw[ultra thick, color = #5] (#1 , #2) -- (#1 , #2 + #4 * 0.2) .. controls (#1
, #2 + #4 * 0.5) .. (#1 + #3 *0.25 , #2 + #4 * 0.5) 
-- (#1 + #3 *0.5 , #2 + #4 * 0.5) -- (#1 + #3 *0.75 , #2 + #4 * 0.5) .. controls
(#1 + #3 , #2 + #4 * 0.5) .. (#1 + #3 , #2 + #4 * 0.2) -- (#1 + #3 , #2);
\draw[ultra thick, color = #5] (#1 + #3 *0.5 , #2 + #4 * 0.5) -- (#1 + #3 *0.5 ,
#2 + #4);
}

\newcommand{\dMultO}[5]{
\draw[line width = 6pt , color = white] (#1 , #2) -- (#1 , #2 + #4 * 0.2) ..
controls (#1 , #2 + #4 * 0.5) .. (#1 + #3 *0.25 , #2 + #4 * 0.5) 
-- (#1 + #3 *0.5 , #2 + #4 * 0.5) -- (#1 + #3 *0.75 , #2 + #4 * 0.5) .. controls
(#1 + #3 , #2 + #4 * 0.5) .. (#1 + #3 , #2 + #4 * 0.2) -- (#1 + #3 , #2);
\draw[ultra thick, color = #5] (#1 , #2) -- (#1 , #2 + #4 * 0.2) .. controls (#1
, #2 + #4 * 0.5) .. (#1 + #3 *0.25 , #2 + #4 * 0.5) 
-- (#1 + #3 *0.5 , #2 + #4 * 0.5) -- (#1 + #3 *0.75 , #2 + #4 * 0.5) .. controls
(#1 + #3 , #2 + #4 * 0.5) .. (#1 + #3 , #2 + #4 * 0.2) -- (#1 + #3 , #2);
\draw[ultra thick, color = #5] (#1 + #3 *0.5 , #2 + #4 * 0.5) -- (#1 + #3 *0.5 ,
#2 + #4);
}

\newcommand{\dAction}[6]{
\draw[ultra thick, color = #5] (#1 , #2) -- (#1 , #2 + #4 * 0.2) .. controls (#1
, #2 + #4 * 0.5) .. (#1 + #3 * 0.8 , #2 + #4 * 0.5) -- (#1 + #3 , #2 + #4 *
0.5);
\draw[ultra thick, color = #6] (#1 + #3 , #2) -- (#1 + #3 , #2 + #4);
}

%Vertical line in matrix
\makeatletter
\renewcommand*\env@matrix[1][*\c@MaxMatrixCols c]{%
  \hskip -\arraycolsep
  \let\@ifnextchar\new@ifnextchar
  \array{#1}}

%operators, morphisms and so on
\newcommand{\id}{{\rm id}}
\newcommand{\Id}{{\rm Id}}

\newcommand{\Rep}{{\rm Rep}}
\renewcommand{\dim}{{\rm dim}}
\newcommand{\Hom}{{\rm Hom}}
\newcommand{\End}{{\rm End}}
\newcommand{\Ext}{{\rm Ext}}
\newcommand{\ses}[3]{0 \ra #1 \ra #2 \ra #3 \ra 0} 
\newcommand{\ra}{\rightarrow}

    % for injections
 % for surjections
 % long injection
        % long surjection
\newcommand{\catM}{\mathcal O^T_{\mathcal M(p)}}
\newcommand{\YD}[1]{{}_{#1}^{#1}\mathcal{YD}}

%caligraphic Letters

\newcommand{\cF}{\mathcal{F}}
\newcommand{\cG}{\mathcal{G}}

\newcommand{\cM}{\mathcal{M}}
\newcommand{\cZ}{\mathcal{Z}}

\newcommand{\cW}{\mathcal{W}}
\newcommand{\cU}{\mathcal{U}}

\newcommand{\CC}{\mathcal{C}}
\newcommand{\DD}{\mathcal{D}}
\newcommand{\BB}{\mathcal{B}}
\newcommand{\UU}{\mathcal{U}}

%Fraktur Letters
\newcommand{\g}{\mathfrak{g}}
\newcommand{\h}{\mathfrak{h}}
\newcommand{\zem}{\mathfrak{Z}}

%blackboard Letters

\newcommand{\C}{\mathbb{C}}

\newcommand{\Z}{\mathbb{Z}}

\newcommand{\one}{\mathbf{1}}

%sanserif Letters

\newcommand{\sF}{\mathsf{F}}

%%%sup- and subscripts

%%%parentheses, over-/underlining

\newcommand{\Vect}{\mathrm{Vect}}
%Symbols on which I have not yet decided

%DRAFT LINE NUMBERS AND COMMENTS
\usepackage{lineno}
%\linenumbers
%\newcommand{\marginparIf}[1]{\marginpar{#1}}
\newcommand{\marginparIf}[1]{  }

\renewcommand{\i}{\mathrm{i}}

\newcommand{\cat}{\UU}
\newcommand{\catA}{{\cat}_A}
\newcommand{\catAloc}{{\cat}_A^0}%category in the preliminaries, \CC or \UU
\newcommand{\forgetA}{\mathrm{forget}_A}
\newcommand{\induceA}{\mathrm{ind}_A}
\newcommand{\bV}{\mathbb{V}}
\newcommand{\coVerma}{\mathrm{coV}}

\renewcommand{\sl}{\mathfrak{sl}}
\newcommand{\V}{\mathcal{V}}
\newcommand{\W}{\mathcal{W}}

\newcommand{\Y}{\mathcal{Y}}

\begin{document}
\title{An algebraic theory for logarithmic Kazhdan-Lusztig correspondences}
\author{Thomas Creutzig, Simon Lentner, and Matthew Rupert}

\maketitle

%SL: Comments and questions to myself 
% Clarify if the conditions in Schaunburg-infinite has better intrinsic description (adjoints etc)
% Clarify if the condition in Reconstruction-infinite has better meaning
% Clean up references and did we forgot someone?
% In Thomas more general setting of Schauenburg, Do we have enough rigidity so \Nihcols is always Hopf and so braiding in YD is well defined?

%--->
%U soll ribbon sein, dann muss B sphearisch sein [Shi18, Theorem 5.11 vgl LaugSan

%With Sebastian I discussed in Marburg how to prove rigidity of U_A, namely as for rigidity for Rep(A): First give a projective object that covers everything: Here: Induce up projective cover in U, use that the functor is projective in my sense (alike Skryabin). This gives you rigidity of some projective covers, and then there was a trick to use that for all subquotients.  

% Mo22 get some feedback from Martin

%Uli told me that in Marburg if A simple, quotient of reg, then it gives a coflat over H and thus Skryabin holds.

%Useful lessons from Yadevs talk

%Shimizu On unimodular tensor cat. C unimodular <-> RightAdj(1) in Z(C) is Frobenius.

%C to Rex(M) produes comm symm Frob algebras
%eg. M=Rep(A)(C) "center of an algebra" (Davydoff) 

\begin{abstract}
Let $\UU$ be a braided tensor category, typically unknown, complicated and in particular non-semisimple. We characterize $\UU$ under the assumption that there exists a commutative algebra $A$ in $\UU$ with certain properties: 
Let $\CC$ be the category of local $A$-modules in $\UU$ and $\BB$ the category of $A$-modules in $\UU$, which are in our set-up usually much simpler categories than $\UU$. Then we can characterize $\UU$ as a relative Drinfeld center $\mathcal Z_\CC(\BB)$ and $\BB$  as representations of a certain Hopf algebra inside $\CC$.
%then as a category of Yetter-Drinfeld modules $\YD{\BB}(\CC)$. 

In particular this allows us to reduce braided tensor equivalences to the knowledge of abelian equivalences, e.g.~if we already know that $\UU$ is abelian equivalent to the category of modules of some quantum group $U_q$ or some generalization thereof, and if $\CC$ is braided equivalent to a category of graded vector spaces, and if $A$ has a certain form, then we already obtain a braided tensor equivalence between $\UU$ and $\Rep(U_q)$.

A main application of our theory is to prove logarithmic Kazhdan-Lusztig correspondences, that is, equivalences of braided tensor categories of representations of vertex algebras and of quantum groups. Here, the algebra $A$ and the corresponding category $\CC$ are a-priori given by a free-field realization of the vertex algebra and by a Nichols algebra. We illustrate this in those examples where the representation theory of the vertex algebra is well enough understood. In particular we prove the conjectured correspondences between the singlet vertex algebra $\cM(p)$ and the unrolled small quantum groups of $\mathfrak{sl}_2$ at $2p$-th root of unity. Another new example is the Kazhdan-Lusztig correspondence between the affine vertex algebra of $\mathfrak{gl}_{1|1}$ and an unrolled quantum group of $\mathfrak{gl}_{1|1}$.
\end{abstract}

\quad\\

\setcounter{tocdepth}{2}
\tableofcontents

\setlength\parskip{1em}
\setlength\parindent{0pt}

\newcommand{\Localization}{\mathfrak{l}}

\newcommand{\Schauenburg}{\mathcal{S}}
\newcommand{\SchauenburgBar}{\overline{\mathcal{S}}}

\newcommand{\Nichols}{\mathfrak{N}}
\newcommand{\NicholsOf}{\mathfrak{B}}

\newcommand{\assoz}{a}

\section{Introduction}

To a simple Lie algebra $\g$ over $\C$ one has an associated quantum group $U_q(\g)$ as well as the affinization $\widehat{\g}$ of $\g$. 
Both algebras allow for categories that are similar to integrable $\g$-modules. For the quantum group this is 
the category of weight modules and for the affine Lie algebra this is the Kazhdan-Lusztig category of level $k$, i.e.~smooth modules at level $k$ whose conformal weight subspaces  are integrable as modules for the horizontal subalgebra. For generic $q$ and $k$, these are abelian equivalent to the category of integrable $\g$-modules and this abelian equivalence is a quite trivial observation. The category of weight modules of the quantum group is also naturally a braided tensor category, while smooth modules of an affine Lie algebra at level $k$ are in particular modules of the affine vertex operator algebra $V^k(\g)$ of $\g$ at level $k$. 
A vertex operator algebra is a rigorous notion of chiral algebra of two-dimensional conformal field theory and in particular it is widely believed that suitable categories of vertex operator algebra modules are braided tensor categories as well. In a series of highly impressive works in the 1990's Kazhdan and Lusztig were able to construct such an expected braided tensor category structure on the Kazhdan-Lusztig category of level $k$ and even better they proved equivalences as braided tensor categories with quantum group categories \cite{KL1, KL2, KL3, KL4, KL5}. A general expectation is that braided tensor categories of modules of quantum groups (and maybe more general quasi Hopf algebras) can also be realized as braided tensor categories of modules of suitable vertex operator algebras. This expectation is broadly referred to as the Kazhdan-Lusztig correspondence and modernly one often adds the adjective logarithmic. The logarithmic Kazhdan-Lusztig correspondences are concerned with non-semisimple categories and the word logarithmic has its origin from logarithmic conformal field theories, which are conformal field theories with non-semisimple representations. 

Non-semisimple braided tensor categories play a prominent role in the mathematics arising from quantum field theories. In particular seemingly very different categories, as quantum group categories, vertex operator algebra categories and categories of line operators are predicted to be braided tensor equivalent, see e.g. \cite{CDGG}. Categories of line operators are formulated as certain categories of, for example, D-modules or coherent sheaves on certain loop spaces, see e.g. \cite{BF18, HR21, W19, BFN21}, and are currently mostly understood on the abelian level. 

We noted that in the original work of Kazhdan and Lusztig the very deep statement is the braided tensor equivalence, while the abelian equivalence has been clear. Our objective in the present article is to develop a theory of (logarithmic) Kazhdan-Lusztig correspondences in the sense that if one can prove an abelian equivalence of a category $\UU$
with some quantum group category together with suitable braided tensor equivalences of much simpler categories, then one already must get a braided tensor equivalence as well. 

%We have two different ways in mind how to relate a category of interest to a simpler category: We either think of our algebra as an algebra in a simpler category or as a subalgebra of an algebra with much simpler representation theory.
%Let us illustrate this in the classical picture of a simple Lie algebra $\g$. An integrable $\g$-module is in particular a module for its Cartan subalgebra, i.e.~a module in the category of vector spaces graded by the Cartan subalgebra. Conversely one then views $\g$ as an algebra in the category of graded vector spaces. 
%On the other hand Lie algebras act via invariant vector fields on the space of functions on the corresponding compact Lie group and in this instance one views the Lie algebra as a subalgebra of the algebra of differential operators in dim $\g$
%variables. 

\subsection{Characterizing braided tensor categories}

Algebraically, we develop tools to describe and classify braided tensor categories in the following situation, which is set up in Section \ref{sec_extension}.
We work over the base field $\C$. 
Let $\UU$ be a braided tensor category, typically unknown and complicated, usually non-semisimple. Suppose that $A$ is a {commutative algebra} inside $\UU$, then there is an associated category of representations $\UU_A$ inside $\UU$ with a tensor product $\otimes_A$, and a subcategory of \emph{local modules}~$\UU_A^0$, i.e.~modules on which the action of $A$ is not changed when precomposed with a double-braiding. The category of local modules admits again a braiding, and we imagine this category $\UU_A^0$ to be known and easier. We summarize the situation with the standard induction tensor functor, restriction functor and embedding tensor functor in the following diagram
\vspace{-1cm}
\begin{center}
%\begin{tcolorbox}[colback=white, width=0.38\linewidth]
$$
%\hspace{0.8cm}
 \begin{tikzcd}[row sep=10ex, column sep=15ex]
   \mathbf{\UU} \arrow{r}{\induceA}  
   & \arrow[shift left=2]{l}{\forgetA}   \mathbf{\BB} = & \hspace{-4.75cm} \catA \\
   &\arrow[hookrightarrow]{u}{\iota}
   %\arrow[below, dotted, shift left=2]{ul}{} %{\coVerma}
   \mathbf{\CC} = & \hspace{-4.75cm}\catAloc
    \end{tikzcd}
$$
%\end{tcolorbox}
\end{center}
There are two main classes of examples for this situation we have in mind: 
\begin{itemize}
    \item To any semisimple finite-dimensional complex Lie algebra $\g$ there is associated a \emph{quantum group} $U_q(\g)$ for a generic complex parameter $q$, a Hopf algebra whose category of representations is the same as for $\g$ with nontrivial associativity and braiding isomorphisms depending on $q$. For $q$ a root of unity there is associated a non-semisimple small quantum group $u_q(\g)$ with category of representations resembling $\g$ in finite characteristic.

    If $\UU$ is the category of representations of a quantum group and $A$ is the dual Verma module of weight zero, which is an algebra, then $\BB$ is the category of representations of the  Borel part and $\CC$ is the category of representations of the Cartan part.
    
    \item A  \emph{vertex operator algebra} (VOA) $\mathcal{V}$ is a rigorous formulation of the chiral algebra of a two-dimensional conformal field theory. Its algebra structure depends on a formal variable $z$ and one might like to call it a quantum analogue of a commutative algebra. 
    In good cases the category of suitable modules form a braided tensor category $\Rep(\mathcal{V})$, with braiding given by the monodromy around singular points $z$.

    For vertex algebras $\mathcal{V}\supset \mathcal{W}$, the larger vertex algebra $\mathcal{V}$ becomes a commutative algebra in the braided tensor category $\UU=\Rep(\cW)$, and the representation category of the larger is recovered as the category of local modules $\CC=\Rep(\V)$, with $\BB$ being the category of twisted modules. This is in particular studied if $\cW=\V^G$ is the fixed subalgebra under some group action ($G$-orbifold), or if $\V$ is a free-field theory and $\cW$ is the kernel of so-called screening operators, which facilitate the action of a Lie algebra or Nichols algebra on the vertex operator algebra. The \emph{logarithmic Kazhdan Lusztig conjecture} links in such cases $\Rep(\cW)$ with the representations of a small quantum group, as discussed further below. 
\end{itemize}

In this setting, and with these applications in mind, our structural results are as follows: In Section \ref{sec_Schauenburg} we prove in a non-semisimple setting based on work by Schauenburg \cite{Sch} under some technical Assumptions \ref{assumption} ($\UU$ rigid, braided, locally finite with trivial M\"{u}ger center, and  $A$ commutative, haploid, and $\cU_A$ rigid) that the category $\UU$ embeds into a relative center:

\begin{introTheorem}[\ref{relcent}]
Let $\UU,\, A,\, \UU_A$ be as in \textup{Assumption \ref{assumption}}.

Then there is a fully faithful functor to the relative Drinfeld center
\[ \Schauenburg:  \mathcal{U} \hookrightarrow \mathcal{Z}_{\mathcal{U}_A^0}(\mathcal{U}_A),  \qquad
X \mapsto (\induceA(X), b_{\bullet, X}). \]
\end{introTheorem}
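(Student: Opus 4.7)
The first step is to construct, for each $X \in \mathcal{U}$, the half-braiding $b_{\bullet,X}$ on the induced module $\induceA(X)=A\otimes X$. For a local module $Y \in \mathcal{U}_A^0$, I will define the candidate
\[ b_{Y,X} : Y \otimes_A \induceA(X) \longrightarrow \induceA(X) \otimes_A Y \]
by first using the standard isomorphisms $Y\otimes_A(A\otimes X)\cong Y\otimes X$ and $(A\otimes X)\otimes_A Y\cong X\otimes Y$ (up to associators), and then plugging in the braiding $c_{Y,X}$ of the ambient category $\mathcal{U}$. The point at which locality of $Y$ is used is precisely in verifying that this composite descends to the relative tensor product and respects the $A$-action: the two ways of moving the $A$-action across the braiding agree exactly when $c_{A,Y}\circ c_{Y,A}$ equals the identity on $A\otimes Y$, which is the defining locality condition for $Y\in \mathcal{U}_A^0$.

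The second step is to check the axioms. Naturality of $b_{\bullet,X}$ in the $Y$-variable follows from naturality of the braiding $c$ in $\mathcal{U}$. The hexagon identities for a half-braiding in the relative center $\mathcal{Z}_{\mathcal{U}_A^0}(\mathcal{U}_A)$, now expressed with respect to $\otimes_A$, reduce to the hexagon identities for $c$ in $\mathcal{U}$ after unravelling the coequalizer definition of $\otimes_A$; compatibility with $A$-module structure was already ensured in Step 1. Functoriality $X\mapsto (\induceA(X), b_{\bullet,X})$ on morphisms is then immediate: for $f\colon X\to X'$ the induced map $\id_A\otimes f$ commutes with $b$ by naturality of $c$.

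For fully faithfulness, I will use the $(\induceA,\forgetA)$-adjunction:
\[ \Hom_{\mathcal{U}_A}\!\bigl(\induceA(X),\induceA(X')\bigr) \;\cong\; \Hom_{\mathcal{U}}\!\bigl(X, A\otimes X'\bigr). \]
Under this identification, I need to single out those $\varphi\colon X\to A\otimes X'$ which, viewed as maps of induced modules, intertwine the half-braidings $b_{\bullet,X}$ and $b_{\bullet,X'}$. The plan is to show that the half-braiding-compatibility constraint forces $\varphi$ to factor through $u_A\otimes\id_{X'}\colon X'\hookrightarrow A\otimes X'$, thereby producing a unique preimage $g\colon X\to X'$ in $\mathcal{U}$. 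Concretely, applying the intertwining condition to $Y=\induceA(Z)$ for $Z\in\mathcal{U}_A^0$ and using the hexagon, the obstruction to $\varphi$ being of this form becomes a morphism from $X\otimes Z$ into a tensor product of $A$ with another object which, by haploidness ($\Hom(\unit,A)=\C$) combined with triviality of the M\"uger center from Assumption~\ref{assumption}, must vanish on the double-braiding component and hence only the trivial component survives.

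The main obstacle I expect is exactly this last step: translating the half-braiding compatibility into a statement that is precise enough to invoke haploidness and triviality of the M\"uger center, while working in the required generality (no semisimplicity). Schauenburg's original Hopf-algebraic version of the result provides a template, and the Assumptions~\ref{assumption} (rigidity of $\mathcal{U}$ and of $\mathcal{U}_A$, local finiteness, commutativity and haploidness of $A$) appear to be tailored so that the formal dinatural argument goes through without semisimple splitting; nonetheless, verifying that all the relevant coequalizers and adjunctions behave correctly in the non-semisimple setting is where the technical care is needed.
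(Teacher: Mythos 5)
Your outline of Step 1 contains a structural misconception that needs correcting before anything else. In the relative center $\mathcal{Z}_{\UU_A^0}(\UU_A)$ an object is a pair $(M,\gamma)$ where $\gamma$ is a half-braiding over \emph{all} of $\UU_A$, together with the extra condition that the monodromy with objects of $\UU_A^0$ vanishes. So the family $b_{\bullet,X}$ must be a natural family $b_{N,X}: N\otimes_A \induceA(X)\to \induceA(X)\otimes_A N$ for every $N\in\UU_A$, not only for local $N$, and its construction does \emph{not} use locality of $N$ -- it uses the ambient braiding $c_{\bullet,\bullet}$ of $\UU$ and the (always available) locality of $A$ itself; see the diagram \eqref{b-diag} and \eqref{bNX}. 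Locality of $N$ enters at a different place, namely in the final verification that the induced objects actually centralize the image of $\UU_A^0$, i.e. that the monodromy $\overline{c}^{-1}_{N,A\otimes X}\circ b_{N,X}$ is the identity for $N\in\UU_A^0$ -- this is the computation the paper carries out at the end of the proof of Theorem \ref{relcent}. Your plan conflates these two steps, and as written it would not even produce an object of $\cZ(\UU_A)$.

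The more serious issue is your fully faithfulness argument, which takes a route genuinely different from the paper's and is underspecified at exactly the point where the work has to happen. The paper does not argue directly in $\UU_A$: it first proves (Lemma \ref{Lem:Muger}) that the canonical functor $\overline{\UU}\boxtimes\UU\to\cZ(\UU)$ is fully faithful -- this is where trivial M\"uger center and rigidity together with the projective/injective hypothesis are used, via Lemma \ref{ff} -- then passes to $A$-modules and local $A$-modules inside $\cZ(\UU)$ (Lemma \ref{loceq}), and finally invokes Schauenburg's equivalence $\cZ(\UU)^0_{\bar{\cF}(A)}\cong\cZ(\UU_A)$ \textup{\cite[Cor.\ 4.5]{Sch}} to land in $\cZ(\UU_A)$. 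Your proposal instead works with Frobenius reciprocity $\Hom_{\UU_A}(\induceA X,\induceA X')\cong\Hom_\UU(X,A\otimes X')$ and asserts that the half-braiding compatibility forces $\varphi$ to factor through $u_A\otimes\id_{X'}$, with the key step being that some obstruction ``must vanish on the double-braiding component.'' There is no such decomposition available in the non-semisimple setting ($A\otimes X'$ does not split off a copy of $X'$), so this phrase does not correspond to an actual argument; moreover $\induceA(Z)$ for $Z\in\UU_A^0$ is not even defined (you presumably mean $Z\in\UU$ or $\induceA(\forgetA Z)$, and in any case such $N$ need not generate the half-braiding compatibility condition over all of $\UU_A$). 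This is precisely the gap that the $\overline{\UU}\boxtimes\UU\to\cZ(\UU)$ construction plus Schauenburg's theorem are designed to fill; without some replacement for that machinery, your Step 3 does not close.
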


Here, the Drinfeld center $\cZ(\BB)$ is a braided tensor category associated to any tensor category $\BB$, and there is a notion of a relative Drinfeld center $\cZ_\CC(\BB)$ with respect to a suitable braided subcategory $\CC$, see \cite{LW1} and Definition \ref{def_relcenter} below. This functor is in fact an equivalence for finite categories, due to dimension arguments, see Corollary \ref{cor_SchauenburgFinite}. However we can also prove this  for infinite tensor categories where in a certain sense $\UU$ is finite over $\CC$, see Corollary \ref{cor_SchauenburgInfinite}. So the braided tensor category $\UU$ is completely determined by the typically less complicated tensor category $\BB = \UU_A$.\\

Our next focus is hence to understand the tensor category $\BB$ in relation to the braided tensor subcategory $\CC$. In Section \ref{sec_splitting} we study the particular case, where the inclusion $\CC\hookrightarrow \BB$ admits a left-inverse \emph{splitting tensor functor} $\Localization:\CC\twoheadleftarrow \BB$. While this is typically not true for $G$-orbifolds, it seems to be a general feature of the Kazhdan-Lusztig setting. The following technical lemma shows why  this is true, due to the ``nilpotent" situation:

\begin{introLemma}[\ref{lm_socle}]
Let $\iota:\CC \hookrightarrow \BB$ be a full Serre tensor subcategory, where $\CC$ is semisimple and $\BB$ consists of objects with finite Jordan-H\"older length. Assume that all simple objects of $\BB$ lie in $\CC$, then there exists a splitting tensor functor $\Localization:\BB\to \CC$.
\end{introLemma}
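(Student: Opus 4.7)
The plan is to construct $\Localization$ as the associated graded of the socle filtration of $\BB$, which I expect to land in $\CC$ and to carry a natural monoidal structure in the ``nilpotent'' Kazhdan--Lusztig setting alluded to in the paper. For $X \in \BB$, set $\mathrm{soc}^0 X = 0$, and inductively let $\mathrm{soc}^n X$ be the preimage in $X$ of $\mathrm{soc}(X/\mathrm{soc}^{n-1}X)$; define $\Localization(X) := \bigoplus_{n \ge 1} \mathrm{soc}^n X / \mathrm{soc}^{n-1} X$. Each subquotient is semisimple with simple constituents in $\CC$ by hypothesis, and since $\CC$ is a semisimple subcategory closed under direct sums, $\Localization(X) \in \CC$; the finite Jordan--H\"older length assumption makes the sum finite. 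Functoriality of $\mathrm{soc}$ gives functoriality of $\Localization$, and for $X \in \CC$ one has $\mathrm{soc}\, X = X$, so $\Localization \circ \iota \cong \id_\CC$ canonically.

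The next step is to equip $\Localization$ with a tensor structure. I would prove by induction on $i+j$ the inclusion $\mathrm{soc}^i X \otimes \mathrm{soc}^j Y \subseteq \mathrm{soc}^{i+j}(X \otimes Y)$, using that by right-exactness of $\otimes$ the quotient $(\mathrm{soc}^i X \otimes \mathrm{soc}^j Y)/(\mathrm{soc}^{i-1}X \otimes \mathrm{soc}^j Y + \mathrm{soc}^i X \otimes \mathrm{soc}^{j-1}Y)$ is isomorphic to $(\mathrm{soc}^i X/\mathrm{soc}^{i-1}X) \otimes (\mathrm{soc}^j Y/\mathrm{soc}^{j-1}Y)$, a tensor product of semisimples inside $\CC$, hence semisimple because $\CC$ is a tensor subcategory. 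By induction the two lower terms already lie in $\mathrm{soc}^{i+j-1}(X \otimes Y)$, so the image of $\mathrm{soc}^i X \otimes \mathrm{soc}^j Y$ in $(X \otimes Y)/\mathrm{soc}^{i+j-1}(X\otimes Y)$ is semisimple, and therefore lies in the socle of that quotient, which is $\mathrm{soc}^{i+j}(X\otimes Y)/\mathrm{soc}^{i+j-1}(X\otimes Y)$ by definition. Taking associated gradeds assembles these inclusions into a natural transformation $\mu_{X,Y}\colon \Localization(X) \otimes \Localization(Y) \to \Localization(X \otimes Y)$.

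To upgrade $\mu$ to an isomorphism, I would compare classes in $K_0(\CC) = K_0(\BB)$, the identification being immediate since every simple of $\BB$ lies in $\CC$ and $\CC$ is semisimple. Both $\Localization(X) \otimes \Localization(Y)$ and $\Localization(X\otimes Y)$ have class $[X]\cdot[Y]$ by multiplicativity of Jordan--H\"older content under exact $\otimes$, so in the semisimple $\CC$ they are abstractly isomorphic and the question of whether $\mu$ is an iso reduces to whether it is surjective. Monoidal coherence (pentagon and unit) for $\mu$ then follows from that of $\otimes_\BB$ by naturality of the construction, and the splitting identity $\Localization \circ \iota \cong \id$ clearly promotes to a monoidal isomorphism. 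The main obstacle I anticipate is precisely the surjectivity step: $\sum_{i+j=n} \mathrm{soc}^i X \otimes \mathrm{soc}^j Y$ need not cover $\mathrm{soc}^n(X\otimes Y)$ modulo lower terms in an arbitrary length tensor category, and one expects the paper's ``nilpotent'' structural feature---that $\BB$ is controlled by a filtered bialgebra object in $\CC$ whose associated graded is compatible with $\otimes$---to be exactly what forces the socle filtration to multiply tightly enough for $\mu$ to be an isomorphism.
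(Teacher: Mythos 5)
Your construction is the paper's construction: $\Localization$ is $\mathrm{gr}$ of the socle (Loewy) filtration, and the candidate monoidal structure comes from the containment $\mathrm{soc}^i X\otimes\mathrm{soc}^j Y\subseteq\mathrm{soc}^{i+j}(X\otimes Y)$. The obstacle you flag at the end is a real gap, not a failure of imagination on your part, and it is a gap in the paper's argument as well: the paper only produces the one-directional ``canonical morphism'' $\Localization^T$ and never shows it is invertible. It is not. Take $\CC=\mathrm{sVect}$, $\Nichols=\C[x]/x^2$ with $x$ odd and primitive, $\BB=\Rep(\Nichols)(\CC)$, and $X=Y=\Nichols$ the regular module (Loewy length $2$). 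Since $\Delta(x)^2=0$, every tensor product again satisfies $x^2=0$, so $X\otimes Y$ has Loewy length $2$ with socle-graded dimensions $(2,2)$ in degrees $1,2$; but $\Localization(X)\otimes\Localization(Y)$ has graded dimensions $(0,1,2,1)$ in degrees $1,\dots,4$. The map $\mu$ is not merely non-surjective here but identically zero: $\mathrm{soc}^1 X\otimes\mathrm{soc}^1 Y=\langle x\otimes x\rangle$ already lies inside $\mathrm{soc}^1(X\otimes Y)$, so its image in $\mathrm{soc}^2/\mathrm{soc}^1$ vanishes, and the degree-$3$ and degree-$4$ targets are zero outright. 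Your $K_0$ argument is correct and does show the two sides have the same Jordan--H\"older class, so $\Localization$ is at least a \emph{quasi}-tensor functor in the paper's terminology; but the canonical $\mu$ is not the witnessing isomorphism, and choosing an arbitrary one leaves coherence unaddressed.

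There is a second, independent issue that neither you nor the paper records: $\mathrm{gr}$ is not faithful, while Definition \ref{localization} requires a splitting functor to be faithful. For a surjection $f\colon X\twoheadrightarrow T$ from a length-two indecomposable onto its simple top, $f(\mathrm{soc}^k X)\subseteq\mathrm{soc}^{k-1}T$ for all $k$, so $\mathrm{gr}(f)=0$ although $f\neq 0$. In the paper's actual applications $\BB$ is identified a priori with $\Rep(\Nichols)(\CC)$ for an $\mathbb{N}$-graded Hopf algebra $\Nichols$ in $\CC$, and the splitting one really wants is the functor forgetting the $\Nichols$-action; that one is faithful and strong monoidal by construction, but it is a different functor from $\mathrm{gr}$ and is not naturally isomorphic to it. So the ``nilpotent, filtered-bialgebra'' structural input you guess is needed is indeed needed; the socle-filtration argument alone, as written in the paper and in your proposal, does not establish the lemma.
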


The typical setting in which there is a splitting functor  is if $\BB=\Rep(\Nichols)(\CC)$ for some Hopf algebra $\Nichols$ inside the braided tensor category $\CC$. In Lemma \ref{lm_infiniteSplittingIsHopfAlgB} we prove the converse statement, again in a version for infinite tensor categories where reconstruction statements may not be available: If $\BB$ over $\CC$ admits a splitting tensor functor $\Localization$ and if $\BB=\Rep(\Nichols)(\CC)$ for a given algebra $\Nichols$ in $\CC$, and if certain $\CC$-module category isomorphisms hold, then $\Nichols$ is already a Hopf algebra in $\CC$ that realizes $\BB$ as a tensor category. 

Our next focus in Section \ref{sec_Nichols} is hence to characterize certain Hopf algebras $\Nichols$ in a braided tensor category $\CC$. A generic construction for $\Nichols$ is the \emph{Nichols algebra} $\NicholsOf(X)$ of an object $X\in \CC$, a quotient of the tensor algebra $\mathfrak{T}(X)$ fulfilling several universal properties. For example, it is the smallest Hopf algebra in $\CC$ generated by $X$ where elements $x\in X$ have a derivation--type coproduct $\Delta(x)=1\otimes x + x\otimes 1$. The simplest example is 
\begin{introExample}[\ref{exm_NicholsRank1}]
The Nichols algebra of a $1$-dim.~object with braiding $x\otimes x\mapsto q(x\otimes x)$ is
$$\NicholsOf(X)=\begin{cases}
\C[x]/x^p,\qquad &q\text{ a primitive root of unity of order $p>1$}\\ 
\C[x],\qquad &\text{else}
\end{cases}$$
\end{introExample}
A standard textbook on the subject is \cite{HS20}. Finite dimensional Nichols algebras with a braiding of diagonal type $x_i\otimes x_j\mapsto q_{ij}(x_j\otimes x_i)$ have been classified by Heckenberger \cite{Heck09}, and they include quantum Borel parts $u_q(\g)^+$ of quantum groups, and quantum super groups and some additional cases. A key observation is that Nichols algebras under suitable conditions admit a generalized root system (with integrality conditions present, but not necessarily isomorphic Weyl chambers), which can be classified by themselves \cite{CH09}. The existence of a generalized root system is a feature that continues in more general situations \cite{HS10, AHS10}. A prominent appearance of Nichols algebras is the Andruskiewitsch-Schneider program, which proposes to classify Hopf algebras $H$ with a fixed (co-)semisimple part $H_0$ in terms of  Nichols algebras over $H_0$. In  \cite{AS10} they classify all $H$ for $H_0$ the group ring of an abelian group, if no small prime divisors are present and hence the only Nichols algebras present are $u_q(\g)^+$, and Angiono completes this for general abelian groups and Nichols algebras. The results in our present paper can be viewed as taking this program further to a categorical setting, and to a braided setting.\\

We now return to results in our present article and assume the braided tensor category $\CC$ is the category $\Vect_\Gamma^Q$ of vector spaces graded by an abelian group $\Gamma$ with braiding and associator determined by a quadratic form $Q$ \cite{McL}. If the situation is ``sufficiently unrolled" in the sense of Definition \ref{def_sufficientlyUnrolled}, then the Nichols algebra is a Hopf algebra already uniquely characterized by its $\Gamma$-grading resp.~structure as an object in $\CC$. Moreover in this case Lemma \ref{lm_isRadfordNichols} gives a second argument for the existence of a splitting functor. Note that the first statement holds without assuming sufficiently unrolled, due to results in \cite{AKM15}, but the second statement may fail in rare examples, see Remark \ref{rem_lifting}.\\

In Section \ref{sec:RelativeDrinfeldCentersGiveLocalization} we return to the category $\UU$, which can by our results so far be realized as $\Nichols$-$\Nichols$-Yetter-Drinfeld modules over $\CC$, see Definition \ref{def_YD}. In the case $\CC=\Vect_\Gamma^Q$ and trivial associator we also give in Lemma \ref{lm_realizedQuantumGroup} a concrete realization of this category as representations of a generalized quantum group $U_q$, compare again \cite{LW1}. For example the Nichols algebra in Example \ref{exm_NicholsRank1} produces the small quantum group $U_q=u_q(\sl_2)$ for $\CC=\Vect_{\Z_p}$ for $q$ of odd order~$p$, respectively the quasi-Hopf algebra $U_q=\tilde{u}_q(\sl_2)$ for $\CC=\Vect_{\Z_{2p}}$ for $q$ of even order~$2p$ \cite{CGR20,GLO18}, respectively the unrolled quantum group  $u_q^H(\sl_2)$ for $\CC=\Vect_{\C}$ \cite{CGP}.\\

The theory developed at this point can now be used to prove characterization theorems in our setting and apply them to prove equivalences of braided tensor categories in the context of the logarithmic Kazhdan-Lusztig correspondence, discussed below. The following two statements are the algebraic side of our main results in this article:

\begin{introTheorem}[\ref{thm_CharacterizingUsingB}]
Suppose $\UU$ is a braided tensor category and $A\in \UU$ a commutative algebra  fulfilling \textup{Assumption \ref{assumption}}. Let $\CC=\Vect_\Gamma^Q=\Rep^{wt}(C)$ and let $X\in \CC$ with finite-dimensional Nichols algebra $\Nichols$ satisfying the mild degree conditions in \textup{Lemma \ref{lm_isNichols}}
and $U_q$ the corresponding quantum group in \textup{Lemma \ref{lm_realizedQuantumGroup}}. Assume that 
\begin{enumerate}[a)]
    \item $\UU\cong \Rep^{wt}(U_q)$ as abelian categories.
    \item $\UU_A^0\cong \Vect_\Gamma^Q$ as braided tensor categories.
    \item $\UU_A\cong \Rep(\Nichols)(\Vect_\Gamma^Q)$ as abelian categories, compatible with the respective $\CC$-module structures as described in the assumptions of \textup{Lemma \ref{lm_infiniteSplittingIsHopfAlgB}}.
\end{enumerate}
Then in fact these are equivalence of tensor categories and braided tensor categories.
%\begin{align*}
%    \UU_A &\cong \Rep(\Nichols)(\CC)\\
%    \UU & \cong \Rep(U_q)
%\end{align*}
\end{introTheorem}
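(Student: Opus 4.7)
The plan is to chain the previous structural results in order, using each of the three hypotheses to upgrade one piece of the abstract characterization. First I would use hypothesis (b) to identify $\UU_A^0 \cong \CC = \Vect_\Gamma^Q$ as braided tensor categories. This makes $\UU_A$ a tensor category over $\CC$ via the embedding $\iota$, and gives meaning to the $\CC$-module structure demanded in hypothesis (c). Since $\Nichols$ is finite-dimensional and the Nichols-algebra setup is sufficiently unrolled, I can invoke Lemma \ref{lm_socle} (or the construction in Lemma \ref{lm_isRadfordNichols}) to produce a splitting tensor functor $\Localization : \UU_A \to \UU_A^0$. With this splitting in place, hypothesis (c) together with Lemma \ref{lm_infiniteSplittingIsHopfAlgB} then upgrades the abelian equivalence $\UU_A \simeq \Rep(\Nichols)(\CC)$ to an equivalence of tensor categories compatible with the $\CC$-module structure, and shows that $\Nichols$ is realized as a Hopf algebra in $\CC$.

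Next I would apply the Schauenburg embedding of Theorem \ref{relcent} to get a fully faithful braided tensor functor
\[ \Schauenburg : \UU \hookrightarrow \cZ_{\UU_A^0}(\UU_A) \simeq \cZ_\CC\bigl(\Rep(\Nichols)(\CC)\bigr), \]
where the second equivalence transports the one just obtained. By Lemma \ref{lm_realizedQuantumGroup} the right-hand relative center is precisely $\Rep^{wt}(U_q)$ as a braided tensor category, because $\CC = \Vect_\Gamma^Q$ has trivial associator and $\Nichols$ is the Nichols algebra from which $U_q$ was built. Composing these identifications yields a fully faithful braided tensor functor $\UU \hookrightarrow \Rep^{wt}(U_q)$.

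To finish, I must argue that this embedding is essentially surjective. This is where hypothesis (a) enters: we already have an abelian equivalence $\UU \cong \Rep^{wt}(U_q)$, so in particular the isomorphism classes of simple objects, and more generally the Jordan--H\"older content and finite length structure, on the two sides match. A fully faithful exact functor between locally finite abelian categories with equal simple sets is essentially surjective on simples, and since both sides consist of weight modules with finite Jordan--H\"older length in each weight space, extension-closedness under $\Schauenburg$ (being a tensor functor with exact right adjoint $\forgetA$) propagates essential surjectivity to all objects. This is exactly the infinite version of the equivalence criterion encoded in Corollary \ref{cor_SchauenburgInfinite}, whose hypotheses are ensured by Assumption \ref{assumption} and the finiteness of $\Nichols$.

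The main obstacle I anticipate is the last step: converting the abelian equivalence (a), which is a priori a formal statement about abelian categories, into a matching of objects that meshes with the quite rigid image of $\Schauenburg$. In particular, one must check that the simple objects of $\Rep^{wt}(U_q)$ are genuinely hit by induced objects $\induceA(X)$ together with their canonical half-braiding, which relies on controlling the weight decomposition via the $\CC$-action and on the compatibility built into hypothesis (c). Everything else is a bookkeeping composition of equivalences that are already established in Sections \ref{sec_Schauenburg}, \ref{sec_splitting} and \ref{sec:RelativeDrinfeldCentersGiveLocalization}.
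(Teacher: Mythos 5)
Your proposal follows essentially the same route as the paper's proof: hypothesis (c) plus Lemma~\ref{lm_socle} yields the splitting tensor functor, Lemma~\ref{lm_infiniteSplittingIsHopfAlgB} upgrades $\Nichols$ to a Hopf algebra and the equivalence $\UU_A\cong\Rep(\Nichols)(\CC)$ to a tensor equivalence, and then Corollary~\ref{cor_SchauenburgInfinite} together with Lemma~\ref{lm_realizedQuantumGroup} give the braided equivalence $\UU\cong\cZ_\CC(\UU_A)\cong\Rep^{wt}(U_q)$, using hypothesis (a) to supply the rank equality needed for that corollary.

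Two small imprecisions are worth flagging. First, Lemma~\ref{lm_socle} does not use finite-dimensionality of $\Nichols$ or any unrolling condition; it only requires that all simple objects of $\UU_A$ land in $\UU_A^0$, and this follows directly from hypothesis (c), since the simple $\Nichols$-modules for a finite-dimensional Nichols algebra are the trivial ones. Second, after Lemma~\ref{lm_infiniteSplittingIsHopfAlgB} produces \emph{some} Hopf algebra structure on the underlying algebra $\Nichols$, you still need to identify it as the Nichols algebra $\NicholsOf(X)$ before Lemma~\ref{lm_realizedQuantumGroup} can be invoked; this is exactly what Lemma~\ref{lm_isNichols} does, and this recognition step (the paper's step c) is suppressed in your write-up. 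Note also that the theorem statement uses the weaker degree conditions of Lemma~\ref{lm_isNichols}, not the stronger ``sufficiently unrolled'' hypothesis of Lemma~\ref{lm_isNicholsFree}, so invoking the latter would unnecessarily restrict the theorem. With these adjustments your argument matches the paper's.
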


The application of this result depends on the knowledge of $\UU_A$ as an abelian category. On the other hand the following result depends on the knowledge of a tensor functor to $\CC$, which means effectively that $\UU$ is realized a-priori as a (quasi-)Hopf algebra. It uses deep results by Takeuchi \cite{Tak79} and Skryabin \cite{Skry06} on relative Hopf modules.

\begin{introTheorem}[\ref{thm_characterizeQG}]
  Let $U\supset C$ be a Hopf algebra with $\UU=\Rep^{wt}(U)$ a braided tensor category with trivial M\"uger center and $A$ is a finite-dimensional commutative simple algebra object in $\UU$. Assume that 
  \begin{enumerate}[a)]
  \item $U=U_q$ as algebra.
  \item $A$ as an algebra in $\Rep^{wt}(U)$ is the dual Verma module $\bV_0^*$, see \textup{Definition \ref{def_Verma}}. (this condition can be relaxed to $A$ as an object, see \textup{Corollary \ref{cor_fixingtwist}}) 
  \item $\NicholsOf(X)\in\CC$ is sufficiently unrolled in the sense of \textup{Definition \ref{def_sufficientlyUnrolled}}.
  \item the category $\UU_A^0$ of local $A$-modules is equivalent as a braided tensor category to $\CC$, compatible with the inclusion $C\subset U$ in the sense that the following  commutes
  \begin{center}
$$
%\hspace{0.8cm}
 \begin{tikzcd}[row sep=10ex, column sep=15ex]
   \Rep^{wt}(U) \arrow{r}{\induceA}  
   \arrow{rd}{\mathrm{res}_C}
   & %\arrow[shift left=2]{l}{\forgetA}   
   \Rep(A)(\Rep^{wt}(U))  \arrow{d}{A\text{-}\mathrm{inv}}\\
   & %\arrow[hookrightarrow]{u}{\iota}\arrow[below, dotted, shift left=2]{ul}{} %{\coVerma}
   \Rep^{wt}(C)
    \end{tikzcd}
$$
\end{center}
  
  \item the categories $\UU$ and $\UU_A$ are rigid.
  \end{enumerate}
  Then we have $B\cong \NicholsOf(X)\rtimes C$ and $U\cong U_q$ as Hopf algebras, and accordingly we have an equivalence of braided tensor categories $\UU=\UU_q$.
  %Then there is a splitting functor $\Localization:\, \UU_A\to \UU_A^0$ and $\UU_A$ is equivalent as tensor category to the category of modules over 
  %an algebra $\Nichols$ in $\CC$. Assume further that $\Localization\circ \induceA$ coincides with the restriction functor from $U$ to $C$, 
  %then the Nichols algebra $\B(X)$ and $\UU$ is equivalent as braided tensor category to 
\end{introTheorem}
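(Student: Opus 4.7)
The plan is to chain together the three main structural results developed earlier in the paper. First, hypothesis (e) together with the trivial Müger center of $\UU$ and the haploid commutative algebra $A$ verify \textup{Assumption \ref{assumption}}, so Theorem \ref{relcent} provides a fully faithful braided tensor functor $\Schauenburg : \UU \hookrightarrow \mathcal{Z}_\CC(\BB)$ with $\BB = \UU_A$ and $\CC = \UU_A^0$. Hypothesis (d) identifies $\CC$ with $\Vect_\Gamma^Q = \Rep^{wt}(C)$ as braided tensor categories, in a way compatible with the restriction functor $\mathrm{res}_C$.

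Next I would describe $\BB$ concretely as the representation category of an algebra in $\CC$. Since (a) gives $U = U_q$ as an algebra and (b) identifies $A$ with the dual Verma $\bV_0^*$, the algebra $A$ is coinduced from the trivial module of the upper Borel subalgebra containing $C$ inside $U_q$. The Takeuchi--Skryabin theorem on relative Hopf modules then identifies $\UU_A$ with $\Rep^{wt}(B)$ for the positive Borel $B \supset C$, compatibly with the $\CC$-module structure from (d). Since the simple objects of $\UU_A$ coincide with those of $\CC$ (the positive generators act nilpotently on finite-dimensional weight modules), \textup{Lemma \ref{lm_socle}} produces a splitting tensor functor $\Localization : \BB \to \CC$, and \textup{Lemma \ref{lm_infiniteSplittingIsHopfAlgB}} upgrades $B$ to a Hopf algebra of the form $\Nichols \rtimes C$ inside $\CC$ with $\BB \simeq \Rep(\Nichols)(\CC)$. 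Hypothesis (c) together with \textup{Lemma \ref{lm_isNichols}} then pins down $\Nichols$ as the Nichols algebra $\NicholsOf(X)$ of the generating object $X$, giving $B \cong \NicholsOf(X) \rtimes C$.

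Finally I would invoke \textup{Lemma \ref{lm_realizedQuantumGroup}}: the relative Drinfeld center $\mathcal{Z}_\CC(\Rep(\NicholsOf(X))(\CC))$ is precisely $\Rep^{wt}(U_q)$ as a braided tensor category. Combined with the Schauenburg embedding of Step 1, which is essentially surjective by the dimension/finiteness argument of \textup{Corollary \ref{cor_SchauenburgFinite}} (or \textup{Corollary \ref{cor_SchauenburgInfinite}} in the infinite case), this yields a braided tensor equivalence $\UU \simeq \UU_q$. Tannakian reconstruction via the common fibre functor $\mathrm{res}_C$ appearing on both sides of the compatibility diagram in (d) then promotes the algebra isomorphism $U \cong U_q$ of (a) to a Hopf algebra isomorphism, since the tensor structure on $\UU$ encodes precisely the coproduct and antipode.

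The hard part is arguably Step 2: the Hopf algebra structure on $\Nichols$ is not visible from $U$ as a bare algebra, and has to be reconstructed from the tensor structure on $\UU_A$ together with the splitting functor. The Takeuchi--Skryabin input is what makes the identification $\BB \simeq \Rep(B)(\CC)$ rigid enough to feed into \textup{Lemma \ref{lm_infiniteSplittingIsHopfAlgB}}, while the ``sufficiently unrolled'' hypothesis (c) is precisely what forces the resulting Hopf algebra to coincide with $\NicholsOf(X)$ rather than a nontrivial lifting thereof, as warned in \textup{Remark \ref{rem_lifting}}. The relaxation mentioned in \textup{Corollary \ref{cor_fixingtwist}} (knowing $A$ only as an object) would require an additional twist-fixing argument between Steps 2 and 3, comparing the two a priori different algebra structures on $A$ via the haploidness assumption.
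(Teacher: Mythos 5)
Your proposal follows the same skeleton as the paper's proof (Takeuchi--Skryabin identifies $\UU_A \simeq \Rep^{wt}(B)$ with the correct $C$-graded dimension; the grading forces a Radford decomposition $B \cong \Nichols \rtimes C$; the Nichols algebra is recognized; the Schauenburg functor closes the loop), so in broad strokes the argument is correct. The main pillars match.

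Where you diverge is in the intermediate lemma choices, and one of these creates a small gap. For the Radford splitting you invoke Lemma~\ref{lm_socle} plus Lemma~\ref{lm_infiniteSplittingIsHopfAlgB}; the paper instead uses Corollary~\ref{cor_isNicholsRadfordFree}, which is tailored to hypothesis~(c): sufficient unrolling of $\NicholsOf(X)$ transfers (via Corollary~\ref{cor_gradeddim}) to a strict $\mathbb{N}^n$-grading on $B$, which immediately makes $B$ coradically graded and hence a Radford biproduct, without a detour through socle filtrations. More substantively, for the recognition of $\Nichols$ you cite Lemma~\ref{lm_isNichols}, which requires $\Nichols \cong \NicholsOf(X)$ \emph{as algebras}. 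But Takeuchi--Skryabin together with Corollary~\ref{cor_gradeddim} only gives a $C$-module isomorphism of $B/BC^+$ with $\NicholsOf(X)$ — the coalgebra structure on $U$ is a priori unknown, so $B = U^{\mathrm{coin}(\pi)}$ cannot be directly matched to the explicit positive Borel of $U_q$ as a subalgebra, and an algebra isomorphism $\Nichols \cong \NicholsOf(X)$ does not fall out for free. The paper sidesteps this by using Lemma~\ref{lm_isNicholsFree}, which under sufficient unrolling needs only the underlying \emph{object} isomorphism. You should substitute Lemma~\ref{lm_isNicholsFree} here, or else supply a separate argument establishing the algebra isomorphism. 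Your final Tannakian step promoting $U \cong U_q$ to a Hopf algebra isomorphism is correct and spells out something the paper leaves implicit.
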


In Section \ref{sec_uprolling} we prove that extending the category $\UU$ by another commutative algebra~$S$ (typically a direct sum of invertible objects) commutes with the constructions above. In particular Theorem \ref{thm_uprolling} states that if $\UU=\YD{\Nichols}(\CC)$ and then the extensions  $\UU_S^0$ for suitable $S$ are again of the form $\YD{\tilde{\Nichols}}(\tilde{\CC})$. This means for example, that it is sufficient to prove a Kazhdan-Lusztig correspondence in the completely unrolled case $\Vect_{\C^n}^Q$.\\

%\begin{problem}
 %   Heckenberger or general: Verma, simple top, WE KL poly BGG (quote Agustin)
%\end{problem}

The main aim of the previous results is to characterize the braided tensor categories of representations of the small quantum groups $u_q(\g)$ in terms of commutative algebras, but it also applies to basic super Lie algebras and other Nichols algebras in Heckenberger's list. Beyond $\CC=\Vect_\Gamma^Q$, interesting cases for our application are modules over non-abelian groups \cite{MS00, AFGV10, Len13, HV14}, parabolic situations \cite{CL17,AA20} and affine Lie algebras at positive integer level.
%Gastón Andrés García, João Matheus Jury Giraldi,
%On Hopf algebras over quantum subgroups,
%Journal of Pure and Applied Algebra,
%Volume 223, Issue 2, 2019,

\begin{problem}
    Classify all finite-dimensional Nichols algebras in the modular tensor category associated to an affine Lie algebra at positive integer level. Examples can be produced from the parabolic situations, but it is in no way clear that this is exhausting.   
\end{problem}

\subsection{Correspondences of VOA and quantum group categories}

 A  \emph{vertex operator algebra} (VOA) $\mathcal{V}$ is a rigorous formulation of the chiral algebra of a two-dimensional conformal field theory. The algebra structure of $\mathcal V$ is given by fields %$Y(a, z)$ 
 \[
Y(-, z): \; \mathcal V \otimes \mathcal V \rightarrow \mathcal V((z)), 
\qquad v \otimes w \mapsto Y(v, z)w
 \]
with $Y(v, z)w$ a formal Laurent series in $z$ with coefficient in the typically infinite dimensional $\mathbb C$-vectorspace $\mathcal V$. This structure is in a certain sense commutative and associative, e.g. the commutativity property is called locality and says that 
\[
(Y(v_1, z_1)Y(v_2, z_2)-Y(v_2, z_2)Y(v_1, z_1))w
\]
vanishes if multiplied by $(z-w)^N$ for sufficiently large $N \in \mathbb Z_{>0}$ depending on $v_1, v_2$.
There is a similar notion for modules and intertwining operators. Under suitable conditions intertwining operators realize a tensor category structure on categories of $\mathcal V$-modules \cite{HLZ0}-\cite{HLZ8}. This is  a braided tensor category.

Logarithmic conformal field theory first appeared around three decades ago \cite{Ka91, Gu93, GK96} and roughly a decade later it got translated to non-semisimple representation theory of VOAs. Much of the work was focused on accessible examples and in fact on their conjectural relations to quantum group categories \cite{FGST06, FGST06b, FGST07}. Since studying abelian and tensor categories of quantum groups is much clearer than for VOAs the putative connection of quantum groups and VOAs has been a huge guide in the development of the field of logarithmic VOAs (VOAs with non-semisimple representation categories). In particular, making the connection clear is a major open problem of the area for the last two decades.

Our theory is tailored to prove correspondences between categories of modules of vertex operator algebras (VOAs) and quantum groups. So we now explain how the previous discussion naturally appears in the VOA setting. 

\subsubsection{Commutative algebras via free field realizations}

Given a VOA $\V$, there are several standard constructions to construct a new VOA, e.g. cosets, orbifolds, cohomologies, tensor products of VOAs, VOA extensions. A basic class of VOAs are affine vertex algebras, which are associated to a Lie superalgebra with an invariant, consistent  and supersymmetric bilinear form and all VOAs that we are aware of  can be realized from affine VOAs via iterating standard constructions. For example $\W$-algebras are constructed via quantum Hamiltonian reduction, that is as certain cohomologies \cite{KW04}.

Except for the nicest classes of VOAs, such as rational and $C_2$-cofinite ones, the representation theory of VOAs is largely open at present. Another class of VOAs whose representation theory is under control and in fact rather trivial is free field algebras. In particular the category of modules of a simple Heisenberg VOA (the free boson of conformal field theory) is nothing but $\C$-graded vector spaces with some quadratic form, $\Vect_\C^Q$.
The point is that most VOAs allow for conformal embeddings into VOAs with much simpler representation theory and a free field realization is such a conformal embedding into a free field algebra. Standard examples are the Wakimoto free field realizations of affine VOAs. Here the free field algebra is the Heisenberg VOA associated to the Cartan subalgebra times a $\beta\gamma$-VOA associated to the root spaces \cite{FF90}. Another standard example are principal $\W$-algebras that embed conformally into the Heisenberg VOA associated to the Cartan subalgebra \cite{FF90b} and there are also free field realizations for general $\W$-algebras \cite{Gen}.
Sometimes one also has certain chains of conformal embeddings, e.g. the affine VOA of $\mathfrak{sl}_2$ at level $k$ embeds into the Virasoro vertex algebra at central charge $13-6(k+2)-6/(k+2)$ times a free field algebra, while the latter embeds itself in a certain free field algebra, see \cite{Ad1} for details. Note, that the Virasoro algebra is the principal $\W$-algebra of $\mathfrak{sl}_2$. In higher rank these chain of embeddings are even richer and quite useful to understand abelian representation theory, see \cite{ACG} for the case of $\mathfrak{sl}_3$.

We are interested in conformal embeddings $\W\subset \V$ of VOAs, since it translates to the following categorical statement. Let $\W$ be a VOA and $\UU$ a braided tensor category of $\W$-modules in the sense of Huang-Lepowsky-Zhang \cite{HLZ0}-\cite{HLZ8}. Let $\V$ be another VOA, that contains $\W$ as a subVOA and such that $\V$ is an object in $\CC$. Then $\V$ is identified with a commutative algebra $A$ in $\UU$ \cite{HKL} and the braided vertex tensor category of $\V$-modules $\CC$, that lie in $\UU$ is braided tensor equivalent to $\UU_A^0$, the category of local  $A$-modules in $\UU$ \cite{CKM}.

\subsubsection{Nichols algebras of screening operators}

Let $\V$ be a VOA, $M$ be a $\V$-module and $\mathcal Y$ an intertwining operator of type  $\binom{M}{M  \ \V}$, and $m\in M$ a suitable vector. Then the screening operator $\zem_m$  is a linear map from $\V$ to $M$  obtained as the zero-mode of $\mathcal Y(m, z)$ with the property that its kernel is automatically a subVOA of $\V$.  It turns out that in all the free field realizations that we mentioned $\W$ can be characterized as the joint kernel of a set of screening charges on $\V$. Screening charges in the case $M=\V$ generate actions of Lie algebras on $\V$ and for $M \neq \V$ it has been conjectured to give actions of quantum Borel parts and more generally of Nichols algebras $\Nichols$ in the category of $\V$-modules $\CC$. Unfortunately the connection of this Nichols algebra to the representation theory of $\W$ is open, see \cite{S11, S12, ST12, ST13, Len21, FL22} for work done in this direction. It has been conjectured there, with certain caveats in place

\begin{conjecture}
Let $\V$ be a VOA with a vertex tensor category $\CC$ of $\V$-modules $M_1,\ldots,M_n$. Fix suitable elements $m_1,\ldots,m_n$ therein and let $\zem_1, \dots, \zem_n$ be their screening operators.
\begin{enumerate}[a)]
    \item The screening operators generate a corresponding Nichols algebra $\Nichols$ in the braided tensor category $\CC$.
    \item The joint kernel of these screening operators $\W$ is a subVOA, and if $\Nichols$ is finite-dimensional then $\W$ has a representation theory similarly good to $\V$, for example if $\V$ is $C_2$-cofinite then $\W$ is also $C_2$-cofinite. 
    \item The braided tensor category of representations of $\W$ is equivalent to the category of $\Nichols$-Yetter-Drinfeld modules in $\CC$, see Definition \ref{def_YD}. Note that in cases without proper rigidity such as $\W_{p,q}$ \cite{GRW09} one expects a weaker relations between these categories.  
\end{enumerate}
\end{conjecture}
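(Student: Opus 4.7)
The conjecture has three parts, and a natural strategy is to prove them in the stated order, with part (c) following from the algebraic framework developed in this paper.

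For part (a), the plan is to exploit the braided structure of the zero-mode subalgebra inside $\End(\V)$. Each screening operator $\zem_{m_i}$ is the residue of an intertwining operator $\mathcal Y(m_i,z)$, and composing such operators under analytic continuation along concatenated contours naturally gives rise to an action of the braided tensor algebra $\mathfrak T(X)$ on $\V$, where $X=\bigoplus_i M_i$ contains $(m_i)$. The first step is to record, via the usual intertwiner commutator formula, that $\zem_{m_i}$ acts as a braided derivation, which is equivalent to a primitive coproduct $\Delta(\zem_{m_i})=\one\otimes\zem_{m_i}+\zem_{m_i}\otimes\one$. This makes the image of $\mathfrak T(X)\to \End(\V)$ a graded braided bialgebra in $\CC$. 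The key technical step is then to identify the defining kernel of this surjection with the quantum shuffle kernel, i.e.~the kernel of Woronowicz's braided symmetrizers $\mathrm{Sh}_n$ on $X^{\otimes n}$; this must be verified by a concrete vertex-analytic contour-integral computation showing that braided antisymmetrizations of screenings vanish. By the universal characterization of the Nichols algebra as the bialgebra quotient by the symmetrizer kernel, the image is then $\NicholsOf(X)=\Nichols$.

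For part (b), that $\W=\bigcap_i\ker\zem_{m_i}$ is a subVOA is immediate from the derivation property in (a), since the joint kernel of derivations is closed under the vertex operation. The transfer of finiteness properties is more subtle. The strategy is to exploit that, when $\Nichols$ is finite-dimensional, the extension $\W\hookrightarrow\V$ has ``finite index'' in a categorical sense: as a graded $\W$-module, $\V$ is expected to decompose in a controlled way governed by a finite-dimensional $\Nichols$-comodule, making $\V$ a finitely generated $\W$-module. Standard extension-of-VOA arguments then propagate properties such as $C_2$-cofiniteness from $\V$ down to $\W$.

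For part (c), which is the heart of the conjecture and where the machinery of this paper applies most directly, the plan is to set $\UU=\Rep(\W)$ and take $A=\V$, viewed as a commutative algebra in $\UU$ via \cite{HKL}. By \cite{CKM} we have $\UU_A^0\cong\CC$ as braided tensor categories, so it remains to identify $\BB=\UU_A$. Part (a) supplies a $\Nichols$-action on objects of $\BB$ through the screening operators, and the splitting functor $\Localization:\BB\to\CC$ arises from the free-field vacuum projection; combined with Lemma \ref{lm_infiniteSplittingIsHopfAlgB} this yields $\BB\cong\Rep(\Nichols)(\CC)$ as tensor categories compatibly with the $\CC$-module structure. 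Theorem \ref{relcent} then produces a fully faithful braided embedding $\Schauenburg:\UU\hookrightarrow\cZ_\CC(\BB)=\YD{\Nichols}(\CC)$, and when $\Nichols$ is finite, Corollary \ref{cor_SchauenburgFinite} (or \ref{cor_SchauenburgInfinite} in the non-finite case) upgrades this to an equivalence of braided tensor categories. The caveat mentioned for $\W_{p,q}$ corresponds precisely to failure of the rigidity hypothesis in Assumption \ref{assumption}; outside the rigid case only the fully faithful embedding survives.

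The principal obstacle is part (a), and specifically the identification of the relations among screening compositions with the Nichols algebra ideal. This is not a formal consequence of any categorical input in the paper: it requires genuinely vertex-analytic contour manipulations showing that braided symmetrizers annihilate the screening tensor word. The second obstacle is verifying rigidity of $\UU$ and $\UU_A$ in Assumption \ref{assumption}, which for general logarithmic VOAs is a serious open problem; the conjecture correctly anticipates that only a weaker correspondence can hold without it.
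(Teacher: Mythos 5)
This statement is a \emph{conjecture} in the paper, not a theorem: the authors explicitly do not prove it. Immediately after stating it they write that assertion (a) was proven in \cite{Len21} only for lattice (free-field) vertex algebras, and that the paper's results ``give a strong indication why assertion c) might hold,'' proving (c) only under additional hypotheses (a given abelian equivalence, or knowledge of the twisted-module category). So there is no proof in the paper to compare against; you should treat your text as a research plan, not as a reconstruction of an existing argument.

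With that understood, your roadmap is broadly consistent with the paper's own commentary, but several steps you present as routine are in fact the open content of the conjecture. For (a), your strategy of showing that braided symmetrizers annihilate words of screenings via contour manipulations is precisely what \cite{Len21} does, but only when $\V$ is a lattice VOA; extending this beyond the free-field case (where analytic continuation of compositions of general intertwiners is much less controlled) is a genuine open problem, not a ``key technical step'' one can currently carry out. For (b), the claim that finiteness of $\Nichols$ forces $\V$ to be a finitely generated $\W$-module and that $C_2$-cofiniteness then descends is not established in the literature in this generality; the paper does not assert it, and the coset/orbifold analogues of such descent are notoriously hard. For (c), the logic is right in spirit, but two details are off: the splitting functor in the paper does not come from a ``free-field vacuum projection'' but from the socle filtration (Lemma \ref{lm_socle}), valid when $\CC$ is semisimple and all simples of $\BB$ are local; and the identification $\BB\cong\Rep(\Nichols)(\CC)$ is not supplied by part (a) alone --- in the paper it requires an independently known abelian equivalence together with the module-category compatibility of Lemma \ref{lm_infiniteSplittingIsHopfAlgB}, which is exactly the extra input Theorem \ref{thm_CharacterizingUsingB} demands. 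Your closing remarks correctly locate the two real obstacles (part (a) outside the lattice case, and rigidity), which is the most accurate part of the proposal.
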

Assertion a) was proven in \cite{Len21} if $\V$ is a lattice vertex algebra, i.e.~a free-field theory. The results of the present article give a strong indication why  assertion c) might hold, in particular it proves assertion c) if there is already an equivalence of abelian categories or if the category of twisted modules is as expected.

\subsubsection{Logarithmic Kazhdan-Lusztig correspondences}

Our theory tells us that if we have a VOA $\W$ allowing for a suitable embedding into a simpler VOA $\V$, then in order to prove a braided tensor equivalence to some quantum group category one needs to do the following steps:
\begin{enumerate}
\item Understand a suitable abelian category $\UU$ of $\W$-modules so that one can establish its abelian equivalence to the quantum group category.
\item Show that $\UU$ is a vertex tensor category and that every object is rigid.
\item Show that $\UU_A$ is rigid and that   $\UU_A\cong \Rep(\Nichols)(\CC)$ for a suitable Nichols algebra $\Nichols$ of screenings in the category $\CC \cong \UU_A^0$ of $\V$-modules.
\end{enumerate}
Performing these steps is usually a tedious problem in VOA theory and we will now explain the state of the art in this direction.
For the existence of vertex tensor category one needs to establish good finiteness conditions. 
$C_2$-cofinite or lisse VOAs always allow for a vertex tensor category with finitely many inequivalent simple modules \cite{H09}, with the most prominent example the triplet VOAs $\cW(p)$ \cite{FGST06, FGST06b, FGST07, AM08, TW, McRY}. However most cases are not of such a nice finite type and a much larger class of categories is covered by 
a good criterion that emerged from \cite{CY, CJORY, McR3}. It says that the category of $C_1$-cofinite modules of a simple VOA is a vertex tensor category if it is of finite Jordan-H\"older length and if it is closed under the contragredient dual functor. Effectively this means one needs to classify $C_1$-cofinite modules and hope that the classification result implies the existence of vertex tensor category. This is unfortunately usually not an easy task and it has been successfully done in the following list of cases: The Virasoro and $N=1$ super Virasoro VOA at any central charge \cite{CJORY, CMOY}, affine VOAs at admissible levels and beyond \cite{CHY, CY},  the singlet VOAs $\cM(p)$ \cite{CMY, CMY4} and the affine VOA of $\mathfrak{gl}_{1|1}$ \cite{CMY5}. This list is currently rather short, but we note that $\W$-algebras in generality as well as affine vertex superalgebras are under current investigations.
Rigidity is usually quite unclear for vertex tensor categories and it has been proven in the above mentioned works with many different and new methods; and the case for most affine VOAs at admissible levels is \cite{C1}.
There is also a small list of vertex tensor categories of modules that are not $C_1$-cofinite, but it is inherited from certain $C_1$-cofinite categories. The most prominent example is the $\beta\gamma$-VOA \cite{AW} and this is in fact the first example of the $\mathcal B(p)$-algebras, which are certain subregular $\W$-algebras of type $\mathfrak{sl}_{p-1}$. Its Feign-Semikhatov dual algebras $\mathcal S(p)$ are certain principal $\W$-superalgebras of $\mathfrak{sl}_{p-1|1}$ \cite{CMY2}.

The third step, that is showing that $\UU_A$ is rigid and that   $\UU_A\cong \Rep(\Nichols)(\CC)$, should usually be possible once one knows the abelian structure and fusion products of $\UU$ well enough. 
The main application of the developed theory of this article is to perform this third step for the singlet VOAs $\cM(p)$ and then to derive the braided tensor equivalence to a small unrolled quantum group of $\mathfrak{sl}_2$. This is done in all detail in Section \ref{sec:KLsinglet}.

In the above mentioned examples, only the case of the triplet VOAs $\cW(p)$, the singlet VOAs $\cM(p)$, the $\mathcal B(p)$-algebras, the $\mathcal S(p)$-algebras and of the affine VOA of $\mathfrak{gl}_{1|1}$ the categories of $C_1$-cofinite modules are not semisimple. These are thus the candidates 
for which we want to establish correspondences to quantum groups. Note that the triplet correspondence has been solved in general in \cite{GN} and for $p=2$ in \cite{CLR}. The drawbacks are that \cite{GN} uses a classification result of tensor categories generated by a standard representation \cite{O08} and is thus quite limited to the triplet problem, while \cite{CLR} is our first approach which is too computational and the present work is the much more efficient theory. 

Our logarithmic Kazhdan-Lusztig correspondences are 
\begin{theorem}\label{ref_logKL}
The following are equivalent as braided tensor categories 
\begin{enumerate}
    \item Let $\mathcal O^T_{\mathcal M(p)}$ the category of weight modules of the singlet algebra of {\textup{\cite{CMY4}}} and $\Rep_{\mathrm{wt}}u_q^H(\mathfrak{sl}_2)$ 
    the category of weight modules of the small unrolled quantum group of $\mathfrak{sl}_2$ at $2p$-th root of unity of \textup{\cite{CGP}}. Then
    \begin{align*}
\mathcal O^T_{\mathcal M(p)} &\cong \Rep_{\mathrm{wt}}(u_q^ H(\sl_2))
\end{align*}
as braided tensor categories \textup{(Theorem \ref{thm_singletEquivalence} as well as Example \ref{ex_singlet})}.
\item The analogous result for triplet VOA and quasi Hopf modification of the small quantum group
\textup{(Remark \ref{rem_triplet} and Corollary \ref{cor_triplet})}
    $$\mathcal O^T_{\mathcal W(p)} \cong \Rep(u_q(\sl_2)).$$
    \item The Hopf algebra $U(S(p))$ is described in \eqref{usp}.
 The vertex tensor category of representations of the vertex superalgebra $\mathcal S(p)$ studied in \textup{\cite{CMY2}} is braided equivalent to the category of weight modules of $U(S(p))$ \textup{(Lemma \ref{lem_sp})}.
 \item 
The braided tensor category of weight representations of $V^k(\mathfrak{gl}_{1|1})$ is equivalent to $\YD{\tilde{\Nichols}}(\tilde{\CC})$ \textup{(Corollary \ref{cor_gl11})}. The realizing quantum group for this braided tensor category is $u_q^B({\mathfrak{gl}_{1|1}})$ constructed in \textup{Example \ref{exm_gl11QG}}.
\end{enumerate}
\end{theorem}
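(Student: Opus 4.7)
The plan is to apply the general characterization Theorem \ref{thm_CharacterizingUsingB} to each of the four VOAs, which reduces the problem of establishing braided tensor equivalences to the verification of three inputs: an abelian equivalence of $\mathcal{U}$ with a quantum group representation category, a braided equivalence of the category of local modules $\mathcal{U}_A^0$ with $\Vect_\Gamma^Q$, and an abelian equivalence of $\mathcal{U}_A$ with representations of the corresponding Nichols algebra compatible with the $\mathcal{C}$-module structure. In each case, the commutative algebra $A\in\mathcal{U}$ will be the conformal VOA extension provided by a free-field realization, and the Nichols algebra will be the one generated by the screening operators of that realization.

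For the singlet case (1), I would set $A$ to be the lattice vertex subalgebra $V_L \supset \mathcal{M}(p)$ sitting inside the Heisenberg VOA. Then $\mathcal{U}=\mathcal{O}^T_{\mathcal{M}(p)}$ is a vertex tensor category by \cite{CMY,CMY4}, local $A$-modules are Heisenberg weight modules giving $\mathcal{U}_A^0\cong\Vect_\mathbb{C}^Q$ (condition (b)), and the associated Nichols algebra is $\mathbb{C}[x]/x^p$ of Example \ref{exm_NicholsRank1}, corresponding on the quantum group side to $u_q^H(\mathfrak{sl}_2)$ via Lemma \ref{lm_realizedQuantumGroup}. Condition (a) would be verified by matching Loewy diagrams of $\mathcal{M}(p)$-modules from the classification in \cite{CMY4} against the known weight modules of $u_q^H(\mathfrak{sl}_2)$. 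Condition (c) is that $\mathcal{U}_A$ is the category of twisted/Feigin-Fuchs modules over the extension, organized as $\Rep(\mathbb{C}[x]/x^p)(\Vect_\mathbb{C}^Q)$, with $\mathcal{C}$-module structure given by tensoring with free-field Fock modules.

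The triplet case (2) then follows by uprolling via Theorem \ref{thm_uprolling}: the triplet $\mathcal{W}(p)$ is a commutative algebra extension of $\mathcal{M}(p)$, and on the quantum group side the corresponding extension produces the quasi-Hopf modification $\tilde{u}_q(\mathfrak{sl}_2)$ from \cite{CGR20,GLO18}, which is precisely the category $\Rep(u_q(\mathfrak{sl}_2))$ of part (2). Case (3) for $\mathcal{S}(p)$ is structurally parallel: a free-field realization from \cite{CMY2} provides the algebra $A$, yielding again a rank-one Nichols algebra but over a different Heisenberg-lattice category, and the explicit Hopf algebra $U(S(p))$ is read off from Lemma \ref{lm_realizedQuantumGroup}. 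For case (4), the Feigin--Tipunin-type realization of $V^k(\mathfrak{gl}_{1|1})$ gives $A$ as an extension to a Heisenberg-lattice VOA of rank two; the screening operators produce the super Nichols algebra $\tilde{\mathfrak{N}}$ of $\mathfrak{gl}_{1|1}$-type, and the results from \cite{CMY5} provide the abelian and rigidity input. Theorem \ref{thm_CharacterizingUsingB} then upgrades the identification to a braided tensor equivalence with $\YD{\tilde{\mathfrak{N}}}(\tilde{\mathcal{C}})$, realized by $u_q^B(\mathfrak{gl}_{1|1})$ of Example \ref{exm_gl11QG}.

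The hard part throughout is verifying condition (c): that $\mathcal{U}_A$ really realizes as $\Rep(\mathfrak{N})(\Vect_\Gamma^Q)$ with compatible $\mathcal{C}$-module structure, as in the assumptions of Lemma \ref{lm_infiniteSplittingIsHopfAlgB}. Concretely, one must identify \emph{all} twisted $A$-modules (typically Fock modules built from the free-field realization), compute their fusion with ordinary Heisenberg modules, and match the resulting $\mathcal{C}$-linear structure to the tensor structure of $\Rep(\mathfrak{N})$. Rigidity of $\mathcal{U}$ and $\mathcal{U}_A$, required by Assumption \ref{assumption}, is an additional nontrivial VOA-theoretic input that must be imported from \cite{CMY,CMY4,CMY2,CMY5} in each case. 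Once these ingredients are assembled, Theorem \ref{thm_CharacterizingUsingB} (with Theorem \ref{thm_uprolling} handling the passage from unrolled to rolled forms) yields all four equivalences of braided tensor categories simultaneously.
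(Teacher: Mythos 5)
Your overall plan is sound for case (1), but there are two substantive points to flag.

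First, a concrete error in the singlet case: the commutative algebra $A$ for $\mathcal{M}(p)$ is the Heisenberg VOA $\pi = F_{1,1}$ (the paper fixes this in Section \ref{catMA} and at the start of Section \ref{sec:KLsinglet}), \emph{not} the lattice VOA $V_L$. The lattice VOA $V_L$ is an extension of $\pi$, not a subalgebra of it, and if one took $A = V_L$ over $\mathcal{M}(p)$ the local modules would form $\Vect_{\mathbb{Z}_{2p}}^{Q}$, not $\Vect_{\mathbb{C}}^{Q}$; one would then be in the triplet picture of Remark \ref{rem_triplet}, where the $\mathbb{Z}_{2p}$-grading is too coarse for the ``sufficiently unrolled'' Lemma \ref{lm_isNicholsFree} and one needs the stronger Lemma \ref{lm_isNichols}. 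Your subsequent claims ($\mathcal{U}_A^0 \cong \Vect_{\mathbb{C}}^Q$, Nichols algebra $\mathbb{C}[x]/x^p$, unrolled $u_q^H(\mathfrak{sl}_2)$) are only consistent with $A = \pi$, so this is presumably a slip, but it would break the proof if taken literally.

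Second, and more importantly, your proposed treatment of cases (3) and (4) diverges from the paper's proof in a way that creates a genuine gap. You propose to run Theorem \ref{thm_CharacterizingUsingB} directly on $\mathcal{S}(p)$ and $V^k(\mathfrak{gl}_{1|1})$ using their free-field realizations. In contrast, the paper establishes only case (1) via the characterization theorem; cases (2), (3), and (4) are then \emph{all} derived from case (1) by uprolling (Theorem \ref{thm_uprolling} / Corollary \ref{cor:uprolling}), exploiting that $\mathcal{W}(p)$, $\mathcal{S}(p)$, and $V^k(\mathfrak{gl}_{1|1})$ are simple current extensions of $\mathcal{M}(p)$, $\mathcal{M}(p)\otimes\pi^\beta$, and $\mathcal{M}(2)\otimes\pi^c\otimes\pi^d$ respectively (see Lemma \ref{lem_sp} and Corollary \ref{cor_gl11}). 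Your direct route would require, for each of $\mathcal{S}(p)$ and $V^k(\mathfrak{gl}_{1|1})$, a fresh verification of the hypotheses of Theorem \ref{thm_CharacterizingUsingB}: the abelian equivalence of $\mathcal{U}$, rigidity of $\mathcal{U}$ and $\mathcal{U}_A$, and — hardest — the identification of $\mathcal{U}_A$ with $\Rep(\mathfrak{N})(\mathcal{C})$ together with the $\mathcal{C}$-module-structure compatibility of Lemma \ref{lm_infiniteSplittingIsHopfAlgB}. That is precisely the content of Lemma \ref{lm_singletAbelianSteps} and Theorem \ref{thm_SingletTwistedAbelian}, which the paper establishes only for the singlet; redoing it for $\mathcal{S}(p)$ and $V^k(\mathfrak{gl}_{1|1})$ would be substantially more work and is not done anywhere in the paper. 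Relatedly, your closing remark that Theorem \ref{thm_uprolling} ``handles the passage from unrolled to rolled forms'' mischaracterizes its role: in this proof it is the engine that transports the entire braided tensor equivalence from the singlet to the other three VOAs, not merely a bookkeeping device for the Cartan part. A minor additional inaccuracy: for $V^k(\mathfrak{gl}_{1|1})$ the uprolled Nichols algebra $\tilde{\mathfrak{N}}$ is still rank one ($\mathbb{C}[x]/x^2$, induced into $\tilde{\mathcal{C}} = \mathrm{sVect}\boxtimes\Vect_{\mathbb{C}^2}$); it is not some larger ``super Nichols algebra of $\mathfrak{gl}_{1|1}$-type.''
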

We note that the quantum group corresponding to the $\mathcal B(p)$-algebras is realized similarly and that will appear in \cite{ALSW23}.

Since we want to illustrate the usefulness of our main Theorems we prove the singlet correspondence twice, see Theorem \ref{thm_singletEquivalence} as well as Example \ref{ex_singlet}, as applications of each of our main characterization theorems. The necessary computations are done in Section \ref{sec:KLsinglet} and use the explicit knowledge of the singlet category obtained in  \cite{CMY, CMY4}.

We could in principal have continued in this way for the other stated correspondences and we sketch this for the triplet in Remark \ref{rem_triplet}. The computations would have been very similar to the ones done for the singlet algebra.
The other cases are however either simple current extensions of the singlet algebra (the triplet algebra) or simple current extensions of the singlet algebra times a rank one or two Heisenberg algebra. We thus work out how categories of Yetter-Drinfeld modules and their realizing Hopf algebras interplay with such extensions (we call this uprolling) in Section \ref{sec_uprolling} so that the remaining parts of the Theorem are just examples of this uprolling procedure.

\subsubsection{Outlook}

Our logarithmic Kazhdan-Lusztig correspondences, Theorem \ref{ref_logKL}, follow from our main structural Theorems together with the good understanding of the tensor category of the singlet algebra. 
In order to prove future logarithmic Kazhdan-Lusztig correspondences one thus needs to develop the representation theory of the involved VOAs well enough. We have a few series of examples in mind, see also Example \ref{ex_VOAfreefield}. There is one relatively straight forward example related to physics, namely certain categories of line operators in twists of three dimensional abelian $\mathcal N=4$ supersymmetric gauge theories are realized by braided tensor categories of VOAs that are extensions of several singlet and Heisenberg algebras \cite{BCDN} and so via uprolling the corresponding quantum groups can and will be constructed and their connection to the gauge theory should then also further be investigated. The other examples are ambitious programs that we are truely excited about.

Given the immense impact of the Kazhdan-Lusztig correspondence for Lie algebras one surely wants to have analogous results for Lie superalgebras. Except for $\mathfrak{osp}_{1|2n}$ the category of finite dimensional weight modules of a simple, classical, basic Lie superalgebra is not semisimple. It turns out that affine VOAs associated to type I Lie superalgebras allow for very nice realizations inside the affine VOA of its even subalgebra times a certain free fermion VOA \cite{QS07}. Our theory thus suggests to show that the Kazhdan-Lusztig category of the affine VOA of the type I Lie superalgera $\g$ at some level $k$ is a relative Drinfeld center of the tensor category of modules of some Nichols algebra in the  Kazhdan-Lusztig category of the even subalgebra times the free fermion category (which is just $\text{sVect}$). This translates via the original Kazhdan-Lusztig correspondence to Yetter-Drinfeld modules over the category of weight modules of the quantum group of the even subalgebra, see Example \ref{exm_parabolic} and Lemma \ref{lem_parabolic}. In order to prove now an equivalence of braided tensor categories of the Kazhdan-Lusztig category of the affine Lie algebra of $\g$ at a given level $k$ to the category of weight modules at corresponding $q$ \footnote{this should be $q = \exp\left(\frac{\pi i }{r^\vee (k+h^\vee)} \right)$ with $h^\vee$ the dual Coxeter number of $\g$ and $r^\vee$ the lacing number of the even subalgebra.}
one needs to show that all assumptions of Theorem \ref{thm_characterizeQG} hold. This is work in progress and we are optimistic that at least for generic $k$ and type I Lie superalgebras this can be proven and the rigidity seems to be the most difficult problem. 

The next series of examples should be called generalized Feigin-Tipunin algebras or just Feigin-Tipunin algebras. 
The triplet algebras are constructed as global sections of a certain VOA bundle over the flag manifold of $SL_2$ \cite{FT10, Su21, Su22}. By construction the algebra has the group $PSL_2$ acting by outer automorphisms and the singlet algebra is the orbifold that is invariant under the action of the maximal torus. In fact this construction generalizes vastly, though not much is known about the representation theory, see \cite{AM22, CRR} for some results in the $\mathfrak{sl}_3$-case. The generalization of \cite{FT10, Su21, Su22} is  as global sections of a certain VOA bundle over the flag manifold of $G$ for simply laced $G$. The VOA bundle is naturally associated to the principal nilpotent orbit in $G$ and there exists a natural generalization to any nilpotent orbit whose investigation is initiated in \cite{ACGY, CNS}. For the principal nilpotent element one expects a correspondence  of the orbifold by the maximal torus  to the small unrolled quantum group of the Lie algebra of $G$ at some root of unity \cite{CM17}. For non principal nilpotent orbits one expects a correspondence to quantum supergroups, but the dictionary yet needs to be developed. In \cite{CNS} the case of the zero-orbit (it is also called the affine case) for $SL_2$ is studied and the corresponding quantum group is a small unrolled quantum group of $\mathfrak{sl}_{2|1}$. Using our methods, the conjecture is proven there in the very first case. 
We hope that one can study the abelian representation categories of these Feigin-Tipunin type algebras using geometric representation theory in the spirit of \cite{AG, ABBGM}.

The third series of examples is going beyond the Kazhdan-Lusztig category. The category of integrable modules of a finite dimensional simple Lie algebra is just a very small subcategory of the category of the category $\mathcal O$ which itself is a subcategory of the category of weight modules. Similary the Kazhdan-Lusztig category of an affine VOA is a subcategory of the category $\mathcal O$ at level $k$ which is a subcategory of lower bounded weight modules which in turn is a subcategory of the category of weight modules, see \cite{ACK}. This last category is expected to be a very interesting and rich vertex tensor category. Given that it lacks many finiteness conditions this is not easy to establish and will appear for $\mathfrak{sl}_2$ at admissible level in \cite{C2}.
The affine VOA of $\mathfrak{sl}_2$ enjoys Gaiotto-Rap\v{c}\'ak triality \cite{GR} with the affine VOA of $\mathfrak{sl}_{2|1}$ and the $\mathcal N=2$ super Virasoro algebra (the princial $\W$-algebra of $\mathfrak{sl}_{2|1}$). This triality are large families of isomorphism of certain coset subVOAs, proven in \cite{CL22a, CL22b}. On the other hand the affine VOA of $\mathfrak{sl}_2$ allows for a realization inside the Virasoro algebra times a free field algebra \cite{Ad1}. The category of $C_1$-cofinite modules of the Virasoro algebra at generic central charge is the Deligne product of two Kazhdan-Lusztig categories of $\mathfrak{sl}_2$ and so this realization in fact suggest that the affine VOA of $\mathfrak{sl}_2$ at generic level has a braided tensor category of modules equivalent to modules of a quantum group of $\mathfrak{sl}_2 \oplus \mathfrak{sl}_{2|1}$. At admissible level one then would get a quotient category that one might like to call a partial semisimplification and we will explain this in a coming work. Higher rank is currently completely out of reach, but we note that the expectation from iterating trialities is that the category of weight modules of the affine VOA of $\mathfrak{sl}_n$ is related to a quantum group of $\mathfrak{sl}_n \oplus \mathfrak{sl}_{n|n-1}\oplus \mathfrak{sl}_{n-1|n-2} \oplus \mathfrak{sl}_{n-2|n-3} \oplus  \dots  \oplus \mathfrak{sl}_{2|1}$.

\subsection{Outline}

\subsection*{Acknowledgement}
Thanks to D. Nikshych for suggesting Corollary \ref{cor_Wittclass} and Problem \ref{prob_Wittclass} on the Witt class, to A. Riesen for suggesting splitting functors in the orbifold context are trivializations Example \ref{exm_localization_examples} b), to I. Angiono for suggesting \cite{AKM15} which is crucial for the Recognition Lemma \ref{lm_isNichols} and to M. Mombelli for suggesting the relative coend as a solution for Problem~\ref{prob_relativecoend}. 

S. Lentner thanks T. Creutzig and T. Gannon for hospitality at the University of Alberta, where much of this work has been done, and thanks for the support of the Feyodor Lynen fellowship of the Humboldt foundation.

\section{Extensions of a braided tensor category by a commutative algebra}\label{sec_extension}

In this Section $\cat$ is a braided tensor category,  $\one$ is the tensor unit and $\otimes$ the tensor bifunctor and $c_{\bullet, \bullet}$ and $\assoz_{\bullet, \bullet, \bullet}$ denote the braiding and associativity constraint.

\subsection{Definitions}

The usual references are \cite{KO, CKM}, the textbook is \cite{EGNO}.

\begin{definition}\label{def:alg}\textup{\cite[Def. 7.8.1]{EGNO}}
Let $\cat$ be a braided tensor category. An {\bf algebra} in $\cat$ is an object $A$ in $\cat$ together with a multiplication map
\[
m: A \otimes A \rightarrow A
\]
and a unit
\[
u : \one \rightarrow A
\]
such that the multiplication is associative and compatible with left and right multiplication, i.e.~the following three diagrams commute:
\begin{equation}
\begin{split}
\xymatrix{
\left(A \otimes A \right)\otimes A  \ar[rr]^(0.5){ \assoz^{-1}_{A, A, A} }\ar[d]_{m \otimes \Id_A}
&& A \otimes \left(A\otimes A\right)  \ar[d]^{\Id_A \otimes m} &
\\
A\otimes A \ar[rd]_{m}
&& A\otimes A \ar[ld]^{m} &
\\
& A &&
\\
} \\ \\
\xymatrix{
\one \otimes A \ar[r]^{\ell_A}\ar[d]_{u\otimes \Id_A} & A \ar[d]^{\Id_A} && A\otimes \one \ar[r]^{r_A}\ar[d]_{\Id_A \otimes u} & A \ar[d]^{\Id_A}
\\  
A\otimes A \ar[r]^m & A && A\otimes A \ar[r]^m & A
\\
}
\end{split}
\end{equation}
The algebra $A$ is called {\bf commutative} if the diagram 
\begin{equation}
\xymatrix{
A\otimes A \ar[rd]_{m}\ar[rr]^{c_{A, A}}
&& A\otimes A \ar[ld]^{m} &
\\
& A &&
\\
}
\end{equation}
commutes. The algebra $A$ is called {\bf haploid} if the dimension of $\Hom_\cat(\one, A)$ is one.
\end{definition}
%There are two natural module categories associated to commutative algebras. 
\begin{definition}\label{def:RepA}
Let $\cat$ be a tensor category and $A$ an algebra in $\cat$. Then the category $\catA$ has objects $(X, m_X)$ with $X$ an object of $\cat$ and $m_X \in \Hom_\cat(A \otimes X, X)$ a multiplication morphism that is
\begin{enumerate}
\item Associative, i.e.~the diagram commutes:
\begin{equation}
\begin{split}
\xymatrix{
A \otimes (A\otimes X)    \ar[rr]^{  \assoz_{A, A, X} } \ar[d]_{\Id_A \otimes m_X } 
&&  (A \otimes A) \otimes X  \ar[d]^{m \otimes \Id_X}
\\
A\otimes X \ar[rd]_{m_X} && A\otimes X \ar[ld]^{m_X} \\
& X &
\\
} 
\end{split}
\end{equation}
\item Unital: The following composition is the identity on $X$
\[
X \xrightarrow{\ell_X} \one \otimes X \xrightarrow{u \otimes \Id_X} A \otimes X \xrightarrow{m_X} X.
\]
\end{enumerate}
Morphisms of $\catA$ are all $\cat$-morphisms $f \in \Hom_\cat(X, Y)$ such that the diagram commutes
\begin{equation}\label{morph}
\begin{split}
\xymatrix{
A \otimes  X    \ar[rr]^{  \Id_A \otimes f   } \ar[d]_{ m_X } 
&&  A \otimes Y  \ar[d]^{m_Y }
\\
X \ar[rr]^{f} && Y \\
} 
\end{split}
\end{equation}
\end{definition}
If $A$ is commutative then the category $\catA$ is tensor but usually not braided. 
For the tensor product one first defines the two morphisms
\begin{equation}
    \begin{split}
        \mu_1: A \otimes (X \otimes Y) &\xrightarrow{\assoz_{A, X, Y}} (A \otimes X) \otimes Y \xrightarrow{m_X \otimes \Id_Y} X \otimes Y \\
        \mu_2: A \otimes (X \otimes Y) &\xrightarrow{\assoz_{A, X, Y}} (A \otimes X) \otimes Y \xrightarrow{c_{A, X} \otimes \Id_Y} (X \otimes A) \otimes Y \\ 
        &\xrightarrow{\assoz^{-1}_{X, A, Y}} X \otimes (A \otimes Y) \xrightarrow{\Id_X \otimes m_Y} X \otimes Y
    \end{split}
\end{equation}
and then $X \otimes_A Y$ as the coequalizer of $\mu_1$ and $\mu_2$.
The projection of $X \otimes Y$ onto $X \otimes_A Y$ will be denoted by $\eta$.
Let $f$ be a morphism from $X \otimes Y$ to $X' \otimes Y'$. It induces a morphism from $X \otimes_A Y$ to $X' \otimes_A Y'$ if $\eta \circ f \circ \mu_1 = \eta\circ f \circ \mu_2$. In this case we denote the resulting morphism by $\overline{f}$. It is a morphism in $\catA$ if \eqref{morph} holds.

\begin{definition}
Let $\cat$ be a braided tensor category and $A$ a commutative algebra in $\cat$. Then the category $\catAloc\subset\catA$ of local modules is the  subcategory whose objects are local with respect to $A$, i.e.~the following diagram commutes
\begin{equation}
\begin{split}
\xymatrix{
A \otimes  X    \ar[rd]_{  m_X }  \ar[rr]^{ c_{X, A} \circ  c_{A, X}} && A\otimes X \ar[ld]^{m_X} \\
& X&  \\
} 
\end{split}
\end{equation}
\end{definition}
\begin{theorem} \textup{\cite{KO, CKM}}
 Let $\cat$ be a braided tensor category and $A$ a commutative algebra in $\cat$.   Then
 \begin{enumerate}
 \item For any pair of objects $(X, m_X), (Y, m_Y)$ in $\catA$ there is a unique morphism in $\cat$, $m_{X \otimes_A Y}: A \otimes (X \otimes_A Y) \rightarrow X \otimes_A Y$ such that the diagram
 \begin{equation}
     \begin{split}
         \xymatrix{
A \otimes (X \otimes Y) \ar[d]^{\Id_A \otimes \eta}\ar[rr]^{\mu_i} && X \otimes Y \ar[d]^{\eta} \\
A \otimes (X \otimes_A Y) \ar[rr]^{m_{X \otimes_A Y}} && X \otimes_A Y\\
         }
     \end{split}
 \end{equation}
 commutes for $i=1,2$. The pair $(X \otimes_A Y, m_{X \otimes_AY})$ is an object in $\catA$ \textup{\cite[Prop.2.38]{CKM}}.
     \item $\catA$ is a tensor category \textup{\cite[Thm. 2.53]{CKM}}.
     \item $\catAloc$ is a braided tensor subcategory of $\catA$ and for $(X, m_X), (Y, m_Y)$ objects in $\catAloc$ the braiding is denoted by $\overline{c}_{X, Y}$ and is characterized by the commutative diagram
     \begin{equation}
         \begin{split}
          \xymatrix{   
          X \otimes Y \ar[rr]^{c_{X, Y}} \ar[d]^\eta && Y\otimes X \ar[d]^\eta \\
             X \otimes_A Y \ar[rr]^{\overline{c}_{X, Y}} && Y \otimes_A X\\
             }
          \end{split}
     \end{equation}
     \textup{\cite[Thm. 2.55]{CKM}}
 \end{enumerate}
\end{theorem}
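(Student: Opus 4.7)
The plan is to prove the three parts in order, with part (1) being the technical heart of the statement and parts (2) and (3) following by standard universal-property manipulations.

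For part (1), I would first note that in a rigid braided tensor category, the functor $A\otimes(-)$ admits a right adjoint (namely $A^*\otimes(-)$), hence it preserves all colimits, so $\mathrm{Id}_A\otimes\eta : A\otimes(X\otimes Y)\to A\otimes(X\otimes_A Y)$ is again a coequalizer, of the two morphisms $\mathrm{Id}_A\otimes\mu_i$. To produce $m_{X\otimes_A Y}$ I would exhibit the composite $\eta\circ\mu_1 = \eta\circ\mu_2 : A\otimes(X\otimes Y)\to X\otimes_A Y$ and show that it coequalizes $\mathrm{Id}_A\otimes\mu_1$ and $\mathrm{Id}_A\otimes\mu_2$; the key check is a diagram chase using associativity of $m_X$ and $m_Y$, the hexagon axioms in $\cat$, and naturality of $c$ and $\assoz$. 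Uniqueness and the commutativity of the required square for $i=1,2$ are then formal consequences of the universal property. The associativity and unitality of $m_{X\otimes_A Y}$ follow similarly by pulling back to $A\otimes A\otimes(X\otimes Y)$ or $\one\otimes(X\otimes Y)$ and using the universal property, since after precomposing with the (epi) map $\mathrm{Id}\otimes\eta$ the identities reduce to the corresponding ones for $m_X$ or $m_Y$.

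For part (2), I would first extend $\otimes_A$ to morphisms: a pair of $A$-module maps $f:X\to X'$, $g:Y\to Y'$ induces $f\otimes g: X\otimes Y\to X'\otimes Y'$ which coequalizes the two $\mu_i$'s (by $A$-linearity of $f$ and $g$), hence factors uniquely through $\eta_{X,Y}$ to give $f\otimes_A g$. The tensor unit is $(A,m)$, with unitor morphisms obtained by checking that $m:A\otimes X\to X$ coequalizes $\mu_1$ and $\mu_2$ on $A\otimes A\otimes X$, and that the resulting map $A\otimes_A X\to X$ is an isomorphism inverse to $\eta\circ(u\otimes\mathrm{Id}_X)$. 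The associator is obtained by noting that $\assoz_{X,Y,Z}: (X\otimes Y)\otimes Z \to X\otimes(Y\otimes Z)$ descends through the appropriate quotient maps to an isomorphism $(X\otimes_A Y)\otimes_A Z\to X\otimes_A(Y\otimes_A Z)$, with the pentagon inherited from $\cat$. Here again the only nontrivial input is showing that the composite $\eta\circ(\eta\otimes\mathrm{Id})\circ\assoz$ coequalizes the relevant $\mu_i$'s on both sides, which reduces to hexagon/associator coherence in $\cat$.

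For part (3), restricted to local objects $(X,m_X),(Y,m_Y)\in\catAloc$, I would verify that $\eta_{Y,X}\circ c_{X,Y}: X\otimes Y\to Y\otimes_A X$ coequalizes $\mu_1^{X,Y}$ and $\mu_2^{X,Y}$; pushing $\mu_1^{X,Y}$ through $c_{X,Y}$ by naturality and the hexagons turns it into a morphism of the form $\mu_2^{Y,X}$ precomposed with a double-braiding $c_{Y,A}\circ c_{A,Y}$, and this is exactly where the locality of $Y$ is used to collapse the double-braiding and identify the result with $\mu_1^{Y,X}$ (symmetrically for $\mu_2^{X,Y}$ using locality of $X$). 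This gives the unique map $\overline{c}_{X,Y}:X\otimes_A Y\to Y\otimes_A X$ characterized by the stated diagram. That $\overline{c}$ is an isomorphism follows by constructing $\overline{c}^{-1}$ from $c^{-1}$ the same way; naturality and the hexagons for $\overline{c}$ descend from those for $c$ via the universal property.

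The main obstacle I expect is the rigorous handling of the descent of diagrams along the coequalizers in part (1), particularly keeping track of how associators and braidings interact with $\eta$. This is essentially a bookkeeping problem, but it is where rigidity (to ensure $A\otimes(-)$ preserves colimits) and the commutativity of $A$ enter crucially; without commutativity the two definitions $\mu_1,\mu_2$ would not agree after passing to the coequalizer, and without preservation of colimits we could not even formulate $m_{X\otimes_A Y}$ as a morphism out of $A\otimes(X\otimes_A Y)$.
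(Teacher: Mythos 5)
Your proposal is correct and follows essentially the same route as the cited references (especially [CKM]): establish that $\mathrm{Id}_A\otimes\eta$ is again a coequalizer, produce $m_{X\otimes_A Y}$ and the tensor structure morphisms by the universal property, and use locality to make the braiding descend.

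Two small observations, neither of which is a gap for the theorem as stated.

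First, you invoke rigidity to conclude that $A\otimes(-)$ preserves coequalizers via a right adjoint. This is the right mechanism, but it is worth noting that what one really needs is only that the tensor functor preserve coequalizers (e.g.\ via biexactness of $\otimes$), since the result is also used in contexts of logarithmic vertex tensor categories where rigidity is not assumed a priori. In the setting of this paper the ambient category $\UU$ is assumed rigid (Assumption \ref{assumption}), so your argument applies directly, but the statement itself holds under the weaker hypothesis.

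Second, in part (3) you use locality of $Y$ to collapse one double-braiding and locality of $X$ for the other. This is perfectly fine for the theorem as stated, where both $X$ and $Y$ are assumed local, but the braiding argument can be organized so that only one of the two localities is ever used (you only need $\eta_{Y,X}\circ c_{X,Y}$ to coequalize $\mu_1^{X,Y}$ and $\mu_2^{X,Y}$, and the two braided images can be arranged to land on $\mu_2^{Y,X}$ and $\mu_1^{Y,X}$ precomposed with the same braid, with a single double-braiding to collapse). This refinement is exactly what Remark \ref{rem:halfbraiding} of the paper records, yielding the half-braiding $\overline{c}_{X,\bullet}$ for $X\in\catAloc$ against an arbitrary $Y\in\catA$, which is what is actually needed for the functor $\Schauenburg$ in Section \ref{sec_Schauenburg}.
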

\begin{remark}\label{rem:halfbraiding}
    The proof of Theorem 2.55 of \textup{\cite{CKM}} only uses that $X$ is in $\catAloc$. This means that in fact the following more general statement has been proven there:

    For $(X, m_X)$ an object in $\catAloc$  there is a natural family of isomorphisms denoted by $\overline{c}_{X, \bullet}$, such that for  $(Y, m_Y)$ an object in $\catA$, $\overline{c}_{X, Y}$
    is characterized by the commutative diagram
     \begin{equation}
         \begin{split}
          \xymatrix{   
          X \otimes Y \ar[rr]^{c_{X, Y}} \ar[d]^\eta && Y\otimes X \ar[d]^\eta \\
             X \otimes_A Y \ar[rr]^{\overline{c}_{X, Y}} && Y \otimes_A X\\
             }
          \end{split}
     \end{equation}
\end{remark}

\begin{definition}
The {\bf forgetful functor} 
$$\forgetA:\;\cat \leftarrow \catA$$
sends $(M,m_M)\in \catA$ to the underlying object $M\in \cat$. The forgetful functor has a left adjoint, the {\bf induction functor}
$$\induceA:\;\cat\rightarrow \catA$$
$$X\mapsto (A\otimes X, m \otimes \Id_X)$$
If $A$ is a commutative algebra, then the induction functor is a tensor functor to $(\catA,\otimes_A)$ \textup{\cite{KO, CKM}}, and the forgetful functor is an {\bf oplax} tensor functor, since we have a canonical surjective map
$$\forgetA(M)\otimes\forgetA(N)\to \forgetA(M\otimes_A N)$$
%We also denote the composition of the inclusion of $\catAloc$ and $\forgetA$ by $\coVerma$, which is again an oplax tensor functor. 
\end{definition}

We summarize the main categories and functors in the following diagram 
\begin{center}
\begin{tcolorbox}[colback=white, width=0.38\linewidth]
$$
%\hspace{0.8cm}
 \begin{tikzcd}[row sep=10ex, column sep=15ex]
   \mathbf{\UU} \arrow{r}{\induceA}  
   & \arrow[shift left=2]{l}{\forgetA}   \mathbf{\BB} = & \hspace{-4.75cm} \catA \\
   &\arrow[hookrightarrow]{u}{\iota}\arrow[below, dotted, shift left=2]{ul}{} %{\coVerma}
   \mathbf{\CC} = & \hspace{-4.75cm}\catAloc
    \end{tikzcd}
$$
\end{tcolorbox}
\end{center}

To summarize the notation appearing in this diagram: $\UU$ is a braided tensor category, $A$ is a commutative algebra therein, $\BB$ is the tensor category of $A$-modules in $\UU$, $\CC$ is the  braided tensor category of local modules therein, with $\iota$ the embedding functor. The functors $\induceA$ and the inclusion of $\CC$ in $\BB$ are tensor functors, while $\forgetA$ and the composition $\forgetA\circ\iota$ are oplax tensor functors.

\subsection{Tensoring with trivial \texorpdfstring{$A$}{A}-modules}\label{sec_TrivialActionOnOneSide}

For an ordinary algebra $A$, we can tensor an $A$-module $N$ with a vector space $X$ and obtain new $A$-modules $N\otimes X$ and $X\otimes N$ where $A$ only acts on one tensor factor. We have a similar notion for algebras in braided tensor categories:

%We now discuss a few examples of morphisms and in fact natural family of morphisms in $\catA$. 
%It is clear that $(A, m)$ is an object of $\catA$, the tensor unit for $\otimes_A$. 
\begin{lemma}
Given an object $(N, m_N)$ in $\catA$ and an object $X$ in $\cat$, then the objects $N\otimes X$ and $X\otimes N$ in $\cat$ become objects in $\catA$ with the following actions:
\begin{align}
m_{N \otimes X}: A \otimes (N \otimes X) &\xrightarrow{\assoz_{A, N, X}} (A \otimes N) \otimes X \xrightarrow{m_N \otimes \Id_X} N \otimes X\label{NX}\\
m_{X \otimes N}: A \otimes (X \otimes N) &\xrightarrow{\assoz_{A, X, N}} (A \otimes X) \otimes N 
\xrightarrow{c_{A, X} \otimes \Id_N}(X \otimes A) \otimes N \notag\\
&\xrightarrow{\assoz^{-1}_{X, A, N}} X \otimes (A \otimes N) \xrightarrow{\Id_X \otimes m_N} X \otimes N. \notag
\end{align}
In particular there is also a right induction functor $\induceA^R(X) := (m_{X \otimes A}, X \otimes A)$.

Moreover,the braiding provides a natural family of $A$-module isomorphism 
$$c_{N,X}:\;(N\otimes X,m_{N \otimes X}) \stackrel{\sim}{\longrightarrow} (X\otimes N,m_{X \otimes N}) $$
\end{lemma}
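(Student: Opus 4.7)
The plan is to verify three things in turn: that $m_{N\otimes X}$ is an $A$-action, that $m_{X\otimes N}$ is an $A$-action, and that $c_{N,X}$ intertwines the two actions. All three are ``coherence only'' statements; the conceptual content is entirely the pentagon and hexagon axioms together with the associativity of $m_N$, so nothing truly delicate is expected to arise. The main nuisance will be keeping the associators straight, which is typical for such string-diagram manipulations.

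First I would handle $m_{N\otimes X}$. Since $A$ acts purely on the $N$-tensorand, the unit axiom reduces to the unit axiom for $m_N$ after applying naturality of the left unitor, and the associativity diagram collapses onto the associativity diagram for $m_N$ tensored on the right by $\Id_X$; the only coherence input needed is the pentagon relating $\assoz_{A,A,N\otimes X}$ with $\assoz_{A,A\otimes N, X}$ and $\assoz_{A\otimes A, N, X}$. This is completely routine.

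Next, for $m_{X\otimes N}$ the computation is more involved because the braiding intervenes: the unit axiom follows from naturality of $c$ applied to $u:\one\to A$ (so that $c_{A,X}\circ(u\otimes \Id_X)$ becomes $\Id_X\otimes u$, up to unitors) combined with the unit axiom for $m_N$. The associativity axiom requires one to transport two copies of $A$ past $X$ one at a time; the hexagon axiom guarantees that this two-step transport equals the transport of $A\otimes A$, and then the associativity of $m$ on $A$ and of $m_N$ finishes the job. Once $m_{X\otimes N}$ is in hand, setting $N=A$ with the regular action gives the right induction functor $\induceA^R(X)=(X\otimes A, m_{X\otimes A})$ for free.

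Finally, to show that $c_{N,X}:(N\otimes X,m_{N\otimes X})\to(X\otimes N,m_{X\otimes N})$ is an $A$-module morphism, I would use the hexagon axiom to rewrite
\[ c_{A\otimes N,X}\circ\assoz_{A,N,X}^{-1} = \assoz^{-1}_{X,A,N}\circ(c_{A,X}\otimes\Id_N)\circ\assoz_{A,X,N}\circ(\Id_A\otimes c_{N,X}), \]
so that $m_{X\otimes N}\circ(\Id_A\otimes c_{N,X}) = (\Id_X\otimes m_N)\circ c_{A\otimes N,X}\circ\assoz_{A,N,X}$. Naturality of the braiding in the first argument applied to the morphism $m_N:A\otimes N\to N$ then gives $(\Id_X\otimes m_N)\circ c_{A\otimes N,X} = c_{N,X}\circ(m_N\otimes\Id_X)$, and composing with $\assoz_{A,N,X}$ produces exactly $c_{N,X}\circ m_{N\otimes X}$. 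Naturality of $c_{N,X}$ in $N$ and $X$ is inherited from naturality of $c$ in $\cat$, so this completes the proof. The only ``hard'' step is the hexagon manipulation, but once the associators are lined up correctly the identification is forced.
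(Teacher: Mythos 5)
Your proposal is correct and follows essentially the same route as the paper: associativity of $m_{X\otimes N}$ via the hexagon axiom plus naturality, unitality from $u$ being central for the braiding, and the $A$-linearity of $c_{N,X}$ from the hexagon identity for $c_{A\otimes N, X}$ combined with naturality of the braiding applied to $m_N$. (One small typo to fix: in your displayed hexagon identity the left-hand side should read $c_{A\otimes N,X}\circ\assoz_{A,N,X}$ without the inverse, to match the types and the conclusion you draw from it.)
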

\begin{proof}
The associativity of multiplication follows from commutativity of
\begin{equation}
\begin{split}
\xymatrix{
A \otimes (A\otimes (X \otimes N))    \ar[r]^{  c_{A, X} } \ar[d]_{} & A \otimes (X \otimes (A \otimes N)) \ar[r]^{\ \ m_N}\ar[d]^{c_{A, X}} & A \otimes (X \otimes N) \ar[d]^{c_{A, X}} \\  
  (A \otimes A) \otimes (X \otimes N) \ar[r]^{c_{A \otimes A, X}} \ar[d]^{m} & X \otimes (A \otimes (A \otimes N)) \ar[r]^{m_N}\ar[d]^{m}& X \otimes (A \otimes N) \ar[d]^{\ \ m_N} \\ 
A\otimes (X \otimes N) \ar[r]_{c_{A,X}} & X \otimes (A\otimes N) \ar[r]^{\ \ m_N} & X \otimes N
\\
} 
\end{split}
\end{equation}
Here we omit the associativity and identity morphisms on factors. The upper right diagram commutes due to the hexagon axiom and naturality of associativity; the upper left diagram commutes due to naturality of associativity, the lower right diagram is naturality of braiding and associativity and the lower right one is naturality of associativity together with associativity of the multiplication of $N$.  
The unit property of $m_{X \otimes N}$ follows from the one of $m_N$.
Let $f \in \Hom_{\catA}(N, N')$  and $g \in \Hom_\cat(X, X')$, then $f \otimes g \in \Hom_{\catA}(N \otimes X, N'\otimes X')$ and $g \otimes f \in \Hom_{\catA}(X \otimes N, X'\otimes N')$, since \eqref{morph} clearly holds using that it holds for $f$ together with naturality of braiding and associativity.\\

For the last claim, we consider the commutative diagram
\[
\xymatrix{
A \otimes (N \otimes X) \ar[r]\ar[d]_{\Id_A \otimes c_{N, X}} &(A \otimes N) \otimes X \ar[d]^{c_{A \otimes N, X}} \ar[r]^{\ \ \ m_N} & N \otimes X \ar[d]^{c_{N, X}} \\
A \otimes (X \otimes N) \ar[r]^{c_{A, X}} & X \otimes (A \otimes N) \ar[r]^{\ \ \ m_N} & X \otimes N \\
}
\]
The left diagram commutes by the hexagon and the right one by naturality of the braiding (both plus naturality of associativity). 
Hence $c_{N, X}$ is a morphism in $\catA$. In fact, since $c_{\bullet,  X}$ is a natural family of isomorphisms in $\cat$, it is so in $\catA$ as well.
\end{proof}

Finally, we want to compare these modules with induced modules:
\begin{lemma}\label{lm_trivialActionToInducedModule}
Assume now that $A$ is commutative. 
Given an object $(N, m_N)$ in $\catA$ and an object $X$ in $\cat$, there are natural families of $A$-module isomorphisms
    $$X \otimes N \cong A \otimes_A(X \otimes N) \cong \induceA(X) \otimes_A N$$
    $$N \otimes X  \cong (N \otimes X) \otimes_A A \cong N \otimes_A \induceA(X)$$
\end{lemma}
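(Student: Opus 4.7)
My plan is to treat both chains in parallel, since they are mirror images of each other; the substantive content is a single coequaliser-descent argument.

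\textbf{First isomorphism in each chain.} For any $A$-module $(M, m_M)$ in $\UU_A$, the action $m_M \colon A \otimes M \to M$ already coequalises the pair $\mu_1, \mu_2 \colon A \otimes A \otimes M \to A \otimes M$ defining $A \otimes_A M$: on $\mu_1$ we use associativity of $m$ followed by $m_M$, on $\mu_2$ we use commutativity of $A$ (which collapses $m \circ c_{A,A} = m$) combined with associativity of $m_M$, and both reduce to the triple action. Hence $m_M$ descends to an $A$-module isomorphism $A \otimes_A M \xrightarrow{\sim} M$, with inverse $\eta \circ (u \otimes \Id_M) \circ \ell_M^{-1}$. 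Applied to the modules $(X \otimes N, m_{X \otimes N})$ and $(N \otimes X, m_{N \otimes X})$ from the preceding lemma, this produces the first isomorphism of each chain.

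\textbf{Second isomorphism of the first chain.} To identify $\induceA(X) \otimes_A N = (A \otimes X) \otimes_A N$ with $X \otimes N$, define
\[
\beta \colon X \otimes N \xrightarrow{(u \circ \ell_X^{-1}) \otimes \Id_N} (A \otimes X) \otimes N \xrightarrow{\eta} (A \otimes X) \otimes_A N
\]
and $\gamma = m_{X \otimes N} \circ \assoz^{-1}_{A,X,N} \colon (A \otimes X) \otimes N \to X \otimes N$, where $m_{X \otimes N}$ is the structure \eqref{NX} (which itself already incorporates $c_{A, X}$ and $m_N$). The key verification is that $\gamma \circ \mu_1 = \gamma \circ \mu_2$, so that $\gamma$ descends to $\bar{\gamma} \colon \induceA(X) \otimes_A N \to X \otimes N$. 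Informally, both compositions send $b \otimes ((a \otimes x) \otimes n)$ to $x \otimes ((ba) \cdot n)$: for $\mu_1$ directly from the induction action $b \cdot (a \otimes x) = ba \otimes x$ on $\induceA(X)$ combined with associativity of $m_N$; for $\mu_2$ by unwrapping $c_{A, A \otimes X}$ via the hexagon, collapsing $m \circ c_{A,A} = m$ by commutativity of $A$, and once more using associativity of $m_N$. Then $\bar{\gamma} \circ \beta = \Id_{X\otimes N}$ by the unit axiom of $m_N$, and $\beta \circ \bar{\gamma} = \Id$ because the coequaliser relation obtained from $\mu_1(a, (1 \otimes x) \otimes n) \sim \mu_2(a, (1 \otimes x) \otimes n)$ yields $(a \otimes x) \otimes_A n = (1 \otimes x) \otimes_A (a \cdot n)$. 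That both $\beta$ and $\bar{\gamma}$ are morphisms in $\UU_A$ follows from the same coherence data that shows $\gamma$ intertwines the induction action on $\induceA(X)$ with $m_{X \otimes N}$.

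\textbf{Second chain and naturality.} The identification $N \otimes_A \induceA(X) \cong N \otimes X$ is obtained by the mirror construction: insert $u$ into the right-hand factor to define the analog of $\beta$, and use $\assoz^{-1}$ followed by $m_{N \otimes X}$ for the analog of $\gamma$. The coherence chase is a bit different because $m_{N \otimes X}$ from \eqref{NX} involves no braiding, while the coequaliser definition still introduces a braiding through its own $\mu_2$; commutativity of $A$ again ensures the two resulting maps agree. Naturality in $X \in \UU$ and in $(N, m_N) \in \UU_A$ is automatic from the naturality of associator, braiding, unit and of $\eta$. The main obstacle throughout is the coherence bookkeeping checking that $\gamma$ (and its mirror) descends to the coequaliser; this is precisely where commutativity of $A$ enters essentially, and every other claim reduces to pentagon/triangle/hexagon together with the defining axioms of $A$ and $(N, m_N)$.
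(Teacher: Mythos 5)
Your proof is correct and follows essentially the same coequaliser-descent strategy as the paper, but with a slightly different parametrisation that is worth noting. The paper first shows that the associator $\assoz_{A,X,N}$ itself intertwines the coequaliser pairs defining $A\otimes_A(X\otimes N)$ and $\induceA(X)\otimes_A N$ (trivially for $\mu_1$, by the hexagon for $\mu_2$ — this step does \emph{not} use commutativity of $A$), and then composes with the standard unit isomorphism $M\cong A\otimes_A M$, which is where commutativity enters. You instead build the composite $X\otimes N\cong\induceA(X)\otimes_A N$ directly via $\beta$ and $\bar\gamma$, so commutativity of $A$ shows up inside the verification $\gamma\circ\mu_1=\gamma\circ\mu_2$ rather than being isolated in the unit isomorphism. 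Both bookkeepings are valid, and yours has the small advantage of making explicit that the unit isomorphism $A\otimes_A M\cong M$ is an $A$-module isomorphism (the paper takes this as known).

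Two minor issues to flag. First, you cite $m_{X\otimes N}$ as ``the structure~\eqref{NX}'', but equation~\eqref{NX} is actually $m_{N\otimes X}$ (no braiding); the parenthetical makes clear you mean the structure that braids $A$ past $X$, so this is only a labelling slip. Second, and more substantively, in the mirror chain the map $\gamma\colon N\otimes(A\otimes X)\to N\otimes X$ cannot literally be ``$\assoz^{-1}$ followed by $m_{N\otimes X}$'': the source has $A$ sandwiched between $N$ and $X$, so after the associator you must braid $A$ past $N$ before $m_N$ (or $m_{N\otimes X}$) can apply. The paper sidesteps this by phrasing its proof in terms of $\induceA^R(X)=X\otimes A$, so that the $A$ sits on the outside of $N\otimes(X\otimes A)$ and the associator alone reaches $(N\otimes X)\otimes A$; the extra braiding is then absorbed into the identification $\induceA^R(X)\cong\induceA(X)$. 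Since you work directly with $\induceA(X)=A\otimes X$, you need one more braiding morphism in the definition of the mirror $\gamma$. You do anticipate that ``the coherence chase is a bit different'', but the map as written would not typecheck. Once that braiding is inserted the rest of your argument goes through.
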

\begin{proof}
For this we first have the commutative diagrams for $i = 1, 2$
\begin{equation}\label{diag1}
\begin{split}
\xymatrix{
A \otimes (A \otimes (X \otimes N)) \ar[rr]^{\Id_A \otimes \assoz_{A, X, N}}\ar[d]^{\mu_i} && A \otimes ((A \otimes X) \otimes N) \ar[d]^{\mu_i} \\
A \otimes (X \otimes N) \ar[rr]^{\assoz_{A, X, N}} \ar[d]^\eta && (A \otimes X) \otimes N \ar[d]^\eta \\
A \otimes_A (X \otimes N) \ar@{.>}[rr]^{\overline{\assoz}_{A, X, N}} &&\induceA(X) \otimes_A N \\
}
\end{split}
\end{equation}
Here the two upper horizontal arrows are just associativity and the upper diagram commutes trivially (i.e.~by coherence of associator) for $i=1$ and for $i=2$ due to the hexagon axiom. 
The lower dashed arrow then exists by the universal property of the coequalizer.
It remains to show that this is a morphism in $\catA$. This follows from 
\[
\xymatrix{
& A \otimes (X \otimes N) \ar[rr]^{\assoz_{A, X, N}} \ar[dd]^(.3){\eta} && (A \otimes X) \otimes N \ar[dd]^\eta \\
A \otimes (A \otimes (X \otimes N)) \ar[ur]^{\mu_i}\ar[dd]^{\Id_A \otimes \eta} \ar[rr]^(.37){\Id_A \otimes \assoz_{A, X, N}}  && \ar[ur]^{\mu_i} A \otimes ((A \otimes X) \otimes N)\ar[dd]^(.25){\Id_A \otimes \eta} \\
& A \otimes_A (X \otimes N) \ar[rr]^(.4){\overline{\assoz}_{A, X, N}} && \induceA(X) \otimes_A N \\
A \otimes (A \otimes_A (X \otimes N)) \ar[ur]^{m}\ar[rr]^{\Id_A \otimes \overline{\assoz}_{A, X, N}}  && \ar[ur]_{m} A \otimes (\induceA(X) \otimes_A N) \\
}
\]
Here we omitted the subscripts in the action of $A$ on modules, that is we just wrote $m$  for $m_{A \otimes_A (X \otimes N)}$ and  $m_{\induceA(X) \otimes_A N}$.
The left and right faces of the diagram commute due to Proposition 2.38 of \cite{CKM}. All others, except for the bottom, commute by \eqref{diag1} and so the bottom diagram commutes as well, but this is exactly the statement that $\overline{\assoz}_{A, X, N}$ is a morphism in $\catA$.
The same argument applies to ${\assoz}_{A, X, N}^{-1}$ inducing a morphism $\overline{\assoz}_{A, X, N}^{-1}$ in $\catA$, which is clearly the inverse of  $\overline{\assoz}_{A, X, N}$.

Hence $X \otimes N \cong A \otimes_A(X \otimes N) \cong \induceA(X) \otimes_A N$.
It is easy to verify that naturality of associator implies that $\overline{\assoz}_{A, X, \bullet}$ is a natural family of isomorphisms. 

Similarly we have the commutative diagrams for $i = 1, 2$
\[
\xymatrix{
A \otimes ((N \otimes X)\otimes A) \ar[rr]\ar[d]^{\mu_i} && A \otimes (N  \otimes (X \otimes A)) \ar[d]^{\mu_i} \\
(N \otimes X) \otimes A \ar[rr] \ar[d]^\eta && N \otimes (X \otimes A) \ar[d]^\eta \\
(N \otimes X) \otimes_A A \ar@{.>}[rr]^{\overline{\assoz}_{N, X, A}} && N \otimes_A \induceA^R(X)  \\
}
\]
Here the two upper horizontal arrows are just associativity and the upper diagram commutes trivially (i.e.~by coherence of associator) for $i=1$ and for $i=2$ due to the hexagon axiom. 
The lower dashed arrow then exists by the universal property of the coequalizer. That this is an ismorphism in  $\catA$ follows in the same way as the previous case. 
In other words, $N \otimes X  \cong (N \otimes X) \otimes_A A \cong N \otimes_A \induceA(X)$ and as before it in fact gives a natural family of isomorphisms. 
\end{proof}

In summary, we have the following commutative diagram
\begin{equation}\label{b-diag} 
\begin{split}
\xymatrix{
N \otimes (A \otimes X) \ar[rr]^\eta \ar[d]^{\Id_N \otimes c_{A, X}} && N \otimes_A \induceA(X) \ar[d]^{\Id_N \otimes_A c_{A, X}} \\
N \otimes (X \otimes A) \ar[rr]^\eta  \ar[d]^{\assoz_{N, X, A}} && N \otimes_A \induceA^R(X) \ar[d]^{\overline{\assoz}_{N, X, A}}\\
 (N \otimes X) \otimes A \ar[rr]^\eta  \ar[d]^{c_{N, X} \otimes \Id_A} && N \otimes X \ar[d]^{c_{N, X} \otimes_A \Id_A} \\
(X \otimes N) \otimes A \ar[rr]^\eta  && X \otimes N  \\
  \ar[u]_{c_{A,X \otimes N}} A \otimes (X \otimes N) \ar[rr]^\eta \ar[d]^{\assoz_{A, X, N}} && X \otimes N \ar[d]^{\overline{\assoz}_{A, X, N}} \ar[u]_{\overline{c}_{A, X \otimes N}} \\
  (A \otimes X) \otimes N \ar[rr]^\eta && \induceA(X) \otimes_A N \\
}
\end{split}
\end{equation}
The second diagram from the bottom is a special case of Remark \ref{rem:halfbraiding}. As a composition of natural families of isomorphism  the family $b_{\bullet, X}$ defined by
\begin{equation}
\begin{split}
    b_{N, X}: &N \otimes_A \induceA(X) \xrightarrow{\Id_N \otimes_A c_{A, X}}
    N \otimes_A \induceA^R(X) \xrightarrow{\overline{\assoz}_{N, X, A}} 
    N \otimes X \xrightarrow{c_{N, X} \otimes_A \Id_A} \\
    &X \otimes N \xrightarrow{\overline{c}^{-1}_{A,X \otimes N} }
    X \otimes N \xrightarrow{\overline{\assoz}_{A, X, N}} \induceA(X) \otimes_A N
\end{split}
\end{equation}
is natural. 
We  note that the braid diagram of the vertical morphism of the left-hand side
of \eqref{b-diag}, with morphisms to be read from bottom to top, is

\begin{center}
        \begin{grform}\label{b-braid}
           \begin{scope}[scale = 1]
           %Bottom to top
                \def\y{-0.3}\def\yn{-0.3+1}
                \draw (0 , \y) node {$N$};
                \draw (1 , \y) node {$A$};
                \draw (2 , \y) node {$X$};

                %c23
                \def\y{0}\def\yn{1}
                \vLine{0}{\y}{0}{\yn}{black};
                \vLine{1}{\y}{2}{\yn}{black};
                \vLineO{2}{\y}{1}{\yn}{black};

                %c12
                \def\y{1}\def\yn{2}
                \vLine{0}{\y}{1}{\yn}{black};
                \vLineO{1}{\y}{0}{\yn}{black};
                \vLine{2}{\y}{2}{\yn}{black};

                %c23 rev
                \def\y{2}\def\yn{3}
                \vLine{0}{\y}{0}{\yn}{black};
                \vLine{2}{\y}{1}{\yn}{black};
                \vLineO{1}{\y}{2}{\yn}{black};

                %c12 rev
                \def\y{3}\def\yn{4}
                \vLine{1}{\y}{0}{\yn}{black};
                \vLineO{0}{\y}{1}{\yn}{black};
                \vLine{2}{\y}{2}{\yn}{black};

                \def\y{4.3}\def\yn{4.3}
                \draw (0 , \y) node {$A$};
                \draw (1 , \y) node {$X$};
                \draw (2 , \y) node {$N$};
            \end{scope}  
        \end{grform}
                \quad=\quad
        \begin{grform}
           \begin{scope}[scale = 1]
           %Bottom to top
                \def\y{-0.3}\def\yn{-0.3+1}
                \draw (0 , \y) node {$N$};
                \draw (1 , \y) node {$A$};
                \draw (2 , \y) node {$X$};

                %c12 rev
                \def\y{0}\def\yn{1}
                \vLine{1}{\y}{0}{\yn}{black};
                \vLineO{0}{\y}{1}{\yn}{black};
                \vLine{2}{\y}{2}{\yn}{black};

                 %c23
                \def\y{1}\def\yn{2}
                \vLine{0}{\y}{0}{\yn}{black};
                \vLine{1}{\y}{2}{\yn}{black};
                \vLineO{2}{\y}{1}{\yn}{black};

                \def\y{2.3}\def\yn{3.3}
                \draw (0 , \y) node {$A$};
                \draw (1 , \y) node {$X$};
                \draw (2 , \y) node {$N$};
            \end{scope}  
        \end{grform}
\end{center}
%\begin{align}\label{b-braid}
%	\begin{matrix}
%\begin{tikzpicture}
%\node (N) at (1,0.3)  {$\scriptstyle{N}$};
%\node (A) at (2,0.3)  {$\scriptstyle{A}$};
%\node (X) at (3,0.3)  {$\scriptstyle{X}$};
%\node (N) at (1,-4.8)  {$\scriptstyle{N}$};
%\node (A) at (2,-4.8)  {$\scriptstyle{A}$};
%\node (X) at (3,-4.8)  {$\scriptstyle{X}$};
%\braid[number of strands=3] (braid)      a_2^{-1}  a_1^{-1}   a_2     a_1 ; % a_2 a_1   ; 
%\end{tikzpicture}
%\end{matrix}
%&\qquad	= \qquad
%	\begin{matrix}
%\begin{tikzpicture}
%\node (N) at (1,0.3)  {$\scriptstyle{N}$};
%\node (A) at (2,0.3)  {$\scriptstyle{A}$};
%\node (X) at (3,0.3)  {$\scriptstyle{X}$};
%\node (N) at (1,-2.8)  {$\scriptstyle{N}$};
%\node (A) at (2,-2.8)  {$\scriptstyle{A}$};
%\node (X) at (3,-2.8)  {$\scriptstyle{X}$};
%\braid[number of strands=3] (braid) a_1 a_2^{-1} ; 
%\end{tikzpicture}
%\end{matrix}
%\end{align}
so that $b_{N, X}$ can be characterized by 
\begin{equation}\label{bNX}
\begin{split}
    \xymatrix{
N \otimes (A \otimes X) \ar[rr]^\eta  \ar[d]^{\assoz_{N, A, X}} && N \otimes_A \induceA(X) \ar[ddddd]^{b_{N, X}} \\ 
(N \otimes A) \otimes X \ar[d]^{c^{-1}_{A, N}\otimes \Id_A} &&  \\
(A \otimes N) \otimes X \ar[d]^{\assoz^{-1}_{A, N, X}} && \\
A \otimes (N \otimes X) \ar[d]^{\Id_A \otimes c_{N, X}} && \\
A \otimes (X \otimes N) \ar[d]^{\assoz_{A, X, N}} \\
\ar[rr]^\eta (A \otimes X) \otimes N && \induceA(X)\otimes_A N  \\
    }
    \end{split}
\end{equation}

\subsection{First examples}

The easiest type of examples are the {simple current extensions} \cite{CKL}, where  $A$ is a semisimple algebra decomposing into invertible ($1$-dimensional) objects in $\UU$. A particular class of examples, which will be fundamental in what follows, is the following:

\begin{example}[Lattice]\label{exm_lattice}
Let $\Gamma$ be a vector space with inner product $(-,-)$ and consider the tensor category $\UU=\Vect_\Gamma$ with braiding $\sigma(\lambda,\mu)=e^{\pi\i(\lambda,\mu)}$. Let $\Lambda\subset \Gamma$ be an even integral lattice, then its span is a commutative  algebra $A$ in $\Vect_\Gamma$ (with multiplication modified by a $2$-cocycle $u(\lambda,\mu)$ with $u(\lambda,\mu)/u(\mu,\lambda)=\sigma(\lambda,\mu)$). In this case we have 
\begin{align*}
\UU&=\Vect_\Gamma\\
\UU_A&=\Vect_{\Gamma/\Lambda}\\
\UU_A^0 &=\Vect_{\Lambda^*/\Lambda}
\end{align*}
with the dual lattice $\Lambda^*=\{\lambda\in \Gamma\,|\, \forall_{\alpha\in \Lambda}(\lambda,\alpha)\in\Z\}$. This example is associated to a lattice vertex algebra, see Section \ref{sec_exampleVOA}.
\end{example}

Another common source of examples are $G$-crossed extensions resp. $G$-orbifolds 

\begin{example}[$G$-Orbifolds]\label{exm_orbifold}
    Let $G$ be a finite group acting on a modular tensor category $\CC$, then we can ask for a $G$-graded extension 
    $$\BB=\bigoplus_{g\in G} \CC_g,\qquad \CC_e=\CC$$
    with a $G$-action and a $G$-crossed braiding. If $\CC$ is finite and semisimple with $G$-action, then by \textup{\cite{ENOM09}} a $G$-crossed extension of $\CC$ exists and is unique up to cohomological obstructions and choices. The $G$-equivariantization is then again a modular tensor category 
    $$\UU=\BB /\!\!/ G$$

    Conversely there exists a commutative algebra $A$ in $\UU$, s.t. $\UU_A$ is $\BB$ and $\UU_A^0$ is $\CC$. This example is associated to a $G$-orbifold $\V^G$ of a vertex algebra $\V$ with category of representations $\CC$, see Section \ref{sec_exampleVOA} and e.g. \textup{\cite{McR2, McR1}}.
\end{example}

\subsection{Examples from quantum groups}\label{sec_exmQG}

The main example we have in mind are the small quantum groups $u_q(\g)$ resp. their quasi-versions for $q$ a root of unity of even order. We now summarize this situation, proofs in the context of relative Drinfeld centers are given in Section \ref{sec:RelativeDrinfeldCentersGiveLocalization}.

For $\g$ a finite-dimensional semisimple complex Lie algebra of rank $n$ with a choice of simple roots $\alpha_1,\ldots,\alpha_n$, and a Killing form $(-,-)$ normalized to $(\alpha,\alpha)=2$ for short roots. Let $q$ be a root of unity of order $\ell$, then the small quantum group $u_q(\g)$ is a a finite-dimensional non-semisimple Hopf algebra \textup{\cite{Lusz93}}. It contains Hopf subalgebras called Borel parts $u_q(\g)^{\leq}, u_q(\g)^{\geq}$. 
Inside the Borels are (coideal) subalgebras $u_q(\g)^{+}, u_q(\g)^{-}$ and a common Hopf subalgebra called the Cartan part $u_q(\g)^0$ which is a quotient of $\C[\Z^n]$ with generators $K_{\alpha_i}$. We denote the dual group by $\Gamma$, so $\Rep(u_q(\g)^0)=\Vect_\Gamma$.

We assume $\ell$ larger than $(\alpha,\alpha)/2$ for all roots $\alpha$, otherwise the quantum group degenerates. For $\ell$ prime to $2$ (resp. for $\g=G_2$ prime to $3$) the category $\UU=\Rep(u_q(\g))$ is a modular tensor category. For $2|\ell$ there is a quasi-Hopf algebra with a nontrivial associator that produces again a modular tensor category \textup{\cite{GLO18}}, as discussed in Section \ref{sec:RelativeDrinfeldCentersGiveLocalization}.

For a weight $\lambda\in \h^*$ and corresponding $1$-dimensional representation of the Cartan part by $K_\alpha\mapsto q^{(\alpha,\lambda)}$, we extend the representation trivially to a $1$-dimensional representation $\chi_\lambda: u_q(\g)^\geq\to \C$ and define the \emph{Verma module} $\bV_\lambda$ as the induced module
$$\bV_\lambda=u_q(\g)\otimes_{u_q(\g)^\geq} \C_{\chi_\lambda}$$
In particular the Verma module $\bV_0$ covering the tensor unit is automatically a commutative coalgebra in $\UU$, understood as a coalgebra quotient of $u_q(\g)$, and can be identified with $u_q(\g)^-$, and the dual Verma module $\bV_0^* = \text{Hom}_{\mathbb C}(\bV_0, \mathbb C)$ is a commutative algebra \cite{GLR23}. The local modules consist of the modules with trivial action of $u_q(\g)^+$ and coincides as a braided tensor category with the category of $\Gamma$-graded vector spaces $\Vect_{{\Gamma}}^Q$ with quadratic form $Q(\lambda)=q^{(\lambda,\lambda)/2}$. In this case our general situation applies as follows
    $$\begin{tikzcd}[row sep=10ex, column sep=15ex]
   {\Rep(u_q(\g))} \arrow{r}{\text{restriction}}  
   & \arrow[shift left=2]{l}{\text{induction}}   \Rep(u_q(\g)^{\geq 0})&\hspace{-3cm} \quad\textcolor{darkgreen}{\cong}\;\UU_A,\otimes_A \newline
   \\
   &\arrow[hookrightarrow]{u}{}\arrow[below]{ul}{\text{dual Verma module}}
   \Rep(u_q^0)\cong\Vect_{{\Gamma}}^Q&\hspace{-2.8cm}\textcolor{darkgreen}{\cong}\;\UU^0_A,\otimes_A
    \end{tikzcd}$$\\
Here, $\textcolor{darkgreen}{\cong}$ is a nontrivial equivalence of tensor categories from $\UU_A$ (or relative $u_q(\g)$-$u_q(\g)^{-}$-Hopf modules) to modules over $u_q(\g)^{\geq 0}$, which we will discuss in Section \ref{sec_quasiquantum}. While the tensor product on the former is $\otimes_A$, the tensor product of the latter is simple $\otimes_\C$ with appropriate action.
%so this (Morita-)equivalence can be viewed as the one giving the correct fibre functor to the tensor category of vector spaces. 
Under this identification, the induction functor $\induceA$ is actually the restriction of a $u_q(\g)$-module to a $u_q(\g)^\geq$-module, for example $\bV_0$ is mapped to a large indecomposable with $1$-dimensional composition factors. All simple $u_q(\g)^\geq$-modules are $1$-dimensional and local modules $\C_\lambda$ with trivial action of $u_q(\g)^+$. On the other hand, the functor $\forgetA$ becomes the Verma induction functor composed with duality from $u_q(\g)^\geq$ to $u_q(\g)$, and the composition  becomes the functor sending a weight $\C_\lambda$ to the dual Verma module $\bV_\lambda^*= \text{Hom}_{\mathbb C}(\bV_\lambda, \mathbb C)$.   \\

\begin{remark}[Exotic Borel] One can ask whether the choice $A=\bV_0^*$ is the only choice of a commutative algebra over $U=u_q(\g)$. Indeed via Theorem \ref{thm_skry} we can describe such an algebra in terms of the right coidal subalgebra of coinvariants $U^{coin(A)}$ in our standard example $u_q(\g)^+$. If such a coideal subalgebra contains the entire Cartan part $u_q(\g)^0$ then the main result of \textup{\cite{HS13}} is that it is necessarily $u_q(\g)^+$ or some Weyl reflection thereof. In the physical application this means that if the category of local modules (resp. the free field realization) should be $\Vect_\Gamma$, then this is the only possibility.

However there are more examples of right coideal subalgebras, in particular \textup{\cite{LV21}} studies those which retain the properties that all simple modules are $1$-dimensional, and it can be checked which of these give rise to (co)commutative (co)algebras. For example the Example~2.12 in loc.~cit.~leads to the cocommutative coalgebra $A^*=\C[K,K^{-1}]$ with $u_q(\sl_2)^0$ acting by left multiplication and $E,F$ by some difference operators. In this case, the category $\UU_A$ is the category of representations of the quantum Weyl algebra, and it would be an interesting physical problem to search for such free-field realizations.  
\end{remark}

\subsection{Examples from vertex algebras}\label{sec_exampleVOA}

Associated to every vertex algebra $\V$ fulfilling good finiteness conditions is a braided tensor category $\Rep(\V),\otimes_\V$ by \cite{HLZ0}-\cite{HLZ8}, see Section 3 of \cite{CY} for the criteria that imply existence of vertex tensor categories in the examples relevant for this work. Suppose that $\W\subset \V$ is a vertex subalgebra, then $\V$ can be regarded as a $\W$-module and is a commutative algebra in the category $\Rep(\W)$ \cite{HKL}. Altogether we have the situation

    $$\begin{tikzcd}[row sep=10ex, column sep=15ex]
   {\Rep(\W)} \arrow{r}{\text{induction}}  
   & \arrow[shift left=2]{l}{\text{restriction}}   \mathrm{twRep}_\W(\V)&\hspace{-2.5cm}=\UU_A,\otimes_A \\
   &\arrow[hookrightarrow]{u}{}\arrow[below]{ul}{\mathrm{restriction}}
   \Rep(\V)&\hspace{-2.5cm}=\UU_A^0, \otimes_A
    \end{tikzcd}$$\\

Here the situation is much closer to modules over a commutative ring and in some sense opposite to the Hopf algebra case: The functors $\induceA,\;\forgetA$ are now indeed induction and restriction of vertex algebra modules and the restriction functor is oplax, because $M\otimes_\V N$ is defined as a universal object with respect to $\V$-intertwining operators, and since these give (a subset of) $\W$-intertwining operators for the restricted modules, we have a canonical morphism 

$$\forgetA(M)\otimes_\W\forgetA(N)\to \forgetA(M\otimes_\V N)$$

The tensor category of {twisted modules} $\mathrm{twRep}_\W(\V)$ consists of $\W$-modules $M$ together with a suitable morphism of $\W$-modules $\V\otimes_\W M\to M$, but not all of these are $\V$-modules due to multivalued functions in the intertwining operators, violating locality, which is directly visible on the nontrivial braiding $c_{\V,M}$. The local modules $M$ are precisely the modules with single-valued intertwining operators, hence these are precisely the $\V$-modules.\\

%\begin{example}
%    If $G$ is a finite group of automorphisms acting on a vertex algebra $\V$, then the set of fixed points $\W=\V^G$ or $G$-orbifold is again a vertex algebra. In this case $\mathrm{twRep}_\W(\V)$ is $G$-graded (see Section \ref{sec_Gcrossed}) and consists of $g$-twisted modules, i.e.~intertwining operators
%    $$\Y_{\M}:\;\V\otimes_\C \M\rightarrow \M\{z\}[\log(z)]$$
%involving power series with complex exponent $\{z\}$ and logarithms,\\
%such that the multivaluedness around $z=0$ is controlled by $g$ in the following sense
%\begin{align*}\label{twisted_monodromy}
%\Y_\M(g.a,z)=e^{2\pi\i\; z\frac{\partial}{\partial z}}\Y_\M(a,z)
%=\Y_\M(a,e^{2\pi\i}z)
%\end{align*}
%\end{example}

\begin{example}\label{ex_VOAfreefield}
Many vertex algebras $\cW$ allow for realizations as subalgebras of well-known vertex algebras $\V$, these are often characterized as a kernel of some set of screening charges acting on the well-known vertex algebra $\V$. 
The probably best known example is the universal Virasoro algebra at any central charge as a subalgebra of the Heisenberg VOA. Here are some examples that are relevant for us, see also Section \ref{catMA}.
\begin{enumerate}
    \item Let $p \in \Z_{\geq 2}$ and let $L = \sqrt{p}A_1$ with $A_1= \sqrt{2}\Z$ the root lattice of $\sl_2$. Then the singlet algebra $\mathcal M(p)$ is a subalgebra of the rank one Heisenberg VOA $\pi$ and the triplet algebra is a subalgebra of the lattice VOA $V_L$. We review these quickly in 
    Section \ref{catMA} and early references are \textup{\cite{Ka91, FGST06, AM07}}. These algebras are first examples of large families:
    \begin{enumerate}
    \item Let $p \in \Z_{\geq 2}$. The triplet algebras are the Feigin-Tipunin algebras associated to $\sl_2$ and the general Feigin-Tipunin algebra $\mathcal{FT}(\g, p)$ \textup{\cite{FT10, Su21}} is constructed as global sections of some vertex algebra bundle over the flag variety of a compact simply-laced Lie group $G$ with Lie algebra $\g$. It is a subalgebra of the lattice VOA $V_{\sqrt{p}Q}$. It is expected that the representation category of $\mathcal{FT}(\g, p)$ is equivalent to a quasi Hopf modification of the small quantum group of $\g$ at $2p$-th root of unity, however almost nothing is known about the representation theory of  $\mathcal{FT}(\g, p)$.
\item   The Feigin-Tipunin algebras have subalgebras $\mathcal{FT}^0(\g, p)$ that are themselves subalgebras of the Heisenberg VOA of rank the rank of $\g$ \textup{\cite{CM17}}. Their representation category is expected to be equivalent to the one of the unrolled quantum group of $\g$ at  $2p$-th root of unity.
  \item The Feigin-Tipunin algebras are large extensions of principal $W$-algebras of $\g$ at level $k = - h^\vee + \frac{1}{p}$. In particular in the case $\g = \sl_2$ the principal W-algebra is just the Virasoro algebra and at these levels it has central charge $1- 6(p-1)^2/p$.
 \end{enumerate} 
  A similar (but more complicated) construction works for $W$-algebras corresponding to any nilpotent element, including affine VOAs. This has been first explored in the $\sl_2$-case \textup{\cite{ACGY}} and is currently developed in generality \textup{\cite{CNS}}. Their representation categories should be related to the ones of certain quantum supergroups, e.g. we expect in the $\sl_2$-case a correspondence with a variant of the quantum supergroup of $\sl_{2|1}$ at $2p$-th root of unity.

    \item The affine vertex algebra of $\mathfrak{gl}_{1|1}$ at non-degenerate level is a subalgebra of a pair of free fermions times a rank two Heisenberg VOA \textup{\cite{SS06}}. This  is the first one of two large series of further examples:
    \begin{enumerate}
        \item The universal principal W-superalgebra of $\mathfrak{sl_{n|1}}$ and $\mathfrak{osp}_{2|2n}$ can be embedded into a pair of free fermions times a Heisenberg VOA of rank $n+1$ \textup{\cite{CGN21}}. This indicates a potential connection to the quantum groups of $\mathfrak{sl_{n|1}}$ and $\mathfrak{osp}_{2|2n}$. Note that these W-superalgebras are Feigin-Frenkel dual \textup{\cite{CGN21, CL22a, CL22b}} to the subregular $W$-algebras of $\mathfrak{sl}_n$ and $\mathfrak{so}_{2n+1}$, so that in particular the latter allow for a similar free field realization.
        \item Let $\g$ be a Lie superalgebra of type I that allows for a non-degenerate bilinear form. Let $\g_0$ be its even subalgebra and let $d_{\text{odd}}$ be the dimension of the odd subspace of $\g$. Then there is an embedding of the universal affine VOA of $\g$ at level $k$, $V^k(\g)$ into $V^\ell(\g_0)$ times $d_{\text{odd}}$ free fermions. Here the level $\ell$ depends on $k$ in a certain way, see \textup{\cite{QS07}}. At least for generic level $k$, the Kazhdan-Lusztig category of the universal affine VOA of $\g$ at level $k$ is expected to be braided equivalent to the one of the quantum supergroup of $\g$ at $q= \text{exp}(\frac{2\pi i }{2 (k+h^\vee})$ with $h^\vee$ the dual Coxeter number of $\g$.
\end{enumerate}
\end{enumerate}
\end{example}

%\begin{example}
%	For $\UU=\Rep(\mathcal{W}_{2,3})$ there is no localization functor....
%\end{example}

\section{The category \texorpdfstring{$\UU$}{U} as relative Drinfeld center of \texorpdfstring{$\UU_A$}{U\_A} }\label{sec_Schauenburg}

\subsection{Relative Drinfeld center}

Recall that the center $\mathcal Z(\UU)$ of a tensor category $\UU$ consists of objects $(Z, \gamma)$ with $Z$ an object in $\UU$ and $\gamma$  a natural family of isomorphisms $\{ \gamma_X : X \otimes Z \rightarrow Z \otimes X \; |\;  X \ \in \ \text{Obj}(\UU)\}$, satisfying the hexagon diagram
\begin{equation*}
\xymatrix{
&   \left(X \otimes Y \right) \otimes Z \ar[rr]^{\phantom{aa} \gamma_{X \otimes Y}  \phantom{aa}} && Z \otimes \left(X \otimes Y\right)  \ar[rd]^{\phantom{aa} \assoz_{Z, X, Y}} &  \\  
X \otimes \left( Y  \otimes Z\right) \ar[ru]^{\assoz_{X, Y, Z}} \ar[rd]_{\Id_X \otimes \gamma_{Y} }
&&&&  \left(Z \otimes X\right)\otimes Y
\\
&   X\otimes \left(Z \otimes Y\right)   \ar[rr]_{\phantom{aa} \assoz_{X, Z, Y}\phantom{aa} } &&  \left(X \otimes Z\right) \otimes Y\ar[ru]_{\ \ \gamma_{X}}\otimes \Id_Y &\\
}
\end{equation*}
A morphism $f$ is a morphism in $\UU$, s.t. the diagram 
\begin{equation}\label{centermor}
    \begin{split}
        \xymatrix{
  X \otimes Z \ar[rr]^{\gamma_X}\ar[dd]^{\text{Id}_X \otimes f} &&  Z \otimes X \ar[dd]^{f \otimes \text{Id}_X }   \\ \\
   X \otimes Z' \ar[rr]^{\gamma'_X} &&  Z' \otimes X  \\
   }
    \end{split}
\end{equation}
commutes for any object $X$.
The center is a tensor category with tensor product
\[
(Z, \gamma) \otimes_{\mathcal Z(\UU)} (Z', \gamma') = (Z \otimes Z', \gamma \cdot \gamma')
\]
with 
\begin{equation}\label{centerproduct}
    \begin{split}
(\gamma \cdot \gamma')_X:\ &X \otimes (Z \otimes Z') \xrightarrow{\assoz_{X, Z, Z'}} 
(X \otimes Z) \otimes Z' \xrightarrow{\gamma_X \otimes \Id_{Z'}} (Z \otimes X) \otimes Z' \\
&\xrightarrow{\assoz^{-1}_{Z, X, Z'}} Z \otimes (X \otimes Z') \xrightarrow{\Id_Z \otimes \gamma'_X}
Z \otimes (Z' \otimes X) \xrightarrow{\assoz_{Z, Z', X}} (Z \otimes Z') \otimes X 
\end{split}
\end{equation}

If $\UU$ is braided then there are two fully faithful functors
\begin{align*}
\mathcal F : \UU \rightarrow \mathcal Z(\UU), \qquad &X \mapsto (X, c_{\bullet, X}) \\
\bar{\mathcal{F}} : \bar\UU \rightarrow \mathcal Z(\UU), \qquad &X \mapsto (X, c^{-1}_{X, \bullet}).
\end{align*}
Here $\bar \UU$ denotes the category $\UU$ with reversed braiding. The M\"uger center of $\UU$ is the subcategory consisting of objects $X$ that satisfy $c_{X, Y} \circ c_{Y, X}= \text{Id}_{X \otimes Y}$ for all objects $Y$.

\begin{lemma}\label{Lem:Muger}
Let $\UU$ be a rigid, braided, locally finite  abelian category with trivial M\"{u}ger center and the property that for any object $X$, there exists a projective object $P_X$ and an injective object $I_X$ with
a surjection $\pi_X : P_X \twoheadrightarrow X$ and an embedding $\iota_X:  X \hookrightarrow I_X$. Then there is a fully faithful braided functor $\overline{\UU} \boxtimes \UU \to \mathcal{Z}(\UU)$.
\end{lemma}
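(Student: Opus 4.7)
The plan is to construct the functor as the tensor product combination of $\bar{\mathcal{F}}$ and $\mathcal{F}$, then reduce fully faithful to the trivial M\"uger center via a centralizer argument. First I would set $\Phi(X \boxtimes Y) := \bar{\mathcal{F}}(X) \otimes_{\mathcal{Z}(\UU)} \mathcal{F}(Y)$, whose underlying $\UU$-object is $X \otimes Y$ equipped with the combined half-braiding built from $c^{-1}_{X, \bullet}$ on the $X$-factor and $c_{\bullet, Y}$ on the $Y$-factor, as in \eqref{centerproduct}. Since each of $\bar{\mathcal{F}}$ and $\mathcal{F}$ is exact in its variable, this assignment extends uniquely to a $\C$-linear functor on the Deligne product $\bar{\UU} \boxtimes \UU$, whose existence is guaranteed in the locally finite abelian setting precisely by the hypothesis of enough projectives and enough injectives. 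A direct computation using the half-braiding compatibilities, which encode that $\bar{\mathcal{F}}(\bar{\UU})$ and $\mathcal{F}(\UU)$ mutually centralize each other inside $\mathcal{Z}(\UU)$, equips $\Phi$ with its braided monoidal structure.

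For faithfulness I would use rigidity. By the universal property of the Deligne product,
\begin{equation*}
\Hom_{\bar{\UU} \boxtimes \UU}(X_1 \boxtimes Y_1,\, X_2 \boxtimes Y_2) \;=\; \Hom_\UU(X_1, X_2) \otimes_\C \Hom_\UU(Y_1, Y_2),
\end{equation*}
and $\Phi$ sends $f \otimes g$ to the $\mathcal{Z}(\UU)$-morphism with underlying $\UU$-morphism $f \otimes g$. Since the forgetful functor $\mathcal{Z}(\UU) \to \UU$ is faithful, it suffices to show that the natural map $\Hom_\UU(X_1, X_2) \otimes_\C \Hom_\UU(Y_1, Y_2) \to \Hom_\UU(X_1 \otimes Y_1,\, X_2 \otimes Y_2)$ is injective. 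Given a relation $\sum_i f_i \otimes g_i = 0$ with $\{f_i\}$ linearly independent, one uses rigidity to produce a dual family $\{f_i^*\} \subset \Hom_\UU(X_2, X_1)$ and applies the corresponding evaluation-coevaluation compositions on the $X$-factors to extract each $g_i$ individually, forcing all of them to vanish.

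The main obstacle is fullness, and this is where the trivial M\"uger center hypothesis is essential. Given $\phi \in \Hom_{\mathcal{Z}(\UU)}(\Phi(X_1 \boxtimes Y_1),\, \Phi(X_2 \boxtimes Y_2))$, viewed as a $\UU$-morphism $X_1 \otimes Y_1 \to X_2 \otimes Y_2$ intertwining the combined half-braidings with every test object $W \in \UU$, I would use rigidity to identify the ambient hom space with $\Hom_\UU(\unit,\, X_1^* \otimes X_2 \otimes Y_2 \otimes Y_1^*)$. Under this identification the intertwining condition translates into a double-braiding triviality condition that defines the M\"uger centralizer of the full subcategory generated by $X_1^* \otimes X_2$ against $Y_2 \otimes Y_1^*$; the trivial M\"uger center hypothesis then forces $\phi$ to lie in the image of $\Hom_\UU(\unit,\, X_1^* \otimes X_2) \otimes_\C \Hom_\UU(\unit,\, Y_2 \otimes Y_1^*) \cong \Hom_\UU(X_1, X_2) \otimes_\C \Hom_\UU(Y_1, Y_2)$, which is exactly the image of $\Phi$. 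The enough projectives and enough injectives hypothesis is used to justify reducing the double-braiding condition to a generating class of test objects and then passing to arbitrary objects via resolutions, so that the centralizer argument carries over from M\"uger's classical semisimple theorem to the present locally finite abelian setting.
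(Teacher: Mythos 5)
Your plan follows the same architecture as the paper's proof: assemble the functor from $\bar{\mathcal{F}}$ and $\mathcal{F}$, use rigidity to fold Hom spaces onto $\Hom_\UU(\unit,-)$, invoke trivial M\"uger center for the heart of the argument, and use projectives/injectives (the paper packages this as Lemma \ref{ff}) to pass from split objects to arbitrary objects of the Deligne product. The construction of $\Phi$ is identical, and the overall route is the same.

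The gap is in how you invoke the M\"uger center. Your formulation --- that the intertwining condition ``defines the M\"uger centralizer of the full subcategory generated by $X_1^* \otimes X_2$ against $Y_2 \otimes Y_1^*$,'' and that trivial M\"uger center then forces $\phi$ into the product subspace $\Hom_\UU(\unit, X_1^*\otimes X_2) \otimes_\C \Hom_\UU(\unit, Y_2 \otimes Y_1^*)$ --- is not a statement that trivial M\"uger center directly yields. Trivial M\"uger center is a condition on objects transparent to all double braidings; it does not by itself say that an element of $\Hom_\UU(\unit, A\otimes B)$ satisfying some equivariance splits as a sum of simple tensors. The paper's argument isolates a sharper lemma: after using rigidity to pass from $\Hom_{\mathcal Z(\UU)}(\Phi(X_1\boxtimes Y_1),\Phi(X_2\boxtimes Y_2))$ to $\Hom_{\mathcal Z(\UU)}\bigl(\bar{\mathcal F}(X), \mathcal F(Y)\bigr)$ for suitable $X,Y$, one observes that a single morphism $f: X \to Y$ intertwining the half-braiding $c_{\bullet,X}$ with $c^{-1}_{Y,\bullet}$ satisfies, by naturality of the braiding, $(\mathrm{Id}_W \otimes f) = c_{Y,W}\circ c_{W,Y}\circ(\mathrm{Id}_W\otimes f)$ for all $W$; hence the image of $f$ has trivial monodromy with every object, lies in the M\"uger center, and so is a direct sum of copies of $\unit$. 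This is what forces $f$ to factor as $X \to \unit^n \to Y$ and therefore to lie in the image of $\Hom_\UU(X,\unit)\otimes_\C\Hom_\UU(\unit,Y)$. You should make this reduction explicit; as written, the phrase ``M\"uger centralizer of one subcategory against another'' does not correspond to the statement you need, and the claim that $\phi$ splits is asserted rather than derived.

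Two smaller points. First, your faithfulness argument via a ``dual family $\{f_i^*\}$'' relies on the composition pairing $\Hom_\UU(X_1,X_2)\times\Hom_\UU(X_2,X_1)\to\C$ being nondegenerate, which fails in general nonsemisimple categories (take $X_1=\unit$, $X_2$ indecomposable with $\unit$ in the socle but not the top). It is safer to argue faithfulness the way the paper does, by establishing the full Hom-space isomorphism for projective/injective arguments and then invoking Lemma \ref{ff}, which handles both directions at once. Second, the role of the projective/injective hypothesis is not to ``reduce the double-braiding condition to a generating class of test objects'': the half-braiding condition already quantifies over all $W\in\UU$. The hypothesis is used exclusively to upgrade fully-faithfulness from the class of objects $P\boxtimes P'$ and $I\boxtimes I'$ (which by Lemma \ref{From Nikolaus for Thomas} are the projectives and injectives of $\UU\boxtimes\bar\UU$) to arbitrary objects of the Deligne product.
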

\begin{proof}
$\mathcal F, \bar{\mathcal{F}}$ combine into a functor (that we denote by $\sF$)
\[
\sF : \UU \boxtimes \overline\UU \rightarrow \mathcal Z(\UU).
\]
Let $f : Z \rightarrow Z'$ be a morphism in $\UU$ and assume that $f$ is also a morphism from $(Z, c_{\bullet, Z})$ to $ (Z', c^{-1}_{Z, \bullet})$, that is 
$$\begin{tikzcd}[row sep=10ex, column sep=15ex]
  X \otimes Z \arrow{r}{c_{X, Z}}\arrow{d}{\text{Id}_X \otimes f} &  Z \otimes X \arrow[]{d}{f \otimes \text{Id}_X }   \\
   X \otimes Z' \arrow{r}{c^{-1}_{Z, X}} &  Z' \otimes X 
    \end{tikzcd}$$
commutes for any object $X$. By naturality of the braiding it follows that
$$\begin{tikzcd}[row sep=10ex, column sep=15ex]
  X \otimes Z \arrow{r}{\text{Id}_X \otimes f}  \arrow{d}{\text{Id}_X \otimes f} &  X \otimes Z' \arrow[]{d}{c_{X, Z'} }   \\
   X \otimes Z' \arrow{r}{c^{-1}_{Z, X}} &  Z' \otimes X 
    \end{tikzcd}$$
commutes, 
i.e.~$f$ factors through the tensor identity as $\UU$ has trivial M\"uger center. It follows that 
\begin{equation}\label{eqZU}
\text{Hom}_{\mathcal Z(\UU)}\left(\sF(X \boxtimes \mathbf 1),  \sF(\mathbf 1 \boxtimes Y)\right)
\cong  \text{Hom}_{\UU}\left(X, \mathbf 1\right) \otimes_{\mathbb C} \text{Hom}_{\bar\UU}\left(\mathbf 1, Y\right) 
\end{equation}
and thus
\begin{equation}\label{ffcent2}
\begin{split}
\text{Hom}_{\mathcal Z(\UU)}\left(\sF(X \boxtimes Y),  \mathbf 1 \right) &\cong  
\text{Hom}_{\mathcal Z(\UU)}\left(\sF(X \boxtimes \mathbf 1) \otimes \sF(\mathbf 1 \boxtimes Y),  \mathbf 1 \right)\\
&\cong  \text{Hom}_{\mathcal Z(\UU)}\left(\sF(X \boxtimes \mathbf 1),  \sF(\mathbf 1 \boxtimes Y)^* \right)\\
&\cong  \text{Hom}_{\UU}\left(X, \mathbf 1\right) \otimes_{\mathbb C} \text{Hom}_{\bar\UU}\left(\mathbf 1, Y^*\right) \\
&\cong  \text{Hom}_{\UU}\left(X, \bf 1\right) \otimes_{\mathbb C} \text{Hom}_{\bar \UU}\left(Y, \bf 1\right) \\
&\cong  \text{Hom}_{\UU \boxtimes \bar\UU}\left(X \boxtimes Y, \bf 1 \boxtimes \bf 1\right).
\end{split}
\end{equation}
Here we used that the dual satisfies $\sF(\mathbf 1 \boxtimes Y)^* \cong \sF(\mathbf 1 \boxtimes Y^*)$.
Hence for any objects $X, X', Y, Y'$ we have
\begin{equation}\label{ffcent}
\begin{split}
\text{Hom}_{\mathcal Z(\UU)}\left(\sF(X \boxtimes Y),  \sF(X' \boxtimes Y') \right) &\cong  \text{Hom}_{\mathcal Z(\UU)}\left(\sF(X' \boxtimes Y')^* \otimes \sF(X \boxtimes Y), \mathbf 1 \right)\\ 
&\cong   \text{Hom}_{\mathcal Z(\UU)}\left(\sF((X'^* \otimes X) \boxtimes \sF(Y'^* \otimes Y)), \mathbf 1 \right)\\ 
&\cong  \text{Hom}_{\UU \boxtimes \bar\UU}\left((X'^* \otimes X) \boxtimes (Y'^* \otimes Y), \bf 1 \boxtimes \bf 1\right) \\
&\cong  \text{Hom}_{\UU}\left(X'^* \otimes X , \bf 1\right) \otimes_{\mathbb C} \text{Hom}_{\bar \UU}\left(Y'^* \otimes Y, \bf 1\right) \\
&\cong  \text{Hom}_{\UU}\left(X,  X'\right) \otimes_{\mathbb C} \text{Hom}_{\bar \UU}\left(Y, Y'\right) \\
&\cong  \text{Hom}_{\UU \boxtimes \bar\UU}\left( X \boxtimes Y, X' \boxtimes Y'\right).
\end{split}
\end{equation}
By \cite[Prop.1.11.2]{EGNO} and Lemma \ref{From Nikolaus for Thomas} the assumptions of Lemma \ref{ff} are satisfied and so $\sF$ is fully faithful. 
\end{proof}

The following notion seems to have first appeared in \cite{Maj91, GNN09} and has been developed in \cite{Lau20,LW1}

\begin{definition}\label{def_relcenter} 
Given a monoidal category $\BB$ and braided category $(\CC,c)$, we say $\BB$ is $\CC$-central if there is a  faithful braided monoidal functor $\mathcal{F}: \overline{\CC} \to \mathcal{Z}(\BB)$. Given a $\CC$-central category, the $\CC$-relative monoidal center $\mathcal{Z}_{\CC}(\BB)$ is the centralizer of $\cF(\CC)$ in $\mathcal{Z}(\BB)$. That is,
\begin{equation}\label{relc} \mathcal{Z_C(B)}:=\mathrm{Cent}_{\mathcal{Z(B)}}(\mathcal{F(C)})=\{ X \in \mathcal{Z(B)} \; | \; c_{\mathcal{F}(C),X} \circ c_{X, \mathcal{F}(C)} = \mathrm{Id}_{X \otimes \mathcal{F}(C)} \; \forall C \in \CC \; \}  \end{equation}
\end{definition}

\subsection{Main theorem of this section} 

%We now establish the categorical results necessary to prove the desired equivalences. To this end, we determine sufficient conditions to decompose a given category as a relative center constructed from categories of modules over some appropriately chosen algebra object. 

We now prove our main result of this section. It implies that for a category $\UU$ together with the existence of a commutative algebra $A$ satisfying some properties completely fixes $\UU$ as the relative Drinfeld center of $\UU_A$. 
The properties needed are
\begin{assumption}\label{assumption}
Assume that  $\UU$ is a rigid, braided, locally finite  abelian category with trivial M\"{u}ger center,  the property that for any object $X$, there exists a projective object $P_X$ and an injective object $I_X$ with
a surjection $\pi_X : P_X \twoheadrightarrow X$ and an embedding $\iota_X:  X \hookrightarrow I_X$.  and that $A \in \cU$ is a commutative and haploid algebra object, such that $\cU_A$ is  rigid.
\end{assumption}

We proceed in several steps. First, suppose we have a fully faithful braided tensor functor 
$F:\UU\to \mathcal{W}$ and a commutative algebra $A\in\UU$, then the image $F(A)$ is a commutative algebra in $\mathcal{W}$ and the functor gives rise to a fully faithful tensor functor resp. braided tensor functor
\begin{align*}
\UU_A&\to \mathcal{W}_{F(A)} \\
\UU_A^0&\to \mathcal{W}_{F(A)}^0 
\end{align*}

\begin{lemma}\label{loceq}
Let $\UU, A, \UU_A$ be as in Assumption \ref{assumption}.

Then the canonical functor $\overline{\cU} \boxtimes \cU \to \cZ(\UU)$ of Lemma \ref{Lem:Muger} induces a fully faithful tensor functor resp. braided tensor  functor
\begin{align*}
\overline{\cU}_{A} \boxtimes \cU & \to \cZ(\cU)_{\bar{\cF}(A)} \\ \overline{\cU}_{A}^0 \boxtimes \cU & \to \cZ(\cU)_{\bar{\cF}(A)}^0. 
\end{align*}
\end{lemma}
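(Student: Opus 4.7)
The plan is to deduce the two claims from a single general principle: any fully faithful braided tensor functor $F:\V\to\W$ sends a commutative algebra $B\in\V$ to a commutative algebra $F(B)\in\W$ and canonically lifts to fully faithful tensor, respectively braided tensor, functors $\V_B\to \W_{F(B)}$ and $\V_B^0\to\W_{F(B)}^0$. Fully faithfulness at the level of modules is automatic: a morphism $f$ is an $F(B)$-module map precisely when it is an image of a morphism commuting with the $B$-action, and this translates back to $\V$ by fully faithfulness of $F$. I would apply this principle to $\sF:\overline{\cU}\boxtimes\cU\to\cZ(\cU)$ from Lemma~\ref{Lem:Muger}, with the algebra $B=A\boxtimes\one$.

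First I would verify that $A\boxtimes\one$ is a commutative algebra in $\overline{\cU}\boxtimes\cU$: the braiding on the Deligne product is componentwise, and commutativity for $A$ in $\cU$ (with braiding $c$) is equivalent to commutativity of $A$ in $\overline{\cU}$ (with braiding $c^{-1}$), since $c\circ m=m$ iff $m=c^{-1}\circ m$. Its image is $\sF(A\boxtimes\one)\cong\bar\cF(A)$, and it inherits commutativity because $\sF$ is a braided tensor functor. Applying the general principle, we obtain fully faithful functors
\[
(\overline{\cU}\boxtimes\cU)_{A\boxtimes\one}\longrightarrow \cZ(\cU)_{\bar\cF(A)},\qquad
(\overline{\cU}\boxtimes\cU)^0_{A\boxtimes\one}\longrightarrow \cZ(\cU)^0_{\bar\cF(A)}.
\]

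Next I would identify the source categories. An $(A\boxtimes\one)$-module structure on $M\boxtimes N$ is simply an $A$-module structure on $M$, with $\one$ acting trivially on $N$; this yields an equivalence
\[
(\overline{\cU}\boxtimes\cU)_{A\boxtimes\one}\;\cong\;\overline{\cU}_A\boxtimes\cU.
\]
For the local version, the analogous identification $(\overline{\cU}\boxtimes\cU)^0_{A\boxtimes\one}\cong\overline{\cU}^0_A\boxtimes\cU$ is obtained by noting that the double braiding of $A\boxtimes\one$ with $M\boxtimes N$ in $\overline{\cU}\boxtimes\cU$ factors componentwise into the double braiding of $A$ with $M$ in $\overline{\cU}$ and the identity on $N$; locality of the product module therefore reduces to locality of $M$ over $A$ in $\overline{\cU}$, which is the same as locality of $M$ over $A$ in $\cU$ (the double braiding equals its inverse in the locality condition). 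The composition with the identifications then gives the desired functors.

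The main obstacle I anticipate is unpacking the locality condition in the target: an object $(Z,\gamma)\in\cZ(\cU)$ is a local $\bar\cF(A)$-module precisely when the $\cZ(\cU)$-double-braiding between $\bar\cF(A)$ and $(Z,\gamma)$, which by construction equals $\gamma_A\circ c^{-1}_{A,Z}$, is trivialized by the $\bar\cF(A)$-action. For $(Z,\gamma)=\sF(M\boxtimes N)$, the half-braiding is obtained from $c^{-1}_{M,\bullet}$ on the $M$-factor and $c_{\bullet,N}$ on the $N$-factor via the center tensor product formula \eqref{centerproduct}; a careful bookkeeping (essentially the same hexagon manipulation used in Section~\ref{sec_TrivialActionOnOneSide}) shows that this double braiding on the $A$-factor reduces exactly to the double braiding $c_{A,M}\circ c_{M,A}$ in $\cU$ restricted to $M$, confirming that the functor sends local modules to local modules and, conversely, that objects of the form $\sF(M\boxtimes N)$ lying in $\cZ(\cU)^0_{\bar\cF(A)}$ force $M$ to be local over $A$. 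Once this verification is in place, that the first functor is tensor and the second is braided tensor is a formal consequence of $\sF$ being a braided tensor functor and the universal property of $\otimes_A$.
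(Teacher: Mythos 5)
Your ``general principle'' is correct and its verification (via faithfulness and naturality of the tensor structure) is sound: a fully faithful braided tensor functor $F:\V\to\W$ does induce fully faithful tensor and braided tensor functors $\V_B\to\W_{F(B)}$ and $\V_B^0\to\W_{F(B)}^0$. Your observations that $A$ is commutative in $\overline{\cU}$ as well as in $\cU$, and that locality over $A$ is the same condition whether computed with $c$ or with the reversed braiding, are also both right.

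The gap is in the step where you identify $(\overline{\cU}\boxtimes\cU)_{A\boxtimes\one}\cong\overline{\cU}_A\boxtimes\cU$ (and its local analogue). Your justification only addresses objects of the form $M\boxtimes N$, but neither side of this putative equivalence consists solely of pure tensors: the Deligne tensor product is built as a completion, and the module categories contain plenty of objects that are not pure tensors. Proving this identification for locally finite abelian categories is a genuine task, and it is precisely the kind of statement the paper deliberately avoids. The paper instead constructs the functor $\cG:\overline{\cU}_A\boxtimes\cU\to\cZ(\cU)_{\bar\cF(A)}$ directly from a bifunctor on $\overline{\cU}_A\times\cU$ and then proves full faithfulness by computing Hom-spaces between a generating family of objects (built from the regular $A$-module and duals/tensors of objects in $\cU$) via Frobenius reciprocity and the haploid hypothesis, then invokes Lemma \ref{ff}. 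Lemma \ref{ff} is where the existence of enough projectives and injectives from Assumption \ref{assumption} enters; note that nothing in your proposal uses these hypotheses, which is a signal that the Deligne-product subtlety has been elided rather than resolved. If you want to keep your route, you would need either to prove the identification in the locally finite setting, or to argue full faithfulness of the extended functor on the full Deligne product $\overline{\cU}_A\boxtimes\cU$ directly (which essentially forces you back to the generation-plus-Lemma-\ref{ff} argument the paper uses).
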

\begin{proof}
First we note that $\cU_A$ is locally finite as it is a subcategory of $\cU$ and it is also abelian \cite[Thm. 2.9]{CKM}.
The  property that for any object $X$, there exists a projective object $P_X$ and an injective object $I_X$ with
a surjection $\pi_X : P_X \twoheadrightarrow X$ and an embedding $\iota_X:  X \hookrightarrow I_X$ holds in
$\UU_A$:  Induction maps projectives to projectives \cite[Rmk. 2.64]{CKM} and so by Frobenius reciprocity we can take $P_X$ as $\induceA(P_{\forgetA(X)})$.
By Rigidity  we can take $P_{X^*}^*$ as $I_X$. 

By Lemma \ref{Lem:Muger} the functor
\[  \overline{\cU} \boxtimes \cU \to \cZ(\UU), \quad N \boxtimes X \mapsto \big(N\otimes X, c_{N,\bullet}^{-1} \cdot   c^{\phantom{}}_{\bullet, X}  \big) \]
is fully faithful and braided.
%The algebra $(A, m, u)$ maps to $(\bar{\cF}(A), m, u)= ((A, c^{-1}_{A, \bullet}), m, u)$
%and by naturality of the inverse-braiding one immediately sees that multiplication $m$ and unit $u$ satisfy \eqref{centermor}, that is $\bar{\cF}(A)$ is a commutative algebra in  $\cZ(\cU)$.
%If we assume $(N, m_N) \in \overline{\cU}_{A}$, then by the same reason $((N, c_{N, \bullet}^{-1}), m_N)$ is an object in $\cZ(\cU)_{\bar{\cF}(A)}$. This is clearly a fully faithful functor from $\overline{\cU}_{A}$ to $\cZ(\UU)_{\bar{\cF}(A)}$.

Next, let $X$ be an object in $\cat$. Then $\text{ind}_{\bar{\cF}(A)}(\cF(X))$ is an object in $\cZ(\UU)_{\bar{\cF}(A)}$.
By Frobenius reciprocity, \eqref{ffcent} and since $A$ is haploid
\begin{equation}\nonumber 
    \begin{split}
        \Hom_{\cZ(\UU)_{\bar{\cF}(A)}}(\text{ind}_{\bar{\cF}(A)}(\cF(X)), \text{ind}_{\bar{\cF}(A)}(\cF(Y))) &\cong
        \Hom_{\cZ(\UU)}(\cF(X), \bar{\cF}(A) \otimes \cF(Y))) \\
        &\cong \Hom_{\overline{\cat} \boxtimes \cat}(\one \boxtimes X, A \boxtimes Y) \\ 
        &\cong \Hom_{\cat}(X, Y)
    \end{split}
\end{equation}
so by Lemma \ref{ff} there  is also fully faithful functor from $\cat$ to $\cZ(\UU)_{\bar{\cF}(A)}$.

Next we have that $(N\otimes X, m_{N \otimes X})$ is an object in $\overline{\cat}_A$, see \eqref{NX}.
$m_{N \otimes X}$ is also a morphism in $\cZ(\cU)$, since \eqref{centermor} holds by naturality of inverse-braiding, so the pair
\[
\big(N\otimes X, c_{N,\bullet}^{-1} \cdot   c^{\phantom{}}_{\bullet, X}, m_{N \otimes X}\big)
\]
is also in $\cZ(\cU)_{\bar{\cF}(A)}$. A morphism $f \boxtimes g$ is just mapped to $f \otimes g$.
Let us denote the functor by $\cG: \overline{\cat}_A \boxtimes \cat \to \cZ(\UU)_{\bar{\cF}(A)}$.
\begin{equation}\nonumber 
    \begin{split}
        \Hom_{\cZ(\UU)_{\bar{\cF}(A)}}(\cG(A \otimes X), \cG(N \otimes \one)) 
        &\cong \Hom_{\cZ(\UU)_{\bar{\cF}(A)}}(\text{ind}_{\bar{\cF}(A)}(\cF(X)), ((N, c_{N, \bullet}^{-1}), m_N)) \\
        &\cong \Hom_{\cZ(\UU)}(\cF(X), (N, c_{N, \bullet}^{-1})) \\
        &\cong \Hom_{\cZ(\UU)}(\cF(X), \bar{\cF}(N)) \\
        &\cong \Hom_{\cZ(\UU)}(\sF(X \boxtimes \one), \sF(\one \boxtimes N)) \\
        &\cong \Hom_{\cat}(X, \one) \otimes_{\CC} \Hom_{\overline\cat}(\one, N) \\
        &\cong \Hom_{\cat}(X, \one) \otimes_{\CC} \Hom_{\overline\cat_A}((A, m), (N, m_N))
    \end{split}
\end{equation}
Here we first used the definition of $\cG$, then Frobenius reciprocity, then the definition of $\bar\cF$ and $\sF$, then \eqref{eqZU} and finally Frobenius reciprocity again. 
By the same argument as \eqref{ffcent2} and \eqref{ffcent} it follows that $\cG$ is fully faithful.

Suppose now that $(N, m_N) \in \overline{\cU}_{A}^0$ so that 
\begin{equation} \label{local1} m_N \circ (c^{-1}_{A,N} \circ c^{-1}_{N,A})= m_N\end{equation}
We need to show that 
\[
\big(\big(N\otimes X, c_{N,\bullet}^{-1} \cdot   c^{\phantom{}}_{\bullet, X}, m_{N \otimes X}\big)
 \in \cZ(\cU)_{\cF(A)}^0.
 \]
 Denote by $M_{A, N \otimes X}$
 the monodromy, i.e.~the double braiding, of $((A, c^{-1}_{N, \bullet}), m)$ with $
\big(\big(N\otimes X, c_{N,\bullet}^{-1} \cdot   c^{\phantom{}}_{\bullet, X}, m_{N \otimes X}\big)$.
It is given by
\begin{equation} \nonumber 
    \begin{split}
M_{A, N \otimes X}: A \otimes (N\otimes X) &\xrightarrow{\assoz_{A, N, X}} (A \otimes N) \otimes X
\xrightarrow{c^{-1}_{N, A}} (N \otimes A) \otimes X 
\xrightarrow{\assoz^{-1}_{N, A, X}} N \otimes (A \otimes X) \\
&\xrightarrow{c_{A, X}} N \otimes (X \otimes A) 
\xrightarrow{\assoz_{N, X, A}} (N \otimes X) \otimes A
\xrightarrow{c^{-1}_{A, N \otimes X}} A \otimes (N \otimes X)
    \end{split}
\end{equation}
By the hexagon axiom one has that 
\begin{align*}
N \otimes (A \otimes X) 
\xrightarrow{c_{A, X}} N \otimes (X \otimes A) 
&\xrightarrow{\assoz_{N, X, A}} (N \otimes X) \otimes A
\xrightarrow{c^{-1}_{A, N \otimes X}} A \otimes (N \otimes X)
\\
\intertext{is equal to} 
N \otimes (A \otimes X) 
\xrightarrow{\assoz_{N, A, X}} (N \otimes A) \otimes X 
&\xrightarrow{c^{-1}_{A, N}} (A \otimes N) \otimes X
\xrightarrow{\assoz^{-1}_{A, N, X}} A \otimes (N \otimes X)
\\
\intertext{and so}
M_{A, N \otimes X}: A \otimes (N\otimes X) &\xrightarrow{\assoz_{A, N, X}} (A \otimes N) \otimes X
\xrightarrow{c^{-1}_{N, A}} (N \otimes A) \otimes X \\
&\xrightarrow{c^{-1}_{A, N}} (A \otimes N) \otimes X
\xrightarrow{\assoz^{-1}_{A, N, X}} A \otimes (N \otimes X)
\end{align*}
But then clearly by \eqref{local1} we have \[
m_{N \otimes X} \circ M_{A, N \otimes X} = m_{N \otimes X},
\]
so we have an induced fully faithful functor
\[ \overline{\cU}_{A}^0 \boxtimes \cU \to \cZ(\UU)_{\bar{\cF}(A)}^0. \]
\end{proof}

Recall Remark \ref{rem:halfbraiding}, that is 
   for $(X, m_X)$ an object in $\catAloc$  there is a natural family of isomorphisms denoted by $\overline{c}_{X, \bullet}$, such that for  $(Y, m_Y)$ an object in $\catA$, $\overline{c}_{X, Y}$
    is characterized by the commutative diagram
     \begin{equation}
         \begin{split}
          \xymatrix{   
          X \otimes Y \ar[rr]^{c_{X, Y}} \ar[d]^\eta && Y\otimes X \ar[d]^\eta \\
             X \otimes_A Y \ar[rr]^{\overline{c}_{X, Y}} && Y \otimes_A X\\
             }
          \end{split}
     \end{equation}
    As a consequence we obtain a  braided tensor functor  
     which we call $\SchauenburgBar$:
    \begin{definition}\label{def_cent2} 
     We denote the tensor functor by 
     \[
    \SchauenburgBar:\,\bar{\mathcal{U}}_A^0 \to \mathcal{Z}(\UU_A),\qquad
    N \mapsto (N,\overline{c}_{N, \bullet}^{-1}).
    \]
\end{definition}
Also recall that there is a family of natural isomorphism $b_{\bullet, X}$ with $b_{N, X}$ characterized by \eqref{bNX}, which is using \eqref{centerproduct}
\begin{equation}
\begin{split}
    \xymatrix{
N \otimes (A \otimes X) \ar[rr]^{(c^{-1}_{A, \bullet} \cdot\, c^{\phantom{} }_{\bullet,X})_N}  \ar[d]^\eta && (A \otimes X) \otimes N  \ar[d]^\eta \\ 
N \otimes_A \induceA(X) \ar[rr]^{b_{N, X}} && \induceA(X)\otimes_A N  \\
    }
    \end{split}
\end{equation}
    As a consequence we obtain a  braided tensor functor  
     which we call $\Schauenburg$:
\begin{definition}
There is a braided tensor functor
\[
\Schauenburg:\,\UU \to \mathcal{Z}(\UU_A), \qquad 
X \mapsto (\induceA(X), b_{\bullet, X}).
\]
\end{definition}

By \cite[Corollary 4.5]{Sch} there is an equivalence of categories 
\[
\cZ(\UU)_{\bar{\cF}(A)}^0 \cong \mathcal{Z}(\mathcal{U}_A)
\]
mapping $(X, \gamma)$ to $(X, \overline{\gamma})$ with $\overline{\gamma}_Y$ the image of $\gamma_Y$ under $\eta$. 
In particular the functors $\Schauenburg,\SchauenburgBar$ are the compositions of our embeddings of Lemma \ref{loceq} with Schauenburg's equivalence. As a composition of fully faithful functors, $\Schauenburg,\SchauenburgBar$ are fully faithful.
We now show that the images of these two functors have the following centralizing property.
\begin{theorem}\label{relcent}
Let $\UU, A, \UU_A$ be as in Assumption \ref{assumption}.

Then there is a fully faithful functor to the relative Drinfeld center
\[ \Schauenburg:  \mathcal{U} \hookrightarrow \mathcal{Z}_{\mathcal{U}_A^0}(\mathcal{U}_A),  \qquad
X \mapsto (\induceA(X), b_{\bullet, X}). \]
\end{theorem}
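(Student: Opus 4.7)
The functor $\Schauenburg$ is already constructed, and the work needed is to establish (i) full faithfulness and (ii) the assertion that the image lands in the relative Drinfeld center $\mathcal{Z}_{\mathcal{U}_A^0}(\mathcal{U}_A)$, rather than only in $\mathcal{Z}(\mathcal{U}_A)$.

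For full faithfulness, I would observe that $\Schauenburg$ agrees with the restriction of the fully faithful embedding from Lemma \ref{loceq} (composed with Schauenburg's equivalence $\cZ(\UU)_{\bar{\cF}(A)}^0 \cong \cZ(\UU_A)$) to the subcategory $\{A\} \boxtimes \cU \subset \overline{\cU}_A^0 \boxtimes \cU$, since under the equivalence $(\induceA(X), b_{\bullet, X})$ corresponds to $\bar{\cF}(A) \otimes \cF(X) = \mathrm{ind}_{\bar{\cF}(A)}(\cF(X))$ with its canonical half-braiding $c^{-1}_{A, \bullet} \cdot c_{\bullet, X}$. As a restriction of a fully faithful functor, $\Schauenburg$ is fully faithful. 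Equivalently, one may compute $\Hom(\Schauenburg(X), \Schauenburg(Y))$ directly using Frobenius reciprocity, haploidness of $A$, and the identity \eqref{ffcent} which exploits the trivial M\"uger center of $\UU$.

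For the centralizing property, by the definition of the relative center and the formula for the braiding in $\cZ(\UU_A)$, the assertion $\Schauenburg(X) \in \cZ_{\UU_A^0}(\UU_A)$ is equivalent to showing, for every $N \in \UU_A^0$, the equality of half-braidings
\[
b_{N, X} \;=\; \overline{c}_{N, \induceA(X)} \;:\; N \otimes_A \induceA(X) \longrightarrow \induceA(X) \otimes_A N.
\]
To prove this, I would invoke the canonical isomorphisms of Lemma \ref{lm_trivialActionToInducedModule},
\[
\phi \colon N \otimes X \xrightarrow{\sim} N \otimes_A \induceA(X), \qquad \psi \colon X \otimes N \xrightarrow{\sim} \induceA(X) \otimes_A N,
\]
each built from the unit $u \colon \one \to A$ and the coequalizer $\eta$. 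Using the explicit lift \eqref{bNX} of $b_{N, X}$ as $(c^{-1}_{A, \bullet} \cdot c_{\bullet, X})_N$ and the lift $c_{N, A \otimes X}$ of $\overline{c}_{N, \induceA(X)}$ from Remark \ref{rem:halfbraiding}, together with naturality of the braiding applied to $u$ and the fact that $c_{\one, \bullet} = \Id$, I would compute
\[
\psi^{-1} \circ b_{N, X} \circ \phi \;=\; c_{N, X} \;=\; \psi^{-1} \circ \overline{c}_{N, \induceA(X)} \circ \phi,
\]
so both morphisms reduce to the ambient braiding $c_{N, X}$ in $\UU$, hence coincide.

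The main obstacle I anticipate is reconciling the two half-braiding lifts, which differ precisely by replacing $c_{N, A}$ with $c^{-1}_{A, N}$ in one middle factor. A more conceptual way to handle this uses locality of $N$: the identity $m_N \circ c_{N, A} = m_N \circ c^{-1}_{A, N}$ ensures that the discrepancy is absorbed by the coequalizer $\eta'$ defining $\induceA(X) \otimes_A N$, via the relation $\eta' \circ \mu_1' = \eta' \circ \mu_2'$. Either route requires careful bookkeeping of associators and of the two distinct coequalizers (for $N \otimes_A \induceA(X)$ and for $\induceA(X) \otimes_A N$), but once the reduction to $c_{N, X}$ is made the identification is immediate and $\Schauenburg$ factors through $\cZ_{\UU_A^0}(\UU_A)$.
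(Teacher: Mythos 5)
Your proposal is correct. For full faithfulness you recover the paper's argument essentially verbatim: $\Schauenburg$ is the restriction of the embedding of Lemma \ref{loceq} to the full subcategory $\{A\}\boxtimes \cU$, composed with Schauenburg's equivalence $\cZ(\UU)_{\bar{\cF}(A)}^0\cong\cZ(\UU_A)$, hence fully faithful.

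For the centralizing property you take a genuinely different route. The paper computes the lift of the monodromy $\overline{c}^{-1}_{N,\induceA(X)}\circ b_{N,X}$ as an explicit braid, reduces it to the double braiding $c_{A,N}\circ c_{N,A}$ on the $(N,A)$-strands, and then invokes locality of $N$ via the projection formula: the canonical projection $(N\otimes A)\otimes X\to N\otimes X$ is $(m_N\circ c_{N,A})\otimes\Id_X$, and locality gives $m_N\circ c_{N,A}\circ c_{A,N}\circ c_{N,A}=m_N\circ c_{N,A}$, so the descended monodromy is the identity. You instead identify the two half-braidings directly, conjugating each by the canonical isomorphisms $\phi$ and $\psi$ of Lemma \ref{lm_trivialActionToInducedModule}. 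The key computation is then on the \emph{section} side rather than the projection side: precomposing the lifts with the unit insertion $\Id_N\otimes u\otimes\Id_X$, naturality of $c^{\pm 1}_{-,N}$ with respect to $u\colon\one\to A$ together with $c_{N,\one}=c^{-1}_{\one,N}$ collapses both $c^{-1}_{A,N}$ (in the lift of $b_{N,X}$) and $c_{N,A}$ (in the lift of $\overline{c}_{N,\induceA(X)}$ via the hexagon) to the same unit isomorphism, whence both lifts equal $\psi_{\mathrm{lift}}\circ c_{N,X}$. This is a clean alternative: the projection argument uses locality explicitly, while your section argument trades it for unit naturality (locality is still consumed once, implicitly, in Remark \ref{rem:halfbraiding} to guarantee that $\overline{c}_{N,\bullet}$ is even defined for $N\in\UU_A^0$). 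Both arguments buy the same thing; yours makes the identity $b_{N,X}=\psi\circ c_{N,X}\circ\phi^{-1}$ explicit, which some may find more transparent.

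One caution: your sketched ``more conceptual'' fallback — that the discrepancy between $c^{-1}_{A,N}$ and $c_{N,A}$ is absorbed by the coequalizer $\eta'$ defining $\induceA(X)\otimes_A N$ via $\eta'\circ\mu_1'=\eta'\circ\mu_2'$ — is not obviously correct as stated. The two lifts differ by morphisms $N\otimes(A\otimes X)\to(A\otimes X)\otimes N$, whereas the relation coequalized by $\eta'$ lives one $A$-tensor-factor higher; the difference does not manifestly factor through the image of $\mu_1'-\mu_2'$. The paper's use of locality is more refined than this: it conjugates by the specific isomorphism $N\otimes X\cong N\otimes_A\induceA(X)$ built from $\overline{\assoz}_{N,A,X}$ and uses the projection $(m_N\circ c_{N,A})\otimes\Id_X$, at which point locality applies cleanly. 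Since your primary unit-insertion argument already closes the proof, this is a side remark rather than a gap, but the fallback as phrased should not be relied upon without the extra conjugation step.
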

\begin{proof}

We just discussed that both $\Schauenburg$ and $\SchauenburgBar$ are fully faithful functors and 
and we only need to show that the image of $\Schauenburg$ is in fact in $\mathcal{Z}_{\mathcal{U}_A^0}(\mathcal{U}_A)$, i.e.~that for any pair $N$ in $\UU_A^0$  and $X$ in $\UU$ the monodromy of their images $(N, \overline{c}^{-1}_{N, \bullet})$ and $(X, b_{\bullet, X})$ in $\mathcal Z(\UU_A)$ is trivial. This monodromy is  $\overline{M}_{N, X} := \overline{c}^{-1}_{N, A \otimes X} \circ  b_{N, X}$ and it is characterized by the commutative diagram
\[
\xymatrix{
N \otimes (A \otimes X) \ar[rr]^{(c^{-1}_{A, \bullet} \cdot\, c^{\phantom{} }_{\bullet,X})_N}  \ar[d]^\eta && (A \otimes X) \otimes N  \ar[d]^\eta  
\ar[rr]^{c^{-1}_{N, A \otimes X}} && N \otimes (A \otimes X) \ar[d]^\eta \\
N \otimes_A \induceA(X) \ar[rr]^{b_{N, X}} && \induceA(X)\otimes_A N \ar[rr]^{\overline{c}^{-1}_{N, A \otimes X}} && N \otimes_A \induceA(X) \\
}
\]
The braid diagram of $c^{-1}_{N, A \otimes X} \circ  c^{-1}_{A, \bullet} \cdot\, (c^{\phantom{} }_{\bullet,X})_N$ is easily obtained from \eqref{b-braid}

\begin{center}
        \begin{grform}
           \begin{scope}[scale = 1]
           %Bottom to top
                \def\y{-0.3}\def\yn{-0.3+1}
                \draw (0 , \y) node {$N$};
                \draw (1 , \y) node {$A$};
                \draw (2 , \y) node {$X$};

                %c23
                \def\y{0}\def\yn{1}
                \vLine{0}{\y}{0}{\yn}{black};
                \vLine{1}{\y}{2}{\yn}{black};
                \vLineO{2}{\y}{1}{\yn}{black};

                %c12
                \def\y{1}\def\yn{2}
                \vLine{0}{\y}{1}{\yn}{black};
                \vLineO{1}{\y}{0}{\yn}{black};
                \vLine{2}{\y}{2}{\yn}{black};

                %c23 rev
                \def\y{2}\def\yn{3}
                \vLine{0}{\y}{0}{\yn}{black};
                \vLine{2}{\y}{1}{\yn}{black};
                \vLineO{1}{\y}{2}{\yn}{black};

                %c12 rev
                \def\y{3}\def\yn{4}
                \vLine{1}{\y}{0}{\yn}{black};
                \vLineO{0}{\y}{1}{\yn}{black};
                \vLine{2}{\y}{2}{\yn}{black};

                  %c23 rev
                \def\y{4}\def\yn{5}
                \vLine{0}{\y}{0}{\yn}{black};
                \vLine{2}{\y}{1}{\yn}{black};
                \vLineO{1}{\y}{2}{\yn}{black};

                %c12 rev
                \def\y{5}\def\yn{6}
                \vLine{1}{\y}{0}{\yn}{black};
                \vLineO{0}{\y}{1}{\yn}{black};
                \vLine{2}{\y}{2}{\yn}{black};

                \def\y{6.3}\def\yn{7.3}
                \draw (0 , \y) node {$N$};
                \draw (1 , \y) node {$A$};
                \draw (2 , \y) node {$X$};
            \end{scope}  
        \end{grform}
                \quad=\quad
        \begin{grform}
           \begin{scope}[scale = 1]
           %Bottom to top
                \def\y{-0.3}\def\yn{-0.3+1}
                \draw (0 , \y) node {$N$};
                \draw (1 , \y) node {$A$};
                \draw (2 , \y) node {$X$};

                %c12 rev
                \def\y{0}\def\yn{1}
                \vLine{1}{\y}{0}{\yn}{black};
                \vLineO{0}{\y}{1}{\yn}{black};
                \vLine{2}{\y}{2}{\yn}{black};

                %c12 rev
                \def\y{1}\def\yn{2}
                \vLine{1}{\y}{0}{\yn}{black};
                \vLineO{0}{\y}{1}{\yn}{black};
                \vLine{2}{\y}{2}{\yn}{black};

                \def\y{2.3}\def\yn{3.3}
                \draw (0 , \y) node {$N$};
                \draw (1 , \y) node {$A$};
                \draw (2 , \y) node {$X$};
            \end{scope}  
        \end{grform}
\end{center}

%\begin{align*}
%	\begin{matrix}
%\begin{tikzpicture}
%\node (N) at (1,0.3)  {$\scriptstyle{N}$};
%\node (A) at (2,0.3)  {$\scriptstyle{A}$};
%\node (X) at (3,0.3)  {$\scriptstyle{X}$};
%\node (N) at (1,-6.8)  {$\scriptstyle{N}$};
%\node (A) at (2,-6.8)  {$\scriptstyle{A}$};
%\node (X) at (3,-6.8)  {$\scriptstyle{X}$};
%\braid[number of strands=3] (braid)      a_2^{-1}  a_1^{-1}   a_2     a_1 a_2 a_1   ; 
%\end{tikzpicture}
%\end{matrix}
%&\qquad	= \qquad
%	\begin{matrix}
%\begin{tikzpicture}
%\node (N) at (1,0.3)  {$\scriptstyle{N}$};
%\node (A) at (2,0.3)  {$\scriptstyle{A}$};
%\node (X) at (3,0.3)  {$\scriptstyle{X}$};
%\node (N) at (1,-2.8)  {$\scriptstyle{N}$};
%\node (A) at (2,-2.8)  {$\scriptstyle{A}$};
%\node (X) at (3,-2.8)  {$\scriptstyle{X}$};
%\braid[number of strands=3] (braid) a_1 a_1 ; 
%\end{tikzpicture}
%\end{matrix}
%\end{align*}
%\marginpar{In other format}
Hence the monodromy is characterized by the commutative diagram
\[
\xymatrix{
N \otimes (A \otimes X) \ar[r]^{\ \assoz_{N, A, X}}  \ar[d]^\eta & (N \otimes A) \otimes X \ar[d]^{\eta} \ar[rr]^{c_{A, N} \circ c_{N, A}} &&(N \otimes A) \otimes X \ar[d]^\eta & \ar[l]_{\ \assoz_{N, A, X}}  N \otimes (A \otimes X) \ar[d]^\eta \\
N \otimes_A \induceA(X) \ar[r]^{\ \ \overline{\assoz}_{N, A, X}} & N \otimes X \ar[rr]^{ \Id_{N \otimes X} } &&N \otimes X & \ar[l]_{\overline{\assoz}_{N, A, X}}  N \otimes_A \induceA(X) \\
}
\]
We clearly have  
$\eta \circ \assoz_{N,A, X} \circ \mu_1 = \eta \circ \assoz_{N,A, X} \circ \mu_2$, since the canonical projection $(N \otimes A) \otimes X$ onto $N \otimes X\cong (A \otimes_A N) \otimes X$ is given by $(m_N \circ c_{N, A})\otimes \Id_X$. Hence $\assoz_{N, A, X}$ induces the indicated $\UU_A$-morphism $\overline{\assoz}_{N, A, X}$ by the universal property of the coequalizer. The morphism from $N \otimes X$ to itself in the diagram must then be the identity as $N$ is in $\UU_A^0$.
It follows  that  $\overline{M}_{N, X} = \Id_{N \otimes_A \induceA(X)}$. 
\end{proof}

Notice that if $\UU$ is finite with
\begin{equation}\label{eq:BC} \BB=\UU_A \qquad \mathrm{and} \qquad \CC=\UU_A^0 \end{equation}
then by 
\cite[Remark 4.10]{LW1} and \cite[Corollary 4.21]{LW2} based on  \cite{Sh19} we have
\[ \mathrm{FPdim}(\mathcal{Z}_{\CC}(\BB))=\frac{\mathrm{FPdim}(\BB)^2}{\mathrm{FPdim}(\CC)^2}=\frac{\mathrm{FPdim}(\UU)^2/\mathrm{FPdim}_{\UU}(A)^2}{\mathrm{FPdim}(\UU)/\mathrm{FPdim}_{\UU}(A)^2}=\mathrm{FPdim}(\UU)\]

Therefore, we have the following corollary of Theorem \ref{relcent}

\begin{corollary}\label{cor_SchauenburgFinite}
Let $\UU, A, \UU_A$ be as in Assumption \ref{assumption} and also assume $\UU$ to be finite. Let $\BB,\CC$ be the categories of modules and local modules as in Equation \eqref{eq:BC}, then the functor $\Schauenburg$ gives rise to an equivalence of braided tensor categories
\begin{align*}\label{Eq:Embed} \Schauenburg:\mathcal{U} \stackrel{\sim}{\longrightarrow} \mathcal{Z}_{\CC}(\BB).
\end{align*}
\end{corollary}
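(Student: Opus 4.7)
The plan is to reduce the corollary to Theorem \ref{relcent} together with a Frobenius-Perron dimension count. Theorem \ref{relcent} already gives a fully faithful braided tensor functor $\Schauenburg: \UU \hookrightarrow \mathcal{Z}_\CC(\BB)$, so the only missing assertion is essential surjectivity. Being a tensor functor between finite abelian categories, $\Schauenburg$ is automatically exact, and for a fully faithful exact $\C$-linear tensor functor between finite tensor categories, essential surjectivity is equivalent to the image being a tensor subcategory of full Frobenius-Perron dimension (a finite tensor category admits no proper tensor subcategory of the same FP-dimension). Hence the whole argument reduces to establishing the equality
\[
\mathrm{FPdim}(\UU) \;=\; \mathrm{FPdim}(\mathcal{Z}_\CC(\BB)).
\]

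The required dimension count is precisely the computation displayed immediately before the statement of the corollary. By \cite[Remark 4.10]{LW1} combined with \cite[Corollary 4.21]{LW2} (which in turn rests on \cite{Sh19}) one has
\[
\mathrm{FPdim}(\mathcal{Z}_\CC(\BB)) \;=\; \frac{\mathrm{FPdim}(\BB)^2}{\mathrm{FPdim}(\CC)^2}.
\]
The induction-functor identity $\mathrm{FPdim}(\BB) = \mathrm{FPdim}(\UU)/\mathrm{FPdim}_\UU(A)$ and the local-module identity $\mathrm{FPdim}(\CC)^2 = \mathrm{FPdim}(\UU)/\mathrm{FPdim}_\UU(A)^2$ (standard consequences of Kirillov--Ostrik for a haploid commutative algebra in a finite braided tensor category) then combine to yield $\mathrm{FPdim}(\mathcal{Z}_\CC(\BB)) = \mathrm{FPdim}(\UU)$. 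Since full faithfulness forces $\mathrm{FPdim}(\Schauenburg(\UU)) = \mathrm{FPdim}(\UU)$, the image is all of $\mathcal{Z}_\CC(\BB)$; braidedness and tensoriality of the resulting equivalence are already part of Theorem \ref{relcent}.

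The only point that requires care is to check that the hypotheses of the Shimizu/Laugwitz--Walton dimension formula are met under Assumption \ref{assumption} with $\UU$ finite: one needs $\BB = \UU_A$ to be a finite rigid tensor category (finiteness from that of $\UU$, rigidity by assumption), $\CC = \UU_A^0$ to be a non-degenerate braided subcategory, and the braided embedding $\SchauenburgBar:\overline{\CC} \to \mathcal{Z}(\BB)$ to realize $\BB$ as a $\CC$-central category in the sense of Definition \ref{def_relcenter}. All three inputs are automatic in our setup: non-degeneracy of $\CC$ comes from the trivial M\"uger center of $\UU$, and the $\CC$-central structure on $\BB$ is exactly the braided functor $\SchauenburgBar$ constructed in Definition \ref{def_cent2}. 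So the anticipated main obstacle is bookkeeping rather than a genuinely new difficulty, and no additional computation beyond the displayed identity above is needed to close the proof.
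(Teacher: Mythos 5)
Your argument is correct and matches the paper's intended proof: the paper gives the $\mathrm{FPdim}$ computation immediately before the corollary and then invokes it silently, while you spell out the missing link (a fully faithful exact tensor functor between finite tensor categories of equal Frobenius--Perron dimension is an equivalence, since the image is a tensor subcategory of full dimension). Your closing paragraph verifying the inputs to the Shimizu/Laugwitz--Walton formula (rigidity of $\BB$, non-degeneracy of $\CC$ from the trivial M\"uger center of $\UU$, and $\CC$-centrality via $\SchauenburgBar$) is a useful explication of hypotheses the paper leaves implicit.
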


We call $\Schauenburg$ the \emph{Schauenburg functor} in recognition of Peter Schauenburg's many contributions to the emergence of tensor categories, e.g. \cite{Sch} Cor.~4.5. 

Recall the notion of \emph{Witt classes} of modular tensor categories by the equivalence relation $\mathcal{C}\sim \mathcal{D}$ if and only if there exists tensor categories $\mathcal{T}, \mathcal{S}$ with $\mathcal{C} \boxtimes \mathcal Z(\mathcal S)\cong \mathcal{D}\boxtimes \mathcal{Z}(\mathcal{T})$. Since $\cZ(\BB)=\cZ_\CC(\BB)\boxtimes \bar{\CC}$ 
a consequence of Corollary \ref{cor_SchauenburgFinite} is the following result, that is well-known in the semisimple case see \cite{DMNO, DNO}:
\begin{corollary}\label{cor_Wittclass}
    Suppose a modular tensor category $\UU$ contains a commutative algebra $A$ with local modules   $\UU_A^0=\CC$ satisfying Assumption \ref{assumption}, then $\UU\sim\CC$ are Witt equivalent.
\end{corollary}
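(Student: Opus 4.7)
The plan is to chain together three ingredients: the embedding of $\UU$ as a relative center, the factorization of the absolute center $\cZ(\BB)$ over a non-degenerate braided subcategory, and the Witt-triviality of any Drinfeld center.

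First, by Corollary \ref{cor_SchauenburgFinite} (applied under Assumption \ref{assumption}, which is implied by $\UU$ being a modular tensor category), the Schauenburg functor gives a braided equivalence
\[ \Schauenburg : \UU \stackrel{\sim}{\longrightarrow} \cZ_\CC(\BB). \]
Next, since $\UU$ is modular and $A$ is a commutative (haploid) algebra in $\UU$, the category of local modules $\CC = \UU_A^0$ is itself modular (this is the standard Kirillov--Ostrik type result, extended to the finite non-semisimple setting). The faithful embedding $\SchauenburgBar : \bar{\CC} \hookrightarrow \cZ(\BB)$ from Definition \ref{def_cent2} therefore identifies $\bar{\CC}$ with a non-degenerate braided subcategory of $\cZ(\BB)$, and its centralizer is precisely $\cZ_\CC(\BB)$ by construction. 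Applying the Müger-type factorization theorem for Drinfeld centers over a non-degenerate subcategory (as cited in the excerpt, in the form $\cZ(\BB) = \cZ_\CC(\BB)\boxtimes \bar{\CC}$) yields
\[ \cZ(\BB) \;\cong\; \cZ_\CC(\BB) \boxtimes \bar{\CC} \;\cong\; \UU \boxtimes \bar{\CC}. \]

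Finally, any Drinfeld center is Witt-trivial by definition of the Witt equivalence relation, so $[\cZ(\BB)] = 0$. Combining with the previous display we get $[\UU] + [\bar{\CC}] = 0$ in the Witt group, equivalently $[\UU] = [\CC]$, which is precisely the desired Witt equivalence $\UU \sim \CC$.

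The one step requiring some care, and the main conceptual obstacle, is the factorization $\cZ(\BB) \cong \cZ_\CC(\BB)\boxtimes \bar{\CC}$: in the classical finite semisimple setting this is Müger's theorem, but here $\BB$ and $\CC$ are finite but generally non-semisimple, so one needs to invoke the non-semisimple version of this decomposition (available once $\CC$ is known to be non-degenerate, with Frobenius--Perron dimensions behaving as required, which is already used in the proof of Corollary \ref{cor_SchauenburgFinite}). Everything else is formal manipulation in the Witt monoid.
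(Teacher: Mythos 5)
Your proposal is correct and follows exactly the chain the paper uses: Corollary~\ref{cor_SchauenburgFinite} gives $\UU\cong\cZ_\CC(\BB)$, the factorization $\cZ(\BB)\cong\cZ_\CC(\BB)\boxtimes\bar{\CC}$ then yields $\cZ(\BB)\cong\UU\boxtimes\bar{\CC}$, and Witt-triviality of Drinfeld centers together with $[\bar{\CC}]=-[\CC]$ (since $\CC$ is modular, $\CC\boxtimes\bar{\CC}\cong\cZ(\CC)$) gives $[\UU]=[\CC]$. The paper's own proof is just the one-sentence remark preceding the corollary, so you have in effect supplied the details the authors left implicit, including the caveat that the factorization $\cZ(\BB)\cong\cZ_\CC(\BB)\boxtimes\bar{\CC}$ is a non-semisimple M\"uger-type statement which the paper also cites without proof. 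One small imprecision: your parenthetical claims Assumption~\ref{assumption} ``is implied by $\UU$ being a modular tensor category,'' but the assumption also requires $A$ haploid and $\UU_A$ rigid, which do not follow from modularity of $\UU$ alone; this is harmless here because the corollary's hypothesis already asserts Assumption~\ref{assumption}, so you should simply invoke the hypothesis rather than claim an implication.
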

\begin{problem}\label{prob_Wittclass}
    Are there nonsemisimple modular tensor categories that are not Witt equivalent to any semisimple modular tensor category?
\end{problem}
\begin{problem}
    It may be possible to relax some assumptions of Theorem \ref{relcent}. For example, it is desirable to prove a variant of Theorem \ref{relcent} without assuming rigidity since rigidity is often very difficult to establish for vertex algebra tensor categories. 

    On the other hand vertex tensor categories are Grothendieck-Verdier categories \textup{\cite{ALSW21}} and if the vertex algebra is simple and its own contragredient dual, then the category even admits a contragredient functor as introduced in \textup{\cite{CKM2}}. This functor maps an object $X$ to its contragredient dual $X'$ and satisfies
    \[
    \Hom_{\cU}(Y \otimes X, \one) \cong \Hom_{\cU}(Y, X')
    \]
    for all objects $Y$. 
    Our Theorem holds if $\cU, \cU_A$ admit a contragredient functor such that $\induceA(X') \cong \induceA(X)'$. \\

    Let us also note: The condition on projective and injective modules is only used so that Lemma \ref{ff} applies. One can weaken the assumptions of the Lemma slightly, see Remark \ref{rem:ff} and so our main Theorem of this Section also holds under those weaker assumptions. 
\end{problem}

\subsection{Relatively finite setting}

We now want our main result of the previous section, Corollary \ref{cor_SchauenburgFinite}, to hold when $\CC$ is infinite, but $\BB$ is finite ``over'' $\CC$.  In the following we set up such a setting algebraically, and later discuss more categorical notions:

 %If $F$ is essentially-surjective then $\iota:C\to U$ is an injective morphism of (quasi-) Hopf algebras
    
%Proof: Assume $F$ is represented by $f$. Assume that $f$ is not injective, then it has a nontrivial kernel $\ker(f)\subset C$. Pick any representation of $C$ that restricts to a nontrivial representation on $\ker(f)$, for example the regular representation of $C$, then this representation is not in the image of $F$, contradicting the assumption of $F$ being fully-surjective. \marginpar{Finite dimensional Rep !?}

\begin{lemma}\label{lm:epimorphism}
Given a morphism of algebras $f:{U}\leftarrow {Z}$, let  $F:\Rep({U})\rightarrow \Rep({Z})$ be the functor given by pullback $F(M)={_f}M$, where $Z$ acts from the left by precomposition with $f$,   and let $G:\Rep({U})\leftarrow \Rep({Z})$  be its left-adjoint functor given by extension of scalars $G(V)=U_f\otimes_{{Z}}V$. Then
\begin{enumerate}
\item[a)] If $F$ is fully faithful, then $G\circ F \cong \id$. 
\item[b)] If $F$ is fully faithful and $F({U})$ is a faithfully flat ${Z}$-module, then $f$ is a surjection. Similarly, if $F$ is fully faithful and $F({U})$ is a finitely generated ${Z}$-module, then $f$ is a surjection. 
\end{enumerate}
Assume now in addition that $C$ is a common subalgebra of $U$ and $Z$ such that $f$ restricts to the identity on $C$, hence $F,G$ are compatible with the restriction functors to $C$. 
\begin{enumerate}
\item[c)] If $F$ is fully faithful and $U,Z$ are free $C$-modules of the same finite rank and $C$ is weakly finite (see e.g. \textup{\cite{Skry06}} Sec. 2, for example if $C$ is commutative) then $f$ is an isomorphism and hence $F:\Rep({U})\cong \Rep({Z})$ is an equivalence of categories. 
\end{enumerate}
 
\end{lemma}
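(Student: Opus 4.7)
The plan is to translate each of a), b), c) into purely ring-theoretic statements via the adjunction $G \dashv F$ (which is checked directly from $\Hom_{\Rep(U)}(U\otimes_Z V, M) \cong \Hom_{\Rep(Z)}(V, {_f}M)$), and then invoke standard results about ring epimorphisms.

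For a), I use the classical categorical fact that a right adjoint is fully faithful if and only if the counit $\varepsilon\colon G F \to \id$ is a natural isomorphism. In our setting $\varepsilon_M\colon U \otimes_Z M \to M$ is simply $u\otimes m \mapsto um$, so this yields $G\circ F \cong \id$. Specialising to $M=U$ shows that the multiplication $\mu\colon U \otimes_Z U \to U$ is an isomorphism, which is precisely the characterisation of $f$ as an epimorphism of associative algebras.

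For b), I exploit this ring-epimorphism property. Applying the right-exact functor $U\otimes_Z-$ to the sequence $Z \xrightarrow{f} U \to U/f(Z) \to 0$, the first arrow becomes the identity of $U$ under the canonical iso $U\otimes_Z Z \cong U$ and the multiplication iso $U\otimes_Z U \cong U$, so $U\otimes_Z (U/f(Z)) = 0$. In the faithfully flat case this immediately gives $U/f(Z) = 0$. In the finitely generated case, I localise at each prime $\mathfrak p\subset Z$ to reduce to the case where $Z$ is local with residue field $k$; since ring epimorphisms are preserved under base change, $k\to U/\mathfrak m U$ is still an epi, so $L := U/\mathfrak m U$ is a finite-dimensional $k$-algebra with $L\otimes_k L \cong L$. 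The dimension identity $\dim_k(L\otimes_k L) = (\dim_k L)^2$ then forces $\dim_k L = 1$, whence $U = f(Z) + \mathfrak m U$, and Nakayama applied to the finitely generated $Z$-module $U/f(Z)$ yields $U/f(Z)=0$.

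For c), the hypothesis $f|_C = \id$ makes $U$ finitely generated as $Z$-module (any generating set over $C \subset Z$ will do), so the finitely generated case of b) produces a surjection $f\colon Z \twoheadrightarrow U$. Fixing any $C$-linear isomorphism $\psi\colon U \cong Z$, the composition $\psi\circ f\colon Z \to Z$ is a surjective $C$-linear endomorphism of a free $C$-module of rank $n$; weak finiteness of $C$ forces it to be injective, hence $f$ is an isomorphism and $F$ becomes an equivalence of categories.

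The most delicate step is the finitely generated half of b), where the reduction to a local residue field and the subsequent Nakayama argument need some care; parts a), c), and the faithfully flat case of b) are essentially formal consequences of the ring-epimorphism picture.
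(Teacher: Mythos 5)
Your proof of a) is correct and amounts to the same content as the paper's: the paper computes $\Hom_U(M,N)\cong\Hom_Z(FM,FN)\cong\Hom_U(GFM,N)$ and invokes Yoneda, while you cite the standard fact that the counit of $G\dashv F$ is an isomorphism precisely when $F$ is fully faithful — these are two phrasings of the same argument. The faithfully flat half of b) is identical to the paper's, and your treatment of c) (compose $f$ with a $C$-linear isomorphism $U\cong Z$ to get a surjective $C$-endomorphism of $C^n$, then invoke weak finiteness) matches the paper's.

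The genuine gap is in the finitely generated half of b). Your argument — check surjectivity of $f$ after localizing $Z$ at each prime, base-change the epimorphism to the residue field $k$, deduce $\dim_k(U/\mathfrak{m}U)\le 1$ from $L\otimes_k L\cong L$, and finish with Nakayama — is a correct and clean proof when $Z$ is commutative. But the lemma places no commutativity hypothesis on $Z$, and in the only place the paper invokes it (Corollary \ref{cor_SchauenburgInfinite}, which feeds directly into c)), $Z=\dot Z$ is the algebra realizing a relative Drinfeld center, i.e.\ a Drinfeld-double type Hopf algebra, which is decidedly non-commutative. Localization at primes of $Z$, the passage to the residue field, and the Nakayama step over a commutative local base all quietly use commutativity of $Z$. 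The paper's proof of this half instead picks a simple quotient $Q\twoheadrightarrow Z/I$ of the cokernel $Q=U/f(Z)$ and pushes it through the right-exact functor $G$ to conclude $U\otimes_Z Z/I=0$; this route does not go through localization and is phrased so as to apply to a general algebra $Z$. Because your proof of c) explicitly invokes your finitely generated case of b) (after observing $U$ is finitely generated over $Z$ since $f|_C=\id$), the restriction to commutative $Z$ propagates into c), which is exactly where the lemma is needed for non-commutative $Z$. You should either replace the localization argument by one that does not use primes of $Z$ (e.g.\ the simple-quotient argument), or explicitly flag the commutativity assumption and explain why it suffices for the applications at hand.
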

\begin{proof}

a) Since $F$ is fully faithful we have $F:\Hom_U(M,N)\to \Hom_Z(F(M),F(N))$ is a bijection (surjectivity is the nontrivial statement). Then using the left-adjoint functor $G$
$$\Hom_U(M,N)\cong \Hom_Z(F(M),F(N))\cong \Hom_U(G(F(M)),N)$$
and the homomorphism is functorial in $M,N$. By using Yoneda's Lemma (or by applying this to the regular representation $N={U}$) we find a natural isomorphism $M \cong G(F(M))$. 

We remark that in algebraic terms this means that the canonical ${U}$-module map ${U}\to {U}_f\otimes_{Z} {_f}{U}$  is an isomorphism and such a ring morphism $f$ is called epimorphism of rings. The results given in b) are standard results on when epimorphisms of rings are actually ring surjections.

b) Now consider the exact sequence of ${Z}$-modules
$$Z\to F(U) \to Q\to 0$$
which under the right-exact functor $G$ maps to 
$${U} \xrightarrow{\cong} G(F({U})) \to G(Q) \to 0$$
where we already know the first map is an isomorphism by a), so this sequence is exact with $G(Q)=0$. We now want to argue that $Q=0$ turns the first sequence into an exact sequence and thus  $f:{Z}\to {U}$ is surjective as asserted. We do so  using two types of additional assumptions:

If we assume now $F(U)$ is a faithfully flat  $Z$-module, then exactness of the second sequence with $G(Q)=0$ implies exactness of the first sequence with $Q=0$. 

Now assume alternatively that the $Z$-module $Q$ is finitely generated. Then there is a surjection of $Z$-modules $Q\to Z/I$ for some ideal $I$. Hence $G(Q)$ surjects onto  $U_f\otimes_Z Z/I=U/f(I)\neq 0$ unless in the trivial case $U=f(Z)=f(I)$, so $f$ is surjective as asserted. 

%Epimorhphisms characterized by full subcat closed under products, coproducts, kernels and cokernels
%[GdP] P. GABRIEL, J. DE LA PEÑA, Quotients of representation-finite algebras, Comm. Algebra 15 (1987), no. 1-2, 279–307.
%[GL] W. GEIGLE, H. LENZING, Perpendicular categories with applications to representations and sheaves, J. Algebra 144, (1991), no.2, 273–343.Z)

c) Since by assumption $U$ is free $C$-module of finite rank and $f(Z) \supset C$ we have $F(U)$ finitely generated, and so by b) the algebra morphism $f:Z\to U$ is a surjection. Since by assumption $U,Z$ are free $C$-modules of equal finite rank $n$ we may identify $U=Z=C^n$ and view $f\in \End_C(C^n)$. Then the assumption on $C$ to be weakly finite (for example for $C$ commutative the Theorem of Vasconcelos \cite{V69}) precisely means that  $f$ surjective implies $f$ injective.

\end{proof}

To apply the previous Lemma, note that in many situations Hopf algebras are faithfully flat (or even free) over Hopf subalgebras, for example in the finite-dimensional case by the celebrated theorem of Nichols-Zoeller \cite{NZ}, or in the case $U$ is pointed by Radford \cite{Rad85}, or for $C$ commutative by results of Arkhipov-Gaitsgory \cite{AG}.

In our later setting of unrolled quantum groups (Section \ref{sec_quasiquantum})  we have to be careful since $\UU,\cZ,\CC$ are not the categories of all representations of Hopf algebras $U,Z,C$, but only the finite-dimensional representations with $C$ acting semisimple. However, we can consider the algebra $\dot{C}=\C^{\C^n}$ spanned by central idempotents $e_\lambda,\lambda\in\C^n$ with $\Rep(\dot{C})$ the category of $\C^n$-graded vector spaces, and correspondingly the algebras $\dot{Z}$ and $\dot{U}$, where the latter is studied in \cite[Chp. 23]{Lusz93}. Also, in this case all representations are direct sums of finite-dimensional representations, so any functor between the finite-dimensional representations can be extended additively.

\begin{corollary}\label{cor_SchauenburgInfinite}
 Assume that tensor categories $\UU,\BB=\UU_A,\CC=\UU_A^0,\cZ=\cZ_\CC(\BB)$ are realized as abelian categories by finite-dimensional representations over algebras $\dot{U},\dot{B},\dot{C}, \dot{Z}$, such that all representations are direct sums of finite-dimensional representations. If Assumption \ref{assumption} holds then the functor 

$$\Schauenburg:\UU\to \cZ_\CC(\BB) $$

can be realized as pullback of an algebra morphism $f:\dot{U}\leftarrow \dot{Z}$. 
%https://mathoverflow.net/questions/392914/functors-between-module-categories-that-comes-from-restriction

Assume further that $\dot{U}$ is a  free $\dot{C}$-module of finite rank, which is equal to the rank of $\dot{Z}$, then by the previous Lemma we have an equivalence of categories
$$\Schauenburg:\UU\stackrel{\sim}{\longrightarrow} \cZ_\CC(\BB). $$
\end{corollary}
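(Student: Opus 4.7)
\emph{Proof plan.} By Theorem~\ref{relcent} we already have at our disposal a fully faithful braided tensor functor $\Schauenburg:\UU\to \cZ_\CC(\BB)$. The corollary splits into two tasks: first, realize $\Schauenburg$ as the pullback along an algebra morphism $f:\dot Z\to \dot U$ restricting to the identity on the common subalgebra $\dot C$; second, invoke Lemma~\ref{lm:epimorphism}~c) to upgrade fully faithfulness into an equivalence.

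For the first task I would carry out a Tannakian-style reconstruction. Under the realization $\UU=\Rep(\dot U)$ with forgetful functor $\omega_U:\Rep(\dot U)\to \Vect$, the hypothesis that every representation is a direct sum of finite dimensional ones gives $\End(\omega_U)\cong \dot U$ (a natural endomorphism of $\omega_U$ is determined by its restriction to finite dimensional summands, and one recovers an element of $\dot U$ via the regular representation broken into finite dimensional cyclic pieces); the analogous statement holds for $\dot Z$. Next I would verify that the forgetful functors line up, namely $\omega_Z\circ \Schauenburg\cong \omega_U$, and moreover that this comparison is $\dot C$-linear: the inclusion $\CC\hookrightarrow \BB$ and the forgetful $\cZ_\CC(\BB)\to \BB$ that drops the half-braiding are realized by compatible algebra morphisms through which $\dot C$ sits as a common subalgebra of both $\dot U$ and $\dot Z$, and these compatibilities are inherited from the definition of $\Schauenburg(X)=(\induceA(X),b_{\bullet,X})$. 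Pulling back the identity $\Id\in \End(\omega_Z)$ along this isomorphism then yields a unital algebra homomorphism $f:\dot Z\to \dot U$ with $f|_{\dot C}=\id_{\dot C}$; multiplicativity of $f$ is forced by $\Schauenburg$ being a tensor functor, and by construction the pullback functor $(-)_f$ coincides with $\Schauenburg$.

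For the second task, the hypotheses of Lemma~\ref{lm:epimorphism}~c) are now in place: $\Schauenburg=(-)_f$ is fully faithful by Theorem~\ref{relcent}, both $\dot U$ and $\dot Z$ are free $\dot C$-modules of the same finite rank by assumption, $f$ restricts to the identity on $\dot C$, and $\dot C$ is commutative and therefore weakly finite. The lemma then forces $f$ to be an isomorphism, yielding the desired equivalence $\Schauenburg:\UU\stackrel{\sim}{\longrightarrow}\cZ_\CC(\BB)$ of braided tensor categories.

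The hard part will be the reconstruction step. Because $\dot U$ and $\dot Z$ are typically infinite dimensional (as in the unrolled quantum group examples of Section~\ref{sec:RelativeDrinfeldCentersGiveLocalization}), one cannot just invoke finite dimensional Tannaka duality. One has to exploit the hypothesis that every representation decomposes as a direct sum of finite dimensional summands in order to guarantee that $\End(\omega_U)$ genuinely returns $\dot U$ rather than some profinite completion, and one has to keep careful track of the various forgetful functors so as to certify that the resulting $f$ lands in $\dot U$ compatibly with the $\dot C$-module structures. Once this bookkeeping is done, the rest of the argument is a direct application of the previous lemma.
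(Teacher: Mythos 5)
Your overall strategy — Tannakian-style reconstruction of $f$ from the comparison of forgetful functors, then applying Lemma~\ref{lm:epimorphism}~c) — is the right reading of the corollary, and the paper itself does not spell out a proof of the ``realized as pullback'' assertion (it simply asserts it and cites the previous Lemma for the second part). So you are filling a genuine gap. That said, the filling is not yet watertight in the place you yourself flag as hard.

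The first substantive worry is the claim $\End(\omega_U)\cong\dot U$. Your recipe is to evaluate a natural endomorphism $\eta$ on the regular representation broken into finite-dimensional cyclic pieces and read off an element of $\dot U$ from the image of the unit. This needs (i) the regular left module $\dot U$ to decompose into finite-dimensional summands in the ambient category, (ii) $\dot U$ to have a genuine unit $1\in\dot U$ rather than only local units. In the model examples the paper has in mind (Lusztig's $\dot U$, $\dot C=\C^{\C^n}$), (ii) fails: $\dot U$ has a system of orthogonal idempotents $e_\lambda$ but $\sum_\lambda e_\lambda$ does not live in $\dot U$. Then $\End(\omega_U)$ is strictly larger than $\dot U$; for instance the family $\eta_M=\sum_\lambda c_\lambda e_\lambda|_M$ with arbitrary scalars $c_\lambda$ gives a natural endomorphism not of the form $u\cdot(-)$ for any $u\in\dot U$. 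Even granting a unital $\dot U$, (i) is delicate: $\dot C=\C^{\C^n}$ is already not a direct sum of finite-dimensional $\dot C$-modules, so the blanket hypothesis ``all representations are direct sums of finite-dimensional ones'' must be read with care (as a statement about a restricted class, not all modules), and your argument needs to be rerun inside that class. You correctly point to this as the hard part, but the gap is not closed — what you obtain by naive reconstruction is a profinite completion of $\dot U$, and some extra argument (likely using $\dot C$-linearity of $\eta$ and finiteness of $\dot U$ over $\dot C$) is needed to land in $\dot U$ itself.

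Two smaller points. The identification $\omega_Z\circ\Schauenburg\cong\omega_U$ is true but not via the naive forgetful functor on $\cZ_\CC(\BB)$: the underlying object of $\Schauenburg(X)=(\induceA(X),b_{\bullet,X})$ in $\BB$ is $A\otimes X$, whereas the forgetful functor determined by the realization $\cZ\cong\Rep^{fd}(\dot Z)$ goes through the Yetter--Drinfeld/Schauenburg description (Lemma~\ref{lm_relcenterSplitting}), where $\Schauenburg(X)$ corresponds to a YD-module with underlying object $X$ again. Your proof should make explicit which $\omega_Z$ is being used and why that one matches $\omega_U$. Finally, the assertion that ``multiplicativity of $f$ is forced by $\Schauenburg$ being a tensor functor'' is misplaced: $f$ is multiplicative because the action map $\dot Z\to\End(\omega_Z)$ is an algebra map and this transports along the natural isomorphism $\omega_Z\circ\Schauenburg\cong\omega_U$; the tensor structure of $\Schauenburg$ is needed if you want $f$ to respect coalgebra structure, which the corollary does not claim. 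The second half of your argument (invoking Lemma~\ref{lm:epimorphism}~c), with $\dot C$ commutative hence weakly finite) is correct and matches the paper's stated route.
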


Note that we do not have an immediate way to realize the relative Drinfeld center as $\Rep(\dot{Z})$, however in the splitting case we have an explicit presentation in terms of a Drinfeld double by Lemma \ref{lm_relcenterSplitting}.

\begin{problem}
Ultimately, one would want to work with a more categorical definition of relative finiteness. For example, one could view $\UU$ as a $\CC$-module category and require the existence (and freeness and finiteness, respectively) of the relative coend defined in \cite{BM21}. As a second step, it would be desirable to not have to require the rank equality for $\cZ,\cU$ but to derive it from the finite dimension of the algebra $A$, as in the finite case.
\end{problem}

\section{Splitting tensor functor to local modules}\label{sec_splitting}
\subsection{Definition}
%In this Section we introduce the notion of ``localization functors", turning a twisted module into a local module,  which will be used to express categories of modules over vertex algebras as catgories of modules over some (quasi-) Hopf algebra object in a semi-simple category. 

Let $\UU$ be a braided tensor category and $A\in \UU$ a commutative algebra with category of $A$-modules and local $A$-modules denoted by
\[ \BB:=\UU_A, \qquad \CC:=\UU_A^0\]
and denote by $\iota$ the inclusion of the full Serre subcategory $\iota: \CC \to \BB$. 

\begin{definition}\label{localization}
A \emph{(quasi-) splitting functor} to local modules is a 
 faithul (quasi-) tensor functor $\Localization:\BB \to \CC$ such that $\Localization\circ\iota=\id$.
\end{definition}

\begin{center}
\begin{tcolorbox}[colback=white, width=0.45\linewidth]
$$
%\hspace{0.8cm}
 \begin{tikzcd}[row sep=10ex, column sep=15ex]
   \mathbf{\UU} \arrow{r}{\mathrm{ind}_A}   
   \arrow[darkgreen, dotted]{dr}{}
   & \arrow[shift left=2]{l}{\mathrm{forget}_A}
   \arrow[shift left=2,darkgreen]{d}{\Localization}   \mathbf{\BB}&\hspace{-2.5cm}=\;\UU_A \\
   &\arrow[hookrightarrow, above]{u}{\iota}\arrow[below, shift left=2.3, dotted]{ul}{}
   \mathbf{\CC}&\hspace{-2.7cm} =\;\UU_A^0
    \end{tikzcd}
$$
\end{tcolorbox}
\end{center}

Recall from \cite{EGNO} that a functor is faithful if it is injective on morphism spaces, and a quasi tensor functor $\Localization$ is a functor between tensor categories such that $\Localization(X\otimes Y)\cong\Localization(X)\otimes \Localization(Y)$, whereas a tensor functor is a  functor together with a family of isomorphisms $\Localization^ T:\Localization(X\otimes Y)\cong\Localization(X)\otimes \Localization(Y)$ fulfilling the hexagon coherence axiom.

\begin{remark}\label{exm_localization_examples}
In many standard examples of commutative algebras the existence of such a splitting tensor functor to local modules fails:
\begin{enumerate}
    \item In the simple current extension associated to a lattice $\Lambda\subset \Gamma$ discussed in Example~\ref{exm_lattice}, the existence of a splitting tensor functor would amount to the splitting of the following exact sequence of abelian groups.
    $$1 \to \Lambda \to \Gamma \to \Gamma / \Lambda \to 1$$
    \item In the case of an orbifold of a finite group $G$ acting on $\CC$ and $\BB=\bigoplus_{g\in G} \CC_g$ with $\CC_e=\CC$, which was discussed in Example \ref{exm_orbifold}, the existence of a splitting tensor functor implies that all $\CC_g=\CC$ and moreover that the initial action of $G$ on $\CC$ was trivial.
    \item We can take so-called nontrivial liftings of a Nichols algebra $\Nichols$, see Section \ref{sec_Nichols}. Such a lifting is a filtered Hopf algebra $B \supset C$ with associated graded Hopf algebra $\Nichols \rtimes C$, where the latter has a category of representations admitting a splitting functor. Generic cases for such liftings appear prominently in \textup{\cite{AS10}}, for example for $u_q(\mathfrak{\sl_2})^\geq$ one may  replace the relation $E^p=0$ by $E^{p}=1-K^p$. A classification program for certain classes of Nichols algebras is found in \textup{\cite{AGI19}} and references therein.
\end{enumerate}
\end{remark}

\begin{remark}
    We briefly discuss different instances of ``quasi" appearing: 
    
    Suppose $\CC$ has a tensor functor or quasi-tensor functor to the category of vector spaces, then $\CC=\Rep(C)$ for a Hopf algebra or quasi-Hopf algebra $C$. In both cases, suppose $\Localization$ is a true tensor functor, then $\BB=\Rep(B)$ for a Hopf algebra or quasi-Hopf algebra $B$, which contains $C$ as a true quasi-Hopf subalgebra (so the coassociator is the same) and $\Nichols$ in the next section is a true Hopf algebra inside $\CC$.
    
    Suppose on the other hand that $\Localization$ is only a quasi-tensor functor, then $B$ contains $C$ as an algebra but the coassociator may be different, and $\Nichols$ in the next section may be a quasi-Hopf algebra inside $\CC$. 
\end{remark}

 \subsection{A criterion giving splitting functors}

The following criterion gives a hint why the extensions by commutative algebras that appear in the logarithmic Kazhdan-Lusztig correspondence all have splitting functors. 

\begin{lemma}\label{lm_socle}
Let $\iota:\CC \hookrightarrow \BB$ be a full Serre tensor subcategory, where $\CC$ is semisimple and $\BB$ consists of objects with finite Jordan-H\"older length. Assume that all simple objects of $\BB$ lie in $\CC$, then there exists a splitting tensor functor $\Localization:\BB\to \CC$.
\end{lemma}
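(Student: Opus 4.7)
The plan is to construct $\Localization$ using the socle (Loewy) filtration on each object of $\BB$, identifying it with its associated graded in $\CC$. For $X \in \BB$, build the filtration
\[
0 = X_0 \subset X_1 \subset X_2 \subset \cdots \subset X_n = X, \qquad X_i/X_{i-1} = \mathrm{soc}(X/X_{i-1}).
\]
The finite Jordan--H\"older length ensures termination; the assumption that every simple subquotient lies in $\CC$, together with semisimplicity of $\CC$, ensures each graded piece $X_i/X_{i-1}$ is already in $\CC$. Set
\[
\Localization(X) := \mathrm{gr}(X) = \bigoplus_{i} X_i/X_{i-1} \in \CC.
\]

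For functoriality, any morphism $f\colon X \to Y$ carries $\mathrm{soc}(X)$ into $\mathrm{soc}(Y)$, since socles are sums of simple subobjects and these go to simple-or-zero subobjects by the Serre hypothesis. Iterating on the quotients $X/X_i$ and $Y/Y_i$ gives $f(X_i) \subseteq Y_i$ for all $i$, hence an induced $\mathrm{gr}(f)\colon \Localization(X) \to \Localization(Y)$. The splitting property $\Localization \circ \iota = \id$ is immediate: for $X \in \CC$ semisimple the socle filtration has a single step, so $\Localization(X) = X$ naturally.

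For the tensor structure, the product filtration $F_k(X \otimes Y) := \sum_{i+j=k} X_i \otimes Y_j$ on $X \otimes Y$ has associated graded $\mathrm{gr}(X) \otimes \mathrm{gr}(Y)$, which already lies in $\CC$; since Jordan--H\"older composition factors are multiplicative under tensor products and $\CC$ is closed under tensor, by Jordan--H\"older one obtains a natural (quasi-)isomorphism $\Localization(X \otimes Y) \cong \Localization(X) \otimes \Localization(Y)$. Compatibility with the associator and unit is then a routine unwinding of the hexagon and triangle axioms inherited from $\CC$.

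The hard part, which I expect to be the real content of the lemma, is \emph{faithfulness}. The naive associated-graded functor typically kills morphisms that strictly lower the filtration level (think of the multiplication-by-$x$ endomorphism of $\mathbb{C}[x]/x^p$, which becomes zero on $\mathrm{gr}$), so the construction above gives a tensor functor but not manifestly a faithful one in the sense of Definition~\ref{localization}. The ``nilpotent'' setup alluded to in the surrounding paragraph should rescue this in one of two ways: either (a) one argues directly that under the Serre-tensor-subcategory hypothesis, together with closure of $\CC$ under subs and quotients and containment of all simples, a $\CC$-object splitting of the socle filtration exists canonically on all of $\BB$, upgrading $\mathrm{gr}$ to a genuine ``forget-structure'' functor; or (b) one performs a Tannakian-style reconstruction $\BB \cong \Rep(\Nichols)(\CC)$ for a Hopf algebra $\Nichols \in \CC$, after which $\Localization$ becomes the honest forgetful functor, which is faithful by construction. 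Either route uses the hypotheses in a more substantial way than the object-level construction above.
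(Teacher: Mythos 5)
Your construction is the same as the paper's: socle (Loewy) filtration on objects of $\BB$, $\Localization := \mathrm{gr}$, and the product filtration $\sum_{i+j=k} X_i \otimes Y_j$ (which is dominated by the socle filtration of $X\otimes Y$) to obtain the tensor structure morphism $\Localization^T$. The splitting property and functoriality you argue exactly as the paper does. So on the constructive side you are aligned.

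The interesting point is your faithfulness flag. You are right that the associated graded functor is \emph{not} faithful in general — your example of multiplication by $x$ on $\C[x]/x^p$ is exactly the paper's Example~\ref{exm_NicholsRank1}, and such a strictly filtration-lowering endomorphism dies on $\mathrm{gr}$. The paper's own proof never addresses faithfulness at all: it constructs $\mathrm{gr}$, observes $\Localization\circ\iota=\id$, and writes down $\Localization^T$, but Definition~\ref{localization} also requires $\Localization$ to be faithful (and, strictly, $\Localization^T$ must be an isomorphism, which the paper also leaves implicit). So you have found a gap the paper shares rather than solved. Your suggested repair~(b) — reconstructing $\BB\cong\Rep(\Nichols)(\CC)$ so that $\Localization$ becomes the honest forgetful functor — would be circular here, since Lemma~\ref{lm_infiniteSplittingIsHopfAlgB} takes the existence of a splitting tensor functor as a \emph{hypothesis}, not a consequence; and (a) would need a canonical $\CC$-splitting of every socle filtration, which is not automatic. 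The cleanest consistent reading is probably that faithfulness should not have been required in Definition~\ref{localization} (or should be demanded only of the tensor structure data), since the downstream uses (Lemma~\ref{lm_infiniteSplittingIsHopfAlgB} and Theorem~\ref{thm_CharacterizingUsingB}) never invoke faithfulness of $\Localization$ directly. Worth raising with the authors; as a matter of matching the paper's proof, your object- and morphism-level construction is exactly theirs, and the concern you raise is legitimate and unresolved by the paper.
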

\begin{proof}
Let $X \in \BB$, consider the  socle filtration $0=X_0 \subset X_1 \subset \hdots \subset X_n:=X$ with $X_{k+1}$ the preimage of the socle of $X/X_{k}$, and let $\mathrm{gr}(X)$ be the associated graded object, which is semisimple. If all simple objects in $\BB$ lie in $\CC$, then $\mathrm{gr}(-)$ gives rise to a functor $\Localization:\BB \to \CC$  satisfying $\Localization \circ \iota = \mathrm{Id}$. \\

It is clear that this functor has a canonical tensor structure: Indeed, if the objects $X,Y$ have socle filtrations    
$$0=X_0 \subset X_1 \subset \hdots \subset X_n:=X$$
$$0=Y_0 \,\subset\, Y_1 \, \subset \, \hdots \, \subset \, Y_n:=Y$$
then the tensor product has an induced filtration
$$0=X_0\otimes Y_0 \,\subset \, \cdots \,
\bigoplus_{i+j=k} X_i\otimes Y_j 
\, \cdots \, \subset \, X_n\otimes Y_n := X\otimes Y 
$$
and since any filtration refines to the socle filtration we have a canonical morphism 
$$\Localization^T:\; 
\left(\bigoplus_{i+j=k+1} X_i\otimes Y_j \right) /
\left(\bigoplus_{i+j=k} X_i\otimes Y_j \right)
\longrightarrow
(X\otimes Y)_{k+1} / (X\otimes Y)_k
$$
\end{proof}

Morally, one might say that orbifolds of semisimple vertex algebras by nilpotent algebras of screenings have splitting functors. Note that the argument would also work in other cases where we have filtrations in terms of local modules that are in some sense canonical (so that they are compatible with tensor products).

\begin{problem}
    Suppose that all simple objects in $\BB$ are in the image of $\CC$, without any assumption on semisimplicity, does this imply the existence of a splitting functor? 
\end{problem}

%Yetter-Drinfel'd modules and modules

\subsection{The Hopf algebra \texorpdfstring{$\Nichols$}{N}}

The standard situation where we have a tensor embedding $\iota:\CC\to \BB$ and a left inverse tensor functor $\Localization:\BB\to \CC$ is the category of representations of a Hopf algebra $\Nichols$ inside a braided tensor category $\CC$.

\begin{definition}
    A {bialgebra} $\Nichols$ inside a braided tensor category $\CC$ is an algebra in the tensor category $\CC$ as in Definition \ref{def:alg}
    $$\Nichols,\quad m:\Nichols\otimes \Nichols\to \Nichols,\quad u:1\to \Nichols$$
    which has at the same time the dual structure of a coalgebra
    $$\Nichols,\quad \Delta:\Nichols\to \Nichols\otimes \Nichols,\quad \epsilon:\Nichols\to 1$$
    and the two structures have the following compatibility
    %\footnote{They express that the coalgebra structures are algebra maps, or dually the algebra structures are coalgebra maps, where the tensor product of two (co-)algebras can be endowed again with the structure of a (co-)algebra in a braided tensor category.}
    involving the braiding $c_{\bullet,\bullet}$ of $\CC$
    \begin{align*}
        \Delta\circ m&=(m\otimes m)(\id\otimes c_{\Nichols,\Nichols}\otimes \id)(\Delta\otimes \Delta)\\
        \Delta\circ u&=u\otimes u \\
        \epsilon\circ m &= \epsilon\otimes \epsilon \\
        \epsilon\circ u &= \id_1
    \end{align*}
We illustrate the first identity in terms of string diagrams, which we read bottom to top 
\begin{center}
            \begin{grform}
                        \begin{scope}[scale = 0.5]
                                \dMult{1}{4}{2}{-1.5}{\grau}
                                \dMult{1}{0}{2}{1.5}{\grau}
                                %\vLine{0.5}{1.5}{0.5}{2.5}{\grau}
                                %\vLine{2.5}{1.5}{1.5}{2.5}{\grau}
                        \end{scope}
                        \vLine{1}{0.5}{1}{1.5}{\grau}
                        
                        \draw (0.5 , -0.3) node {$\Nichols$};
                        \draw (0.5 , 2.3) node {$\Nichols$};
                        \draw (1.5 , -0.3) node {$\Nichols$};
                        \draw (1.5 , 2.3) node {$\Nichols$};
                \end{grform}
                %%%%%%%%%
                = \hspace{.2cm}
                %%%%%%%%%
                \begin{grform}
                        \begin{scope}[scale = 0.5]
                                \dMult{0.5}{1.5}{1}{-1.5}{\grau}
                                \dMult{0.5}{2.5}{1}{1.5}{\grau}
                                \dMult{2.5}{1.5}{1}{-1.5}{\grau}{black}
                                \dMult{2.5}{2.5}{1}{1.5}{\grau}{black}
                                \vLine{0.5}{1.5}{0.5}{2.5}{\grau}
                                \vLine{2.5}{1.5}{1.5}{2.5}{\grau}
                                \vLineO{1.5}{1.5}{2.5}{2.5}{\grau}
                                \vLine{3.5}{1.5}{3.5}{2.5}{\grau}
                        \end{scope}
                        \draw (0.5 , -0.3) node {$\Nichols$};
                        \draw (0.5 , 2.3) node {$\Nichols$};
                        \draw (1.5 , -0.3) node {$\Nichols$};
                        \draw (1.5 , 2.3) node {$\Nichols$};
                \end{grform}
\end{center}
A bialgebra is called a Hopf algebra if there exists an antipode $S:\Nichols\to \Nichols$ such that 
$$m \circ (S\otimes \id) \circ \Delta=m \circ (\id\otimes S) \circ \Delta =\epsilon$$
In the following we always assume that the antipode is bijective.     
    \end{definition}
    
    \begin{definition}\label{def_RepB}
    The category of representations $\Rep(\Nichols)(\CC)$ of a bialgebra $\Nichols$ inside $\CC$ is the category of representations of the underlying algebra in Definition \ref{def:RepA}, so objects $X\in \CC$ together with an action $m_X:\Nichols\otimes X\to X$. For a bialgebra we can endow this category with a tensor product different from the tensor product over a commutative algebra: For $(X,m_X),\;(Y,m_Y)\in\Rep(\Nichols)(\CC)$ we define $X\otimes Y$ as object in $\CC$ by the tensor product in $\CC$, and endow it with the action (again not drawing associators)
    $$m_{X\otimes Y}:\Nichols\otimes (X\otimes Y) \xrightarrow{\Delta}
    (\Nichols\otimes \Nichols)\otimes (X\otimes Y)
    \xrightarrow{c_{\Nichols,X}} (\Nichols\otimes X)\otimes (\Nichols\otimes Y)
    \xrightarrow{m_X\otimes m_Y} X\otimes Y$$
\begin{center}
            \begin{grform}
                        \begin{scope}[scale = .7]
                                \dMult{-.5}{1}{1}{-1}{\grau}
                                \dAction{-0.5}{2}{2}{1}{\grau}{black}
                                \dAction{0.5}{1}{2}{1}{\grau}{black}
                                \vLine{-0.5}{1}{-0.5}{2}{\grau}
                                \vLine{1.5}{0}{1.5}{1.4}{black}
                                \vLine{1.5}{1.7}{1.5}{2}{black}
                                \vLine{2.5}{0}{2.5}{3}{black}
                                \draw (0 , -0.3) node {$\Nichols$};
                                \draw (1.5 , -0.3) node {$X$};
                                \draw (2.5 , -0.3) node {$Y$};
                                \draw (1.5 , 3.3) node {$X$};
                                \draw (2.5 , 3.3) node {$Y$};
                        \end{scope}
                        
                \end{grform}
    \end{center}
    and we endow the unit object $1\in\CC$ with the trivial action via $\epsilon$. \\

    If $\Nichols$ is a Hopf algebra and $\CC$ is rigid, then $\Rep(\Nichols)(\CC)$ is again rigid. The dual of an $\Nichols$-module is the dual object in $\CC$ and the action precomposed with the antipode. 
    \end{definition}

For the category of representations of a quasitriangular Hopf algebra $\CC=\Rep(C)$ we can write the tensor category $\Rep(\Nichols)(\CC)$ again as a category of representations of a Hopf algebra, namely the biproduct or smash product  due to Radford \cite{Rad85} and Majid \cite{Maj94}, which we denote $\Nichols\rtimes C$

    \newcommand{\forgetNichols}{\mathfrak{f}}%{\mathrm{forget}_\Nichols}
    \begin{lemma}\label{lm_RepBFunctors}
    In this situation we have tensor functors 
    \begin{align*}
        \Rep(\Nichols)(\CC) &\xrightarrow{\hspace{.37cm}\forgetNichols\hspace{.37cm}} \CC \\
        \cZ(\Rep(\Nichols)(\CC)) &
        \xleftarrow{(\epsilon,{c}^{-1})} \CC 
    \end{align*}
    where $\forgetNichols:\Rep(\Nichols)(\CC)\to \CC$ is the functor forgetting the the $\Nichols$-action and $\iota:\CC\to \Rep(\Nichols)(\CC)$ is the functor endowing an object $X\in \CC$ with the trivial $\Nichols$-action via $\epsilon$, and this functor extends to a functor $(\iota,{c}^{-1}):\CC\to \cZ(\Rep(\Nichols)(\CC))$ with the following half-braiding 
    %(compare Remark \ref{rem:halfbraiding} and Definition \ref{def_cent2}) 
    $$c_{\iota(X),N}^{-1}:\,  \iota(X)\otimes N\to N\otimes \iota(X)$$
    This half-braiding $c_{\iota(X),N}^{-1}$ is $\Nichols$-linear since the actions on both sides is 
    \begin{align*}
    m_{N\otimes \iota(X)}
    &=m_N\otimes \id_{\iota(X)}\\
    m_{\iota(X)\otimes N}
    &=(\id_{\iota(X)}\otimes m_N)(c_{\Nichols,\iota(X)}\otimes \id_N)
    \end{align*}
    and hence we have
    \begin{center}
        \begin{grform}
           \begin{scope}[scale = 1]
                %\dMult{-0.5}{1.5}{1}{-1.5}{\grau}
                \vLine{0}{0}{0}{2}{\grau}
                \vLine{1}{0}{2}{2}{black}
                \vLineO{2}{0}{1}{2}{black}
                %action leg are the first coordinates
                \dAction{0}{2}{1}{1}{\grau}{black}
                \vLine{2}{2}{2}{3}{black}
                \draw (0 , -0.3) node {$\Nichols$};
                \draw (1 , -0.3) node {$\iota(X)$};
                \draw (2 , -0.3) node {$N$};
                \draw (1 , 3.3) node {$N$};
                \draw (2 , 3.3) node {$\iota(X)$};
            \end{scope}  
        \end{grform}
        \quad=
        \begin{grform}
           \begin{scope}[scale = 1]
                \vLine{1}{0}{1}{.7}{black} %Little Hack to get horizontal braiding
                \dAction{0}{0}{2}{1.75}{\grau}{black}
                \vLine{1}{1}{2}{3}{black}
                \vLineO{2}{1}{1}{3}{black}
                \draw (0 , -0.3) node {$\Nichols$};
                \draw (1 , -0.3) node {$\iota(X)$};
                \draw (2 , -0.3) node {$N$};
                \draw (1 , 3.3) node {$N$};
                \draw (2 , 3.3) node {$\iota(X)$};
            \end{scope}  
        \end{grform} 
        \quad
    \end{center}
\end{lemma}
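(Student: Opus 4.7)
The lemma asserts two things: that the forgetful functor $\forgetNichols:\Rep(\Nichols)(\CC)\to \CC$ is a tensor functor, and that the trivial-action embedding $\iota:X\mapsto(X,\epsilon\otimes\id_X)$ lifts to a braided functor $\CC\to\cZ(\Rep(\Nichols)(\CC))$ with half-braiding given by the inverse braiding in $\CC$. The first claim is essentially built into Definition~\ref{def_RepB}: the tensor product on $\Rep(\Nichols)(\CC)$ is defined as the tensor product of underlying $\CC$-objects equipped with a diagonal action, so $\forgetNichols$ is strictly monoidal and nothing further needs to be verified.

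For the second claim, I would first check that $\iota(X)=(X,\epsilon\otimes\id_X)$ really is an $\Nichols$-module: the associativity condition becomes $(\epsilon\circ m)\otimes\id_X=(\epsilon\otimes\epsilon)\otimes\id_X$, which is just the bialgebra identity $\epsilon\circ m=\epsilon\otimes\epsilon$, and unitality reduces to $\epsilon\circ u=\id_{\one}$. I would then verify that $\iota$ is a tensor functor: the diagonal action on $\iota(X)\otimes\iota(Y)$ prescribed by Definition~\ref{def_RepB} is $\Delta$ followed by two copies of $\epsilon$, which by the counit axiom collapses to $\epsilon\otimes\id_{X\otimes Y}$, so $\iota(X)\otimes\iota(Y)=\iota(X\otimes Y)$ as $\Nichols$-modules and the tensor coherence is the identity morphism.

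The main step is to check that for each $\Nichols$-module $N$, the braiding isomorphism $c^{-1}_{\iota(X),N}$ in $\CC$ is actually an $\Nichols$-module morphism between the two actions given in the statement. These two actions are obtained by instantiating the formula of Definition~\ref{def_RepB} in the two possible orders of tensor factors: whenever a factor carries the trivial action via $\epsilon$, the corresponding half of the coproduct collapses. In the case $N\otimes\iota(X)$ this happens immediately, producing $m_N\otimes\id_X$, while in the case $\iota(X)\otimes N$ the braiding $c_{\Nichols,X}$ first moves $\Nichols$ past $X$, producing $(\id_X\otimes m_N)\circ(c_{\Nichols,X}\otimes\id_N)$. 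That $c^{-1}_{\iota(X),N}$ intertwines these two actions is then a short string-diagram manipulation whose core ingredient is naturality of the braiding, together with one use of a hexagon identity in $\CC$; this is exactly the identity exhibited graphically at the end of the lemma statement.

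Finally, naturality of $c^{-1}_{\iota(X),\bullet}$ in $N$ is inherited from naturality of the braiding in $\CC$, and the hexagon axiom for $(\iota(X),c^{-1}_{\iota(X),\bullet})$ as an object of $\cZ(\Rep(\Nichols)(\CC))$ reduces, under the faithful tensor functor $\forgetNichols$, to the hexagon identities for $c^{-1}$ in the braided category $\CC$. The only real content of the proof is the $\Nichols$-linearity calculation, but it is a routine diagram chase and I do not anticipate any serious obstacle.
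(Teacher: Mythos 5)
Your proposal is correct and matches the paper's approach: the paper essentially presents the proof inline in the lemma statement by computing the two $\Nichols$-actions and exhibiting the string-diagram identity that encodes $\Nichols$-linearity of $c^{-1}_{\iota(X),N}$, which as you observe follows from naturality of the braiding together with one hexagon. You spell out a few routine points (that $\iota(X)$ is an $\Nichols$-module, that $\iota$ is strict monoidal, and that the half-braiding hexagon descends under the faithful functor $\forgetNichols$) that the paper leaves implicit, but the core argument is the same.
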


\begin{example}[Rank 1]\label{exm_NicholsRank1}
    In the braided tensor categories $\CC=\Vect_\C$  with braiding 
    $$\C_\lambda\otimes\C_\mu\xrightarrow{e^{\pi\i(\lambda\mu)}} \C_\mu \otimes \C_\lambda.$$ 
    Let $X=\C_\alpha$ with generator $x$ in degree $\alpha=-\sqrt{2/p}$ (the sign is mere convention for later) for $p \in \Z_{\geq 1}$, so the self-braiding is $e^{2\pi\i/p}$. Then there is a Hopf algebra
    $$\Nichols=\C[x]/x^p$$
    $$\Delta(x)=1\otimes x+x\otimes 1,\qquad \epsilon(x)=0.$$
    We remark that this gives a well-defined Hopf algebra in $\CC$ precisely because the given self-braiding has order $p$, so all nontrivial Gau{\ss} binomial coefficients in the following computation vanish 
    $$\Delta(x^p)=\sum_{i=0}^p {p\choose k}_{e^{2\pi\i/p}} (x^k\otimes x^{p-k})
    =x^p\otimes 1+1\otimes x^p$$
    and the relation $x^p=0$ is compatible with the definition of $\Delta$. For $p\geq 2$ this is the simplest example of a Nichols algebra in the next section, and the Borel part of $u_q(\sl_2)$ at $q=e^{\pi\i/p}$. \\

    We discuss in this case the category of representations  $\BB=\Rep(\Nichols)(\CC)$: It consists of $\C$-graded vector spaces with an additional action of $x$ shifting the degree by $\alpha$ such that $x^p$ acts by zero. The simple modules are precisely the simple modules $\C_{\lambda,1}=\C_{\lambda}$ with trivial $\Nichols$-action $\epsilon(x)=0$. There is clearly a unique indecomposable module $\C_{\lambda,l}$ with top $\C_\lambda$ and lenght $0\leq l< p$, namely the one with action of $x$ (and at the same time Loewy diagram) given by  
    $$\C_\lambda\stackrel{x}{\longrightarrow}\C_{\lambda+\alpha}\stackrel{x}{\longrightarrow}\;\cdots\; \stackrel{x}{\longrightarrow} \C_{\lambda+\alpha(l-1)}$$
    and in particular $\C_{\lambda,p}$ is the indecomposabele projective cover of $\C_{\lambda,1}$. The tensor product in $\CC$ with the action via $\Delta(x)=1\otimes x+x\otimes 1$ is, for example (for $p>2$)
    \begin{align*}
    \C_{\lambda,l}\otimes \C_{\mu,1} &= \C_{\lambda+\mu,l}\\
    \C_{\lambda,2}\otimes \C_{\mu,2} &= \C_{\lambda+\mu,3}+\C_{\lambda+\mu+\alpha,1}
    \end{align*}
\end{example}

We now want to have statements ensuring conversely that a splitting tensor functor implies the existence of a bialgebra or Hopf algebra $\Nichols$ in $\CC$ with 
$$\BB=\Rep(\Nichols)(\CC)$$

\begin{example}[Finite Hopf algebra case]\label{exm_splittingIsHopfAlgB}
In the case where $\CC\cong \Rep(C)$ for a finite-dimensional (quasi-) Hopf algebra and $\BB$ a finite tensor category,  the existence of $\Nichols$ follows: By the reconstruction theorem $\BB=\Rep(B)$ for some (quasi-) Hopf algebra $B\supset C$ and the existence of $\iota$ implies $B=\Nichols \rtimes C$, since we can consider the image of the regular representation $\iota(C_{reg})$ as a $B$-module, which provides us with a projection of $B\to C$, and then apply the Radford projection Theorem \textup{\cite{Rad85}}. Here, $\Nichols$ is a Hopf algebra in $\cZ(\CC)$. If $\CC$ is braided and the functor $\iota$ gives rise to a functor $(\iota,\bar{c}^{-1}):\CC\to \cZ(\BB)$, then $\Nichols$ comes from an object in the braided tensor category $\CC$ (as can be seen from the coproduct in the Radford biproduct) 
\end{example}

\begin{problem}\label{prob_relativecoend}
We would expect in general that the existence of a tensor functor with a right inverse tensor functor $\CC\leftrightarrows \BB$ for finite categories implies the existence of a Hopf algebra $\Nichols\in\CC$ with $\BB=\Rep(\Nichols)(\CC)$. A possible construction for $\Nichols$ could be the relative coend of $\BB$ as a module category over $\CC$. 
\end{problem}

%{\it Ressources for reconstruction:
%Majid Section 3.2 (https://arxiv.org/pdf/q-alg/9509023.pdf): Reconstruct $\Nichols$ as representable object for the functor $V\mapsto Nat(V\otimes F,F)$.
%EGNO 1.8.8.-1.8.11: Says representability means existience of right-adjoint or left/right-exactness. 
%EGNO 7.9.1: For a general module category represented by an ind-object
%Mombelli: Reconstruct using relative coend of module category.}

 In an infinite-dimensional setting without using reconstruction statements, we will use the following setting and result. 

 \newcommand{\oneside}{{_{\Nichols}}}

\begin{lemma}\label{lm_infiniteSplittingIsHopfAlgB}
    Suppose that $\CC$ is a braided tensor category and $\BB$ is a tensor category with tensor functor $\iota:\CC\to \BB$ which extends to a braided functor $(\iota,c'): \bar{\CC}\to \cZ(\BB)$ and with a tensor functor $\Localization:\BB\to \CC$ with tensor structure $\Localization^T$ such that $\Localization\circ \iota=\id$ and $\Localization(c')=c^{-1}$.
    
    Suppose that $\Nichols$ is an algebra in $\CC$. Then the abelian category $\Rep(\Nichols)(\CC)$ is endowed with the  functor  $\forgetA:\Rep(\Nichols)(\CC)$ forgetting the $\Nichols$-action (and possibly tensor structure $\forgetNichols^T$) such that $\forgetNichols\circ \varepsilon=\id$. 
    
    Suppose we have an an equivalence $F$ of abelian categories that commute with the functors $\forgetNichols,\Localization$ to $\CC$  
    \begin{align*}\label{ModCatCondition}
          \xymatrix{   
          \Rep(\Nichols)(\CC)\ar[rr]^F \ar[dr]_{\forgetNichols}
          &&
          \BB \ar[dl]^{\Localization} 
          \ar@{}[dll]^(.25){}="a"^(.55){}="b" 
          \ar@{=>}_\eta "a";"b"\\
          &
          \CC
          &
          }
    \end{align*}
    in that we have a natural isomorphism $\eta_\bullet: \Localization\circ F\to \forgetNichols$.

    Both categories have the structure of a right $\CC$-module category: For $N\in \BB$ and $V\in \CC$ we define $N.V=N\otimes_\BB \iota(V)$. For $M\in \BB$ and $V\in \CC$ we define $M.V=\oneside\forgetNichols(M)\otimes_\CC V$, which is the $\CC$-object $\forgetNichols(M)\otimes_\CC V$ endowed with $\Nichols$ acting on the first factor only.
    Suppose now that $F$ has the additional structure of an equivalence of  $\CC$-module categories
    $$F^{T}:\, F(\oneside{\forgetNichols(M)\otimes_\CC V}) \stackrel{\sim}{\longrightarrow} F(M)\otimes_\BB \iota(V)$$
    for all $M\in \Rep(\Nichols)(\CC)$ and $V\in \CC$. Suppose that the image of this structure in $\CC$ under $\Localization$ is trivial in the sense that the following diagram involving the tensor structures $\Localization^T,\forgetNichols^T$ commutes
         \begin{equation}\label{ModCatCondition}
         \begin{split}
          \xymatrix{   
        \Localization(F(\oneside{\forgetNichols(M)\otimes_\CC V})) 
          \ar[rr]^{\Localization(F^T)} 
          \ar[d]^{\eta_{\oneside{\forgetNichols(M)\otimes_\CC V}}}
          &&
          \Localization(F(M)\otimes_\BB \iota(V))
          \ar[d]^{\Localization^T}
          \\
          \forgetNichols(\oneside{\forgetNichols(M)\otimes_\CC V})
          \ar[d]^{\forgetNichols^T}
           && 
          \Localization(F(M))\otimes_\CC \Localization(\iota(V))
          \ar[d]^{\eta_M\otimes \id}
          \\
          \forgetNichols(M)\otimes_\CC V 
          \ar[rr]^{=}
         &&
         \forgetNichols(M) \otimes_\CC V
             }
          \end{split}
          \end{equation}
              
    Then $\Nichols$ has the structure of  a bialgebra in the braided tensor category $\CC$ such that $F$ is an equivalence of tensor categories. \\
    
    If $\BB$ and $\CC$ are rigid and $\iota$ and $\Localization$ are functors between rigid tensor categories, $\Nichols$ has the structure of a Hopf algebra such that $F$ is an equivalence of rigid tensor categories.
 \end{lemma}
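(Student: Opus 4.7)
The plan is to transport the tensor product from $\BB$ to $\Rep(\Nichols)(\CC)$ along the equivalence $F$, show that this makes $\forgetNichols$ into a genuine tensor functor to $\CC$, and then read off a bialgebra structure on $\Nichols$ by a Tannakian-style reconstruction argument. Concretely, I would first define $M \otimes' N := F^{-1}(F(M) \otimes_\BB F(N))$, inheriting the associator and unit from $\BB$ through $F$. Applying $\forgetNichols$ and using the natural isomorphism $\eta: \Localization \circ F \cong \forgetNichols$ together with the tensor structure $\Localization^T$, one computes
\[
\forgetNichols(M \otimes' N) \cong \Localization(F(M) \otimes_\BB F(N)) \cong \Localization(F(M)) \otimes_\CC \Localization(F(N)) \cong \forgetNichols(M) \otimes_\CC \forgetNichols(N),
\]
so $\forgetNichols$ inherits at least a quasi-tensor structure. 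The role of the $\CC$-module compatibility hypothesis relating $F^T$, $\Localization^T$ and $\forgetNichols^T$ is precisely to ensure that this identification is strictly compatible with the right $\CC$-action; together with coherence of the associator in $\BB$, this upgrades $\forgetNichols$ to a genuine tensor functor.

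Next, I would extract the bialgebra data on $\Nichols$. The tensor unit $1_\BB = \iota(1_\CC)$ is carried by $F^{-1}$ to a tensor unit of $\Rep(\Nichols)(\CC)$ whose underlying $\CC$-object is $1_\CC$; the $\Nichols$-action on it is the desired counit $\epsilon: \Nichols \to 1_\CC$. For the coproduct, I would evaluate at the regular module: since $\Nichols \otimes' \Nichols$ has underlying $\CC$-object $\Nichols \otimes \Nichols$ by the computation above, its module structure map
\[
m_{\Nichols \otimes' \Nichols}: \Nichols \otimes (\Nichols \otimes \Nichols) \longrightarrow \Nichols \otimes \Nichols
\]
composed with the unit morphism on the second and third tensor factors defines
\[
\Delta := m_{\Nichols \otimes' \Nichols} \circ (\id_\Nichols \otimes (u \otimes u)): \Nichols \longrightarrow \Nichols \otimes \Nichols.
\]
Coassociativity of $\Delta$ and the counit axioms then follow from the pentagon and triangle coherence of $\otimes_\BB$ pulled back through $F$, combined with the universal property of the free module on $1_\CC$.

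The braided bialgebra axiom relating $\Delta$ and $m$ via the braiding $c$ of $\CC$ is the place where the braided part of the data genuinely enters. It follows by tracing through the naturality of the module action on $M \otimes' N$ at $M = N = \Nichols$ and using that $(\iota, c')$ is a \emph{braided} lift to $\cZ(\BB)$ together with the hypothesis $\Localization(c') = c^{-1}$; this guarantees that the diagonal action on a tensor product is implemented by $\Delta$ through the braiding of $\CC$ rather than through some other twist. The main obstacle I anticipate is precisely this careful matching of half-braidings: one must verify that the half-braiding $c'$ on the image of $\iota$ in $\cZ(\BB)$, when transported through $F$ and projected by $\Localization$, reproduces the braiding of $\CC$ used to define the diagonal action.

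Finally, for the rigid case, $F$ transports the rigidity of $\BB$ to $\Rep(\Nichols)(\CC)$, so in particular the regular module $\Nichols$ admits a dual there. Extracting the evaluation and coevaluation morphisms on $\Nichols$ and unwinding them through the bialgebra structure just constructed produces an antipode $S: \Nichols \to \Nichols$ satisfying the Hopf algebra axioms, and the equivalence $F$ is then automatically an equivalence of rigid tensor categories between $\Rep(\Nichols)(\CC)$ and $\BB$.
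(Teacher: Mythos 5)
Your proposal is correct and follows essentially the same route as the paper's proof: transport $\otimes_\BB$ to $\Rep(\Nichols)(\CC)$ via $F$, read off the counit from the $\Nichols$-action on $F^{-1}(\iota(1_\CC))$, extract the coproduct by evaluating the module structure of the transported tensor square of $\Nichols_{\mathrm{reg}}$ at the unit, and use rigidity to produce the antipode. You also correctly locate where each hypothesis enters, in particular that the compatibility condition on $F^T$, $\Localization^T$, $\forgetNichols^T$ is what upgrades the quasi-tensor structure on $\forgetNichols$ to a genuine one, and that the braided lift with $\Localization(c')=c^{-1}$ is what enforces the braided bialgebra axiom.
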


%Note that using the half-braiding $c'$ we have vice-versa a map $$\bar{F}^{T}:\, F({_\mathfrak{c}}V\otimes_\CC \oneside{\forgetNichols(M)}) \stackrel{\sim}{\longrightarrow} \iota(V) \otimes_\BB F(M) $$

Note that the additional conditions (\ref{ModCatCondition}) in the Hopf algebra intuition means that $\Nichols$ is the subalgebra of left coinvariants with respect to $\Localization$. Together with the centrality it implies that the reversed order $\iota(V)\otimes M\to V\otimes \Localization(M)$ is given by the braiding in $\CC$. Note that in the Radford biproduct there are other choices of subalgebras giving the same category, for example the right coinvariants, where the roles are reversed. \\

\begin{proof}
    All tensor products are in $\CC$ unless specified otherwise.

    First, we construct a functor that provides a notion of trivial $\Nichols$-action 
    $$\varepsilon:\; \CC \stackrel{\iota}{\longrightarrow} \BB \stackrel{F^{-1}}{\longrightarrow} \Rep(\Nichols)(\CC)$$
    and is by construction a right inverse to $\forgetNichols$ with the following natural transformation
    $$\eta_{F^{-1}(\iota(\bullet))}^{-1}:\; (\forgetNichols\circ \varepsilon)=(\forgetNichols\circ F^{-1} \circ \iota)\longrightarrow (\Localization \circ \iota)=\id$$
    In particular $\varepsilon(1_\BB)$ gives a $\Nichols$-module on the object $\Localization(1_\BB)=1_\CC$. We use this to define the counit $\epsilon:\Nichols\to 1_\CC$ which is my construction a $\CC$-morphism and an algebra morphism. We may now define for $V\in\CC$ the respective object $\V_\epsilon\in \Rep(\Nichols)(\CC)$ with action given by $\epsilon$. The previous natural transformation $\eta_{F^{-1}(\iota(V))}^{-1}$ gives an isomorphism $\varepsilon(V)\cong V_\epsilon$. \\ 

    Second, we use the tensor product in $\BB$ to induce a tensor product for $M,N \in \Rep(\Nichols)(\CC)$ via
    $$M\otimes_F N := F^{-1}\big(F(M)\otimes_\BB F(N)\big)$$
    which is a $\Nichols$-module with an underlying  $\CC$-object isomorphic to $\forgetNichols(M) \otimes \forgetNichols(N)$
    \begin{align*}
    \forgetNichols(F^{-1}(F(M)\otimes_\BB F(N)))
    \xrightarrow{\eta^{-1}_{F^{-1}(F(M)\otimes_\BB F(N))}}
    &\quad\Localization(F(M)\otimes_\BB F(N)) \\
    \xrightarrow{\qquad \ \ \ \Localization^T\ \ \ \qquad }
    &\quad\Localization(F(M))\otimes_\BB \Localization(F(M)) \\
    \xrightarrow{\quad \ \ \ \eta_M\otimes \eta_N\  \ \ \quad}
    &\quad\forgetNichols(M) \otimes \forgetNichols(N) 
    \end{align*}

    For any $\Nichols$-module $M$ the action map $\rho_M:\,\Nichols\otimes M\to M$ becomes a $\Nichols$-module-map if we take the regular representation $\Nichols_{reg}$ and the action on the first tensor factor only, i.e.~in our notation $\rho_M$  gives rise to a morphism in $\Rep(\Nichols)(\CC)$
$$\hat{\rho}_M:\,\oneside{\forgetNichols(\Nichols_{reg})\otimes \forgetNichols(M)}\to M$$
    The module category structure morphism $F^T$ allows us to express this as a $\BB$-morphism from the $\BB$-tensor product
$$%\hat{\rho}_M^\BB:\, 
%F(\Nichols_{reg})\otimes_\BB F(\varepsilon(\forgetNichols(M)))
F(\Nichols_{reg})\otimes_\BB \iota(\forgetNichols(M))
\xrightarrow{(F^T)^{-1}}
F(\oneside{\forgetNichols(\Nichols_{reg})\otimes \forgetNichols(M)})
\xrightarrow{F(\hat{\rho})}
F(M)
$$
 which is essentially $\rho$ on the level of $\CC$ in the following sense: From the assumed compatibility diagam for $F^T$ we get

 \begin{equation}
\begin{split}
          \xymatrix{   
        \Localization(F(M))
        \ar[d]^\eta
        &&
        \Localization(F(\oneside{\forgetNichols(\Nichols_{reg})\otimes\forgetNichols(M)})) 
          \ar[d]^{\eta}
        \ar[ll]^{\Localization(F(\hat{\rho}_M))\qquad}
          &&
          \Localization(F(\Nichols_{reg})\otimes_\BB \iota(\forgetNichols(M)))
          \ar[d]^{\Localization^T}
         \ar[ll]^{\Localization(F^T)^{-1}} 
          \\
          \forgetNichols(M)
          \ar[d]^{=}
          &&
          \forgetNichols(\oneside{\forgetNichols(M)\otimes \forgetNichols(M)})
          \ar[d]^{\forgetNichols^T}
          \ar[ll]^{\forgetNichols(\hat{\rho}_M)\qquad}
           && 
          \Localization(F(\Nichols_{reg}))\otimes \Localization(\iota(V))
          \ar[d]^{\eta\otimes \id}
          \\
          \forgetNichols(M)
          &&
          \forgetNichols(\Nichols_{reg})\otimes \forgetNichols(M)
                    \ar[ll]^{\rho_M\qquad}
         &&
         \forgetNichols(\Nichols_{reg}) \otimes \forgetNichols(M)
         \ar[ll]^{=}
             }
          \end{split}
\end{equation}
 After applying $F^{-1}$ on both sides and using $F^{-1}(\iota(\forgetNichols(M)))=\varepsilon(\forgetNichols(M))$ we get a $\Nichols$-morphism 
$$F(\Nichols_{reg})\otimes_F \varepsilon(\forgetNichols(M))\xrightarrow{\hat{\hat{\rho}}_M}  
M
$$
In particular, for $M=\Nichols_{reg}$ we get a $\Nichols$-morphism 
$$\Nichols_{reg}\otimes_F \varepsilon(\Nichols)
\xrightarrow{\hat{\hat{\rho}}_{\Nichols_{reg}}}
\Nichols_{reg}
$$
and the assumed compatibility diagram we shows that on the level of $\CC$-objects (and up to $\eta$ and tensor structures) this is the multiplication in $\Nichols$.\\

To construct the coproduct we now consider the tensor product $M=\Nichols_{reg} \otimes_F \Nichols_{reg}$ and the respective $\Nichols$-morphism 

$$\Nichols_{reg}\otimes_F \varepsilon(\Nichols\otimes \Nichols)
\xrightarrow{\hat{\hat{\rho}}_{\Nichols_{reg}\otimes_F \Nichols_{reg}}}
\Nichols_{reg}\otimes_F \Nichols_{reg}
$$
and precompose with  $e\otimes_F e$ for the unit $e:1_\CC\to \Nichols$ to define a $\Nichols$-morphism 
$$\Nichols_{reg}
\xrightarrow{\Delta}
\Nichols_{reg}\otimes_F \Nichols_{reg}
$$
The coassociativity follows from the coassociativity of the tensor product. Counitality follows from unitality of the tensor product $\otimes_F$. The braided bialgebra axiom is essentially that the map $\Delta$ above is a $\Nichols$-morphism between regular representations, as established. Let us argue more concretely:

Define the $\Nichols$-morphism 
\begin{align*}\mathrm{BiAx}:\,
(\Nichols_{reg}\otimes_F \Nichols_{reg})\otimes_F (\Nichols_\epsilon\otimes_F \Nichols_\epsilon)
&\xrightarrow{assoz}
\Nichols_{reg}\otimes_F (\Nichols_{reg}\otimes_F \Nichols_\epsilon) \otimes_F \Nichols_\epsilon\\
&\xrightarrow{\id\otimes F^{-1}(c')\otimes \id}
\Nichols_{reg}\otimes_F (\Nichols_{\epsilon}\otimes_F \Nichols_{reg}) \otimes_F \Nichols_\epsilon\\
&\xrightarrow{assoz}
(\Nichols_{reg}\otimes_F \Nichols_{\epsilon})\otimes_F (\Nichols_{reg}\otimes_F \Nichols_\epsilon)\\
&\xrightarrow{\hat{\hat{\rho}}_{\Nichols_{reg}}\otimes \hat{\hat{\rho}}_{\Nichols_{reg}}}
\Nichols_{reg}\otimes_F \Nichols_{reg}
\end{align*}
Then the bialgebra axiom on the level of elements reads:
  \begin{align*}
        \mathrm{BiAx}(\Delta(a)\otimes \forgetNichols(\Delta)(b))
        &= \mathrm{BiAx}(a.(1\otimes 1\otimes  \forgetNichols(\Delta)(b)))\\
        &= a.\mathrm{BiAx}(1\otimes 1 \otimes  \forgetNichols(\Delta)(b)) \\
        &= a.\Delta(b) \\
        &= \Delta(ab) 
    \end{align*}
If $\BB$ and $\CC$ are rigid and $\iota$ and $\Localization$ are functors between rigid tensor categories, then $\hat{\hat{\rho}}_{F^{-1}(F(\Nichols_{reg})^*)}$ again precomposed with the unit gives an antipode on $\Nichols$, so $\Nichols$ is a Hopf algebra.

%This concludes the proof that $\Nichols$ is a bialgebra. 
\end{proof}

\begin{remark}
Now suppose we are in the situation that $\BB=\UU_A$ is an extensions of a commutative algebra. Then we have a tensor functor $\iota:\CC\to \BB$ which can be extended to a functor $(\iota,\bar{c}^{-1}_{N,\bullet}):\CC\to \cZ(\BB)$ by Definition \ref{def_cent2}. 
\end{remark}

\section{Nichols algebras}\label{sec_Nichols}

\subsection{Definition and relevant examples}

Let $\CC$ be a braided tensor category and $X\in\CC$ an object. The tensor algebra
$$\mathfrak{T}(X):=\bigoplus_{n=0}^\infty X^{\otimes n}$$
becomes a Hopf algebra in $\CC$ if it is endowed with the coproduct $\Delta:\; X \to 1\otimes X+X\otimes 1$ with the identity in $X$ and with the counit $\epsilon$ defined to be zero in degree $\geq n$.
\begin{definition}
    The Nichols algebra $\NicholsOf(X)$ is an $\mathbb{N}$-graded Hopf algebra in $\CC$, which is a surjective image of the Hopf algebra $\mathfrak{T}(X)$, with the universal property that any other such surjective image of $\mathfrak{T}(X)$ factors over $\NicholsOf(X)$.
\end{definition}
There is also an alternative definition in terms of the quantum symmetrizer  \cite{Wor}. Nichols algebras are a crucial component in the classification of pointed Hopf algebras \cite{AS10}. Nichols algebras in $\CC=\Vect_{\Gamma}$ for $\Gamma=\C^n$ (or some other abelian group) have been classified in \cite{Heck09}, for nonabelian groups see \cite{HV14}. A comprehensive standard textbook for Nichols algebras is \cite{HS20}.
In the context of vertex algebras, the non-local screening operators are described by Nichols algebras in the category of representations of the vertex algebra \cite{Len21}. For an introduction to Nichols algebras and an introduction to screening operators, see for example \cite{FL22} Section 2 and 3. %For a categorical treatment of the reflection operator \cite{HS13} see \cite{BLS14} Section 4. In particular this gives in general non-isomorphic Nichols algebras whose Drinfeld centers are isomorphic. \\

Let $\CC=\Vect_{\Gamma}$ for $\Gamma=\C^n$ a vector space with an inner product $(\lambda,\mu)$, and define a braiding by $\sigma(\lambda,\mu)=e^{\pi\i(\lambda,\mu)}$ and a trivial associator. In a more general setting let $\Gamma$ be an abelian group endowed with a quadratic form $Q(\lambda)$ and associated bimultiplicative form $B(\lambda,\mu)$, then let $(\sigma,\omega)$  a choice of representing abelian $3$-cocycle for the abelian cohomology class defined by $Q$ \cite{Mac52}, so that a braiding is given by the $2$-cocycle $\sigma$ and the associator is given by the $3$-cocycle $\omega$, such that $Q(\lambda)=\sigma(\lambda,\lambda)$ and $B(\lambda,\mu)=\sigma(\lambda,\mu)\sigma(\mu, \lambda)$ and $\omega$ expresses to which extend $\sigma$ is not bimultiplicative.\\

Let $X=\C_{\alpha_1}\oplus \cdots \otimes \C_{\alpha_n}$ be any object in $\CC$, spanned by generators $x_i$ in degree $\alpha_i\in\Gamma$. The Nichols algebra $\NicholsOf(X)$ depends on the braiding matrix $q_{ij}=\sigma(\alpha_i,\alpha_j)$, and in fact it only depends on $q_{ii}=Q(\alpha_i)$ and $q_{ij}q_{ji}=B(\alpha_i,\alpha_j)$ up to twist equivalence.  Our main example in this article is the following:

\begin{example}[Quantum group]\label{exm_NicholsQG}
    Let $\g$ be a semisimple complex finite-dimensional Lie algebra with Cartan subalgebra $\h$ and Killing form $(-,-)$, and let $\Gamma=\h^*$, so $\CC=\Vect_\Gamma$ is the category of weight spaces, which we endow with the braiding $q^{(\lambda,\mu)}$ for some $q\in \C^ \times$. Let $\alpha_1,\ldots,\alpha_n$ be a choice of simple roots, take the object $X=\C_{\alpha_1}\oplus\cdots\oplus \C_{\alpha_n}$, which has a braiding matrix $q_{ij}=q^{(\alpha_i,\alpha_j)}$. Then the Nichols algebra $\NicholsOf(X)$ is the quantum Borel part of the (small) quantum group $u_q(\g)^+$. For $\g=\sl_2$ and $q^2 \neq 1$ this reproduces the Hopf algebra in Example \ref{exm_NicholsRank1}. For $\g=\sl_2$ and $q^2 = 1$ the Nichols algebra is $\C[x]$.
\end{example}

    Note that the Nichols algebra is independent on whether it is realized in the category $\CC=\Vect_\Gamma$ for $\Gamma$ some finite quotient of $\h^*$, corresponding to the Cartan part $u_q(\g)^0$ of the small quantum group,  or in the case $\Gamma=\h^*$ corresponding to the Cartan part $u^H_q(\g)^{0}$ of the unrolled quantum group. Note also that in Nichols algebra literature one usually considers a category of $\Gamma$-Yetter-Drinfeld modules instead of $\Vect_\Gamma^Q$ with a chosen quadratic form. The two are related by the canonical braided tensor functor
    $$\Vect_\Gamma^Q \longrightarrow \cZ(\Vect_\Gamma)=\YD{\C[\Gamma]}$$

    A second relevant class of examples comes from parabolic situations and is intimately related to the reflection theory of Nichols algebras, as discussed for example in \cite{CL17}:

    \begin{theorem}\label{thm_parabolic}
        Let $J\subset \{1,\ldots,n \}$ be a subset of simple roots and $X=X_J\bigoplus X_{\bar{J}}$. Then there is an isomorphism of Hopf algebras resp. an equivalence of tensor  categories 
        $$\NicholsOf(X)\cong \NicholsOf(\hat{X}_{\bar{J}})\rtimes \NicholsOf(X_J)$$
        $$\Rep(\NicholsOf(X))(\CC)\cong \Rep(\NicholsOf(\hat{X}_{\bar{J}}))(\Rep(\NicholsOf(X_J)(\CC))$$
        
        where $\hat{X}_{\bar{J}}$ is a Yetter-Drinfeld module over $\NicholsOf(X_J)$ (in the sense of Definition \ref{def_YD}) generated  by $X_{\bar{J}}$ under the adjoint action $\mathrm{ad}_{\NicholsOf(X_J)}(X_{\bar{J}})$. 
    \end{theorem}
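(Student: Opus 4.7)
The plan is to realize $\NicholsOf(X)$ as a braided Radford--Majid biproduct over the Hopf subalgebra $\NicholsOf(X_J)$ and then identify the other factor as the Nichols algebra of $\hat{X}_{\bar{J}}$. First I would construct a split Hopf algebra projection
\[
\iota:\NicholsOf(X_J)\hookrightarrow\NicholsOf(X)\xrightarrow{\pi}\NicholsOf(X_J),\qquad \pi\circ\iota=\id.
\]
The section $\iota$ is induced by the universal property of $\NicholsOf(X_J)$ applied to the composition $X_J\hookrightarrow X\hookrightarrow\NicholsOf(X)$; it is injective because the subalgebra of $\NicholsOf(X)$ generated by $X_J$ is graded, connected, and has primitives concentrated in degree one equal to $X_J$, so it is itself a Nichols algebra of $X_J$. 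The retraction $\pi$ is induced by the linear projection $X\twoheadrightarrow X_J$ killing $X_{\bar J}$: it extends by the universal property of $\mathfrak{T}(X)$ to a Hopf algebra surjection $\mathfrak{T}(X)\to\NicholsOf(X_J)$ and then descends to $\NicholsOf(X)$ because it sends each defining relation of $\NicholsOf(X)$ to one of $\NicholsOf(X_J)$.

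Given such a split projection in the braided category $\CC$, I would apply the braided version of the Radford--Majid biproduct theorem (due to Bespalov and Drabant): there is a Hopf algebra $R$ in the relative Yetter--Drinfeld category $\YD{\NicholsOf(X_J)}(\CC)$ together with a Hopf algebra isomorphism $\NicholsOf(X)\cong R\rtimes\NicholsOf(X_J)$ in $\CC$. Concretely $R$ is the subobject of right coinvariants $\{b\mid(\id\otimes\pi)\Delta(b)=b\otimes 1\}$, with YD action given by the adjoint action transported along $\iota$ and YD coaction $(\pi\otimes\id)\circ\Delta$. Since $\NicholsOf(X)$ is $\mathbb{N}$-graded, connected, and $\pi$ is a graded map, $R$ inherits the same properties.

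The main obstacle is to identify $R$ with the Nichols algebra $\NicholsOf(\hat X_{\bar J})$. For this I would invoke the standard characterization of Nichols algebras: a connected, $\mathbb{N}$-graded Hopf algebra in a braided category that is generated by its degree-one part and whose space of primitives equals that part must be the Nichols algebra of its degree-one component. The degree-one primitive coinvariants in $\NicholsOf(X)$ are exactly the degree-one elements $v$ with $\pi(v)=0$, namely $X_{\bar J}$; its YD-closure under the action of $\NicholsOf(X_J)$ is by construction $\hat X_{\bar J}=\mathrm{ad}_{\NicholsOf(X_J)}(X_{\bar J})$. Generation of $R$ by $\hat X_{\bar J}$ follows from the biproduct decomposition combined with the fact that $\NicholsOf(X)$ is generated in degree one by $X=X_J\oplus X_{\bar J}$. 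The equality of primitives and degree one in $R$ is inherited from the analogous statement in $\NicholsOf(X)$: a primitive of $R$ is skew-primitive in the biproduct, and the only skew-primitives of $\NicholsOf(X)$ not lying in $\NicholsOf(X_J)$ sit in degree one. This delicate primitives-count is where one genuinely uses that both $\NicholsOf(X)$ and the candidate $\NicholsOf(\hat X_{\bar J})$ are Nichols algebras, not merely pre-Nichols quotients of tensor algebras.

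The categorical statement follows formally from the biproduct decomposition: by the well-known module-category interpretation of bosonization, representations of a Radford biproduct $R\rtimes H$ in $\CC$ are equivalent to representations of $R$ in the tensor category $\Rep(H)(\CC)$, where the action of $R$ must respect its YD structure over $H$. Applying this with $H=\NicholsOf(X_J)$ and $R=\NicholsOf(\hat X_{\bar J})$ yields the equivalence
\[
\Rep(\NicholsOf(X))(\CC)\;\cong\;\Rep(\NicholsOf(\hat X_{\bar J}))\bigl(\Rep(\NicholsOf(X_J))(\CC)\bigr),
\]
completing the proof.
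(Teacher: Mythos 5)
The paper does not actually prove this theorem; it is stated as a citation, attributed to \cite{CL17}, and the underlying result is due to Andruskiewitsch–Heckenberger–Schneider and Heckenberger–Schneider on Nichols algebras of semisimple Yetter–Drinfeld modules \cite{AHS10,HS10,HS20}. Your sketch follows the standard route from that literature: a split Hopf algebra projection onto $\NicholsOf(X_J)$, Bespalov's braided Radford–Majid biproduct, and identification of the coinvariant Hopf algebra $R$ with $\NicholsOf(\hat{X}_{\bar{J}})$ by a recognition criterion.

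The identification step, however, has a genuine gap. You state that $R$ inherits the $\mathbb{N}$-grading of $\NicholsOf(X)$ and then identify the degree-one coinvariants as $X_{\bar{J}}$ (the degree-one elements killed by $\pi$). But if $R_1 = X_{\bar{J}}$, the recognition criterion would give $R \cong \NicholsOf(X_{\bar{J}})$ computed in $\CC$, not $\NicholsOf(\hat{X}_{\bar{J}})$ computed in $\YD{\NicholsOf(X_J)}(\CC)$; these are genuinely different objects with, in general, different dimensions, and the distinction is exactly what makes the parabolic decomposition nontrivial (compare Example \ref{exm_parabolic}, where $\hat{X}_{\bar{J}}$ is $n$-dimensional while $X_{\bar{J}}$ is one-dimensional). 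The grading that makes the argument work is not the total degree inherited from $\NicholsOf(X)$ but the $\bar{J}$-degree, i.e.~the restriction of the $\mathbb{N}^n$-multigrading to the $\bar{J}$-components: an element like $\mathrm{ad}(x_j)(x_{\bar{j}})$ with $j\in J$, $\bar{j}\in\bar{J}$ has total degree two but $\bar{J}$-degree one. Under the $\bar{J}$-grading, the degree-one part of $R$ is $\hat{X}_{\bar{J}}$, and the recognition criterion then gives what you want. Your invocation of ``YD-closure'' is trying to compensate, but the criterion you cite concerns the degree-one component of $R$ itself; one cannot enlarge it a posteriori. A second, related subtlety is that the primitives of $R$ are computed with respect to the Radford–Majid coproduct $\Delta_R$, not the restriction of $\Delta_{\NicholsOf(X)}$: an element of $\hat{X}_{\bar{J}}\setminus X_{\bar{J}}$ is not primitive in $\NicholsOf(X)$, yet is $\Delta_R$-primitive, and establishing this is precisely where the Yetter–Drinfeld structure on $\hat{X}_{\bar{J}}$ enters. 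Your sentence ``a primitive of $R$ is skew-primitive in the biproduct, and the only skew-primitives of $\NicholsOf(X)$ not lying in $\NicholsOf(X_J)$ sit in degree one'' conflates the two notions of primitivity (and, since $\NicholsOf(X)$ is connected, skew-primitives in it are just primitives); as written it only accounts for $X_{\bar{J}}$, not all of $\hat{X}_{\bar{J}}$.
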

    Differently spoken,  $\NicholsOf(\hat{X}_{\bar{J}})$ is a Nichols algebra in the category of representations of the quantum group associated to $J$. The root system of this Nichols algebra is given by a restriction of hyperplane arrangements, see \cite{CL17} Section 3. Note that even in the Lie algebra case the resulting root system can be a generalized root system, because the parabolic subsystem is in general not stable under reflections. See in \cite{CL17} the examples $A_2$ in Section 4.4., the example $E_7$  with a new root system in  Section 4.5 and the thoroughly discussed example $D(2,1|\alpha)$ in Section 5.3. 
    
    \begin{example}[$\sl(n|1)$ over $\sl_n$]\label{exm_parabolic}
        The Nichols algebra $\NicholsOf(X)$ of the object 
        $$X=X_{\alpha_1}\oplus \cdots \oplus X_{\alpha_{n-1}}\oplus X_{\alpha_n}$$ 
        of rank $n$ with braiding matrix 
        $$q_{ij}=\begin{pmatrix}[ccccc|c] 
        q^2 & q^{-1} & 1        &\cdots & 1 & 1 \\
        q^{-1} & q^2 & q^{-1}   &\cdots & 1 & 1 \\
        1 & q^{-1} & q^2 & \cdots & 1 & 1 \\
        \vdots & \vdots & \vdots & & \vdots & \vdots \\
        1 & 1 & 1 & \cdots & q^{2} & q^{-1} \\  
        \hline
        1 & 1 & 1 & \cdots & q^{-1}\vphantom{X^{X^X}} & -1 \\  
        \end{pmatrix}$$
        has a root system of type $A_{n-1|1}$ with ${n+1\choose 2}$ positive roots. 
        
        The roots $J=\{\alpha_1,\ldots,\alpha_{n-1}\}$ generate the root system $A_{n-1}$ and there are $n$ fermionic roots $\alpha_n,\,\alpha_n+\alpha_{n-1},\ldots$. Under the adjoint action these roots span a Yetter-Drinfeld module $\hat{X}_{\bar{J}}$ of rank $1$ (i.e.~simple) and dimension $n$ over the Nichols algebra 
        $$\NicholsOf(X_J)=u_q(\sl_n)^+,$$
        Correspondingly $\hat{X}_{\bar{J}}$  is the $n$-dimensional standard representation of $u_q(\sl_n)$. The Nichols algebra of $\hat{X}_{\bar{J}}$ is isomorphic to the exterior algebra of dimension $2^n$
        $$\NicholsOf(\hat{X}_{\bar{J}})=\bigwedge \hat{X}_{\bar{J}}.$$
    \end{example}

\subsection{Recognition statements}

Suppose $\Nichols$ is a Hopf algebra in a braided tensor category $\CC$, for now $\Vect_{\Gamma}^Q$ for $\Gamma=\C^n$ or more generally any abelian group $\Gamma$ and a quadratic form $Q$. In the context of our article, we encounter such $\Nichols$ from a splitting tensor functor $\BB\leftrightarrows \CC$. We want to give criteria that ensure that $\Nichols$ is indeed a Nichols algebra $\NicholsOf(X)$. We have a surprisingly simple and applicable argument by $\Gamma$-degrees: 

\begin{definition}[Sufficiently unrolled]\label{def_sufficientlyUnrolled}
Let $\NicholsOf(X)$ be the Nichols algebra of an object $X=\C_{\bar{\alpha}_1}\oplus\cdots \oplus \C_{\bar{\alpha}_n}$ in $\Vect_\Gamma$. The Nichols algebra is by definition $\mathbb N^n$-graded with generators in degree $\alpha_1,\ldots,\alpha_n$ which are the formal basis of $\mathbb N^n$. Denote $\mathrm{supp}(\NicholsOf(X))\subset \mathbb N^n$ the subset of degrees appearing. On the other hand the Nichols algebra as an object in $\Vect_\Gamma$ is $\Gamma$-graded, and we denote by a bar the additive map $\mathbb N^n\to \Gamma$ sending $\alpha_k$ to $\bar{\alpha}_k$.

We call the Nichols algebra $\NicholsOf(X)$  in $\Vect_\Gamma$ sufficiently unrolled, if $\bar{\alpha}+\bar{\beta}=\bar{\gamma}$ implies $\alpha+\beta=\gamma$ for all $\alpha,\beta,\gamma \in \mathrm{supp}(\NicholsOf(X))$.
\end{definition}

The main point of this definition is 

\begin{lemma}\label{lm_isNicholsFree}
Let $\Nichols$ be any Hopf algebra in $\Vect_\Gamma$ which is isomorphic to $\NicholsOf(X)$ as object in $\Vect_\Gamma$. If the latter is sufficiently unrolled then we have a Hopf algebra isomorphism $\NicholsOf(X)\to \Nichols$. 
\end{lemma}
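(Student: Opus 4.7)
The plan is to use the sufficiently unrolled hypothesis to upgrade the $\Gamma$-grading on $\Nichols$ to an $\mathbb N^n$-grading compatible with the Hopf algebra structure, and then invoke the universal property of the Nichols algebra in a dimension-counting argument.

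First, I would fix an object isomorphism $\psi\colon \NicholsOf(X)\to\Nichols$ in $\Vect_\Gamma$ and transport the canonical $\mathbb N^n$-grading of $\NicholsOf(X)$ to a decomposition $\Nichols=\bigoplus_{\gamma\in\mathrm{supp}(\NicholsOf(X))}\Nichols_\gamma$ refining the $\Gamma$-grading. The multiplication on $\Nichols_\alpha\otimes\Nichols_\beta$ lands in $\Nichols_{\bar\alpha+\bar\beta}=\bigoplus_{\gamma:\,\bar\gamma=\bar\alpha+\bar\beta}\Nichols_\gamma$, but the sufficient unrolling condition applied to $\bar\alpha+\bar\beta=\bar\gamma$ forces $\gamma=\alpha+\beta$, so only the single summand $\Nichols_{\alpha+\beta}$ survives. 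The same argument applied to every summand of $\Delta$ shows the comultiplication also respects the $\mathbb N^n$-grading; counit, unit, and antipode are handled analogously, using additionally that sufficient unrolling yields $\Nichols_{\bar 0}=\Nichols_0\cong \C$. Hence $\Nichols$ becomes an $\mathbb N^n$-graded Hopf algebra in $\Vect_\Gamma$.

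Next, let $x_i\in\Nichols_{\alpha_i}$ denote the images of the generators of $X$ under $\psi$. Since sufficient unrolling ensures that the only decompositions $\alpha_i=\mu+\nu$ with $\mu,\nu\in\mathrm{supp}(\NicholsOf(X))$ are $\alpha_i+0$ and $0+\alpha_i$, the coproduct $\Delta(x_i)$ lies in $\Nichols_0\otimes\Nichols_{\alpha_i}\oplus\Nichols_{\alpha_i}\otimes\Nichols_0$. Applying the counit (which vanishes on $\Nichols_{\alpha_i}$, since $\bar\alpha_i\neq\bar 0$ by sufficient unrolling) then pins the coefficients, giving $\Delta(x_i)=1\otimes x_i+x_i\otimes 1$, so each $x_i$ is primitive.

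By the universal property of the tensor algebra, the assignment $x_i\mapsto x_i$ extends to a Hopf algebra homomorphism $\phi\colon\mathfrak{T}(X)\to\Nichols$ in $\Vect_\Gamma$. Let $K=\ker\phi$ and $\Nichols'=\phi(\mathfrak T(X))\subseteq\Nichols$. Since $\phi$ is injective on $X$, the Hopf ideal $K$ satisfies $K\cap X=0$, so by the universal characterization of $\NicholsOf(X)=\mathfrak T(X)/I$ with $I$ maximal among such ideals, we have $K\subseteq I$ and therefore a surjection $\Nichols'\twoheadrightarrow\NicholsOf(X)$. Restricting to each $\mathbb N^n$-graded piece (finite-dimensional since $X$ is finite-dimensional) yields
\[ \dim\Nichols'_\gamma\;\geq\;\dim\NicholsOf(X)_\gamma\;=\;\dim\Nichols_\gamma\;\geq\;\dim\Nichols'_\gamma,\]
forcing equality throughout: $\Nichols'=\Nichols$, and the surjection $\Nichols\twoheadrightarrow\NicholsOf(X)$ is an isomorphism, whose inverse provides the desired Hopf algebra isomorphism $\NicholsOf(X)\to\Nichols$.

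The main obstacle, circumvented precisely by the sufficient unrolling hypothesis, is the refinement from the $\Gamma$-grading to the $\mathbb N^n$-grading: without it, the coproducts of the $x_i$ could a priori contain off-diagonal terms in $\Gamma$-degree $\bar\alpha_i$ coming from larger $\mathbb N^n$-degrees, obstructing primitivity and hence the invocation of the tensor-algebra universal property. Remark \ref{rem_lifting} indicates this genuinely can fail in the absence of the hypothesis.
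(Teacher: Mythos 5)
Your argument is correct and follows the paper's overall strategy: use the sufficiently unrolled hypothesis to promote the $\Gamma$-grading of $\Nichols$ to an $\mathbb N^n$-grading of Hopf algebras, deduce that the degree-one generators are primitive, then compare with $\NicholsOf(X)$ via a universal property of the Nichols algebra and a dimension count over the finite-dimensional graded pieces.

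The one substantive divergence is in how the universal property is invoked. The paper's sketch asserts a map $\NicholsOf(X)\to\Nichols$ directly and calls it injective; but the natural map produced by primitivity goes out of the tensor algebra $\mathfrak T(X)$, and it is not a priori clear that it annihilates the Nichols ideal $I$ (which is what one would need for it to descend to $\NicholsOf(X)$). Your version goes the other way: the kernel $K$ of $\phi\colon\mathfrak T(X)\to\Nichols$ is a graded Hopf ideal with $K_0=K_1=0$, hence $K\subseteq I$ by maximality, giving a surjection from $\Nichols'=\mathrm{im}\,\phi$ onto $\NicholsOf(X)$; the Hilbert-series identity $\dim\Nichols_\gamma=\dim\NicholsOf(X)_\gamma$ then squeezes this to an isomorphism and forces $\Nichols'=\Nichols$. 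This route uses the side of the universal property that genuinely applies, so it is the more careful formalization of the paper's terse sketch. One clarification worth inserting: for the inclusion $K\subseteq I$ you need $K$ to be an $\mathbb N^n$-graded Hopf ideal concentrated in degrees $\geq 2$, not merely $K\cap X=0$; this holds because $\phi$ is $\mathbb N^n$-homogeneous (it sends each $x_i$ to an element of degree $\alpha_i$, and $\Nichols$ carries the lifted grading), so its kernel is a graded ideal with $K_0=K_1=0$.
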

\begin{proof}
The condition of being sufficiently unrolled implies that the $\Gamma$-grading of $\Nichols$ as algebra and coalgebra lifts to an $\mathbb N^n$-grading. Such a grading implies for the generators of $X_{\alpha_k}$ in $\Nichols$ the coproduct
$$\Delta(x_k)=x_k\otimes 1+1\otimes x_k.$$
Hence we have a Hopf algebra map $\NicholsOf(X)\to \Nichols$, which by the universal property of the Nichols algebra is injective. Since both sides are $\mathbb N^n$-graded vector spaces with finite-dimensional homogeneous components (as the tensor algebra) it is already an isomorphism.
\end{proof}

Using deeper arguments on Nichols algebras and some knowledge from the classification of finite-dimensional Nichols algebras of diagonal type we have 

\begin{theorem}[\cite{AKM15} Theorem 6.3.]
Any filtered Hopf algebra $\Nichols$ with associated graded Hopf algebra $\mathrm{gr}(\Nichols)\cong \NicholsOf(X)$ with $\NicholsOf(X)$ a finite-dimensional Nichols algebra with diagonal braiding is actually graded and $\Nichols\cong \NicholsOf(X)$.
\end{theorem}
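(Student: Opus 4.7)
The plan is to follow the standard lifting strategy for Nichols algebras. First I would lift the generating object $X = \NicholsOf(X)_1$ to a subspace $V \subset \Nichols$ of primitive elements by choosing a splitting of the canonical surjection $F_1 \twoheadrightarrow \mathrm{gr}_1(\Nichols) = X$; since $X$ is primitive in $\NicholsOf(X)$ and $F_0 = \one$, every element of $F_1$ is automatically primitive in $\Nichols$. Because $\mathrm{gr}(\Nichols)$ is generated in degree one, $V$ generates $\Nichols$ as an algebra, so the universal property of the tensor algebra yields a surjective morphism of Hopf algebras $\pi : \mathfrak{T}(V) \twoheadrightarrow \Nichols$. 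Identifying $V$ with $X$, the goal becomes to show that $\pi$ factors through the canonical quotient $\NicholsOf(X)$; since $\mathrm{gr}(\Nichols) \cong \NicholsOf(X)$ is finite-dimensional, the resulting surjection $\NicholsOf(X) \twoheadrightarrow \Nichols$ is automatically an isomorphism and respects the grading, proving that $\Nichols$ itself is graded.

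For the factorisation I would invoke Angiono's explicit presentation of Nichols algebras of diagonal type: the kernel of $\mathfrak{T}(X) \twoheadrightarrow \NicholsOf(X)$ is generated by a finite list of homogeneous elements $r$ (quantum Serre relations, powers of root vectors, and a short list of further relations), each of some $\mathbb{N}$-degree $d_r > 1$ and some $\Gamma$-weight $\beta_r \in \Gamma$. Each such $r$ vanishes in $\mathrm{gr}(\Nichols)$, so $\pi(r) \in F_{d_r-1}$. A direct computation using that $r$ is skew-primitive modulo the relations of strictly smaller degree forces $\pi(r)$ to be primitive in $\Nichols$, hence to lie in $V \subset F_1$ and to have $\Gamma$-weight $\beta_r$. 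The crucial point is that $\beta_r$ is a strictly positive combination of at least two simple roots and therefore is never a weight of $X$ itself. Consequently $\pi(r) = 0$, and inducting on $d_r$ one obtains that $\pi$ factors through $\NicholsOf(X)$ as desired.

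The hard part is the primitivity-plus-weight argument in the previous step. In the broader setting of pointed Hopf algebras, non-trivial liftings genuinely exist: for instance the relation $E^p = 1 - K^p$ in the small quantum group of $\sl_2$ (see Example \ref{exm_localization_examples}) occurs precisely because the grouplike element $K^p$ provides a weight-matching partner for the would-be relation. The content of \cite{AKM15} Theorem 6.3 is that in the pure, grouplike-free setting of a Hopf algebra inside a braided tensor category, with no separate ``Cartan part'' available to absorb deformations, this escape route is closed. Implementing this rigorously amounts to a combinatorial verification over Angiono's classification list, tracking exactly which $\Gamma$-weights can occur at each filtered degree and confirming that the only primitive element of weight $\beta_r$ in $\Nichols$ is zero. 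This combinatorial step, resting on the full Heckenberger--Angiono classification of finite-dimensional Nichols algebras of diagonal type, is the technical heart of the proof.
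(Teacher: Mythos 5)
This statement is cited in the paper as Theorem 6.3 of \cite{AKM15}; the paper does not prove it, so there is no in-house proof to compare against. That said, your proposal has a genuine gap that is worth making explicit, because it is exactly the gap that distinguishes this theorem from the paper's elementary Lemma \ref{lm_isNicholsFree}.

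Your plan lifts $X$ to the primitives of $\Nichols$, obtains a surjection $\mathfrak{T}(X)\twoheadrightarrow\Nichols$, and, working up the filtration, argues that a relation $r$ of $\NicholsOf(X)$ maps to a primitive element $\pi(r)\in\Nichols$. So far so good: this induction is standard and the claim that $P(\Nichols)\subset F_1$ is forced because $\NicholsOf(X)$ has no primitives above degree one. The problem is the next step, where you write that the $\mathbb{N}^n$-degree $\beta_r$ of a relation is ``a strictly positive combination of at least two simple roots and therefore is never a weight of $X$ itself.'' This conflates the abstract $\mathbb{N}^n$-degree with the $\Gamma$-degree. The Hopf algebra $\Nichols$ is only $\Gamma$-graded (it is an object of $\Vect_\Gamma^Q$), so the weight that $\pi(r)$ is forced to carry is $\bar{\beta}_r=\sum_i m_i\bar{\alpha}_i\in\Gamma$, and this can perfectly well coincide with some $\bar{\alpha}_j$ when $\Gamma$ is small or the $\bar{\alpha}_i$ satisfy linear relations. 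Ruling out that coincidence is exactly the ``sufficiently unrolled'' hypothesis of Definition \ref{def_sufficientlyUnrolled}, and under that hypothesis the paper already has the much shorter Lemma \ref{lm_isNicholsFree} and has no need for \cite{AKM15}. The whole reason Theorem 6.3 of \cite{AKM15} is invoked (inside Lemma \ref{lm_isNichols}) is to cover the case where $\bar{\beta}_r=\bar{\alpha}_j$ can and does happen, i.e.~where the weight argument you propose gives no information because a nonzero primitive of weight $\bar{\beta}_r$ exists in $\Nichols$, namely $x_j$.

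Consequently the statement in your last paragraph, that in the ``grouplike-free'' braided setting the escape route for deformations is closed, is not accurate at the level of weights: removing the Cartan part kills deformations of the shape $E^p\mapsto 1-K^p$ (weight matching against a grouplike), but it does not kill deformations of the shape $r\mapsto c\,x_j$ (weight matching against a generator). The entire content of \cite{AKM15} is to rule out the latter by a case-by-case analysis over the Heckenberger classification. You do flag this analysis as ``the technical heart of the proof,'' but the earlier text presents the weight argument as if it settles the general case, which it does not; as written, your argument proves Lemma \ref{lm_isNicholsFree} but not the cited theorem.
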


This gives another criterion that we can apply whenever we know the category $\BB$ as abelian category. Note that the previous theorem is very strong and one could also use other types of assumptions to make it applicable.

\begin{lemma}\label{lm_isNichols}
Assume the degrees $\bar{\alpha}_1,\ldots,\bar{\alpha}_n$ are independent in $\Gamma$ in the sense that $0,\bar{\alpha}_1,\ldots,\bar{\alpha}_n$ are all different in $\Gamma$. Assume that as algebras $\Nichols\cong \NicholsOf(X)$. Then also as Hopf algebras $\Nichols\cong \NicholsOf(X)$.
\end{lemma}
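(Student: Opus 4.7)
Proof plan.  The strategy is to exhibit a Hopf algebra filtration on $\Nichols$ whose associated graded is isomorphic to $\NicholsOf(X)$ as braided Hopf algebras, and then invoke the rigidity theorem of \cite{AKM15} quoted just above Lemma \ref{lm_isNichols}.

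Write $\phi:\NicholsOf(X)\to \Nichols$ for the given algebra isomorphism in $\CC$, and set $y_i:=\phi(x_i)$, each of $\Gamma$-degree $\bar{\alpha}_i$. Since $\NicholsOf(X)_0=\C\cdot\one$, the algebra $\Nichols$ is $\Gamma$-connected; as a $\Gamma$-graded pointed coalgebra its coradical is $\C\cdot\one$, so the coradical filtration $C_0\subseteq C_1\subseteq\cdots$ of $\Nichols$ is well-defined.  Let $R:=\mathrm{gr}_c(\Nichols)$ be the associated graded Hopf algebra; by construction $R$ is a coradically graded braided Hopf algebra in $\CC$ with $R_0=\C$, $P(R)=R_1$, and with $\Gamma$-graded Hilbert series equal to that of $\NicholsOf(X)$ (because passing to the associated graded preserves $\Gamma$-graded dimensions, and $\Nichols\cong\NicholsOf(X)$ as $\Gamma$-graded vector spaces via $\phi$).

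The crux is to identify $R\cong\NicholsOf(X)$ as braided Hopf algebras in $\CC$.  The distinctness assumption on $0,\bar{\alpha}_1,\dots,\bar{\alpha}_n$ in $\Gamma$ forces the $\Gamma$-degree of every primitive of $R$ to equal one of the $\bar{\alpha}_i$, with multiplicity one, spanned by the class of $y_i$; consequently $P(R)\cong X$ as $\Gamma$-graded objects of $\CC$.  The braided sub-Hopf-algebra of $R$ generated by $P(R)$ is then a quotient of $\mathfrak{T}(X)$ whose space of primitives is exactly $X$, so by the universal property of Nichols algebras it equals $\NicholsOf(X)$; the matching $\Gamma$-graded Hilbert series forces this subalgebra to exhaust $R$, giving $R\cong\NicholsOf(X)$ as graded braided Hopf algebras.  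Finally, since $\CC=\Vect_\Gamma^Q$ realises $\NicholsOf(X)$ as a finite-dimensional Nichols algebra of diagonal braided type, Theorem~6.3 of \cite{AKM15} applied to the coradical filtration yields $\Nichols\cong\NicholsOf(X)$ as Hopf algebras.

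The main obstacle is the identification $R\cong\NicholsOf(X)$, namely showing (i) that $P(R)$ is one dimensional in each $\Gamma$-degree $\bar{\alpha}_i$ and zero in all other degrees, and (ii) that the subalgebra of $R$ generated by $P(R)$ is all of $R$.  Point (i) uses the distinctness assumption together with the fact that the images of $y_1,\dots,y_n$ in $R_1=C_1/C_0$ remain linearly independent (because they sit in pairwise distinct nonzero $\Gamma$-degrees) and span $P(R)$; point (ii) combines the universal property of Nichols algebras with the $\Gamma$-graded dimension count inherited from $\Nichols\cong\NicholsOf(X)$ as algebras.
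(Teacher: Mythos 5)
Your route is related to the paper's (both arrange to apply Theorem~6.3 of \cite{AKM15} to a Hopf filtration of $\Nichols$), but you choose the coradical filtration where the paper transports the $\mathbb N$-filtration from $\NicholsOf(X)$ along the given algebra isomorphism $\phi$ and then dualizes. This change introduces two genuine gaps.

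First, the claim that the coradical of $\Nichols$ is $\C\cdot\one$ is not justified. The assertion ``since $\NicholsOf(X)_0=\C\cdot\one$, the algebra $\Nichols$ is $\Gamma$-connected'' conflates the $\mathbb N^n$-grading with the $\Gamma$-grading. In the $\mathbb N^n$-grading the degree-zero part is $\C$, but under the map $\mathbb N^n\to\Gamma$ distinct monomial degrees can collapse: if some $\mu\in\mathrm{supp}(\NicholsOf(X))\setminus\{0\}$ has $\bar\mu=0$, then $\Nichols_0\supsetneq\C$ in the $\Gamma$-grading, and nothing you have said rules out extra grouplikes or a non-trivial simple matrix subcoalgebra sitting inside $\Nichols$. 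The distinctness assumption controls only $0,\bar\alpha_1,\dots,\bar\alpha_n$; it says nothing about degree collisions higher up. So connectedness of $\Nichols$ is exactly the kind of fact one has to \emph{earn}, and your argument assumes it.

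Second, even granting the coradical to be $\C$, the identification $P(R)\cong X$ does not follow as you claim. You argue that the classes of $y_1,\dots,y_n$ lie in $R_1=C_1/C_0$ and span $P(R)$, but $y_i$ need not be a primitive element of $\Nichols$: the $\Gamma$-degree constraint on $\Delta(y_i)$ allows terms in $\Nichols_\beta\otimes\Nichols_\gamma$ with $\beta,\gamma\neq 0$ and $\beta+\gamma=\bar\alpha_i$ in $\Gamma$, and such extra terms put $y_i$ outside $C_1$. Likewise, nothing prevents genuine primitives of $\Nichols$ (and hence of $R$) from appearing in degrees other than the $\bar\alpha_i$; the distinctness hypothesis only separates $0,\bar\alpha_1,\dots,\bar\alpha_n$, not arbitrary elements of $\mathrm{supp}(\NicholsOf(X))$. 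The assertion ``forces the $\Gamma$-degree of every primitive of $R$ to equal one of the $\bar\alpha_i$'' is therefore unsupported, and the subsequent Hilbert-series argument for $R\cong\NicholsOf(X)$ rests on it.

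The paper sidesteps both problems. By importing the total $\mathbb N$-grading from $\NicholsOf(X)$ via $\phi$, it gets an algebra filtration with $\Nichols_0=\C$ automatically. The distinctness of $0,\bar\alpha_1,\dots,\bar\alpha_n$ is used precisely to force the low-degree part of $\Delta(y_i)$ to be $1\otimes y_i+y_i\otimes1$, making $\Delta$ an $\mathbb N$-filtered map and hence $\Nichols$ an $\mathbb N$-filtered Hopf algebra. Dualizing turns this into a filtered \emph{algebra} $\Nichols^*$ with associated graded $\NicholsOf(X^*)$, which is exactly the hypothesis of the AKM15 rigidity theorem. If you want to rescue the coradical-filtration route, you would first need to establish, from the $\mathbb N$-filtration, that the coradical is $\C$ and that the two filtrations have the same associated graded Hopf algebra; at that point you are doing essentially the paper's argument with extra steps.
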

\begin{proof}
As a Hopf algebra the Nichols algebra $\NicholsOf(X)$ is $\mathbb{N}^n$-graded with $n=\dim(X)$, and the total degree gives in particular a $\mathbb{N}$-grading with $1$ in degree $0$ and $X$ in degree $1$. As a Hopf algebra in $\Vect_\Gamma$ both $\NicholsOf(X)$ and $\Nichols$ are graded by $\Gamma$. Our assumption on the $\Gamma$-degrees $\alpha_i$ implies that $\Delta(x_i)=1\otimes x_i+x_i\otimes 1$ plus elements in higher total $\mathbb{N}$-degree. Since $\Delta$ is multiplicative, this implies that $\Nichols$ is a $\mathbb{N}$-filtered Hopf algebra. \\

 We consider the dual Hopf algebra $\Nichols^*$ which is then a graded coalgebra and filtered algebra with associated graded Hopf algebra $\mathrm{gr}(\Nichols^*)\cong\NicholsOf(X^*)$. The Nichols algebra has the universal property that the canonical pairing $\NicholsOf(X)\otimes\NicholsOf(X^*)\to 1$ is a nondegenerate Hopf pairing. For a finite-dimensional Nichols algebra we have hence $\NicholsOf(X)^*\cong \NicholsOf(X^*)$. Now by the preceeding theorem we have already $\Nichols^*\cong \NicholsOf(X^*)$.\\
 
%This is precisely the problem discussed in Let $R\in \mathfrak{T}(X)$ be a relation $R=0$ in $\NicholsOf(X)$, then $R$ is a possibly non-trivial element in $\Nichols$. Because Nichols algebras arise by inductively quotienting out primitive elements, we can assume a largest non-zero $R$ to be primitive $\Delta(R)=1\otimes R+R\otimes 1$. This uses that checking the classification of finite-dimensional diagonal Nichols algebras and its generators and relations, there are no $\CC$-morphisms from the defining relations of a diagonal Nichols algebra to its space of primitive $X$. % and hence $\Nichols^*\cong \NicholsOf(X^*)$ and $\Nichols\cong \NicholsOf(X)$ as Hopf algebras in $\CC$.
\end{proof}

\begin{example}
Consider again the Nichols algebra $\NicholsOf(X)=\C[x]/x^p$ with $x$ in degree $\bar{\alpha}$ having self-braiding $Q(\bar{\alpha})=e^{2\pi\i/p}$ in $\Vect_{\Gamma}^Q$, where $\Gamma=\Z\bar{\alpha}/p\bar{\alpha}$ is the smallest possible quotient group. Let $\Nichols$ be a Hopf algebra of dimension $p$ in $\Vect_{\Gamma}^Q$ containing $x$ in degree $\bar{\alpha}$ and assume as algebras $\Nichols\cong \C[x]/x^p$. The coproduct is of the form
$$\Delta(x)=x\otimes 1 + 1\otimes x+\sum_{i+j=p+1} a_{ij} (x^i\otimes x^{j}).$$

%To be more explicitly below we compute using $[k]_q=\frac{q^ k-q^{-k}}{q-q^{-1}}$ with the self-braiding $q=e^{e\pi i/p}$
%\begin{align*}
%\Delta(x^n) &=\sum_{k+l=n} \frac{[k]_q!\,[l]_q!}{[n]_q!}(x^k \otimes x^ j)\\
%&+\sum_{i+j=p+1} a_{ij}
%\sum_{k'+l'+1+k''+l''=n}\left(
%\frac{[k']_q!\,[l']_q!}{[n']_q!}q^{(l'+j)k''}\frac{[k'']_q!\,[l'']_q!}{[n'']_q!}\right) \;(x^{k'+i+k''}\otimes x^ {l'+j+l''})
%\end{align*}
%where the last term has total degree$(k'+i+k'')+(l'+j+l'')=n+p$, in n %particular $\Delta(x^p)=x^p\otimes 1+1\otimes x^p$. \\

The dual Hopf algebra $\Nichols^*$ is spanned by $(x^n)^ *,\;0\leq n<p$, has a known coproduct with $\Delta^*(x^*)=x^*\otimes 1+1\otimes x^*$ and counit $\epsilon^*(\phi)=\phi(1)$, and a convolution product with possible extra terms depending on $a_{ij}$. Hence $\Nichols^*$ is a $\Z$-filtered algebra graded by the cosets in $\Z_p$, that is a product of elements in degree $n$ and $m$ is an element in degree $n+m$ with possible extra terms in degree $n+m-p\mathbb{N}$. In particular the convolution power $(x^*)^p$ may have a term proportional to $1^*$. But this would violate multiplicativity of $\epsilon^*$. Hence the coproduct has no additional terms and $\Nichols\cong\NicholsOf(X)$.\\

%More explicitly argued in our example: If we compute the convolution power $(x^*)^p$ then the additional terms come from having at least once a coproduct summand $a_{1p}(x^1\otimes x^p)$ and then $p$ coproducts on the right hand side, but such an element already has $p+1$ tensor factors $x$ (and $x^p=0$ anyhow). Note that the coproduct of $1$ has no additional terms of total degree $0+p$ by unitality. Hence we have unchanged $(x^*)^p=0$ and this already determines the algebra structure.  
\end{example}

\begin{remark}
It may seem puzzeling that the same problem is much easier over $\Vect_{\C^n}$ than $\Vect_\Gamma$. From the algebra side it is not so surprising, because an algebra is forced to be graded. From the vertex algebra side however it seems to be a trivial fact that the lattice vertex algebra contains a Heisenberg vertex algebra, and alongside the triplet algebra contains a singlet algebra. But on closer look this fact contains more structure, which might be most transparently formulated as an additional $\h$-action or $\h^*$-grading: The free field realization $\V$ carries an additional action of the Lie algebra $\h$, and orbifolding with it brings us from $\Vect_\Gamma$ to $\Vect_{\C^n}$. This action comes from the Heisenberg fields $\partial\phi_i$, which are in the kernel of the short screenings $\W$, so these are still acting on $\W$. This additional structure apparently suffices to establish the additional grading on $\Nichols$ in general.
\end{remark}

\subsection{Splitting statements}

In case we have no a-priori knowledge of a splitting functor, but in which we can realize $\BB=\Rep(B)$ for a Hopf algebra $B$, we can analyze $B$  in a similar way as the previous section. Assume $B\supset C$ are Hopf algebras and $\BB=\Rep^{wt}(B)$ and $\CC=\Rep^{wt}(C)$, where $\Rep^{wt}$ denotes representations that are finite-dimensional with $C$ acting semisimply and $C$ is cosemisimple. For example we can take $\CC=\Vect_{\C^n}$ with $C=\C[H_1,\ldots,H_n]$  or $\CC=\Vect_\Gamma$ with $C=\C[\Gamma]$ a finite group ring. 

Again the far easier case is the sufficiently unrolled case, in particular $\CC=\Vect_{\C^n}$.

\begin{corollary}\label{cor_isNicholsRadfordFree}
Assume that $B$ under the adjoint action of $C$ is sufficiently unrolled in the sense of Definition \ref{def_sufficientlyUnrolled}. Then we have a splitting tensor functor and  $B=\Nichols\rtimes C$.
\end{corollary}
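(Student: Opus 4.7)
The plan is to construct a Hopf algebra projection $\pi: B \twoheadrightarrow C$ splitting the inclusion $C \hookrightarrow B$, and then apply Radford's biproduct theorem \cite{Rad85} to conclude $B \cong \Nichols \rtimes C$ with $\Nichols := B^{\mathrm{co}\,\pi}$; the splitting tensor functor $\Localization: \BB \to \CC$ will then be restriction of scalars along $C \hookrightarrow B$.

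First I would equip $B$ with a compatible algebra and coalgebra $\Gamma$-grading coming from the adjoint action $\ad(c)(b) = \sum c_{(1)} b S(c_{(2)})$. Since $C$ is cosemisimple and cocommutative (for the intended examples $C = \C[\Gamma]$ or $C = \C[H_1,\ldots,H_n]$), the adjoint action is semisimple and produces a weight decomposition $B = \bigoplus_{\lambda \in \Gamma} B_\lambda$. The action is by algebra maps, so this is a grading of the algebra. Cocommutativity of $C$ implies that $\Delta_B$ intertwines $\ad$ on $B$ with the diagonal adjoint action on $B \otimes B$, so $\Delta(B_\lambda) \subseteq \bigoplus_{\mu+\nu=\lambda} B_\mu \otimes B_\nu$. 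Obviously $C \subseteq B_0$.

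The sufficiently unrolled hypothesis, interpreted in the spirit of Definition \ref{def_sufficientlyUnrolled} for $B$ in place of $\NicholsOf(X)$, asserts that the $\Gamma$-valued degrees in the support of $B$ lift uniquely and additively to $\mathbb{N}^n$. Exactly as in the proof of Lemma \ref{lm_isNicholsFree}, this refines the $\Gamma$-grading to an $\mathbb{N}^n$-grading of $B$ compatible with multiplication (by additivity of the lift) and, crucially, with comultiplication (by uniqueness of the lift on sums of degrees in the support). Thus $B$ becomes an $\mathbb{N}^n$-graded Hopf algebra whose degree-zero component must equal $C$. The projection $\pi: B \to B_0 = C$ is then a bialgebra surjection restricting to $\id_C$, and automatically commutes with the antipode since it is a bialgebra map between Hopf algebras with bijective antipodes.

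At this stage Radford's projection theorem provides $B \cong \Nichols \rtimes C$ with $\Nichols$ a Hopf algebra in the braided Yetter--Drinfeld category $\YD{C}$. The $\Gamma$-grading of $\Nichols \subset B$ together with its residual adjoint $C$-action places $\Nichols$ in the full braided subcategory $\CC = \Vect_\Gamma^Q \hookrightarrow \YD{C}$ cut out by the given quadratic form. Finally the splitting tensor functor $\Localization: \Rep^{wt}(B) \to \Rep^{wt}(C)$ is simply restriction of action along $C \hookrightarrow B$; this is a tensor functor because $\Delta_C$ is the restriction of $\Delta_B$, and $\Localization \circ \iota = \id_\CC$ where $\iota = \pi^{*}$, since $\pi|_C = \id_C$. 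The main obstacle in the argument is the coalgebra compatibility of the $\mathbb{N}^n$-refinement: while the algebra grading lifts easily by additivity, lifting the coproduct requires the uniqueness clause of sufficiently unrolled, exactly mirroring its original use in Lemma \ref{lm_isNicholsFree}.
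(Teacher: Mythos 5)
Your proof is correct and follows essentially the same route as the paper's own (very terse) proof: use the adjoint $C$-action to produce a $\Gamma$-grading of $B$ as a Hopf algebra, invoke the sufficiently unrolled hypothesis to lift it to an $\mathbb{N}^n$-grading with $C$ concentrated in degree zero, observe that the degree-zero projection is then a Hopf algebra retraction onto $C$, and conclude via Radford's projection theorem. You spell out the coalgebra compatibility of the grading (via cocommutativity of $C$) and the role of the uniqueness clause in lifting the coproduct, which the paper leaves implicit behind the phrase ``coradically graded''; you omit the paper's one-line alternative via Lemma \ref{lm_socle}, but that is a separate argument, not a gap in yours.
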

\begin{proof}
$B$ is a $\mathbb{N}$-graded Hopf algebra with cosemisimple Hopf algebra $C$, so it is coradically graded. Hence $B$ and thus it is a Radford biproduct. \\

Alternatively we could argue that by grading that  all simple modules are local and we can apply Lemma \ref{lm_socle}.
\end{proof}

We can proceed to a more general case if we know the algebra structure:

\begin{lemma}\label{lm_isRadfordNichols}
Assume $B$ is a Hopf algebra, which is isomorphic as an algebra to $\NicholsOf(X)\rtimes C$ as finite-dimensional algebras in $\CC$, and which contains $C$ as a Hopf subalgebra, and assume the degrees $\bar{\alpha}_1,\ldots,\bar{\alpha}_n$ of a basis $x_1,\ldots,x_n$ of $X$ are independent in the sense that never  $\bar{\alpha}_i+\bar{\alpha}_j=\bar{\alpha}_k$ or $0$. Then $B=\NicholsOf(X)\rtimes C$ as Hopf algebras.
\end{lemma}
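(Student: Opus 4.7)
The plan is to apply Radford's biproduct decomposition theorem to $B$ and then identify the resulting ``Nichols part'' with $\NicholsOf(X)$ by invoking the preceding Lemma \ref{lm_isNichols}. First, using the $\Gamma$-graded algebra isomorphism $\phi : \NicholsOf(X) \rtimes C \xrightarrow{\sim} B$ (which, after composing with an algebra automorphism of $C$, I may arrange to restrict to the identity on $C\subset B$), I transport the $\mathbb N$-grading of the Nichols algebra to $B$, obtaining $B = \bigoplus_n B_n$ with $B_0 = C$ and $B_1 = X \cdot C$. This $\mathbb N$-grading is compatible with the $\Gamma$-grading on $B$ coming from the adjoint action of $C$; under the latter, $x_i$ sits in degree $\bar\alpha_i$ and $C$ in degree $0$.

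Next, I define $\pi : B \to C$ as the algebra projection along the two-sided ideal $\phi(\NicholsOf(X)^+)\cdot C$, and claim that it is a Hopf algebra map. Multiplicativity, unit and counit compatibility are immediate from the construction; for the coalgebra axiom, it suffices to check on algebra generators. On $C$ the map $\pi$ restricts to the identity, and for $x_i$ one needs $(\pi\otimes\pi)\Delta_B(x_i) = 0$. Such a component would lie in $C \otimes C$, which is of $\Gamma$-degree $0$, whereas $\Delta_B(x_i)$ has total $\Gamma$-degree $\bar\alpha_i$ by $C$-equivariance of $\Delta_B$, and $\bar\alpha_i\neq 0$ by the independence hypothesis (taking $j=k=i$ in the condition $\bar\alpha_i+\bar\alpha_j\neq\bar\alpha_k$). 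Hence this component vanishes.

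By Radford's biproduct theorem one then obtains $B \cong B^{\mathrm{co}\pi}\rtimes C$ as Hopf algebras, where $B^{\mathrm{co}\pi} := \{b \in B : (\mathrm{id}\otimes\pi)\Delta_B(b) = b\otimes 1\}$ is a Hopf algebra in $\YD{C}$, and in fact in $\CC$ once one verifies that the Yetter-Drinfeld datum factors through the given braided structure on $\CC$. Applying the projection formula $b \mapsto b_{(1)}\, S(\pi(b_{(2)}))$ to each generator $x_i$ yields a degree-$1$ element of $B^{\mathrm{co}\pi}$ of $\Gamma$-degree $\bar\alpha_i$; by a dimension count $B^{\mathrm{co}\pi}$ has dimension $\dim\NicholsOf(X)$, and the relations inherited from the algebra structure of $B$ match those of $\NicholsOf(X)$, so $B^{\mathrm{co}\pi}\cong\NicholsOf(X)$ as algebras in $\CC$. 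Lemma \ref{lm_isNichols} then upgrades this algebra isomorphism to a Hopf algebra isomorphism in $\CC$, and combining with the Radford biproduct gives $B\cong\NicholsOf(X)\rtimes C$ as Hopf algebras.

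The main obstacle is the algebra identification $B^{\mathrm{co}\pi}\cong\NicholsOf(X)$ in the penultimate step: although $B\cong\NicholsOf(X)\rtimes C$ as algebras via $\phi$, the subalgebra $B^{\mathrm{co}\pi}$ need not coincide with $\phi(\NicholsOf(X))$, since different sections of $\pi$ yield different $\NicholsOf(X)$-type complements of $C$ in $B$. Verifying that the projection formula genuinely produces a copy of $\NicholsOf(X)$ --- rather than some algebra deformation --- will require simultaneous use of the $\mathbb N$-grading, the $\Gamma$-grading and the independence hypothesis, and is the most delicate part of the argument.
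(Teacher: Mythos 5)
Your proposal takes a genuinely different route from the paper's. The paper passes to the Hopf dual $B^*$ and observes that the degree hypothesis forces $\Delta_B(x_i)\in C\otimes x_i + x_i\otimes C$ plus terms of higher total degree, so that $B$ is a \emph{filtered} Hopf algebra with $\mathrm{gr}\,B=\NicholsOf(X)\rtimes C$; the conclusion then follows from the graded-versus-filtered rigidity of \cite{AKM15} in the spirit of Lemma~\ref{lm_isNichols}. Your argument instead constructs a Hopf algebra projection $\pi:B\to C$ directly and invokes Radford's biproduct theorem. The construction of $\pi$ is fine, and the $\Gamma$-degree argument showing $(\pi\otimes\pi)\Delta_B(x_i)=0$ is correct (since $C$ sits in $\Gamma$-degree $0$ under the adjoint action). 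The price you pay for this route is that you then have to identify the resulting coinvariants with $\NicholsOf(X)$, and that is where the argument stops.

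The gap you flag in your last paragraph is real and it is the crux of the lemma, not a loose end. Radford's theorem gives $B\cong B^{\mathrm{co}\pi}\rtimes C$ where $B^{\mathrm{co}\pi}$ is cut out of $B$ by a condition involving the \emph{unknown} coproduct $\Delta_B$; a priori you only get a $\CC$-object isomorphism $B^{\mathrm{co}\pi}\cong B/BC^+\cong\NicholsOf(X)$, with no control on the algebra structure of $B^{\mathrm{co}\pi}$. Your dimension count and the observation that the elements $(x_i)_{(1)}\,S\pi((x_i)_{(2)})$ land in the right $\Gamma$-degree do not establish that the relations of $\NicholsOf(X)$ hold in $B^{\mathrm{co}\pi}$. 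Note that the hypothesis here is deliberately weaker than ``sufficiently unrolled'' (Definition~\ref{def_sufficientlyUnrolled}); if it were sufficiently unrolled, Lemma~\ref{lm_isNicholsFree} would close the gap from the $\Gamma$-graded dimension alone, and this lemma would be superfluous. What is actually needed is to show that the transported $\mathbb{N}$-filtration makes $B$ a filtered Hopf algebra with $\mathrm{gr}\,B\cong\NicholsOf(X)\rtimes C$ and to then invoke the rigidity result of \cite{AKM15} as Lemma~\ref{lm_isNichols} does, so that the filtration is a grading. You supply the raw material for this (the $\mathbb{N}$-grading, its compatibility with the $\Gamma$-grading, the degree constraint on $\Delta_B(x_i)$) but do not run the argument, so the proof as written is incomplete.
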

\begin{proof}
Consider the (Hopf-)dual Hopf algebra $B^*$. Our assumption amounts to $B^*=\Nichols^*\rtimes C^*$ as coalgebra and our additional assumption amounts to $\Delta(x_i)\in C\otimes x_i+x_i\otimes C$ plus additional terms in total degree $\geq 2$. Hence $B$ is a filtered Hopf algebra with associated graded Hopf algebra $\mathrm{gr}B=\Nichols\rtimes C$. Hence all simple modules are local and we can apply Lemma \ref{lm_socle}. 
\end{proof}

\begin{remark}\label{rem_lifting}
For not sufficiently unrolled cases the question above is precisely the question of liftings of a Nichols algebra \textup{\cite{AI19}}. For example the preceeding Lemma also follows from the result in  that all nontrivial liftings of a Radford biproduct of a finite-dimensional Nichols algebra $\NicholsOf(X^*)$ and a group ring $C^*$ are $2$-cocycle deformations of the trivial lifting, hence the associated category of comodules are equivalent, which relies on the lifting method and case-by-case analysis on the classification of finite-dimensional diagonal Nichols algebras. However, in our case we already assume that the algebra structure is still graded, which seems to be close to the assertion that $C^*$ is a $2$-cocycle deformation. The first counterexamples are found for liftings of Nichols algebras associated to Jordan planes \textup{\cite{AAH18,AAH22}}.
\end{remark}

\renewcommand{\catM}{\UU}

\section{The relative center in the splitting case}\label{sec:RelativeDrinfeldCentersGiveLocalization}

\subsection{Yetter-Drinfeld modules in a braided tensor category}

For $\BB=\Rep(B)$ the category of representations of a Hopf algebra $B$, the Drinfeld center $\cZ(\BB)$ was initially studied as the category of representations of a particular quasitriangular Hopf algebra called Drinfeld double $D(B)$, and a different formulation are the $B$-$B$-Yetter-Drinfeld modules (or crossed modules), which are modules and comodules of $C$ with a certain compatiblility condition.  They have been generalized in \cite{Besp95} to Hopf algebras $\Nichols$ inside a braided tensor category $\CC$, see also \cite{BLS14} Section 2 and 3: 
\begin{definition}\label{def_YD}
Let $\Nichols$ be a Hopf algebra in a braided category $\CC$. A $\Nichols$-$\Nichols$-\emph{Yetter-Drinfeld module} $X$ is an object $X$ in $\CC$ with the structure of a $\Nichols$-module and $\Nichols$-comodule in $\CC$, such that the following compatibility condition holds
\begin{center}
                \begin{grform}
                        \begin{scope}[scale = 0.5]
                                \dMult{0.5}{1.5}{1}{-1.5}{\grau}
                                \dMult{0.5}{2.5}{1}{1.5}{\grau}
                                \dAction{2.5}{1.5}{0.5}{-1.5}{\grau}{black}
                                \dAction{2.5}{2.5}{0.5}{1.5}{\grau}{black}
                                \vLine{0.5}{1.5}{0.5}{2.5}{\grau}
                                \vLine{2.5}{1.5}{1.5}{2.5}{\grau}
                                \vLineO{1.5}{1.5}{2.5}{2.5}{\grau}
                                \vLine{3}{1.5}{3}{2.5}{black}
                        \end{scope}
                        \draw (0.5 , -0.3) node {$A$};
                        \draw (0.5 , 2.3) node {$A$};
                        \draw (1.5 , -0.3) node {$X$};
                        \draw (1.5 , 2.3) node {$X$};
                \end{grform}
                %%%%%%%%%
                =
                %%%%%%%%%
                \begin{grform}
                        \begin{scope}[scale = 0.5]
                                \vLine{2}{0}{1}{1}{black}
                                \dMultO{0}{1}{1.5}{-1}{\grau}
                                \dAction{0}{1}{1}{1}{\grau}{black}
                                \dAction{0}{3}{1}{-1}{\grau}{black}
                                \dMult{0}{3}{1.5}{1}{\grau}
                                \vLineO{2}{4}{1}{3}{black}
                                \vLine{1.5}{1}{1.5}{3}{\grau}
                        \end{scope}
                        \draw (0.45 , -0.3) node {$A$};
                        \draw (0.45 , 2.3) node {$A$};
                        \draw (1 , -0.3) node {$X$};
                        \draw (1 , 2.3) node {$X$};
                \end{grform}
\end{center}
 
\end{definition}

The category $\YD{\Nichols}(\CC)$ consists of  Yetter-Drinfeld modules and of $\Nichols$-linear and $\Nichols$-colinear morphisms. It becomes a tensor category with the usual tensor product of $\Nichols$-modules and $\Nichols$-comodules $X\otimes Y$ (see Definition \ref{def_RepB}). This category also admits a braiding
        \begin{align*}
                c_{(X, m_X, \delta_X), (Y, m_Y, \delta_Y)} &: X \otimes Y \rightarrow Y \otimes X \\
\intertext{which is given on objects $(X, m_X, \delta_X)$ and $(Y, m_Y, \delta_Y)$ by}
                c_{(X, m_X, \delta_X), (Y, m_Y, \delta_Y)} &:= (\rho_Y \otimes \id_X ) \circ (\id_\Nichols \otimes
c_{X,Y} ) \circ (\delta_X \otimes \id_Y )\,\, .
        \end{align*}
and is invertible if $\Nichols$ has a bijective antipode
\[
               (c_{(X, m_X, \delta_X), (Y, m_Y, \delta_Y)} )^{-1} := c^{-1}_{X,Y} \circ (m_Y \otimes
\id_X ) \circ (c^{-1}_{\Nichols,Y} \otimes \id_X )
                \circ (\id_Y \otimes S^{-1} \otimes \id_X ) \circ (\id_Y \otimes
\delta_X ).
\]
 If $\CC$ is rigid, then the dual object in $\CC$ with the standard dual action and coaction by the antipode gives a dual object in $\YD{\Nichols}(\CC)$. The structure is summarized in the following statement proven in \cite{Besp95}:
\begin{theorem}
Let $\Nichols$ be a Hopf algebra in $\CC$. 
The Yetter-Drinfeld modules over $\Nichols$ in $\CC$ have
a natural structure of a
braided monoidal category $\YD{\Nichols}(\CC)$. If $\CC$ is rigid, then so is $\YD{\Nichols}(\CC)$.
\end{theorem}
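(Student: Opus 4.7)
The plan is to follow the standard construction of \cite{Besp95}, generalizing the classical Yetter--Drinfeld category of a Hopf algebra in $\Vect$ to the braided setting, and to check each piece of structure via string-diagrammatic computations in $\CC$.

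First I would fix the tensor product. Given Yetter--Drinfeld modules $(X,m_X,\delta_X)$ and $(Y,m_Y,\delta_Y)$, the object $X\otimes Y$ in $\CC$ carries a tensor-product $\Nichols$-module structure via $\Delta$ and the braiding $c_{\Nichols,X}$, exactly as in Definition \ref{def_RepB}. Dually, it carries a tensor-product $\Nichols$-comodule structure
\[
\delta_{X\otimes Y}:\; X\otimes Y\xrightarrow{\delta_X\otimes\delta_Y}(\Nichols\otimes X)\otimes(\Nichols\otimes Y)\xrightarrow{\,\id\otimes c_{X,\Nichols}\otimes\id\,}(\Nichols\otimes\Nichols)\otimes(X\otimes Y)\xrightarrow{m\otimes\id}\Nichols\otimes(X\otimes Y),
\]
using that $\Nichols$ is both an algebra and a coalgebra. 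The next step is to verify the Yetter--Drinfeld compatibility condition (the hexagonal diagram displayed in Definition \ref{def_YD}) for $X\otimes Y$; this is a direct string-diagram computation that uses the bialgebra axiom of $\Nichols$, the naturality of $c$, and the YD compatibility for $X$ and for $Y$ separately. The tensor unit is $\unit\in\CC$ with trivial action $\epsilon$ and trivial coaction $u$, which satisfies the YD axiom trivially.

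Next I would verify that $\YD{\Nichols}(\CC)$ is a monoidal category: the associator and unit constraints are inherited from $\CC$ and need only be checked to be $\Nichols$-linear and $\Nichols$-colinear, which follows from naturality of $\assoz$ and the unit/counit axioms. For morphisms, one checks that the tensor product of $\Nichols$-linear, $\Nichols$-colinear morphisms is again $\Nichols$-linear and $\Nichols$-colinear, which is immediate from naturality.

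The heart of the proof is the braiding. I would take the formula
\[
c^{\mathrm{YD}}_{X,Y}=(m_Y\otimes\id_X)\circ(\id_\Nichols\otimes c_{X,Y})\circ(\delta_X\otimes\id_Y)
\]
and check three things: (i) $c^{\mathrm{YD}}_{X,Y}$ is a morphism in $\YD{\Nichols}(\CC)$, i.e.\ $\Nichols$-linear and $\Nichols$-colinear, (ii) it is natural in $X$ and $Y$, and (iii) the two hexagon axioms hold. The $\Nichols$-linearity is precisely the YD compatibility condition for $X$ (together with associativity of the action on $Y$), while the $\Nichols$-colinearity is the YD condition for $Y$ (together with coassociativity for $X$); the hexagons are verified by a diagrammatic manipulation using the bialgebra axiom, coassociativity, naturality of $c$ in $\CC$, and the module/comodule axioms. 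This step is the main computational obstacle: each identity unfolds into a fairly intricate string diagram, but none require an idea beyond manipulating strands using naturality of $c$ and the (co)associativity axioms.

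Invertibility when $S$ is bijective is obtained by writing down the proposed inverse
\[
(c^{\mathrm{YD}}_{X,Y})^{-1}=c^{-1}_{X,Y}\circ(m_Y\otimes\id_X)\circ(c^{-1}_{\Nichols,Y}\otimes\id_X)\circ(\id_Y\otimes S^{-1}\otimes\id_X)\circ(\id_Y\otimes\delta_X)
\]
and verifying $c^{\mathrm{YD}}\circ(c^{\mathrm{YD}})^{-1}=\id$ and vice versa via the antipode axioms. Finally, if $\CC$ is rigid, I would endow the dual $X^*\in\CC$ with the action obtained by dualizing $m_X$ and precomposing with $S$, and with the coaction obtained by dualizing $\delta_X$ and precomposing with $S^{-1}$; the evaluation and coevaluation of $\CC$ are then checked to be $\Nichols$-linear and $\Nichols$-colinear (again diagrammatically, using the antipode axioms), giving dualities in $\YD{\Nichols}(\CC)$. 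The hardest step here is the YD compatibility for $X^*$, which is the reason one needs $S$ bijective.
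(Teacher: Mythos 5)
Your sketch follows exactly the approach of Bespalov \cite{Besp95}, which is what the paper cites for this theorem (the paper itself gives no independent proof); your formulas for the braiding and its inverse coincide verbatim with the paper's, and the rigidity argument via the (inverse) antipode on the dual object is the standard one. The one detail to double-check is the crossing direction $c_{X,\Nichols}$ versus $c^{-1}_{\Nichols,X}$ in your tensor-coaction formula: in a genuinely braided (non-symmetric) category these give different comodule structures, and the choice must be made consistently with the module tensor product of Definition \ref{def_RepB} and the YD compatibility diagram so that $c^{\mathrm{YD}}$ is colinear and the hexagons close.
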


This gives an explicit expression for the relative center in the splitting case:        

\begin{lemma}[\cite{Lau20} Prop.  3.36]\label{lm_relcenterSplitting}
    Suppose $\BB=\Rep(\Nichols)(\CC)$, then the relative center is
    $$\cZ_\CC(\BB)=\YD{\Nichols}(\CC)$$
\end{lemma}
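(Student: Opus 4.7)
The strategy is to construct mutually inverse braided tensor equivalences between $\YD{\Nichols}(\CC)$ and $\cZ_\CC(\BB)$, extending Majid's description of centers of module categories to the braided-categorical setting.

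For the forward direction $\Phi\colon \YD{\Nichols}(\CC) \to \cZ_\CC(\BB)$, given a Yetter-Drinfeld module $(X, m_X, \delta_X)$, regard $X$ as an object of $\BB$ via its $\Nichols$-action and equip it with the half-braiding
\[
\gamma_{X, Y} := (m_Y \otimes \id_X) \circ (\id_\Nichols \otimes c_{X, Y}) \circ (\delta_X \otimes \id_Y),
\]
which reuses the formula for the braiding on $\YD{\Nichols}(\CC)$ but applied to arbitrary $Y \in \BB$. The Yetter-Drinfeld compatibility axiom is precisely the statement that $\gamma_{X, Y}$ is $\Nichols$-linear for the tensor-product action in $\BB$ via $\Delta$, and the hexagon identity for a half-braiding follows from coassociativity of $\delta_X$ together with multiplicativity of the actions on tensor products. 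Specializing $Y = \iota(V)$ with trivial action $m_{\iota(V)} = \epsilon \otimes \id_V$ and applying the counit axiom for $\delta_X$ reduces $\gamma_{X, \iota(V)}$ to the $\CC$-braiding $c_{X, V}$; since $\iota(V) \in \cZ(\BB)$ carries the half-braiding $c^{-1}_{\iota(V), \bullet}$, this is exactly the centralizing condition \eqref{relc} of the relative center.

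For the reverse direction $\Psi\colon \cZ_\CC(\BB) \to \YD{\Nichols}(\CC)$, given $(X, \gamma)$, keep the $\Nichols$-action from $X \in \BB$ and define the coaction via the left regular module $\Nichols_{\mathrm{reg}} \in \BB$ and the unit $u\colon \unit \to \Nichols$ of $\CC$:
\[
\delta_X := \gamma_{X, \Nichols_{\mathrm{reg}}} \circ (\id_X \otimes u) \colon X \to \Nichols \otimes X.
\]
Counitality follows from naturality of $\gamma$ applied to the $\Nichols$-linear counit $\epsilon\colon \Nichols_{\mathrm{reg}} \to \iota(\unit)$ combined with the centralizing condition at $\iota(\unit)$. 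Coassociativity follows from the hexagon axiom applied to $\gamma_{X, \Nichols_{\mathrm{reg}} \otimes_\BB \Nichols_{\mathrm{reg}}}$ together with naturality applied to the coproduct, viewed as an $\Nichols$-linear morphism $\Delta\colon \Nichols_{\mathrm{reg}} \to \Nichols_{\mathrm{reg}} \otimes_\BB \Nichols_{\mathrm{reg}}$ in $\BB$. The Yetter-Drinfeld compatibility between $m_X$ and $\delta_X$ translates exactly into the $\Nichols$-linearity of $\gamma_{X, \Nichols_{\mathrm{reg}}}$ as a morphism in $\BB$.

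The composition $\Psi \circ \Phi$ is immediate from the counit axiom. For $\Phi \circ \Psi \cong \id$, one must recover an arbitrary half-braiding $\gamma$ from its restriction $\gamma_{X, \Nichols_{\mathrm{reg}}}$. This reconstruction step uses that every $Y \in \BB$ admits an $\Nichols$-linear surjection $m_Y\colon \Nichols_{\mathrm{reg}} \otimes_\BB \iota(\forgetNichols(Y)) \twoheadrightarrow Y$ (split in $\CC$ by $u$): naturality of $\gamma$, the hexagon identity on the source, and the centralizing reduction $\gamma_{X, \iota(V)} = c_{X, V}$ together force $\gamma_{X, Y}$ to coincide with the $\Phi$-formula evaluated at the $\Psi$-coaction. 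Braided tensor compatibility is then direct: both functors preserve the underlying $\CC$-tensor product, and on both sides the braiding equals the half-braiding/Yetter-Drinfeld braiding formula. The main obstacle is precisely this reconstruction step; all other ingredients are diagrammatic verifications in $\CC$ using the Hopf algebra axioms for $\Nichols$ and naturality of the $\CC$-braiding.
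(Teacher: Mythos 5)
Your proposal follows the standard Majid--Laugwitz reconstruction argument; since the paper states this lemma as a citation to Laugwitz (Prop.\ 3.36 of \cite{Lau20}) rather than proving it, you are supplying the intended argument. The essential ingredients are all present and correct: the half-braiding formula extending the Yetter--Drinfeld braiding to arbitrary $\Nichols$-modules, the counit computation that yields the centralizing condition, the regular-module-plus-unit construction of the coaction, and the reconstruction of $\gamma$ from $\gamma_{X,\Nichols_{\mathrm{reg}}}$ via naturality, the hexagon, and the $\CC$-split $\Nichols$-linear surjection $\Nichols_{\mathrm{reg}} \otimes_\BB \iota(\forgetNichols(Y)) \twoheadrightarrow Y$.

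Two points are worth flagging if this were to become a full proof. First, you work with the convention $\gamma_{X,Y}\colon X \otimes Y \to Y \otimes X$ (central object first), whereas the paper's $\cZ(\BB)$ uses $\gamma_Y\colon Y \otimes Z \to Z \otimes Y$ (ambient object first), and its $\CC$-central structure from Lemma~\ref{lm_RepBFunctors} is accordingly written $c^{-1}_{\iota(V),\bullet}$. In your convention the centralizing condition \eqref{relc} unwinds to $\gamma_{X,\iota(V)} = c_{X,V}$, which is exactly what your counit computation produces; in the paper's convention the same condition reads $\gamma_{\iota(V)} = c_{\iota(V),X}$, so the braiding factors in the half-braiding formula would appear with opposite orientation. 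Since $c_{X,V}$ and $c^{-1}_{V,X}$ genuinely differ once $\CC$ is not symmetric, mixing conventions here would make the forward direction fail; you are internally consistent, but the inversion should be stated when matching your formulas to the paper's $(\iota,c^{-1})$. Second, the reconstruction step hinges on the fact that $\Nichols_{\mathrm{reg}} \otimes_\BB \iota(V)$ is isomorphic as an $\Nichols$-module to the free module $\Nichols \otimes V$ (action on the first factor only): the trivial action on $\iota(V)$ via $\epsilon$ together with the naturality identity $(\id_\Nichols \otimes \epsilon)\circ c_{\Nichols,\Nichols} = \epsilon \otimes \id_\Nichols$ kills the braided diagonal twist in the tensor-product action. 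This is precisely what makes $m_Y$ an $\Nichols$-linear epimorphism split in $\CC$ by the unit and is where the Hopf structure of $\Nichols$ enters the reconstruction, so it deserves to be stated explicitly rather than left implicit.
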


In particular in the cases covered by Section \ref{sec_Schauenburg} we have 
\begin{corollary}\label{cor_schauenburgSplit}
    Let $A$ be a commutative algebra in $\UU$ that admits a splitting functor, which means $\UU_A=\Rep(\Nichols)(\CC)$ and $\UU_A^0=\CC$, and assume Assumption \ref{assumption}, 
    then $\UU\cong \YD{\Nichols}(\CC)$. 
\end{corollary}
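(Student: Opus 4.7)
The plan is to assemble the corollary by composing the two main structural results of the preceding sections. First I would apply Theorem \ref{relcent} to the data $(\UU, A)$, which gives under Assumption \ref{assumption} a fully faithful braided tensor functor
\[ \Schauenburg:\,\UU \hookrightarrow \mathcal{Z}_{\UU_A^0}(\UU_A). \]
Second, by the splitting hypothesis we may identify $\UU_A \cong \Rep(\Nichols)(\CC)$ as a tensor category and $\UU_A^0 \cong \CC$ as a braided tensor category, so Lemma \ref{lm_relcenterSplitting} rewrites the target as
\[ \mathcal{Z}_{\UU_A^0}(\UU_A) \cong \mathcal{Z}_{\CC}(\Rep(\Nichols)(\CC)) \cong \YD{\Nichols}(\CC). \]
Composing these two identifications with $\Schauenburg$ yields a fully faithful braided tensor functor $\UU \hookrightarrow \YD{\Nichols}(\CC)$.

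It then remains to upgrade the fully faithful embedding to an equivalence, i.e.\ to show essential surjectivity. In the finite case this is automatic from Corollary \ref{cor_SchauenburgFinite}, since the Frobenius--Perron dimension computation already used there forces
\[ \mathrm{FPdim}(\YD{\Nichols}(\CC)) = \mathrm{FPdim}(\mathcal{Z}_{\CC}(\BB)) = \mathrm{FPdim}(\UU), \]
so the fully faithful functor is essentially surjective. In the relatively finite setting of Section \ref{sec_Schauenburg}, one instead invokes Corollary \ref{cor_SchauenburgInfinite}: under the splitting, $\BB=\Rep(\Nichols\rtimes C)$ and $\YD{\Nichols}(\CC)$ is realized as modules over the Drinfeld double of $\Nichols\rtimes C$ relative to $C$, and the free/rank condition gives the needed equivalence via Lemma \ref{lm:epimorphism}.

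The main obstacle is therefore not the construction of the functor, which is essentially a bookkeeping exercise once Theorem \ref{relcent} and Lemma \ref{lm_relcenterSplitting} are in hand, but checking that the half-braiding $b_{\bullet,X}$ produced by the Schauenburg functor matches, under the identification $\UU_A = \Rep(\Nichols)(\CC)$, the Yetter--Drinfeld coaction arising from Lemma \ref{lm_relcenterSplitting}. This is a naturality/coherence check: one unwinds the definition of $b_{N,X}$ in diagram \eqref{bNX} and compares it with the formula for the half-braiding on $\Nichols$-$\Nichols$-Yetter--Drinfeld modules after restriction to the subcategory $\CC \subset \BB$ of local modules (cf.\ Lemma \ref{lm_RepBFunctors}, where the half-braiding on $\iota(V)$ is $c^{-1}$, matching $\SchauenburgBar$). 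Once this identification is made, essential surjectivity in either the finite or the relatively finite setting completes the proof.
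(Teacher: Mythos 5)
Your proposal reconstructs exactly the argument the paper intends with its one-line justification (``In particular in the cases covered by Section \ref{sec_Schauenburg}\ldots''): apply Theorem \ref{relcent}, upgrade to an equivalence via Corollary \ref{cor_SchauenburgFinite} or \ref{cor_SchauenburgInfinite} under the relevant finiteness hypotheses, and translate the target using Lemma \ref{lm_relcenterSplitting}. The ``main obstacle'' you flag at the end is not actually a separate step: matching the half-braiding $b_{\bullet,X}$ against the Yetter--Drinfeld coaction is precisely the content of the cited equivalence $\cZ_\CC(\Rep(\Nichols)(\CC))\cong \YD{\Nichols}(\CC)$ from Lemma \ref{lm_relcenterSplitting}, so once that lemma is invoked there is nothing further to verify; composing equivalences of braided tensor categories is automatically coherent. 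You are, however, right to point out that the stated hypotheses (Assumption \ref{assumption} alone) only deliver the fully faithful embedding, and that the passage to an equivalence silently imports the finiteness or relative-finiteness assumptions of Corollaries \ref{cor_SchauenburgFinite} and \ref{cor_SchauenburgInfinite} --- a point the paper leaves implicit.
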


\begin{center}
\begin{tcolorbox}[colback=white, width=0.72\linewidth]
$$
 \begin{tikzcd}[row sep=10ex, column sep=15ex]
%\hspace{0.8cm}
  \UU=\YD{\Nichols}(\CC) \arrow{r}{\induceA}  
   & \arrow[shift left=2]{l}{\forgetA}  
      \arrow[shift left=2, dashed]{d}{} 
   \Rep(\Nichols)(\CC)&
   \hspace{-2.5cm}\textcolor{darkgreen}{\cong}\;\UU_A,\otimes_A \\
   &\arrow[hookrightarrow]{u}{}\arrow[below]{ul}{\coVerma}
   \mathbf{\CC}&\hspace{-2.8cm}\textcolor{darkgreen}{\cong}\;\UU_A^0
    \end{tikzcd}
$$
\end{tcolorbox}
\end{center}

\begin{definition}\label{def_Verma}
The Verma module $\bV_0$ in $\YD{\Nichols}(\CC)$ can be defined as $\Nichols$ with the adjoint action and the regular coaction. It is a commutative coalgebra in $\YD{\Nichols}(\CC)$. For $\CC$ rigid, the dual  Verma module $\bV_0^*$ in $\YD{\Nichols}(\CC)$ is an algebra. 
\end{definition}
This is the adjoint algebra in the sense of \cite{Sh19} in the relative version \cite{LW2,Mo22}. For highest-weight theory in the context of diagonal Nichols algberas see \cite{Vay19}.\\

In \cite{GLR23} we get in the splitting case the following converse of Corollary \ref{cor_schauenburgSplit}:
\begin{theorem}
Let $\Nichols$ be a Hopf algebra in a braided tensor category $\CC$, let $\BB=\Rep(\Nichols)(\CC)$ and $\UU=\cZ_\CC(\BB)=\YD{\Nichols}(\CC)$. Then for the commutative algebra $A=\bV_0$  in $\UU$ we have $\UU_A\cong\BB$ as tensor categories and $\UU^0_A\cong\CC$ as braided tensor categories. In particular the functor $\BB\to \CC$ forgetting the action of $\Nichols$ is a splitting tensor functor to the local modules.
\end{theorem}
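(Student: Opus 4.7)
The strategy is to construct explicit mutually inverse tensor functors between $\UU_A$ and $\BB$, and between $\UU_A^0$ and $\CC$, and then to identify the canonical splitting functor.

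First I would verify that $A = \bV_0^*$ is a commutative, haploid algebra in $\UU = \YD{\Nichols}(\CC)$. The coalgebra $\bV_0 = \Nichols$ inherits its comultiplication from the Hopf algebra $\Nichols$; dualization using rigidity of $\CC$ then produces the algebra $A$. Commutativity of $A$ is equivalent to cocommutativity of $\bV_0$ as a coalgebra in the braided category $\UU$, which is the main content of \cite{GLR23} and rests on a braided diagrammatic computation using the YD-braiding on $\bV_0\otimes\bV_0$ applied to $\Delta\colon \Nichols\to\Nichols\otimes\Nichols$. Haploidness follows from $\Hom_\UU(\mathbf{1},A)\cong \Hom_\UU(\bV_0,\mathbf{1})^*$ being one-dimensional, generated by the counit of $\Nichols$.

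To identify $\UU_A$ with $\BB$, I would define $G\colon \UU_A\to \BB$ by sending an $A$-module $(M,\mu)$ to its underlying $\Nichols$-module (forgetting both the $A$-action and the $\Nichols$-coaction), and $F\colon \BB\to \UU_A$ by sending $N$ to the free $A$-module $A\otimes N$ endowed with the $A$-action $m_A\otimes\mathrm{id}_N$ and with the YD-structure inherited from $A$ (the $\Nichols$-coaction lives only on the $A$-factor, while the $\Nichols$-action is diagonal). The nontrivial content is to show $FG\cong\mathrm{id}_{\UU_A}$: this expresses that any $A$-module in $\UU$ is forced to be of free type $A\otimes N$, i.e.\ that the YD-coaction is determined by the $A$-action. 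This is the categorical analogue of the Fundamental Theorem of Hopf Modules, applied to the duality $\bV_0\leftrightarrow A$. Tensor compatibility then follows from the explicit coequalizer computation $(A\otimes N)\otimes_A(A\otimes N')\cong A\otimes(N\otimes_\CC N')$ carrying the diagonal $\Nichols$-action, matching the tensor product of $\BB$ in the sense of Definition \ref{def_RepB}.

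For $\UU_A^0\cong\CC$, I would check that $F(N)=A\otimes N$ is local iff the monodromy of $A$ with $A\otimes N$ is trivial. Unwinding this via the YD-braiding formula forces the $\Nichols$-action on $N$ to factor through the counit $\epsilon$, i.e.\ $N$ lies in the image of $\iota\colon \CC\hookrightarrow\BB$. The braiding on $\UU_A^0$ from \cite[Thm.~2.55]{CKM} then restricts to the given braiding on $\CC$, since $\otimes_A$ collapses to $\otimes_\CC$ on objects with trivial $\Nichols$-action. Under these identifications, the natural functor $\UU_A\to \UU_A^0$ (base change along the counit $\epsilon_A\colon A\to\mathbf{1}$) is precisely the tensor functor $\BB\to\CC$ of Lemma \ref{lm_RepBFunctors} forgetting the $\Nichols$-action; it left-inverts $\iota$ by construction, and is hence a splitting functor in the sense of Definition \ref{localization}.

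The hardest part will be twofold: the commutativity of $A$ (equivalently, cocommutativity of $\bV_0$), which is a genuinely braided-categorical computation and does not reduce to the Hopf-algebraic structure of $\Nichols$ alone; and the essential surjectivity of $F$, which encodes that every $A$-module in $\UU$ is free. The latter substitutes for the dimension count available in the finite setting by a direct descent argument using the universal property of $\bV_0$. Once these two ingredients are in hand, the remaining tensor- and braided-categorical verifications are routine manipulations in the YD-calculus.
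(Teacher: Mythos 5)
The paper does not prove this theorem itself; it cites \cite{GLR23} (in preparation) and only sketches the structure of the equivalence: it is given by sending an $A$-module to its $A$-\emph{invariants}, with the inverse an induction along a certain universal object $U$ carrying an additional right $\Nichols$-action. Your proposal deviates from this at the decisive step, and the deviation is a genuine gap, not just a different route.

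Concretely, your functor $G\colon\UU_A\to\BB$ is defined by \emph{forgetting} the $A$-action and the $\Nichols$-coaction, keeping the underlying $\Nichols$-module. This cannot be the equivalence: for the free module $F(N)=A\otimes N$ one gets $G(F(N))=A\otimes N$ as an $\Nichols$-module, which is not isomorphic to $N$ (its dimension, where this makes sense, is $\dim A\cdot\dim N$, not $\dim N$). So $GF\not\cong\mathrm{id}_{\BB}$ already, and similarly $FG(M)=A\otimes M\not\cong M$. The Fundamental Theorem of Hopf modules, which you correctly identify as the right ingredient, states $M\cong A\otimes M^{\mathrm{co}A^*}$; the right-hand factor is the \emph{coinvariants} (equivalently the $A$-invariants under the trivial character), not the full underlying object. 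Forgetting structure loses the information the fundamental theorem would supply, so your $F$ and $G$ are not mutually inverse.

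A second, more subtle issue: you endow $A\otimes N$ with the diagonal $\Nichols$-action and the $\Nichols$-coaction on the $A$-factor only. For this to define an object of $\YD{\Nichols}(\CC)$, the Yetter--Drinfeld compatibility must hold. Note however that $(N,\rho_N,\delta^{\mathrm{triv}})$ is \emph{not} a YD module unless $\Nichols$ is (braided) cocommutative — the YD equation on $N$ with trivial coaction reduces to a cocommutativity constraint on $\Nichols$. So the claim that $A\otimes N$ with your proposed structure lands in $\UU$ needs a careful verification (this is precisely why the paper invokes a specially constructed universal object $U$ rather than plain $A\otimes(-)$). Until those two points are repaired — replacing the forgetful $G$ by a coinvariants functor, and verifying (or modifying) the YD-structure on $F(N)$ — the argument that $\UU_A\cong\BB$ does not close. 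The surrounding observations (commutativity and haploidness of $A=\bV_0^*$, the identification of the local modules with trivial $\Nichols$-action, and the recognition of the splitting functor as the one from Lemma~\ref{lm_RepBFunctors}) are consistent with the paper and unaffected by this gap.
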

More precisely, the category equivalence $\YD{\Nichols}(\CC)_A\textcolor{darkgreen}\;{\cong} \Rep(\Nichols)(\CC)$ is given by sending an $A$-module to its $A$-invariants, and reversed by inducing up a $\Nichols$-module  by a certain universal object $U$ in the left category with additional right $\Nichols$-action.\\

\subsection{Realizing Hopf algebras}

The preceeding categories can be realized as (quasi-) Hopf algebras if $\CC$ is realized by a quasi-triangular (quasi-) Hopf algebra $C$. This appears in different language and level of generality since the beginning of quantum groups, see \cite{Lusz93,Maj94, Som96} in a quantum group setting, \cite{HY10,AY13} in a Nichols  algebra setting and \cite{Lau20, LW1} in a terms of relative Drinfeld centers, and notably example of a Lie superalgebra in \cite{SL23}. If the base category has a nontrivial associator category one can similarly obtain a quasi-Hopf algebra, see  \cite{GLO18} Section 5.5.

%The standard approach is, in our language, to first realize $\BB=\Rep(\Nichols)(\CC)$ as Radford biproduct $B=\Nichols \rtimes C$, where $\Nichols$ in $\CC$ becomes a $C$-Yetter-Drinfeld module via the fixed braiding on $\CC$ giving $\CC\to \cZ(\CC)$ resp. the coaction is given by the $R$-matrix $x\mapsto R'\otimes R''.x$. Then the center $\cZ(\BB)$ is realized by the (quasi-) Drinfeld double of $B$. Finally, the relative center is realized by a quotient $U=D(B)/\bar{C}^+D(B)$, where the normal Hopf subalgebra $\bar{C}$ is the image of $C$ in $D(C)\subset D(B)$ under the embedding $C\otimes \bar{C}\cong D(C)$ given be the reversed $R$-matrix of $C$.  

We will now give for the case of $\CC=\Vect_\Gamma$ with trivial associator and $\Nichols$ a Nichols algebra a direct construction of a realizing Hopf algebra $U$ from the categorical picture. The description matches essentially \cite{LW1} Proposition 5.15 but is suitable for infinite $\Gamma$. 

\begin{example} \label{exm_realizedQuantumGroup}
Assume the braided tensor category $\CC$ is realized as finite-dimensional semisimple modules $\Rep^{wt}(C)$ of a commutative cocommutative Hopf algebra $C$, so $\CC=\Vect_\Gamma$ for a possibly infinite abelian group $\Gamma=\mathrm{Spec}(C)$ with a braiding $\sigma$.
    
    Let $\Nichols=\NicholsOf(X)$ be a finite dimensional Nichols algebra for an object $X=\bigoplus_{k=1}^n \C_{\gamma_k}$ in $\CC$ spanned by $x_1,\ldots, x_n$ in degrees $\gamma_1,\ldots,\gamma_n\in \Gamma$, so $g.x_i=\gamma_i(g)x_i$ for all $g\in C$. Assume there are corresponding grouplike elements $g_i\in C$ with $\sigma(\gamma_i,\gamma)=\gamma(g_i)$ and $\bar{g}_i\in C$ with $\sigma(\gamma,\gamma_i)=\gamma(\bar{g}_i)$. \\

    We define a Hopf algebra $B=\NicholsOf(X)\otimes C$ generated by $C$ and $\NicholsOf(X)$ and together with the commutation relation and modified coproduct and antipode
    \begin{align*}
        gx_i &= \gamma_i(g^{(1)})\cdot x_i g^{(2)}\\
        \Delta(x_i) &= g_i\otimes x_i + x_i\otimes 1\\
        S(x_i)&=-g^{-1}x_i
    \end{align*}
    Here $g^{(1)}$ and $g^{(2)}$ are defined via the co-product, that is $\Delta(g)= g^{(1)}\otimes g^{(2)}$ in Sweedler notation.
    We also define a Hopf algebra $B^*=\NicholsOf(X^*)\otimes C$ where $X^*=X$ with generators $x_i^*$ in degree $\gamma_i(S(-))$ and modified coproduct and antipode
     \begin{align*}
        gx_i^* &= \gamma_i(S(g^{(1)}))\cdot x_i^* g^{(2)}\\
        \Delta(x_i^*) &= \bar{g}_i\otimes x_i^* + x_i^*\otimes 1\\
        S(x_i^*)&=-\bar{g}^{-1}x_i^*
    \end{align*}
    %We substitute $y_i=x_i g_i^{-1}  \gamma_i(g_i)$, then 
    % \begin{align*}
    %    gy_i &= \gamma_i(S(g^{(1)}))\cdot y_i g^{(2)}\\
    %    \Delta(y_i) &= g_i^{-1}\otimes y_i + y_i\otimes 1\\
    %    S(y_i)&=-g_i y_i
    %\end{align*}   
    We finally define the Hopf algebra $U=\NicholsOf(X)\otimes \NicholsOf(X^*)\otimes C $ with all previous relations and the {linking relation} 
    \begin{align*}
    x_j^*x_i - \gamma_j(g_i) x_ix_j^* 
    &= \delta_{ij}\big(1- \bar{g}_i g_i\big)\\
    %[x_i,y_j]&= \delta_{ij} \big(\bar{g}_i-g_i^{-1}\big)
    \end{align*}

\begin{lemma}\label{lm_realizedQuantumGroup}
    The relations above define a Hopf algebra and we have an equivalence of braided tensor categories with a nondegenerate braiding
    $$\Rep^{wt}(U)\cong \YD{\Nichols}(\CC)$$
\end{lemma}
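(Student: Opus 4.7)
The plan is to identify $U$ with the relative Drinfeld double of $\Nichols$ over $C$ and then port the classical Majid equivalence $\Rep(D(H))\cong{}_H^H\mathcal{YD}$ to the present braided relative setting. Concretely, I will first verify that the given generators and relations actually define a Hopf algebra, then construct an equivalence $F\colon\Rep^{wt}(U)\to\YD{\Nichols}(\CC)$ with an explicit inverse, and finally match tensor structure, braiding, and nondegeneracy.

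For Step~1 (Hopf algebra structure), the only nontrivial check is that $\Delta$ descends to the quotient by the linking relation: using $\Delta(x_i)=g_i\otimes x_i+x_i\otimes 1$, $\Delta(x_j^*)=\bar{g}_j\otimes x_j^*+x_j^*\otimes 1$, and the commutation rules with $C$, the four resulting terms rearrange to $\delta_{ij}(1\otimes 1-\bar{g}_ig_i\otimes \bar{g}_ig_i)=\Delta(\delta_{ij}(1-\bar{g}_ig_i))$. Counit and antipode compatibility are one-line checks, and compatibility of $\Delta$ with the Nichols relations of $\Nichols$ and $\Nichols^*$ is already built into the two Radford biproducts $B,B^*$ sitting as Hopf subalgebras of $U$.

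For Step~2 (the functor $F$), a weight module $M$ over $U$ is already an object of $\Rep(\Nichols)(\CC)$ via the subalgebra $B\subset U$, so I only need a $\Nichols$-coaction satisfying the Yetter--Drinfeld axiom of Definition~\ref{def_YD}. Using dual bases $\{x_I\}\subset\Nichols$, $\{x_I^*\}\subset\Nichols^*$ for the nondegenerate graded Hopf pairing (nondegenerate because $\Nichols=\NicholsOf(X)$ is finite-dimensional), I set
\[ \delta_M(m)=\sum_I x_I\otimes (x_I^*\cdot m), \]
which is automatically a finite sum on each weight space. The YD compatibility is exactly the linking relation $x_j^*x_i-\gamma_j(g_i)x_ix_j^*=\delta_{ij}(1-\bar{g}_ig_i)$ transported across this duality. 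The inverse functor $G$ recovers an $\Nichols^*$-action from the coaction by $x_i^*\cdot n=\langle x_i^*,n^{(-1)}\rangle\,n^{(0)}$; this is well defined, satisfies all relations of $U$ by reversing the above translation, and yields a two-sided inverse of $F$, establishing the equivalence of abelian categories. For the tensor and braided structure, under $F$ the formulas for $\Delta$ on $x_i,x_i^*$ translate into the standard action and coaction of $\Nichols$ on tensor products in $\YD{\Nichols}(\CC)$; the group-likes $g_i,\bar{g}_i$ appearing in the coproducts are precisely what is needed to implement the braiding of $\CC=\Vect_\Gamma$ when one moves $x_i$ past a second tensor factor. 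The braiding of $\Rep^{wt}(U)$ is implemented by the universal element $R=\sum_I x_I\otimes x_I^*$ (together with the standard abelian $R$-matrix encoding the quadratic form $Q$ on the $C$-part); substituting the definition of $\delta$ recovers the Yetter--Drinfeld braiding formula. Nondegeneracy follows because any object in the M\"uger centre of $\YD{\Nichols}(\CC)$ must have trivial coaction and trivial $\Gamma$-weight, hence is a sum of copies of the unit.

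The main obstacle I expect is Step~2: the precise translation between the linking relation and the Yetter--Drinfeld axiom requires careful bookkeeping of how the group-likes $g_i,\bar{g}_i$ commute past the pairing $\langle-,-\rangle$ and of the $\CC$-braiding factors $\sigma(\gamma_i,\gamma_j)$ on both sides. A second delicate point is that $C$ and hence $\Gamma$ may be infinite, so one must work throughout with the category $\Rep^{wt}$ of weight modules rather than all modules, and use the finite-dimensionality of $\Nichols$ to ensure that $\delta_M$ lands in the algebraic tensor product $\Nichols\otimes M$ rather than in a completion.
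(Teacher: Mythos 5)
Your proposal is correct and takes essentially the same approach as the paper: verify the Hopf algebra axioms by checking compatibility of $\Delta$ with the linking relation, then use the nondegenerate Hopf pairing $\NicholsOf(X^*)\otimes\NicholsOf(X)\to\C$ to translate between $\Nichols$-coactions and $\NicholsOf(X^*)$-actions, identifying the Yetter--Drinfeld axiom of Definition~\ref{def_YD} with the linking relation and the $\CC$-internality of the coaction with the $g$--$x^*$ commutation relation. The paper's primary functor is your $G$ (from $\YD{\Nichols}(\CC)$ to $\Rep^{wt}(U)$), with your dual-basis functor $F$ appearing as its inverse; otherwise the arguments coincide, and your added sketch of nondegeneracy via the M\"uger centre is a welcome elaboration of what the paper only asserts.
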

\begin{proof}
    We first check that $B, B^*$ and $U$ are Hopf algebras: For $B$ we have to check coassociativity and antipode condition on the generators $x_i$, which clearly holds, and check that $\Delta$ is compatible with the commutation relation, i.e.~$\Delta$ extended by the bialgebra axiom applied to both sides of the relation is equal up to the relation (here using commutativity and cocommutativity)
    \begin{align*}
    \Delta(gx_i)&=g^{(1)}g_i\otimes g^{(2)}x_i+g^{(1)}x_i\otimes g^{(2)} \\
    \Delta(\gamma_i(g^{(1)})x_i g^{(2)}) 
    &=\gamma_i(g^{(1)})g_i g^{(2)}\otimes x_ig^{(3)}
    +\gamma_i(g^{(1)})x_ig^{(2)}\otimes g^{(3)}
    \end{align*}
    Since the commutation relation and the coproduct relation on a product on $1$-dimensional $C$-modules in degrees $\mu,\nu$ reads 
    $$g.(x_i.m_\mu)=(g^{(1)}.x_i)(g^{(2)}.m_\mu) \qquad 
    \Delta(x_i)(m_\mu\otimes n_\nu)=(x_i.m_\mu)\otimes n_\nu+\sigma(\gamma_i,\mu) m_\mu\otimes (x_i.n_\nu)$$
    As a consequence, we have almost by definition an equivalence of tensor categories 
    $$\Rep(B)\cong \Rep(\Nichols)(\CC)$$
    The computation for $B^*$ is completely analogous. 

    For $U$ we have to check that $\Delta$ is compatible with the linking relation, because extending $\Delta$ by the bialgebra axiom to the left-hand side of the linking relation matches the general coproduct for the right hand side (a so-called trivial skew-primitive for $g=\bar{g}_jg_i$):
    \begin{align*}
    \Delta(x_j^* x_i-\gamma_j(g_i) x_ix_j^*)
    &=(\bar{g}_j \otimes x_j^*+x_j^*\otimes 1) 
    (g_i \otimes x_i+x_i\otimes 1)\\
    &-\gamma_j(g_i)(g_i \otimes x_i+x_i\otimes 1)
    (\bar{g}_j \otimes x_j^*+x_j^*\otimes 1)  \\
    &=\bar{g}_jg_i \otimes (x_j^* x_i-\gamma_j(g_i) x_ix_j^*)
    +(x_j^* x_i-\gamma_j(g_i) x_ix_j^*)\otimes 1 \\
    &+(\bar{g}_jx_i-\gamma_j(g_i)x_i\bar{g}_j)\otimes x_j^*
    +(x_j^*g_i-\gamma_j(g_i) g_ix_j^*)\otimes x_i
    \\
    \Delta(1-g)&=g\otimes (1-g)+ (1-g)\otimes 1
    \end{align*}
    
    We now turn to the asserted category equivalence: Define a functor $\YD{\Nichols}(\CC)\to \Rep(U)$ by considering $M\in \Rep(\Nichols)(\CC)$ as a $B$-module and turning the $\Nichols$-coaction $m\mapsto m^{(-1)} \otimes m^{(0)}$ into a $\NicholsOf(X^*)$-action on $M$ using the nondegenerate Hopf pairing $\langle-,-\rangle:\NicholsOf(X^*)\otimes \NicholsOf(X)\to \C$ uniquely determined by $ \langle x_j^*,x_i\rangle=\delta_{i,j}$, whose existence is one of the equivalent characterizations of a Nichols algebra \cite{Lusz93}\cite{HS20}. So in particular on generators 
    $$x_j^*.m =\langle x_j^*,m^{(0)}\rangle m^{(1)}$$
    The coaction being inside the category $\CC$ i.e.~$g.m\mapsto g^{(1)}m^{(-1)} \otimes g^{(2)}m^{(0)}$ is equivalent to the relation between $g,x_j^*$ since
    \begin{align*}
    x_j^*.g.m &=\langle x_j^*,g^{(1)}.m^{(-1)}\rangle g^{(2)}m^{(0)}\\
    &=\langle S(g^{(1)})x_j^*,m^{(-1)}\rangle g^{(2)}.m^{(0)}\\
    &=\langle \gamma_i(g^{(1)})x_j^*,m^{(-1)}\rangle g^{(2)}.m^{(0)}\\
    &=\gamma_j(g^{(1)})g^{(2)}.x_j^*.m
    \end{align*}
    The braided Yetter-Drinfeld condition in Definition \ref{def_YD} is equivalent to the linking relation between $x_i,y_j$ because the two sides of the Yetter-Drinfeld condition evaluated on $x_i\otimes m$ for an element $m$ having degree $\lambda$ and a  generator $x_i$ having the coproduct $\Delta_\NicholsOf(x_i)=1\otimes x_i+x_i\otimes 1$, and the result paired with another generator $x_j^*$, has on both sides of the equation only two contributions for $i=j$  
    \begin{align*}
        \langle x_j^*,x_i\rangle m 
        + \sigma(\gamma_i,\gamma_j) \langle x_j^*,m^{(-1)}\rangle x_i.m^{(0)}
        &= \langle x_j^*,x_i\rangle \sigma(\lambda,\gamma_i)\sigma(\gamma_i,\lambda)m 
        +  \langle x_j^*,(x_i.m)^{(-1)}\rangle (x_i.m)^{(0)}
    \end{align*}
    for braiding factors $\sigma(\gamma_i,\gamma_j)=\gamma_j(g_i)$ and $\sigma(\gamma_i,\lambda)=\lambda(g_i)$ resp.  $\sigma(\lambda,\gamma_i)=\lambda(\bar{g}_i)$ for $m$ in degree $\lambda$. Rewritten as action of $x_j^*$ this is
    \begin{align*}
        \delta_{ij} m 
        + \gamma_j(g_i) x_i.x_j^*.m
        &= \delta_{ij} \lambda(\bar{g}_i)\lambda(g_i) m 
        + x_j^*.x_i.m \\
   \end{align*}
   and this gives the asserted linking relation in $U$. We also check that this functor extends to a tensor functor: For $\Rep(\Nichols)(\CC)\to\Rep(B)$ we have already checked this. The  tensor product of $\Nichols$-comodules is defined using the algebra structure in $\Nichols$  
   $$(m_\mu\otimes n_\nu)^{(-1)}\otimes (m_\mu\otimes n_\nu)^{(0)}
   =
   m_{\mu'}^{(-1)}n_{\nu'}^{(-1)}\sigma(\mu'',\nu')\otimes (m_{\mu''}^{(0)}
   \otimes n_{\nu''}^{(0)})
   $$
   so after applying the Hopf pairing with a generator with primitive coproduct $\Delta_\NicholsOf(x_j^*)=1\otimes x_j^*+x_j^*\otimes 1$
   \begin{align*}
    x_j^* (m\otimes n)
   &=\langle x_j^*, m_{\mu'}^{(-1)}n_{\nu'}^{(-1)}\sigma(\mu'',\nu')\rangle \; (m_{\mu''}^{(0)}
   \otimes n_{\nu''}^{(0)})\\
   &=\big(\langle 1, m_{\mu'}^{(-1)}\rangle\langle x_j^*, n_{\nu'}^{(-1)}\rangle \sigma(\mu'',\nu') 
  +\langle x_j^*, m_{\mu'}^{(-1)}\rangle\langle 1, n_{\nu'}^{(-1)}\rangle \sigma(\mu'',\nu')
  \big)
     \; (m_{\mu''}^{(0)}
   \otimes n_{\nu''}^{(0)})\\
   &=  \sigma(\mu,\gamma_j) \; (m_{\mu}
   \otimes x_j^*.n_{\nu}) +\sigma(\mu-\gamma_j,0) \; (x_j^*.m_{\mu}
   \otimes n_{\nu}) \\
   &= (\bar{g}_j\otimes x_j^*+x_j^*\otimes 1 ).(m\otimes n)
   \end{align*}
    
    %Rewritten as action of $y_j$, using $g_jx_i=\gamma_i(g_j)x_ig_j=\sigma(\gamma_i,\gamma_j)x_ig_j$ and $g_j.m=\lambda(g_j)$ 
    %\begin{align*}
    %    \delta_{ij} m 
    %    + \gamma_i(g_j)\gamma_j(g_j)^{-1} x_i.y_j\lambda(g_j).m
    %    &= \delta_{ij}\lambda(g_i)\lambda(\bar{g}_i)m 
    %    %+\gamma_j(g_j)^{-1} y_jg_j.x_i.m 
    %    +\gamma_i(g_j)\gamma_j(g_j)^{-1}y_j.x_i.\lambda(g_j).m
    %\end{align*}
    % and rearranging it gives precisely the linking relation above 
    %\begin{align*}
    %[x_i,y_j]m&= \delta_{ij} \gamma_i(g_j)^{-1} \gamma_j(g_j)%\big(\lambda(g_i)\lambda(\bar{g}_i)-1\big)\lambda(g_i)^{-1}m \\
    %&= \delta_{ij} \big(\lambda(\bar{g}_i)-\lambda(g_i)^{-1}\big)m
    %\end{align*}\\

    The same reasoning gives an inverse functor $\Rep(U)\to \YD{\Nichols}(\CC)$, because the pairing $\langle -,-\rangle$ is invertible for $\Nichols$ finite-dimensional, and the relations between $x_i^ *,g$ and $x_i,x_j^*$ precisely show on generators the coaction being in the category and the Yetter-Drinfeld condition. We thus finally conclude that we have an equivalence of tensor categories.

 A concrete $R$-matrix in some closure of $U$ making this a braided tensor category can easily be read off from the braiding in $\YD{\Nichols}$.
    \end{proof}
\end{example}

\subsection{Example related to quantum groups}\label{sec_quasiquantum}

We review how the quantum groups and the related quasi-quantum groups for even order roots of unity can be obtained as a relative Drinfeld center. In the language of Drinfeld doubles, this view has been around since the beginning of quantum groups:
Let $\g$ be a complex finite-dimensional semisimple Lie algebra and $q\in\C^\times$. Let $\CC=\Vect_\Gamma$ with nondegenerate quadratic form $Q$ be a modular tensor category containing an object $X=\C_{\alpha_1}\oplus \cdots \oplus \C_{\alpha_n}$ with braiding matrix $q^{(\alpha_i,\alpha_j)}$. This can be $\Gamma=\h^*$, but also a finite quotient group $\Gamma$. Note however that for an even root of unity $\CC$ this requires a  associator. 

\begin{example}\label{ex:quantumsl2asrelativecenter}
    Let $\g=\sl_2$ and $q=e^{\pi\i/p}$ a $2p$-to root of unity. Then $\Gamma=\Z_{2p}$ can be endowed with the quadratic form $Q(k)=e^{\pi\i\;k^2/2p}$ which factorizes over $\Z_{2p}$ and the associated bimultiplicative form $B(k,l)=e^ {2\pi\i \;kl /2p}$ is a nondegenerate form. But any representing abelian $3$-cocycle $(\sigma,\omega)$ describing braiding $\sigma$ and associator $\omega$ will have $\omega\neq 1$. For example $\sigma(k,l)=e^ {\pi\i \;kl /2p}$ is not a bimultiplicative map.
\end{example}

Now we have discussed in Example \ref{exm_NicholsQG} the Nichols algebra $\NicholsOf(X)$ matching the quantum Borel part $u_q(\g)$. Let us define 
\begin{align*}
    \BB&=\Rep(\Nichols)(\CC)\\ 
    \intertext{which has an obvious splitting tensor functor to $\CC$, and the relative Drinfeld center}
    \UU&=\cZ_\CC(\BB)
    \end{align*}
    {which is by construction a nondegenerate braided tensor category. Then $\UU$ is the category of representations of the factorizable (quasi-)quantum group $u_q(\g)$ respectively the unrolled quantum group $u^H_q(\g)$}.

From Section \ref{sec:RelativeDrinfeldCentersGiveLocalization} we then know that $\UU$ contains a commutative algebra $A=\Nichols^*$ with regular coaction and adjoint action. In our case, this is the dual of the induced representation $u_q(\g)\otimes_{u_q(\g)^{\geq0}}\C_\epsilon$ (Verma module), which is indeed an algebra in $\UU$.

We also match this with our explicit realization in Example \ref{exm_realizedQuantumGroup}: Let $\sigma(\lambda,\mu)=\sigma(\mu,\lambda)=q^{(\lambda,\mu)}$ and substitute $x_i=E_i,\;x_i^*=K_iF_i(q-q^ {-1}),\;g_i=\bar{g}_i=K_i$ which means $\lambda(K_i)=q^{(\alpha_i,\lambda)}$. Then the relations for $U$ given in Example \ref{exm_realizedQuantumGroup} are the Nichols algebra relations together with 
\begin{align*}
    K_\lambda E_i&=q^{(\alpha_i,\lambda)} E_i K_\lambda \\
    K_\lambda F_i&=q^{-(\alpha_i,\lambda)} F_i K_\lambda\\
    %K_iF_jE_i-q^{(\alpha_i,\alpha_j)} EiK_iF_j 
    %&=\delta_{ij}(1-K_i^2)
    [E_i,F_j]&=\delta_{ij}\frac{K_i-K_i^{-1}}{q-q^{-1}}
\end{align*}
and coproduct
\begin{align*}
    \Delta(K_\lambda) &=K_\lambda\otimes K_\lambda \\
    \Delta(E_i) 
    &=K_i\otimes E_i + E_i\otimes 1 \\
     \Delta(F_i) %\Delta(K_i^{-1}x_i^*)
    &=1\otimes F_i + F_i\otimes K_i^{-1} 
\end{align*}

More generally we have
\begin{definition}[Heckenberger quantum group]\label{def_HechenbergerQG}
Let $\CC=\Vect_\Gamma$ with (non-degenerate) braiding given by $\sigma(\lambda,\mu)$ and let $X=\bigoplus_{i=1}^n \C_{\alpha_i}$, then the Nichols algebra $\NicholsOf(X)$ is of diagonal type, depending on the braiding matrix $q_{ij}=\sigma(\alpha_i,\alpha_j)$ as discussed in Section \ref{sec_Nichols}. Possible choices leading to finite-dimensional Nichols algebras were classified in \textup{\cite{Heck09}}. Then we have a tensor category with (non-degenerate) braiding 

$$\UU=\YD{\NicholsOf(q_{ij})}(\CC)$$

Again we can obtain a realization of $\UU=\Rep(U)$ for a (quasi-)Hopf algebra $U$, as discussed in Example \ref{exm_realizedQuantumGroup}.
\end{definition}

This definition includes quantum groups $u_q(\g)$ and unrolled quantum groups $u_q^H(\g)$, quantum groups associated to Lie superalgebras of classic  type  \cite{RS92,SL23} and further examples.

\begin{example}[$u_q^B({\mathfrak{gl}_{1|1}})$]\label{exm_gl11QG}
(compare with e.g. \textup{\cite{RS92, GY22}})
Let $C=\C[A,B,(-1)^F,g]$ with coproducts of $A,B$ primitive and $(-1)^F,g$ grouplike with relations $((-1)^F)^2=1$ and $g=(-1)^F e^{\pi\i\,\hbar B}$ i.e.~
%$\Delta(a)=1\otimes a+a\otimes 1$ and $\Delta(b)=1\otimes b+b\otimes 1$ and $\Delta(f)=f\otimes f$,
we consider the category $\Rep^{wt}(C)$ of finite-dimensional semisimple representations where the symbol $(-1)^F$ acts by $\pm 1$ and $g$ acts by $(-1)^f q^b$ with $q=e^{\pi\i\,\hbar}$ on the eigenspace of $(-1)^F,B$  with eigenvalue $(-1)^f,b$. Then $\Rep^{wt}(C) =\Vect_{\C^2}\boxtimes \mathrm{sVect}$ and we denote the $1$-dimensional objects by $\C_{(a,b,\pm)}$ according to their eigenvalues under $A,B,(-1)^F$. Let a braiding be given by
$$\C_{(a,b,\pm)}\otimes \C_{(a',b',\pm )}\stackrel{\sigma}{\longrightarrow} \C_{(a',b',\pm)}\otimes \C_{(a,b,\pm)}$$
$$\sigma((a,b,(-1)^f),(a',b',(-1)^{f'}))=
\exp\big(\pi\i\,\begin{pmatrix} a \\ b \\ f \end{pmatrix}^\mathsf{T}
\begin{pmatrix}
0 & -\hbar & 0\\
-\hbar & -\hbar^{2} & 0 \\
0 & 0 & 1
\end{pmatrix}
\begin{pmatrix} a' \\ b' \\ f' \end{pmatrix} \big)
$$
%$$\sigma((a,b,(-1)^f),(a',b',(-1)^{f'}))=(-1)^{ff'} e^{\pi\i(ab'+ba'-bb')}$$

Let $X=x\C_{\gamma}$ with $\gamma=(-1,0,-1)$, then $g$ has been chosen such that $\sigma(\gamma,\lambda)=\sigma(\lambda,\gamma)=\lambda(g)$ holds. Since $\sigma(\gamma,\gamma)=-1$ the associated Nichols algebra is $\Nichols=\C[x]/(x^2)$. Correspondingly the quantum group associated to this situation by Lemma  \ref{exm_realizedQuantumGroup} is generated by $A,B,(-1)^F,g$ and $x,x^*$ with relations above and
\begin{align*}
x^2=(x^*)^2&=0 \\
[A,x]&=-x \\
[B,x]&=0 \\
(-1)^Fx&=-x(-1)^F \\
[A,x^*]&=x^* \\
[B,x^*]&=0 \\
(-1)^Fx^*&=-x^*(-1)^F \\
xx^*+x^*x &=1-g^2\\
\Delta(x)&=g\otimes x+x\otimes 1 \\
\Delta(x^*)&= g\otimes x^*+x^*\otimes 1
\end{align*}
To compare with \textup{\cite{GY22}} we set $G=N=A,E=B,K=g(-1)^F=q^B$ and $X=xK^{-1},Y=x^*(q^{-1}-q)$. Then the relations read
\begin{align*}
XY+YX &= (xK^{-1})(x^*(q^{-1}-q))+x^*(q^{-1}-q)(xK^{-1}) \\
&=K^{-1}(q^{-1}-q)(xx^*+x^*x) \\
&=\frac{K-K^{-1}}{q-q^{-1}}\\
\Delta(X)&=(-1)^F\otimes X+X\otimes K^{-1} \\
\Delta(Y)&= (-1)^FK\otimes Y+Y\otimes 1
\end{align*}
This Hopf algebra is a Radford biproduct between the $2$-dimensional Hopf algebra $\C[\Z_2]$ generated by $(-1)^F$ and the Hopf algebra in the category $\Rep(\C[\Z_2])=\mathrm{sVect}$ (a Hopf superalgebra) generated by the other generators as in \textup{\cite{GY22}}. The Radford biproduct leads to the additional $(-1)^F$ in the first factors of the coproduct of $X,Y$.

\end{example}

%\begin{example}
%The same holds for quantum super groups for Lie super algebras of classic type....\marginpar{}, which we consider here to be defined as $\YD{\Nichols}(\CC)$ for suitable Nichols algebras $\Nichols$ in Heckenberger's list \cite{Heck}.

%For example for the case $\sl(2|1)$ we choose an object $X=\C_{\alpha_1}\oplus \C_{\alpha_2}$ in $\CC$ such that one of the following two braidings appears
%\begin{align*}
%\sigma^I(\alpha_i,\alpha_j)&=\begin{pmatrix} -1 & q^{-1} \\ q^{-1} & q^2 \end{pmatrix}\\
%\sigma^{II}(\alpha_i,\alpha_j)&=\begin{pmatrix} -1 & -q \\ -q & -1 \end{pmatrix}
%\end{align*}
%The Nichols algebras map to the following objects in $\CC$, as is generally proven by the PBW theorem \cite{AHS}
%\begin{align*}
%F'(\Nichols^{I})&=\C[x_{\alpha_1}]/(x_{\alpha_1})^2\otimes \C[x_{\alpha_2}]/(x_{\alpha_2})^p\otimes \C[x_{\alpha_1+\alpha_2}]/(x_{\alpha_1+\alpha_2})^2\\
%F'(\Nichols^{II})&=
%\C[x_{\alpha_1}]/(x_{\alpha_1})^2\otimes \C[x_{\alpha_2}]/(x_{\alpha_2})^2\otimes \C[x_{\alpha_1+\alpha_2}]/(x_{\alpha_1+\alpha_2})^p
%\end{align*}
%where $p=\mathrm{ord}(q^2)$. In particular $\Gamma$ is sufficiently unrolled if the elements $i\alpha_1+j\alpha_2$ are different for $0\leq i<4,0\leq j<2p$ resp. $0\leq i<2p,0\leq j<2p$. The action of the Weyl groupoid $(\alpha_1,\alpha_2)\mapsto(-\alpha_1,\alpha_1+\alpha_2)$ transforms $\sigma^{I}\mapsto \sigma^{II}$ and leads to a braided tensor equivalence  \cite{HS,BLS}
%$$\YD{\Nichols^I}(\CC)\cong \YD{\Nichols^{II}}(\CC)$$
%\end{example}

We finally consider the parabolic situation in Theorem \ref{exm_parabolic} and Example \ref{exm_parabolic}: Let $X=X_J\oplus X_{\bar{J}}$ and consider again the Nichols algebra $\NicholsOf(\hat{X}_{\bar{J}})$ over $\NicholsOf(X_{{J}})$. Then the braided tensor category we associate to this Nichols algebra $\NicholsOf(\hat{X}_{\bar{J}})$ coincides with the  braided tensor category we associate with the entire Nichols algebra $\NicholsOf(X)$, see \cite{Besp95} Proposition 4.2.3 resp. \cite{BLS14} Theorem 3.12. 

\begin{lemma}\label{lem_parabolic}
In the parabolic situation we have an equivalence of braided tensor categories
$$\YD{\NicholsOf(\hat{X}_{\bar{J}})}\big(\YD{\NicholsOf(X_{{J}})}(\CC)\big)
\;\cong\;
\YD{\NicholsOf(X)}(\CC)
$$
\end{lemma}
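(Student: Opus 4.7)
The plan is to combine Theorem \ref{thm_parabolic} with a general ``iterated Yetter--Drinfeld'' result for Radford biproducts (bosonizations) in braided tensor categories, as already alluded to in the paragraph preceding the lemma.

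First I would record what Theorem \ref{thm_parabolic} gives us: the Nichols algebra $\NicholsOf(X)$ is isomorphic to the Radford biproduct $\Nichols_{\bar J}\rtimes \Nichols_J$, where we abbreviate $\Nichols_J := \NicholsOf(X_J)$ (a Hopf algebra in $\CC$) and $\Nichols_{\bar J} := \NicholsOf(\hat X_{\bar J})$ (a Hopf algebra in $\YD{\Nichols_J}(\CC)$). Here $\hat X_{\bar J} = \ad_{\Nichols_J}(X_{\bar J})$ is naturally a Yetter--Drinfeld module over $\Nichols_J$, with coaction coming from the $\mathbb N$-grading by ``$X_{\bar J}$-content'' and action being the adjoint action inside $\NicholsOf(X)$; this is precisely the structure appearing in the bosonization isomorphism.

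Second, I would invoke the general bosonization lemma for Yetter--Drinfeld categories: for any Hopf algebra $A$ in a braided category $\DD$ and any Hopf algebra $K$ in $\YD{A}(\DD)$, one has a braided tensor equivalence
\[
\YD{K}\!\big(\YD{A}(\DD)\big)\;\cong\;\YD{K\rtimes A}(\DD),
\]
equating an object $M$ on the left with the object $M\in\DD$ endowed with the combined $(K\rtimes A)$-action and $(K\rtimes A)$-coaction obtained by composing the $A$- and $K$-structures in the usual Radford way. This is \cite{Besp95} Proposition 4.2.3 / \cite{BLS14} Theorem 3.12. Applying it with $\DD=\CC$, $A=\Nichols_J$ and $K=\Nichols_{\bar J}$, and using step one to identify $\Nichols_{\bar J}\rtimes \Nichols_J\cong \NicholsOf(X)$ as Hopf algebras in $\CC$, yields the asserted equivalence
\[
\YD{\NicholsOf(\hat X_{\bar J})}\!\big(\YD{\NicholsOf(X_J)}(\CC)\big)\;\cong\;\YD{\NicholsOf(X)}(\CC).
\]

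The main (and essentially only) content is therefore the bosonization equivalence; its tensor structure is transparent, and braidedness is where I would be most careful. Concretely, the Yetter--Drinfeld braiding on the right is built from the single coaction/action of $\NicholsOf(X)$, while on the left it is assembled from an ``outer'' $\Nichols_{\bar J}$-braiding performed inside the already braided category $\YD{\Nichols_J}(\CC)$. Matching these amounts to rewriting the coproduct of $\NicholsOf(X)\cong \Nichols_{\bar J}\rtimes \Nichols_J$ as the $R$-matrix-twisted tensor coproduct and unwinding the hexagon, which is the standard verification in \cite{Besp95, BLS14}. Since this last check is purely formal once bosonization is in place, I would cite it rather than reproduce the string diagrams, giving the short proof sketched above.
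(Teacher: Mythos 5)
Your proposal is correct and takes essentially the same approach as the paper: the paper's justification is precisely the Radford biproduct decomposition $\NicholsOf(X)\cong \NicholsOf(\hat{X}_{\bar{J}})\rtimes \NicholsOf(X_J)$ from Theorem \ref{thm_parabolic} combined with the iterated Yetter--Drinfeld/bosonization equivalence of \cite{Besp95} Proposition 4.2.3 and \cite{BLS14} Theorem 3.12, which you identify and apply in the same way.
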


\section{The Singlet VOA and related VOAs}

We now consider the braided tensor category given by representations of the singlet vertex algebra $\catM=\Rep(\cM(p))$ for $p\in\mathbb{N}$, see \cite{CMY4, CMY}. Earlier important works on this VOA are \cite{Ad03, AM07, CM, CMR}.
As we will describe first, from free field realizations as kernel of screenings in Heisenberg VOAs $\cM(p)\subset \pi$ we know as in Section \ref{sec_exampleVOA} that there exists a commutative algebra $A$ with $\UU_A^0=\Vect_{\C}$, the  tensor category of $\C$-graded vector spaces with associated quadratic form $Q(k)=e^{\pi i (k/2p)^2}$.

%The goal of this section is to prove from these facts a category equivalence to representations of the unrolled quantum group $u_q^ H(\sl_2)$. We start by explicitly working out that the abelian categories are equivalent, and that the abelian categories of $A$-modules are equivalent.

\subsection{Free field realizations}\label{catMA}

Let $\alpha_+ = \sqrt{2p}, \alpha_- = -\sqrt{2/p}$, $\alpha_0 = \alpha_+ + \alpha_-$ and 
\[
\alpha_{r, s} = \frac{1-r}{2} \alpha_+ + \frac{1-s}{2}\alpha_-.
\]
Let $\pi$ be the rank one Heisenberg algebra with generator $\alpha(z)$ satisfying 
\[
\alpha(z)\alpha(w) = (z-w)^{-2}.
\]
Let $\pi_\lambda$ be the Fock module of highest-weight $\lambda$ and let $L = \alpha_+  \mathbb Z$. Then the singlet and triplet algebras are subalgebras of $\pi$ and  the lattice VOA $V_L$ characterized as
\[
\cW(p) = \text{ker}( e_0^{\alpha_-\alpha}: V_L \rightarrow V_{L+\alpha_-}), \qquad \cM(p) = \text{ker}(e_0^{\alpha_-\alpha}: \pi \rightarrow \pi_{\alpha_-}).
\]
Let us denote these embeddings by $\iota$, 
\[
\iota: \cW(p) \hookrightarrow V_L, \qquad \iota: \cM(p) \hookrightarrow \pi. 
\]
There are further singlet modules, characterized as kernels of screenings on Fock modules
\[
M_{r, s} = \text{ker}(e_0^{\alpha_-\alpha}: \pi_{\alpha_{r, s}} \rightarrow \pi_{\alpha_{r+1, p-s}}).
\]
We decide to write $\pi_\lambda$ for the Fock module, while if we view it as a $\cM(p)$-module we write $F_\lambda$.  
The triplet is an extension of the singlet, namely
\begin{equation}\label{eq_TripletAsModule}
\begin{split}
\cW(p) &= \bigoplus_{\lambda \in L } \text{ker}(e_0^{\alpha_-\alpha}: \pi_\lambda \rightarrow \pi_{\lambda+\alpha_-}) \\
&= \bigoplus_{n \in \mathbb Z} \text{ker}(e_0^{\alpha_-\alpha}: \pi_{\alpha_{2n+1, 1}} \rightarrow \pi_{\alpha_{2n+2, p-1}}) \\
&= \bigoplus_{n \in \mathbb Z}  M_{2n+1, 1}.
\end{split}
\end{equation}
Let $A = \pi_{\alpha_{1, 1}} = \pi_0$, as a singlet module it satisfies the non-split exact sequence
$$
0 \rightarrow M_{1, 1} \rightarrow A \rightarrow M_{2, p-1} \rightarrow 0.
$$
$A$ is a commutative algebra in $\catM$ by \cite{HKL}.
We denote by $\cU_A$ and $\cU_A^0$ the category of $A$-modules and local $A$-modules in $\cU$ the the induction and forgetful functor by $\induceA$ and $\forgetA$  as before.

There are a few more interesting VOAs related to the singlet algebra. 
Namely, consider a second Heisenberg VOA with generator $\beta$ and OPE 
\[
\beta(z)\beta(w) = (z-w)^{-2}.
\]
We now denote Fock modules by a superscript $\alpha, \beta$ to indicate the underlying VOA. 
The $\mathcal B(p)$-algebra has been introduced in \cite{CRW}, generalizing the well-known $p=2$ case and the case of $p=3$ of \cite{Ad05}. It is an extension of $\cM(p) \otimes \pi^\beta$ and decomposes as  (see also (5.1) of \cite{CMY2}
\[
\mathcal B(p) \cong \bigoplus_{n \in \Z} M_{1-n, 1} \otimes \pi^\beta_{n \lambda_p}
\]
with $\lambda_p = \sqrt{-\frac{p}{2}}$. $\mathcal B(p)$ are chiral algebras of Argyres-Douglas theories and it is isomorphic to the subregular $W$-algebra of $\mathfrak{sl}_{p-1}$ at level $-(p-1) + \frac{p-1}{p}$  for $p>2$, while in the case $p=2$ it is the well-known $\beta\gamma$-VOA \cite{C17, ACKR, ACGY}. 
$\mathcal B_p$ is by construction a subalgebra of the VOA
\[
\bigoplus_{n \in \Z} \pi^\alpha_{\alpha_{1-n, 1}} \otimes \pi^\beta_{n \lambda_p} \cong \Pi(0) 
\]
where $\Pi(0)$ \cite{Ad1} is a VOA extension of a rank two Heisenberg algebra generated by fields $c, d$ with only non-vanishing OPE
\[
c(z) d(w) =  (z-w)^{-2}
\]
and 
\[
\Pi(0) = \bigoplus_{n \in \Z} \pi^d_n \otimes \pi^c
\]
The isomorphism is given by $d\mapsto \frac{\alpha_-}{2}(\alpha - \sqrt{-1} \beta)$ and 
$c\mapsto \alpha_+(\alpha + \sqrt{-1} \beta)$.
In particular the Fock module $\pi^\alpha_{\alpha_-}\otimes \pi^\beta$ has weight $(-2, \frac{1}{p})$ for $(c, d)$.

For $p>2$, there is a superVOA analogue to $\mathcal B(p)$, dentoted by $\mathcal S(p)$ and this is isomorphic to the principal W-superalgebra of $\mathfrak{sl}_{p-1|1}$ at level $(p-2) + \frac{p}{p-1}$. It is the dual W-algebra via the duality of \cite{CGN21, CL22a}. It decomposes as \cite[Prop.6.1]{CMY2}
\begin{equation}\label{Sp}
\mathcal S(p) \cong \bigoplus_{n \in \Z} M_{1-n, 1} \otimes \pi^\beta_{n \mu_p}
\end{equation}
with $\mu_p = \sqrt{\frac{2-p}{2}}$.
$\mathcal S(p)$ is by construction a subalgebra of the VOA
\begin{equation}\label{Rp}
\mathcal R(p) := \bigoplus_{n \in \Z} \pi^\alpha_{\alpha_{1-n, 1}} \otimes \pi^\beta_{n \mu_p} \cong V_{\epsilon\Z} \otimes \pi^\gamma 
\end{equation}
with $\epsilon^2 =1$  and $\gamma^2= \frac{p}{p-2}$, in particular the lattice VOA $V_{\epsilon\Z}$ is isomorphic to a pair of free fermions. The isomorphism is given by $\epsilon \mapsto \frac{\alpha_+}{2}\alpha + \mu_p \beta$ and 
$\gamma \mapsto -\frac{\alpha_+}{2} \alpha + \frac{p}{2\mu_p}\beta$. 
In particular the Fock module $\pi^\alpha_{\alpha_-}\otimes \pi^\beta$ has weight $(-1, 1)$ for $(\epsilon, \gamma)$.
The braiding is given by
\begin{equation} \label{braidingSp}
(V_{\epsilon\Z}^f \otimes \pi^\gamma_\lambda) \boxtimes (V_\epsilon^g \otimes \pi^\gamma_\mu)  \xrightarrow{(-1)^{fg} q^{(2-p)\lambda \mu }}
(V_{\epsilon\Z}^g \otimes \pi^\gamma_\mu) \boxtimes 
(V_{\epsilon\Z}^f \otimes \pi^\gamma_\lambda) 
\end{equation}
with $\pi^\gamma_\lambda$ the Fock module of weight $\lambda$ and $V_{\epsilon\Z}^\pm$ the even and odd part of $V_{\epsilon\Z} $
and $q = e^{\frac{\pi \i}{p}}$.

The case $p=2$ is a bit different. In this case one has \cite{CRo}
$$
V^k(\mathfrak{gl}_{1|1}) \cong \bigoplus_{n \in \Z} M_{1-n, 1} \otimes \pi^d_n \otimes \pi^c,
$$
here $k \in \C\setminus\{0\}$ as the affine VOAs of $\mathfrak{gl}_{1|1}$ at any two non-zero levels are isomorphic. In this case we have an embedding into 
\begin{equation} \label{gl11}
\bigoplus_{n \in \Z} \pi^\alpha_{\alpha_{1-n, 1}} \otimes \pi^d_n \otimes \pi^c \cong V_{\epsilon\Z} \otimes \pi^B\otimes \pi^A 
\end{equation}
with $\alpha_-=-1$ and with $B$ satisfying inner products $B A = -\hbar^{-1}$ and $A^2=1$ and orthogonal on the others. The isomorphism is given by $\epsilon \mapsto \alpha + c$ and $B \mapsto \hbar^{-1} c$ and $A \mapsto \alpha_-  d$.  In particular the Fock module $\pi^\alpha_{\alpha_-}\otimes \pi^d \otimes \pi^c$ in degree $(\alpha-,0,0)$ for $(\alpha,d,c)$ has degree $(-1,0,  -1)$ for $(\epsilon, B, A)$. With $q=e^{\pi\i\hbar}$ the braiding is 
\begin{equation} \label{braidingGl11}
(V_{\epsilon\Z}^f \otimes \pi^B_{b} \otimes \pi^A_a) \boxtimes (V_\epsilon^g \otimes \pi^B_{b'} \otimes \pi^A_{a'})  \xrightarrow{(-1)^{fg} q^{-b {a'} - a{b'} - \hbar b{b'} }}
(V_\epsilon^g \otimes \pi^B_{b'} \otimes \pi^A_{a'}) \boxtimes (V_{\epsilon\Z}^f \otimes \pi^B_b\otimes \pi^A_a)
\end{equation}

\subsection{Abelian category of the singlet vertex algebra}\label{sec_singletAbelian}

We review the singlet $\cM(p)$ and triplet $\cW(p)$ vertex operator algebras  for integer $p>1$ following \cite{CLR, CMY4, CMY}. We denote by $\UU=\mathcal O^T_{\mathcal M(p)}$ the category of weight modules of the singlet algebra. This category has been finally understood in \cite{CMY4}.

%Let $\alpha_+ = \sqrt{2p}, \alpha_- = -\sqrt{2/p}$, $\alpha_0 = \alpha_+ + \alpha_-$ and 
%\[
%\alpha_{r, s} = \frac{1-r}{2} \alpha_+ + \frac{1-s}{2}\alpha_-
%\]
We are interested in three types of modules. 
Firstly a set of simple modules of the singlet algebra  denoted by $M_{r, s}$ for integers $r, s$ and in addition $1\leq s \leq p$. 
Secondly a set of of modules $F_\lambda$ for $\lambda \in \mathbb C$. These are simple, projective and injective  unless $\lambda \in  L = \frac{\alpha_-}{2} \mathbb Z$. Set $F_{r, s}:= F_{\alpha_{r, s}}$. The modules $F_{r, p}$ are simple, projective and injective as well.
For $s \neq  p$ one has an addition type of module denoted by $\overline{F}_{r, s}$ and 
they fit in the non-split short exact sequence 
\begin{equation}\label{ses1}
0 \rightarrow M_{r, s} \rightarrow F_{r, s} \rightarrow M_{r+1, p-s} \rightarrow 0, \qquad 
0 \rightarrow M_{r, s} \rightarrow \overline{F}_{r-1, p-s} \rightarrow M_{r-1, p-s} \rightarrow 0
\end{equation}
As said, the simple $F_\lambda$ are also projective and injective, while the projective cover and injective hull of the $M_{r, s}$ ($s \neq p$) is denoted by $P_{r, s}$ and satisfies
the non-split exact sequences
\begin{equation}\label{ses2}
0 \rightarrow F_{r, s} \rightarrow P_{r, s} \rightarrow F_{r-1, p-s} \rightarrow 0, \qquad 
0 \rightarrow \overline{F}_{r-1, p-s} \rightarrow P_{r, s} \rightarrow \overline{F}_{r, s} \rightarrow 0
\end{equation}
The Loewy diagram of the $P_{r, s}$ is as follows
\begin{center}
\begin{tikzpicture}[scale=1]
\node (top) at (0,2) [] {$M_{r, s}$};
\node (left) at (-2,0) [] {$M_{r-1, p-s}$};
\node (right) at (2,0) [] {$M_{r+1, p-s}$};
\node (bottom) at (0,-2) [] {$M_{r, s}$};
\draw[->, thick] (top) -- (left);
\draw[->, thick] (top) -- (right);
\draw[->, thick] (left) -- (bottom);
\draw[->, thick] (right) -- (bottom);
\node (label) at (0,0) [circle, inner sep=2pt, color=white, fill=black!50!] {$P_{r, s}$};
\end{tikzpicture}
\end{center}
and from this it is clear that 
\[
\Ext_{\catM}^1(M_{r, s}, M_{r', s'}) = \begin{cases} \C & \quad \text{for} \ (r', s') = (r\pm 1, p-s) \\ 0 &\quad \text{otherwise} \end{cases} 
\]
We observe that we have a block decomposition
\[
\UU= \bigoplus_{\alpha  \in \C/\Z \, \cup\,  p\Z} \UU_\alpha \oplus \bigoplus_{s= 1}^{p-1} \ \mathcal \UU_s.
 \]
The typical blocks $\UU_\alpha \cong \text{Vect}$ for $\alpha \in \C/\Z \, \cup\,  p\Z$ have each a single simple object, $F_{\frac{\alpha+ p -1}{\sqrt{2p}}}$. We note that this labelling is motivated from \cite{CMR} where the abelian correspondence to the unrolled quantum group was suggested explicitely on the level of modules. 
The block $\UU_s$ has simple objects $M_{r, s}$ and $r$ even as well as $M_{r, p-s}$ for $r$ odd.

%In order to establish the braided tensor equivalence we need more structure and 
We continue with a few preparatory statements:

From \eqref{ses1} and \eqref{ses2} we read of some Hom-spaces
\begin{corollary} \label{cor:hom} For $s, s'\neq  p$
\begin{equation}
\begin{split}
\Hom_{\catM}(F_{r', s'}, F_{r, s}) &= \begin{cases} \mathbb C & \quad \text{for} \ (r', s') = (r-1, p-s) \ \text{and} \ (r', s') = (r, s) \\
0 & \quad \text{otherwise} \end{cases} \\
 \Hom_{\catM}(F_{r', s'}, P_{r, s}) &= \begin{cases} \mathbb C & \quad \text{for} \ (r', s') = (r-1, p-s) \ \text{and} \ (r', s') = (r, s) \\
0 & \quad \text{otherwise} \end{cases} \\ 
\Hom_{\catM}(\overline{F}_{r', s'}, \overline{F}_{r, s}) &= \begin{cases} \mathbb C & \quad \text{for} \ (r', s') = (r+1, p-s) \ \text{and} \ (r', s') = (r, s) \\
0 & \quad \text{otherwise} \end{cases} \\
 \Hom_{\catM}(\overline{F}_{r', s'}, P_{r, s}) &= \begin{cases} \mathbb C & \quad \text{for} \ (r', s') = (r-1, p-s) \ \text{and} \ (r', s') = (r, s) \\
0 & \quad \text{otherwise} \end{cases} \\ 
\Hom_{\catM}(\overline{F}_{r', s'}, F_{r, s}) &= \begin{cases} \mathbb C & \quad \text{for} \ (r', s') = (r, s) \\ 
0 & \quad \text{otherwise} \end{cases} \\ 
\Hom_{\catM}(F_{r', s'}, \overline{F}_{r, s}) &= \begin{cases} \mathbb C & \quad \text{for} \ (r', s') = (r, s) \\
0 & \quad \text{otherwise} \end{cases} \\
\end{split}
\end{equation}
\end{corollary}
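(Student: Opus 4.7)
My plan is to prove all six Hom-space formulas by a single uniform argument: any morphism $f\colon X \to Y$ factors as a surjection onto its image followed by inclusion, so $\mathrm{im}(f)$ must occur simultaneously as a quotient of $X$ and a submodule of $Y$. The short exact sequences \eqref{ses1}, \eqref{ses2} and the Loewy diagram of $P_{r,s}$ let me enumerate all such subquotients explicitly. The quotients of $F_{r',s'}$ are $0$, $M_{r'+1,p-s'}$, $F_{r',s'}$; the quotients of $\overline{F}_{r',s'}$ are $0$, $M_{r',s'}$, $\overline{F}_{r',s'}$; the submodules of $F_{r,s}$ are $0$, $M_{r,s}$, $F_{r,s}$; the submodules of $\overline{F}_{r,s}$ are $0$, $M_{r+1,p-s}$, $\overline{F}_{r,s}$; and the submodules of $P_{r,s}$ (read off from the Loewy diagram and \eqref{ses2}) are $0$, $M_{r,s}$, $F_{r,s}$, $\overline{F}_{r-1,p-s}$, $\mathrm{rad}(P_{r,s})$ and $P_{r,s}$.

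For each of the six cases I then match quotients of the source with submodules of the target up to isomorphism. For instance in $\Hom(F_{r',s'}, F_{r,s})$ the only nonzero matches are $F_{r',s'}\cong F_{r,s}$, which forces $(r',s')=(r,s)$, and $M_{r'+1,p-s'}\cong M_{r,s}$, which forces $(r',s')=(r-1,p-s)$; these two conditions never coincide (they would force $r=r-1$). The same case analysis handles the other five Hom spaces; in the $P_{r,s}$-targets, no quotient of $F_{r',s'}$ or $\overline{F}_{r',s'}$ has the Loewy length needed to match $\mathrm{rad}(P_{r,s})$ or $P_{r,s}$, so only $M_{r,s}$, $F_{r,s}$ and $\overline{F}_{r-1,p-s}$ appear as possible images, yielding the two conditions claimed.

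To upgrade these matches to one-dimensional Hom spaces, I will argue each realized case contributes exactly $\C$. When the image is a simple $M_{r,s}$, the surjection onto the head of the indecomposable source is unique up to scalar and the inclusion of $M_{r,s}$ into the socle of the target is also unique up to scalar, giving $\C$. When the image equals the whole indecomposable source (an isomorphism onto a submodule), one uses that $\mathrm{End}(F_{r,s}) = \mathrm{End}(\overline{F}_{r,s}) = \C$: any endomorphism induces a scalar on the simple head, and after subtracting that scalar the result has image in the socle; but the socle is not isomorphic to the head, so the surjection onto the head of this corrected endomorphism must vanish, forcing the endomorphism to be scalar.

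The main obstacle I anticipate is twofold: first, correctly enumerating the submodule lattice of $P_{r,s}$ so that no possible image is overlooked; and second, carefully excluding coincidental matches in edge cases where $p = 2s$ or the indices $r'$ are chosen so that $(r,s)$ and $(r\pm 1, p-s)$ could collide. For the former, the ses \eqref{ses2} identifies $F_{r,s}$ and $\overline{F}_{r-1,p-s}$ as the two length-two submodules containing the socle $M_{r,s}$, and $\mathrm{rad}(P_{r,s})$ is their sum. For the latter, the collision $(r,s)=(r\pm 1,p-s)$ never occurs since $r\neq r\pm 1$; and collisions like $(r-1,p-s)=(r+1,p-s)$ never occur for the same reason, so the two cases in each formula remain disjoint for all admissible $r, s$.
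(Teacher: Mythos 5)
Your proposal is correct and follows essentially the same route the paper intends: the paper offers no detailed argument beyond the remark that the Hom spaces are "read off" from the short exact sequences \eqref{ses1}, \eqref{ses2}, and your factorization-through-the-image argument together with the explicit enumeration of quotients of the source and submodules of the target (including the submodule lattice of $P_{r,s}$) is precisely that reading-off, with the one-dimensionality supplied by the unique surjection onto the simple head and unique inclusion of the simple socle. The edge-case remark about $s = p/2$ and the observation that $F_{a,b}$ is never isomorphic to $\overline{F}_{c,d}$ (heads and socles would force $c+2 = c$) close the gaps one might worry about, so the argument is complete.
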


\begin{lemma} \label{lem:ext} For $s \neq p$
\[
\Ext_{\catM}^1(F_{r', s'}, F_{r, s}) =  \begin{cases} \mathbb C & \quad \text{for} \ (r', s') = (r-1, p-s) \ \text{and} \ (r', s') = (r-2, s) \\
0 & \quad \text{otherwise} \end{cases}
\]
\end{lemma}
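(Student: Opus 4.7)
The plan is to exploit the short exact sequence
\[ 0 \to F_{r,s} \to P_{r,s} \to F_{r-1,p-s} \to 0 \]
from \eqref{ses2} together with the fact that $P_{r,s}$ is both projective and injective (it is simultaneously the projective cover and injective hull of $M_{r,s}$). Applying $\Hom_{\catM}(F_{r',s'}, -)$ and using $\Ext^1_{\catM}(F_{r',s'}, P_{r,s}) = 0$, the long exact sequence collapses to
\[ \Hom_{\catM}(F_{r',s'}, P_{r,s}) \xrightarrow{\pi_{*}} \Hom_{\catM}(F_{r',s'}, F_{r-1,p-s}) \to \Ext^{1}_{\catM}(F_{r',s'}, F_{r,s}) \to 0, \]
where $\pi: P_{r,s} \twoheadrightarrow F_{r-1,p-s}$ is the projection. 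The computation therefore reduces to identifying $\mathrm{coker}(\pi_{*})$.

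By Corollary \ref{cor:hom}, the target $\Hom_{\catM}(F_{r',s'}, F_{r-1,p-s})$ is one-dimensional exactly when $(r',s') \in \{(r-2,s), (r-1,p-s)\}$ and otherwise vanishes, while the source $\Hom_{\catM}(F_{r',s'}, P_{r,s})$ is one-dimensional exactly when $(r',s') \in \{(r-1,p-s), (r,s)\}$ and otherwise vanishes. Thus for all $(r',s')$ outside $\{(r-2,s),(r-1,p-s)\}$ the cokernel is automatically zero; and for $(r',s') = (r-2,s)$ the source is zero while the target is $\C$, immediately giving a one-dimensional $\Ext^{1}$. The only remaining case is $(r',s') = (r-1,p-s)$, in which both spaces are one-dimensional and it must be argued that $\pi_{*}$ is the zero map.

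For this nontrivial case, suppose for contradiction that $\pi_{*}$ is nonzero. Then for a generator $f: F_{r-1,p-s} \to P_{r,s}$ we would have $\pi \circ f \neq 0$ in $\End_{\catM}(F_{r-1,p-s})$. By Corollary \ref{cor:hom} the latter space is one-dimensional, so $\pi \circ f$ is a nonzero scalar multiple of the identity, and after rescaling $f$ provides a splitting of the short exact sequence above. This contradicts the indecomposability of $P_{r,s}$ (as the projective cover of the simple object $M_{r,s}$), hence $\pi_{*} = 0$ and $\Ext^{1}_{\catM}(F_{r-1,p-s}, F_{r,s}) \cong \C$, completing the case analysis.

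The only subtle point is the last contradiction step; the rest is direct bookkeeping from the cited Corollary and the injectivity of $P_{r,s}$. Implicit throughout is the standing assumption $s \neq p$, needed so that $P_{r,s}$ (the projective/injective cover) is defined and indecomposable.
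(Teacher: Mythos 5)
Your proof is correct and follows the same route as the paper: apply $\Hom_{\catM}(F_{r',s'},-)$ to the short exact sequence $0\to F_{r,s}\to P_{r,s}\to F_{r-1,p-s}\to 0$, kill $\Ext^1(F_{r',s'},P_{r,s})$ by injectivity of $P_{r,s}$, and read off dimensions from Corollary \ref{cor:hom}. The only (cosmetic) difference is in the case $(r',s')=(r-1,p-s)$, where you invoke indecomposability of $P_{r,s}$ to show $\pi_*=0$; keeping the preceding term $\Hom(F_{r-1,p-s},F_{r,s})\cong\C$ in the exact sequence would give $\pi_*=0$ at once, since the injection $\Hom(F_{r-1,p-s},F_{r,s})\hookrightarrow\Hom(F_{r-1,p-s},P_{r,s})$ is then an isomorphism of one-dimensional spaces.
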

\begin{proof}
The short exact sequence  $0 \rightarrow  F_{r, s} \rightarrow P_{r, s} \rightarrow F_{r-1, p-s} \rightarrow 0 $ gives rise to the usual long exact sequence whose relevant piece is
\begin{equation}
\begin{split}
0 \rightarrow &\Hom_{\catM}(M, F_{r,s}) \rightarrow \Hom_{\catM}(M, P_{r, s}) \rightarrow \\
&\Hom_{\catM}(M, F_{r-1, p-s}) \rightarrow  \Ext_{\catM}^1(M, F_{r, s})  \rightarrow  \Ext_{\catM}^1(M, P_{r, s}) = 0 
\end{split}
\end{equation}
The claim follows immediately using Corollary \ref{cor:hom}.
\end{proof}

\subsection{Some singlet VOA fusion rules}

We need the fusion rules for $F_{r, s}$ with $\overline{F}_{r', s'}$. 
Define 
\[
P_{r', s', r, s} := \bigoplus_{\substack{ \ell =2p+1-s-s' \\  \ell +s + s' \ \text{odd}}}^{p} P_{r+r'-1, \ell}.
\]
\begin{theorem}\label{thm:fusion} \textup{\cite[Thm.9.1]{CLR}} and \textup{\cite[Thm.1.3]{CMY4}}
	The fusion rules 
	\begin{equation}
	\begin{split}
	M_{r', s'} \boxtimes M_{r, s} &=P_{r', s', r, s} \oplus  \bigoplus_{\substack{ \ell = |s-s'| +1 \\  \ell +s + s' \ \text{odd}}}^{\text{min}\{s+s'-1, 2p-1-s-s' \}} M_{r+r'-1, \ell} \\
	M_{r', s'} \boxtimes F_{r, s} &=P_{r', s', r, s} \oplus P_{r', s', r+1, p-s } \oplus  \bigoplus_{\substack{ \ell = |s-s'| +1 \\  \ell +s + s' \ \text{odd}}}^{\text{min}\{s+s'-1, 2p-1-s-s' \}} F_{r+r'-1, \ell} \\
 M_{r', s'} \boxtimes \overline{F}_{r, s} &=P_{r', s', r+1, p-s} \oplus P_{r', s', r, s } \oplus  \bigoplus_{\substack{ \ell = |s-s'| +1 \\  \ell +s + s' \ \text{odd}}}^{\text{min}\{s+s'-1, 2p-1-s-s' \}} \overline{F}_{r+r', \ell} \\
    M_{r, s} \boxtimes F_\lambda &= \bigoplus_{\ell = 0}^{s-1} F_{\lambda + \alpha_{r, s} + \ell \alpha_-}
	\end{split}
	\end{equation}
	hold for $\lambda \notin L$.
\end{theorem}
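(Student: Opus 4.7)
My plan is to establish the four fusion rules in order of increasing complexity, using the free field realization $\cM(p) \subset \pi$ and the characterization of $M_{r,s}$ as the kernel of the screening $e_0^{\alpha_-\alpha}$ on the Fock module $\pi_{\alpha_{r,s}}$.

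First I would dispatch the fourth rule. For $\lambda \notin L$ the Fock module $F_\lambda$ is simple, projective, and injective in $\catM$. The composition factors of $\pi_{\alpha_{r,s}}$ as a $\cM(p)$-module are $M_{r,s}$ and the Fock-shifted composition factors coming from iterated application of the screening; equivalently, the subquotient $M_{r,s}$ has Fock-level composition factors $\pi_{\alpha_{r,s}+\ell\alpha_-}$ for $\ell=0,\ldots,s-1$. Since fusion of Fock modules is governed by the Heisenberg/lattice structure and acts by weight shift, the fusion $M_{r,s} \boxtimes F_\lambda$ must be $\bigoplus_{\ell=0}^{s-1} F_{\lambda+\alpha_{r,s}+\ell\alpha_-}$, with no extensions possible because all summands are simple, projective and in distinct typical blocks $\UU_\alpha$.

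Second, I would tackle the hardest rule, $M_{r',s'} \boxtimes M_{r,s}$. Using rigidity of the singlet category (established in \cite{CMY}) I can apply Frobenius reciprocity to reduce computation of Hom spaces out of the fusion product to known Hom spaces involving duals. The character of the fusion product is determined by Virasoro-level fusion together with the block decomposition, so the list of composition factors $M_{r+r'-1,\ell}$ for $\ell$ of appropriate parity with $|s-s'|+1 \le \ell \le s+s'-1$ is known. The real content is to show that for $\ell$ in the range $2p+1-s-s' \le \ell \le s+s'-1$ the composition factors $M_{r+r'-1,\ell}$ and $M_{r+r'-1,2p-\ell}$ organize into the indecomposable projective $P_{r+r'-1,\ell}$ rather than a split sum. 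This is forced by combining the $\Ext^1$ computations of Lemma~\ref{lem:ext}, Corollary~\ref{cor:hom}, and the fact that $M_{r',s'} \boxtimes -$ preserves projectivity (since induction/co-induction along rigid duals preserves projectives).

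Third, the rules for $M_{r',s'} \boxtimes F_{r,s}$ and $M_{r',s'} \boxtimes \overline{F}_{r,s}$ follow by applying the exact functor $M_{r',s'} \boxtimes -$ to the short exact sequences \eqref{ses1} and using the first rule. The only remaining issue is whether the resulting short exact sequences split or not; but all non-projective summands live in distinct blocks once one tracks weights carefully, and any potential gluing with non-split extensions would contradict the $\Ext^1$ vanishing of Lemma~\ref{lem:ext} with the already-identified $P$-summands. The free-field interpretation of $\overline{F}_{r,s}$ as a cokernel of a screening then pins down the remaining coefficients.

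The main obstacle is the second step: recognizing that the appropriate composition factors glue into full projective covers $P_{r+r'-1,\ell}$ rather than semisimplifying. This is precisely the point where one needs rigidity and a careful bookkeeping of projectivity under fusion, and is where the approach of \cite{CMY4} supersedes the more computational approach of \cite{CLR}, which handled this by explicitly constructing intertwining operators at the free-field level in the Heisenberg algebra $\pi$ and tracking them through the screening kernel.
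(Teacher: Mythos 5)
The paper does not prove Theorem~\ref{thm:fusion}; it is quoted verbatim from \cite[Thm.9.1]{CLR} and \cite[Thm.1.3]{CMY4}, so there is no in-paper proof to compare against. That said, your sketch has a genuine gap at its hardest point, step two. You write that the appearance of full projective covers $P_{r+r'-1,\ell}$ in $M_{r',s'}\boxtimes M_{r,s}$ is "forced by \dots the fact that $M_{r',s'}\boxtimes -$ preserves projectivity." But $M_{r,s}$ is not projective (for $s\ne p$), so this observation says nothing directly about $M_{r',s'}\boxtimes M_{r,s}$. Tensoring a rigid object with a projective one gives a projective, but you are tensoring two non-projective simples, and the fusion is indeed not projective (it has the simple summands $M_{r+r'-1,\ell}$ for small $\ell$). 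The delicate point is precisely why the composition factors $M_{r+r'-1,\ell}$ and $M_{r+r'-1,2p-\ell}$ (and their shifted copies) in the range $2p+1-s-s'\le \ell\le p$ assemble into the full four-term indecomposable $P_{r+r'-1,\ell}$ and not into one of the length-two subquotients $F$ or $\overline F$ or a split sum. The $\Ext^1$ data you cite (Lemma~\ref{lem:ext}, Corollary~\ref{cor:hom}) constrains but does not determine this; one needs an additional structural input — in \cite{CMY4} this is a careful use of rigidity, self-duality of the simples, and Frobenius reciprocity against both the projective cover and injective hull — to exclude the non-projective possibilities.

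A smaller imprecision is in your first step: the phrase "the subquotient $M_{r,s}$ has Fock-level composition factors $\pi_{\alpha_{r,s}+\ell\alpha_-}$ for $\ell=0,\dots,s-1$" conflates the Heisenberg-graded structure of $M_{r,s}$ with its singlet composition series (which has only the factors in \eqref{ses1}). The cleaner route is: $F_\lambda$ for $\lambda\notin L$ is simple, projective and rigid, so $M_{r,s}\boxtimes F_\lambda$ is projective; projectives in the typical blocks $\UU_\alpha\cong\mathrm{Vect}$ are simple Fock modules; hence the fusion is a direct sum of $F_\mu$'s and one only needs to count, which is a character/weight-support computation. Your conclusion is right but the reasoning as written would need this repair. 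The remaining two rules by exactness applied to \eqref{ses1} are fine in outline, modulo the same caveat about distinguishing split from non-split extensions among the resulting summands.
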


\newcommand{\forget}{\forgetA} %?

\section{The Kazhdan-Lusztig equivalence for the singlet VOAs}\label{sec:KLsinglet}

\subsection{Recognizing the tensor category \texorpdfstring{$\UU$}{U}  knowing the abelian category \texorpdfstring{$\BB$}{B} }

Collecting our results above, we have the following statement assuming we know $\UU$ and $\BB$ are as expected for a quantum group. We will then apply this to the case of the singlet algebra.  

\begin{theorem}\label{thm_CharacterizingUsingB}
Suppose $\UU$ is a braided tensor category and $A\in \UU$ a commutative algebra  fulfilling Assumption \ref{assumption}. Let $\CC=\Vect_\Gamma^Q=\Rep^{wt}(C)$ and 
let $X\in \CC$ with finite-dimensional Nichols algebra $\Nichols$ satisfying the mild degree conditions in Lemma \ref{lm_isNichols}  
and $U_q$ the corresponding quantum group in Lemma \ref{lm_realizedQuantumGroup}. Assume that 
\begin{itemize}
    \item $\UU\cong \Rep^{wt}(U_q)$ as abelian categories.
    \item $\UU_A^0\cong \Vect_\Gamma^Q$ as braided tensor categories.
    \item $\UU_A\cong \Rep(\Nichols)(\CC)$ as abelian categories, compatible with the respective $\CC$-module structures as described in the assumptions of Lemma \ref{lm_infiniteSplittingIsHopfAlgB}.
\end{itemize}
Then in fact these are equivalence of tensor categories and braided tensor categories.
%\begin{align*}
%    \UU_A &\cong \Rep(\Nichols)(\CC)\\
%    \UU & \cong \Rep(U_q)
%\end{align*}
\end{theorem}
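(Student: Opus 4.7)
The plan is to chain together the three main structural inputs: Lemma \ref{lm_infiniteSplittingIsHopfAlgB} will promote the abelian equivalence on $\UU_A$ to a tensor equivalence, Lemma \ref{lm_isNichols} will identify the resulting bialgebra with the given Nichols algebra $\Nichols=\NicholsOf(X)$, and then the Schauenburg embedding of Theorem \ref{relcent} (together with Corollary \ref{cor_SchauenburgFinite} or \ref{cor_SchauenburgInfinite}) combined with Lemma \ref{lm_relcenterSplitting} and Lemma \ref{lm_realizedQuantumGroup} will upgrade the abelian equivalence $\UU\cong\Rep^{wt}(U_q)$ to a braided tensor equivalence.

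First I would use assumption $b)$ to identify $\CC=\UU_A^0$ as a braided tensor subcategory of $\BB=\UU_A$, so that the inclusion $\iota:\CC\hookrightarrow\BB$ extends as in Definition \ref{def_cent2} to a braided functor $\overline{\CC}\to\cZ(\BB)$. Composing the given abelian equivalence $F:\Rep(\Nichols)(\CC)\to\BB$ of assumption $c)$ with the forgetful functor $\BB\to\CC$ produces a functor that, by the stated $\CC$-module compatibility, plays the role of the splitting $\Localization$ in Lemma \ref{lm_infiniteSplittingIsHopfAlgB}; the module-category structure on $F$ and the triviality of its image under $\Localization$ are exactly the compatibilities listed in the hypotheses of $c)$. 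Applying Lemma \ref{lm_infiniteSplittingIsHopfAlgB} then endows the underlying algebra of $\Nichols$ with the structure of a bialgebra $\Nichols'$ (a Hopf algebra, since $\UU_A$ and $\CC$ are rigid by Assumption \ref{assumption}) inside $\CC$ in such a way that $F$ becomes a tensor equivalence $\Rep(\Nichols')(\CC)\cong \BB$.

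Next I would compare $\Nichols'$ with the given Nichols algebra $\NicholsOf(X)$. By construction the underlying algebra of $\Nichols'$ is isomorphic to $\NicholsOf(X)$ in $\CC$, and the degree conditions on the generators of $X$ built into the hypothesis let us invoke Lemma \ref{lm_isNichols} (or Lemma \ref{lm_isNicholsFree} in the sufficiently unrolled case) to conclude $\Nichols'\cong\NicholsOf(X)$ as Hopf algebras in $\CC$. Hence assumption $c)$ is upgraded to a tensor equivalence
\[ \UU_A\;\cong\;\Rep(\NicholsOf(X))(\CC), \]
and combined with assumption $b)$ this realizes the full extension datum $(\UU_A^0\subset\UU_A)$ as the splitting pair $(\CC\subset\Rep(\NicholsOf(X))(\CC))$.

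Now I would apply the Schauenburg functor of Theorem \ref{relcent}, which under Assumption \ref{assumption} gives a fully faithful braided tensor functor
\[ \Schauenburg:\;\UU\;\hookrightarrow\;\cZ_{\UU_A^0}(\UU_A)\;\cong\;\cZ_\CC(\Rep(\NicholsOf(X))(\CC))\;=\;\YD{\NicholsOf(X)}(\CC), \]
where the last identification is Lemma \ref{lm_relcenterSplitting}. By Lemma \ref{lm_realizedQuantumGroup}, the right-hand side is braided tensor equivalent to $\Rep^{wt}(U_q)$. Composed with assumption $a)$, this exhibits $\Schauenburg$ as a fully faithful braided tensor endofunctor of categories abelian equivalent to $\Rep^{wt}(U_q)$, so it remains to promote ``fully faithful'' to ``essentially surjective''.

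The main obstacle is precisely this last step, since $\UU$ is typically not finite (e.g.\ in the singlet application $\Gamma=\C$), so Corollary \ref{cor_SchauenburgFinite} does not apply directly and the Frobenius--Perron dimension argument is unavailable. The remedy is Corollary \ref{cor_SchauenburgInfinite}: assumption $a)$ together with the explicit Hopf algebra realization from Lemma \ref{lm_realizedQuantumGroup} allows us to present both $\UU$ and $\cZ_\CC(\UU_A)$ as weight modules over algebras $\dot U$ and $\dot Z$ extending the same Cartan $\dot C$, both free of the same finite rank over $\dot C$ (this rank being $\dim\NicholsOf(X)^2$). The Schauenburg functor is then realized as pullback along an algebra map $\dot Z\to\dot U$, and fullness combined with the rank equality forces this map to be an isomorphism by Lemma \ref{lm:epimorphism}(c). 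This yields the desired equivalence $\UU\cong\cZ_\CC(\UU_A)\cong\Rep^{wt}(U_q)$ as braided tensor categories and, together with the tensor equivalence on $\UU_A$ and the braided one on $\UU_A^0$ already established, completes the proof.
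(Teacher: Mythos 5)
Your proposal follows essentially the same chain of lemmas as the paper, and the final essential-surjectivity step via Corollary \ref{cor_SchauenburgInfinite} and Lemma \ref{lm:epimorphism}(c) is spelled out in more detail than the paper bothers to give — that elaboration is correct and helpful. The one place where you diverge, and where you have a small soft spot, is the very first step: you obtain the candidate splitting functor $\Localization$ by transferring $\forgetNichols$ along the abelian equivalence $F$ and then assert it ``plays the role of the splitting $\Localization$ in Lemma \ref{lm_infiniteSplittingIsHopfAlgB}'' by appealing to the stated compatibilities. But Lemma \ref{lm_infiniteSplittingIsHopfAlgB} requires $\Localization$ to be a \emph{tensor} functor equipped with a tensor structure $\Localization^T$; an abelian functor conjugated by an abelian equivalence does not automatically acquire such a structure, and it is not obvious that the theorem's hypothesis c) is meant to postulate one. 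The paper addresses this head-on in its step a): because all simple objects of $\UU_A$ correspond to trivial $\Nichols$-modules and are therefore local, Lemma \ref{lm_socle} applies and manufactures the splitting tensor functor (with its tensor structure) from the socle filtration. You should either invoke Lemma \ref{lm_socle} at that point, or make explicit that you are reading hypothesis c) as already supplying $\Localization$ with its tensor structure. Apart from this, the rest of your argument — the identification $\Nichols'\cong\NicholsOf(X)$ via Lemma \ref{lm_isNichols}, the passage through Theorem \ref{relcent} and Lemma \ref{lm_relcenterSplitting} to $\YD{\NicholsOf(X)}(\CC)$, the realization by $U_q$ via Lemma \ref{lm_realizedQuantumGroup}, and the rank argument for essential surjectivity — matches the paper's proof.
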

\begin{proof}
    \begin{enumerate}[a)]
    \item Since by assumptions on $\UU_A$ all simple modules are trivial $\Nichols$-modules and hence are already local, we have  by Lemma \ref{lm_socle} that the abelian functor forgetting $\Nichols$-action gives rise to a splitting tensor functor $\Localization$.
    \item By Lemma \ref{lm_infiniteSplittingIsHopfAlgB} this implies that $\Nichols$ has the structure of a Hopf algebra such that $\UU_A\cong \Rep(\Nichols)(\CC)$ as tensor categories, as asserted. 
    \item By Lemma \ref{lm_isNichols} $\Nichols$ is in fact the Nichols algebra of $X$.
    \item By Corollary \ref{cor_SchauenburgInfinite} we have $\UU=\cZ_\CC(\UU_A)$ and by construction in  Lemma \ref{lm_realizedQuantumGroup} hence $\UU\cong \Rep^{wt}(U_q)$ as asserted.
    \end{enumerate}
\end{proof}

\subsection{The abelian category \texorpdfstring{$\catM$}{U} }

We want to explain that we have an abelian equivalence of $\UU$ and the category of weight modules of the small unrolled quantum group $\Rep_{\mathrm{wt}}u_q^H(\mathfrak{sl}_2)$. For this we first need to introduce an auxiliary category $\DD$, so that we can describe the block decomposition of $\Rep_{\mathrm{wt}}u_q^H(\mathfrak{sl}_2)$.

Let $\DD$ be an abelian category with simple objects $L_n$ with $n \in \mathbb Z$ and the following properties
\begin{enumerate}
\item 
$
\Ext_\DD^1(L_n, L_m) = \begin{cases} \mathbb C & \quad  \text{if} \ |n-m| =1 \\ 0 &\quad \text{else;} \end{cases}
$

 Let
$ \ses{L_n}{E^\pm_n}{L_{n \pm1}} \ \in  \ \Ext_\DD^1( L_{n \pm 1}, L_n)$
{be a non-zero element.}
Then 
\item
$\Ext_\DD^1(E^+_{n+1}, L_n) = 0 = \Ext_\DD^1(L_n, E^+_{n-2})\ $ and $\ \Ext_\DD^1(E^-_{n-1}, L_n) = 0 = \Ext_\DD^1(L_n, E^-_{n+2})$;
\item 
$\Ext_\DD^1(E^+_{n-1}, L_n) = 0 = \Ext_\DD^1(L_n, E^+_n)\ $ and  $\ \Ext_\DD^1(E^-_{n+1}, L_n) = 0 = \Ext_\DD^1(L_n, E^-_n)$;
\item 
$\Ext_\DD^1(E^+_n, E^+_{n+1}) \neq 0 \ \text{and} \ \Ext_\DD^1(E^-_n, E^-_{n-1}) \neq 0$.
\end{enumerate}
Let $\DD$ be another abelian category with the exact same properties. Then $\DD$ and $\DD'$ are equivalent \cite[Thm.5.2]{ACK}. 
Let now
\[
\ses{E^\pm_{n\pm 1}}{P_n^\pm}{E^\pm_n} \quad \in \ \Ext_\DD^1(E^\pm_n, E^\pm_{n \pm 1})
\]
and assume that $P_n^\pm$ is projective and injective, then the properties 2 and 3 of the list are consequences as the relevant part of the long exact sequence in cohomology is 
\[
\Hom_\DD(M, E^\pm_{n \mp 1}) \rightarrow  \Ext_\DD^1(M, E^\pm_n) \rightarrow 0, \qquad
\Hom_\DD(E^\pm_{n \pm 1}, M) \rightarrow  \Ext_\DD^1(E^\pm_n, M) \rightarrow 0
\]
and since $\Hom_\DD(L_m, E^\pm_n) = \delta_{n, m} \C$ and $\Hom_\DD(E^\pm_n, L_m)= \delta_{m, n \pm 1} \C$.
\begin{example}
The abelian category $\Rep_{\mathrm{wt}}u_q^H(\mathfrak{sl}_2)$ of weight modules of the unrolled quantum group  has been studied in detail in  \textup{\cite{CGP}}
and from that it is clear that it decomposes as 
\[
\Rep_{\mathrm{wt}}u_q^H(\mathfrak{sl}_2) \cong \bigoplus_{\alpha  \in \C/\Z \, \cup\,  p\Z} \UU_\alpha \;\oplus\; \bigoplus_{s= 1}^{p-1} \ \mathcal \UU_s
\]
with $\UU_\alpha \cong \text{Vect}$ for $\alpha  \in \C/\Z \, \cup\,  p\Z$ and $\UU_s \cong \DD$, see \textup{\cite[Section 2]{ACK}}. 
\end{example}

On the other hand, we have obtained the block decomposition of $O^T_{\mathcal M(p)}$ in Section \ref{sec_singletAbelian}. If we set 
\begin{equation}
\begin{split}
L_n &= \begin{cases} M_{n, s} &\quad n \ \text{even} \\ M_{n, p-s} &\quad n \ \text{odd} \\ \end{cases} \\ 
E^+_n &= \begin{cases} F_{n, s} &\quad n \ \text{even} \\ F_{n, p-s} &\quad n \ \text{odd} \\ 
\end{cases} \\ 
E^-_n &= \begin{cases} \overline{F}_{n-1, p-s} &\quad n \ \text{even} \\ \overline{F}_{n-1, s} &\quad n \ \text{odd} \\ \end{cases} \\ 
P_n &= \begin{cases} P_{n, s} &\quad n \ \text{even} \\ P_{n, p-s} &\quad n \ \text{odd} \\ \end{cases} \\
\end{split}
\end{equation}
then $\UU_s$ has exactly the properties of the atypical block of last subsection and so in particular due to  \cite[Thm.5.2]{ACK} we have that $\UU_s \cong \DD$ as an abelian category. Hence

\begin{theorem}\label{thm_SingletAbelian}
We have an equivalence $\Rep_{\mathrm{wt}}(u_q^H(\mathfrak{sl}_2)) \cong \mathcal  O^T_{\mathcal M(p)}$ as abelian categories.
\end{theorem}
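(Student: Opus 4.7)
The plan is to reduce this to a block-by-block comparison and identify each atypical block via the axiomatic characterization of $\DD$ recalled above, combined with \cite[Thm.5.2]{ACK}.

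First I would match the block decompositions. On the quantum group side, the example preceding the theorem records
\[ \Rep_{\mathrm{wt}}u_q^H(\mathfrak{sl}_2) \cong \bigoplus_{\alpha \in \C/\Z \,\cup\, p\Z} \UU_\alpha \;\oplus\; \bigoplus_{s=1}^{p-1} \UU_s, \qquad \UU_\alpha \cong \Vect, \quad \UU_s \cong \DD. \]
The block decomposition of $\mathcal{O}^T_{\mathcal{M}(p)}$ was already computed in Section \ref{sec_singletAbelian} with the same indexing, each typical block containing a single simple projective-injective Fock module $F_{(\alpha+p-1)/\sqrt{2p}}$. The matching of typical blocks is therefore immediate since both sides are equivalent to $\Vect$.

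Second, for an atypical block $\UU_s$ on the VOA side, I would adopt the parity-dependent relabeling $L_n, E^\pm_n, P_n$ spelled out above the statement and verify that $\UU_s$ satisfies the four defining properties of $\DD$. Property (1) follows at once from the Ext calculation performed in Section \ref{sec_singletAbelian} from the Loewy diagram of $P_{r,s}$, which moreover identifies the two nontrivial extensions as precisely the short exact sequences \eqref{ses1} defining $E^\pm_n$. The projective covers $P_{r,s}$ are simultaneously injective because their Loewy diagram is self-dual, so they furnish projective-injective extensions $0 \to E^\pm_{n\pm 1} \to P_n^\pm \to E^\pm_n \to 0$ in $\UU_s$; as noted in the paragraph just before the Example, properties (2) and (3) then follow by applying $\Hom_{\UU_s}(-, L_n)$ and $\Hom_{\UU_s}(L_n, -)$ to these sequences and reading off the vanishing from the Hom computations of Corollary \ref{cor:hom}. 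Finally, property (4) is precisely the nonvanishing statement of Lemma \ref{lem:ext} (rephrased via the relabeling).

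Having verified properties (1)--(4), the uniqueness result \cite[Thm.5.2]{ACK} produces an equivalence $\UU_s \cong \DD$; summing over all blocks then gives the claimed abelian equivalence. The main obstacle is not conceptual but bookkeeping: one must carefully track the parity-dependent relabeling so that every Ext vanishing or nonvanishing predicted by the axioms of $\DD$ corresponds to an entry of Corollary \ref{cor:hom} or Lemma \ref{lem:ext}, and one should verify that projectivity-injectivity of $P_{r,s}$ descends to the summand $\UU_s$ (which is automatic, since block decompositions respect projective covers and injective hulls).
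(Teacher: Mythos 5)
Your proof follows the same route as the paper: match the block decompositions, apply the parity-dependent relabeling to identify each atypical block $\UU_s$ with the model category $\DD$ by checking the four defining Ext-axioms, and then invoke the uniqueness theorem \cite[Thm.5.2]{ACK}. One small caveat worth flagging: Lemma \ref{lem:ext} literally covers only the $E^+_n$ (i.e.\ $F_{r,s}$) half of property (4); the $E^-_n$ half involves $\overline{F}_{r,s}$, for which the analogous nonvanishing is not stated as a lemma in the text, but follows by the mirror-image computation from \eqref{ses2} and Corollary \ref{cor:hom} (or by contragredient duality) — the paper's own argument elides this detail in exactly the same way.
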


\subsection{The abelian category \texorpdfstring{$\catM_A$}{U\_A} }

We need to understand the category $\catM_A$. In particular we need to show that it is equivalent to the category of Nichols algebra modules in $\text{Vect}_\C^Q$ for the Nichols algebra of Example \ref{exm_NicholsRank1}.

Recall that $A= F_{1, 1}$ is the commutative algebra in the category $\UU$  corresponding to the Heisenberg VOA $\pi$. 

Using Theorem \ref{thm:fusion} and exactness of tensor product (thanks to rigidity \cite{CMY4})
we get the fusion rules
\begin{equation}\label{eq:somefusion}
\begin{split}
A \boxtimes F_\mu &= F_\mu \oplus F_{\mu + \alpha_-} \oplus \dots \oplus F_{\mu + (p-1)\alpha_-} \\
A \boxtimes F_{r'. s'} &= P_{2, p-1, r', s'} \oplus P_{2, p-1, r'+1, p-s'} \oplus F_{r'+1, p-s'} \oplus  F_{r', s'} \\
&= \bigoplus_{\substack{\ell = p - (s'-2) \\ \ell = p  -s' \mod 2}}^p P_{r'+1, \ell} \oplus
\bigoplus_{\substack{\ell = s'+2 \\ \ell = s' \mod 2}}^p P_{r'+2, \ell} \oplus F_{r'+1, p-s'} \oplus  F_{r', s'} \\
A \boxtimes \overline{F}_{r'. s'} &= P_{2, p-1, r', s'} \oplus P_{2, p-1, r'+1, p-s'} \oplus P_{r'+1, p-s'}\\
&= \bigoplus_{\substack{\ell = p - s' \\ \ell = p  -s' \mod 2}}^p P_{r'+1, \ell} \oplus
\bigoplus_{\substack{\ell = s'+2 \\ \ell = s' \mod 2}}^p P_{r'+2, \ell}
\end{split}
\end{equation}
The last line used Corollary 2.3 of \cite{CLR} and the second line used 
\[
	M_{2, p-1} \boxtimes F_{r'. s'} =  P_{2, p-1, r', s'} \oplus P_{2, p-1, r'+1, p-s'} \oplus F_{r'+1, p-s'}
	\]
 together with $\text{Ext}^1(F_{r+1, p-s}, F_{r, s})= 0$ by Lemma \ref{lem:ext}.
\begin{corollary}\label{cor:someHoms} Set $G_\lambda = F_\lambda$ for $\lambda \notin L$, $G_{\lambda} = F_{r, p}$ for $\lambda = \alpha_{r, p}$ and $G_\lambda = \overline{F}_{r, s}$ for $\lambda = \alpha_{r, s}$ and $r\in \Z$ and $s = 1, \dots, p-1$.
	\begin{equation}
 \begin{split} 
 \Hom_{\catM}(G_\lambda, A \boxtimes G_\mu) = \begin{cases} \mathbb C &\quad \lambda = \mu + \ell \alpha_-, \qquad \ell = 0, \dots, p-1 \\ 0 & \quad \text{else} \end{cases} \\
 \Hom_{\catM}(A \boxtimes G_\mu, F_\lambda) = \begin{cases} \mathbb C &\quad \lambda = \mu + \ell \alpha_-, \qquad \ell = 0, \dots, p-1 \\ 0 & \quad \text{else} \end{cases} 
 \end{split}
 \end{equation}
\end{corollary}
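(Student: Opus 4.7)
The plan is a case analysis on the type of $G_\mu$, using the explicit fusion decompositions in \eqref{eq:somefusion} together with Corollary~\ref{cor:hom} and the Loewy structure of the projective covers $P_{r,s}$.

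For $\mu \notin L$ the fusion rule gives $A \boxtimes F_\mu = \bigoplus_{\ell=0}^{p-1} F_{\mu + \ell\alpha_-}$, a sum of simple projective-injective Fock modules (since $\mu \notin L$ forces $\mu + \ell\alpha_- \notin L$). Both Hom spaces then reduce termwise, and are $\mathbb{C}$ precisely when $\lambda = \mu + \ell\alpha_-$ for some $\ell$ and $0$ otherwise. The substantive work is in the atypical cases $\mu = \alpha_{r,s}$. For $s = p$, $G_\mu = F_{r,p} = M_{r,p}$ is simple projective-injective; I would compute $A \boxtimes F_{r,p}$ by tensoring the defining sequence $0 \to \unit \to A \to M_{2,p-1} \to 0$ with $M_{r,p}$ and applying Theorem~\ref{thm:fusion} to evaluate $M_{2,p-1} \boxtimes M_{r,p}$, with projectivity of $F_{r,p}$ ensuring the resulting short exact sequence splits. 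For $s \in \{1,\dots,p-1\}$ the decomposition of $A \boxtimes \overline{F}_{r,s}$ as a sum of $P_{r+1,\ell}$'s and $P_{r+2,\ell}$'s is already furnished by \eqref{eq:somefusion} (under the convention $P_{r',p} = F_{r',p}$).

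The Loewy structure of $P_{r',\ell}$ for $\ell < p$ (socle and head $M_{r',\ell}$, middle layer $M_{r'\pm 1, p-\ell}$), together with the subobjects $F_{r',\ell}, \overline{F}_{r'-1, p-\ell} \hookrightarrow P_{r',\ell}$ and quotients $F_{r'-1,p-\ell}, \overline{F}_{r',\ell}$ from \eqref{ses2}, combined with Corollary~\ref{cor:hom}, yields: both $\Hom_{\catM}(G_\lambda, P_{r',\ell})$ and $\Hom_{\catM}(P_{r',\ell}, F_\lambda)$ are $\mathbb{C}$ for exactly two values of $\lambda$, namely $\alpha_{r',\ell}$ and $\alpha_{r'-1, p-\ell}$ (with the corresponding $G_\lambda$ being $\overline{F}_{r',\ell}$ or $\overline{F}_{r'-1, p-\ell}$, and the corresponding $F_\lambda$ being $F_{r',\ell}$ or $F_{r'-1, p-\ell}$); for the summand $F_{r',p}$ the unique value is $\lambda = \alpha_{r',p}$.

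It then remains to match the collection of $\lambda$'s so obtained against $\{\mu + \ell\alpha_- : \ell = 0, \dots, p-1\}$. Using the identity $\alpha_{r,s} = \tfrac{(r-1)p + 1 - s}{2}\,\alpha_-$, every $\mu + \ell\alpha_-$ is a unique $\alpha_{r'',s''}$ with $s'' \in \{1,\dots,p\}$, and one verifies by direct tabulation that as $\ell$ ranges over $\{0,\dots,p-1\}$ the resulting pairs $(r'',s'')$ traverse exactly once the labels of the summands of $A \boxtimes G_\mu$ (with the two parity-constrained arithmetic progressions of \eqref{eq:somefusion} corresponding to two ``strands'' in $\ell$, and $G_\lambda = F_{r'',p}$ appearing precisely when $s''=p$). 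I expect this final combinatorial matching, keeping careful track of the parity conditions, to be the main obstacle, but it reduces to a short finite check of the type illustrated for $p=5$, $s=2$, where $\lambda \in \{\alpha_{r,2},\alpha_{r+1,5},\alpha_{r+1,3},\alpha_{r+1,1},\alpha_{r+2,4}\}$ exhausts the five summand contributions exactly once.
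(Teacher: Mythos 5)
Your proposal takes essentially the same approach as the paper, whose own proof is the one-liner ``Immediate from \eqref{eq:somefusion} together with Corollary \ref{cor:hom}''; you have correctly unpacked the case analysis by type of $G_\mu$, the reading of Hom spaces off the Loewy structure of the $P_{r',\ell}$, and the $\ell \mapsto (r'',s'')$ combinatorial matching that this ``immediate'' actually entails, and your worked $p=5$, $s=2$ example checks out. One detail you supply that is strictly missing from \eqref{eq:somefusion} as written is the decomposition of $A\boxtimes F_{r,p}$ (the three lines of \eqref{eq:somefusion} cover only $\mu\notin L$ and $s'<p$); your route via the sequence $0\to M_{1,1}\to A\to M_{2,p-1}\to 0$, the fusion $M_{2,p-1}\boxtimes M_{r,p}$, and projectivity of $F_{r,p}$ is exactly the right way to fill this in.
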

\begin{proof}
	Immediate from \eqref{eq:somefusion} together with Corollary \ref{cor:hom}.
\end{proof}
\begin{lemma}\label{lm_singletAbelianSteps} For $\lambda \in \C$
\begin{enumerate}
\item $\induceA(G_\lambda)$ is indecomposable.
\item Let $X$ be a simple object in $\catM_A$ with the property that $\forgetA(X) \cong F_\lambda$, then $X \cong \pi_\lambda$.
\item The simple composition factors of $\induceA(G_\lambda)$ are $\{ \pi_\lambda, \pi_{\lambda- \alpha_-}, \dots, \pi_{\lambda+(p-1)\alpha_-}\}$
    \item Let $X$ be a simple object in $\catM_A$ then $X \cong \pi_\mu$ for some $\mu$. In particular every simple module is local. 
    \item The top (the maximal semisimple quotient) of $\induceA(G_\lambda)$ is $\pi_\lambda$.
    \item $\induceA(G_\lambda)$ is projective and $\induceA(G_\lambda) \otimes  \pi_\mu \cong \induceA(G_{\lambda+\mu})$ for all $\lambda, \mu \in C$.
 \item    Set $H_\lambda = \induceA(M_{0, p-1}) \otimes \pi_\lambda$. Then there are non-split short exact sequences
 \[
 \ses{H_\lambda}{\induceA(G_{\lambda- p\alpha_-})}{\pi_{\lambda-p\alpha_-}},\qquad\ses{\pi_\lambda}{\induceA(G_{\lambda- (p-1)\alpha_-})}{H_\lambda}
 \]
 in particular the top of $H_\lambda$ is $\pi_{\lambda - (p-1)\alpha_-}$ and so $\Hom_{\Rep(\catM)}(H_\lambda, \pi_\mu) = \C \delta_{\lambda- (p-1)\alpha_-, \mu}$.
 \item $\Ext^1_{\Rep(\catM)}(\pi_\lambda, \pi_\mu) = \C \delta_{\lambda+\alpha_-, \mu}$.
 \item  
	$\induceA(G_\lambda)$ is indecomposable with Loewy diagram 
	\[
	\pi_\lambda \rightarrow \pi_{\lambda+ \alpha_-} \rightarrow \dots \rightarrow \pi_{\lambda+ (p-1) \alpha_-}
	\]
 \item Any object in $\catM_A$ is a quotient of a finite direct sum of the $\induceA(G_\lambda)$,  $\induceA(G_{\lambda_1}) \oplus \dots \oplus
 \induceA(G_{\lambda_m})$. In particular the $\{\induceA(G_\lambda) | \lambda \in \mathbb C\}$ form a complete list of indecomposable projective objects in $\catM_A$.
 \item $\catM_A$ is rigid.
\end{enumerate}

\end{lemma}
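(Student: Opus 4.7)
The plan is to proceed through the eleven items in the given order, using three consistent tools throughout: (a) Frobenius reciprocity $\Hom_{\catM_A}(\induceA(X),Y)\cong \Hom_{\catM}(X,\forgetA(Y))$ together with its right-adjoint version $\Hom_{\catM_A}(Y,\induceA(X))\cong \Hom_{\catM}(\forgetA(Y),X)$ (valid since the forgetful functor is two-sided adjoint to induction whenever $A$ has a Frobenius structure, which it has here as $\V\supset \W$ is conformal); (b) the fusion calculations of Theorem \ref{thm:fusion} and their direct consequences gathered in \eqref{eq:somefusion} and Corollary \ref{cor:someHoms}; and (c) exactness of $\induceA$ and of the tensor product $\otimes_A$ (which is available on $\catM_A$ because $\catM$ is rigid by \cite{CMY4}).

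For items (1)--(5) the work is essentially book-keeping. Indecomposability of $\induceA(G_\lambda)$ follows from $\End_{\catM_A}(\induceA(G_\lambda))\cong \Hom_{\catM}(G_\lambda, A\boxtimes G_\lambda)$ being one-dimensional, read off from Corollary \ref{cor:someHoms} at $\ell=0$. For (2), if $\forgetA(X)\cong F_\lambda$ then the $A$-action $A\boxtimes F_\lambda\to F_\lambda$ is a non-zero projection onto the $F_\lambda$-summand of \eqref{eq:somefusion}, which by uniqueness of the module structure (since $\mathrm{End}(F_\lambda)=\C$) forces $X\cong \pi_\lambda$. Item (3) follows because $\forgetA\circ\induceA(G_\lambda)=A\boxtimes G_\lambda$ has all composition factors in $\{F_{\lambda+k\alpha_-}\}_{k=0}^{p-1}$ by the fusion rules, and these Fock modules lift uniquely to the $\pi_{\lambda+k\alpha_-}$ by (2). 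Item (4) follows from (3) combined with the adjunction surjection $\induceA(\forgetA(X))\twoheadrightarrow X$. Item (5) is again Frobenius: $\Hom_{\catM_A}(\induceA(G_\lambda),\pi_\mu)=\Hom_{\catM}(G_\lambda,F_\mu)$, which equals $\C\delta_{\lambda,\mu}$ by Corollary \ref{cor:hom}, so $\pi_\lambda$ is the only simple quotient.

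Items (6)--(9) are the heart of the argument. For projectivity in (6), the right adjoint $\forgetA$ is exact, so $\induceA$ sends projectives to projectives; since every $G_\lambda$ is projective in $\catM$ (simple-projective for $\lambda\notin L$, and equal to the projective cover $P_{r,p}$ or $\overline F_{r,s}$ in the atypical blocks), $\induceA(G_\lambda)$ is projective. The tensor-product formula $\induceA(G_\lambda)\otimes_A\pi_\mu\cong \induceA(G_{\lambda+\mu})$ follows from Lemma \ref{lm_trivialActionToInducedModule} combined with the fusion rules $F_\lambda\boxtimes \pi_\mu\subset$-summands. For (7) I apply the exact functor $\induceA$ to the defining sequences \eqref{ses1},\eqref{ses2} involving $M_{0,p-1}$, and read off the composition factors from (3); the identification of top follows by computing $\Hom$-spaces with the $\pi_\mu$ via Frobenius. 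Item (8) is then a standard Yoneda computation: starting from the two-step resolution provided by (6)--(7), namely $0\to H_\lambda\to \induceA(G_{\lambda-p\alpha_-})\to \pi_{\lambda-p\alpha_-}\to 0$, apply $\Hom_{\catM_A}(-,\pi_\mu)$ and use (5) and (7) to pin down the single non-vanishing $\Ext^1$. Item (9) is the assembly: by (1) $\induceA(G_\lambda)$ is indecomposable, by (5) its top is $\pi_\lambda$, by (3) the composition factors are $\pi_{\lambda+k\alpha_-}$ with multiplicity one, and by (8) the only extensions available between consecutive simples are the indicated ones, forcing the uniserial Loewy diagram.

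Items (10) and (11) are then general consequences. For (10), given any $X\in \catM_A$, the adjunction provides a surjection $\induceA(\forgetA(X))\twoheadrightarrow X$, and since every object of $\catM$ is a quotient of a finite direct sum of the $G_\lambda$ (the projective covers in $\catM$), the claim follows; the $\induceA(G_\lambda)$ then exhaust the indecomposable projectives by the Krull--Schmidt property coming from (1) and finite length. For rigidity in (11), the starting point is that $\catM$ is rigid and that the induced modules satisfy $\induceA(G_\lambda)^\vee\cong \induceA(G_\lambda^\vee)$ with $G_\lambda^\vee\cong G_{-\lambda-(p-1)\alpha_-}$ (up to reindexing), which is again of type $G$; so the projective generators are rigid, and by \cite[Prop.4.2.12]{EGNO}-type arguments (or directly by taking a two-term projective presentation and using exactness/five-lemma) rigidity propagates to all of $\catM_A$. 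The main obstacle I anticipate is the clean verification of rigidity in (11), because one must check that the evaluation/coevaluation morphisms defined on presentations descend to well-defined duality data; barring that, the structural items (7) and (9) will require careful diagram chases but no new input beyond fusion rules and Frobenius reciprocity.
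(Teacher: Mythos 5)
Your overall architecture (Frobenius reciprocity, the fusion rules of Theorem \ref{thm:fusion} and Corollary \ref{cor:someHoms}, exactness of $\induceA$) matches the paper's, and items (1)--(5) and (7)--(10) are essentially the same proofs as the paper's, if somewhat compressed. However, there are three concrete gaps in items (6) and (11).

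\textbf{Projectivity in (6).} You assert that every $G_\lambda$ is projective in $\catM$ because it is ``equal to the projective cover $P_{r,p}$ or $\overline F_{r,s}$.'' This is false for atypical $\lambda$: $\overline F_{r,s}$ has only two composition factors by \eqref{ses1}, while the projective cover $P_{r,s}$ has four (Loewy length three), so $\overline F_{r,s}$ is not projective. The paper's proof first handles $\lambda\notin L$ where $G_\lambda=F_\lambda$ is simple-projective-injective, and then extends to atypical $\lambda$ by tensoring with the rigid simple current $\pi_\mu$ and invoking $\cite[Prop.\,4.2.12]{EGNO}$ together with the tensor formula $\induceA(G_\lambda)\otimes\pi_\mu\cong\induceA(G_{\lambda+\mu})$. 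Your argument as stated would already fail here.

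\textbf{The tensor formula in (6).} You claim $\induceA(G_\lambda)\otimes_A\pi_\mu\cong\induceA(G_{\lambda+\mu})$ ``follows from Lemma \ref{lm_trivialActionToInducedModule} combined with the fusion rules.'' Lemma \ref{lm_trivialActionToInducedModule} gives $\induceA(G_\lambda)\otimes_A\pi_\mu\cong G_\lambda\otimes\pi_\mu$, which is an $A$-module with $A$ acting through the $\pi_\mu$-factor only; it is \emph{not} the same thing as $A\otimes G_{\lambda+\mu}$ with $A$ acting on itself. The paper's proof is genuinely different: it uses indecomposability under tensoring with simple currents \textup{\cite[Prop.\,2.5]{CKLR}} plus an explicit computation of $\Hom_{\catM_A}(\induceA(G_{\lambda+\mu}),\induceA(G_\lambda)\otimes\pi_\mu)$ (split into the typical and atypical cases) to match tops and thereby identify the two modules.

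\textbf{Rigidity in (11).} Your proposal to propagate rigidity from the projective generators via a ``two-term projective presentation and five-lemma'' is not a valid argument; rigidity is not stable under quotients or extensions, and there is no five-lemma for duality data. You flag this as the obstacle you anticipate, and indeed it is a real gap. The paper's argument sidesteps this by explicitly identifying each indecomposable object of $\catM_A$ (using the Loewy-filtration description from (9) and (10)) with a module of the form $\pi_\mu\otimes\induceA(M_{r,s})$: induced modules are rigid because $\induceA$ is a tensor functor from a rigid category, $\pi_\mu$ is a rigid simple current, and the tensor product of rigid objects is rigid. No resolution or five-lemma is invoked.

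The remaining items are fine modulo small imprecisions (e.g.\ in (2) the hypothesis is only that $\forgetA(X)\cong F_\lambda$, so $F_\lambda$ may be non-simple and ``$\End(F_\lambda)=\C$'' is not available; the paper instead works directly with $\Hom_\catM(A\otimes G_\lambda,F_\lambda)$ being one-dimensional).
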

\begin{proof} We are using Frobenius reciprocity $\Hom_{\catM_A}(\induceA(X), Y) = \text{Hom}_{\catM}(X, \forgetA(Y))$.
\begin{enumerate}
\item  Firstly we get from the previous Corollary that
\begin{equation}\nonumber
    \begin{split}
    \Hom_{\catM_A}(\induceA(G_\lambda), \induceA(G_\mu)) &= \text{Hom}_{\catM}( G_\lambda, \forgetA(\induceA(G_\mu)))\\ &= \text{Hom}_{\catM}( G_\lambda, A \boxtimes G_\mu) \\ &= \begin{cases} \mathbb C &\quad \lambda = \mu + \ell \alpha_-, \qquad \ell = 0, \dots, p-1 \\ 0 & \quad \text{else} \end{cases}    
    \end{split}
\end{equation}
so each $\induceA(G_\lambda)$ is indecomposable.
\item We have 
\[
\mathbb C = \Hom_{\catM}(G_\lambda, F_\lambda) = \Hom_{\catM}(G_\lambda, \forget(\pi_\lambda)) =
\Hom_{\catM_A}(\induceA(G_\lambda), \pi_\lambda) 
\]
as well as 
\[
\mathbb C = \Hom_{\catM}(G_\lambda, F_\lambda) = \Hom_{\catM}(G_\lambda, \forget(X)) =
\Hom_{\catM_A}(\induceA(G_\lambda), X) 
\]
Let $f \in  \Hom_{\catM}(G_\lambda, F_\lambda)$, this means the induction $\induceA(f)$ is a surjective morphism to $\pi_\lambda + X$.
But a morphism from $\induceA(G_\lambda)$ to $X$ or $\pi_\lambda$ must in particular be a morphism in the underlying category $\catM$, that is in $\Hom_{\catM}(A \otimes G_\lambda, F_\lambda)$. 
This Hom-space is however only one-dimensional by the previous Corollary and so $\pi_\lambda + X = \pi_\lambda$ and in particular $X \cong \pi_\lambda$.
\item
%Consider $\lambda \notin L$ in this case $F_\lambda$ is projective and so is $\induceA(F_\lambda)$, see e.g. Remark 2.64 of \cite{CKM}.
 This is seen inductively: Let $\mu^i = \lambda + (p-i)\alpha_-$ for $i = 1, \dots, p$.  
	It follows that
	\[
	\Hom_{\catM_A}(\induceA(G_{\mu^i}), \induceA(G_\lambda)) = \Hom_{\catM}(G_{\mu^i}, A \boxtimes G_\lambda) = \mathbb C 
	\] 
	from the previous Corollary. Also from Frobenius reciprocity $\pi_\nu$ is a simple quotient of $\mathcal F(G_\nu)$. 
	The image of a non-trivial morphism $\induceA(G_{\mu^1}) \rightarrow  \induceA(G_\lambda)$ must in particular be an object in $\catM$.  However we inspect from the fusion rules that the  only such morphism has $F_{\mu^1}= \forgetA(\pi_{\mu^1})$ as image and hence by the previous point $\pi_{\mu^1}$ must be a simple submodule of $\induceA(G_\lambda)$. 
	Set $Q^0 = \induceA(G_\lambda)$ and $Q^1 = Q^0/ \pi_{\mu^1}$. The induction hypothesis is now that $Q^{i-1}$ has $\pi_{\mu^i}$ as simple submodule so that we can set $Q^i = Q^{i-1}/\pi_{\mu^i}$.  Using Frobenius reciprocity we verify that 
	$\Hom_{\catM_A}(\induceA(G_{\mu^i}), Q^{i-1}) = \mathbb C$ and again one observes that the only possible non-zero morphism needs to have  $F_{\mu^i}$ as image so that $\pi_{\mu^i}$ is a simple submodule of $Q^{i-1}$. 
 \item Let now $X$ be a simple object in $\catM_A$ then $\forgetA(X)$ is an object in $\catM$ and so in particular it has a simple submodule $M$. But for every simple module in $\catM$ there exists precisely one $\mu \in \C$ such that $\Hom_{\catM}(G_\mu, M)$ is non-zero. Hence $\Hom_{\catM}(G_\mu, \forgetA(X))$ is non-zero and  by Frobenius reciprocity 
 $\Hom_{\catM_A}(\induceA(G_\lambda), X)$ is non-zero as well, in particular $X \cong \pi_\nu$ for some $\nu$ by the previous statement. 
 \item This is now clear since $\delta_{\lambda, \mu} \C = \Hom_{\catM}(G_\lambda, F_\mu) =\Hom_{\catM_A}(\induceA(G_\lambda), \pi_\mu)$.
 \item Since $\pi_{\lambda + \ell \alpha_-} \otimes \pi_\mu = \pi_{\lambda+\mu + \ell\alpha_-}$
 the module $\induceA(G_\lambda) \otimes  \pi_\mu$  has the same composition factors as $\induceA(G_{\lambda+\mu})$.
 Since $\pi_\mu$ is a simple current Proposition 2.5 of \cite{CKLR} applies\footnote{Proposition 2.5 of \cite{CKLR} is stated for local modules of a VOA, but the proof works in any tensor category (with right exact tensor product, which holds in $\cU_A$  \cite[Remark 2.64]{CKM}).}, in particular  $\induceA(G_\lambda) \otimes  \pi_\mu$ is indecomposable and the top is $\pi_\lambda \otimes \pi_\mu = \pi_{\lambda+\mu}$. 
 We have 
 \[
 \Hom_{\catM_A}(\induceA(G_{\lambda+\mu}), \induceA(G_\lambda) \otimes  \pi_\mu) =
 \Hom_{\catM}(G_{\lambda+ \mu}, \forgetA (\induceA(G_\lambda) \otimes  \pi_\mu)).
 \]
 There are two cases. First, if $\lambda+\mu$ is typical, i.e.~not equal to $\alpha_{r', s'}$ for some $r'\in \Z$ and $s'=1, \dots, p-1$, then
 \[
 \forgetA (\induceA(G_\lambda) \otimes  \pi_\mu) = F_{\lambda+\mu} \oplus F_{\lambda+\mu +\alpha_-} \oplus \dots \oplus F_{\lambda+\mu + (p-1)\alpha_-} 
 \]
and so $\Hom_{\catM_A}(\induceA(G_{\lambda+\mu}), \induceA(G_\lambda) \otimes  \pi_\mu) = \C$. Since the top level of both modules is $\pi_{\lambda+\mu}$ the only possibility is that $\induceA(G_\lambda) \otimes  \pi_\mu \cong \induceA(G_{\lambda+\mu})$.
Now, assume that $\lambda+ \mu = \alpha_{r', s'}$ for some $r'\in \Z$ and $s'=1, \dots, p-1$, then
$\induceA(G_\lambda) \otimes  \pi_\mu$ must have the same composition factors as 
\[
\bigoplus_{\substack{\ell = p - s' \\ \ell = p  -s' \mod 2}}^p P_{r'+1, \ell} \oplus
\bigoplus_{\substack{\ell = s'+2 \\ \ell = s' \mod 2}}^p P_{r'+2, \ell}.
\] 
The only summand that belongs to the same block as $\overline{F}_{r', s'}$ is $P_{r'+1, p-s'}$.
The functor $\forgetA$ is exact and since $\forgetA(\pi_\lambda) = F_\lambda$, the only possibilities for the decomposition of $\induceA(G_\lambda) \otimes  \pi_\mu$ is 
\[
\bigoplus_{\substack{\ell = p - s' \\ \ell = p  -s' \mod 2}}^p R_{r'+1, \ell} \oplus
\bigoplus_{\substack{\ell = s'+2 \\ \ell = s' \mod 2}}^p R_{r'+2, \ell},
\]
with $R_{r, s} \in \{ P_{r, s}, F_{r, s }\oplus F_{r-1, p-s}\}$. Since $$\Hom_{\catM}(\overline{F}_{r', s'}, P_{r'+1, p-s'}) = \C = \Hom_{\catM}(\overline{F}_{r', s'}, F_{r'+1, p-s'} \oplus F_{r', s'})$$ we have 
\[
 \Hom_{\catM_A}(\induceA(G_{\lambda+\mu}), \induceA(G_\lambda) \otimes  \pi_\mu) =
 \Hom_{\catM}(G_{\lambda+ \mu}, \forgetA (\induceA(G_\lambda) \otimes  \pi_\mu)) = \C
 \]
 and the same argument as in the typical case applies. 

 Consider $\lambda \notin L$ in this case $G_\lambda$ is projective and so is $\induceA(G_\lambda)$, see e.g. Remark 2.64 of \cite{CKM}. The $\pi_\mu$ are rigid and since the tensor product of a rigid object with a projective one is projective \cite[Prop. 4.2.12]{EGNO} the projectivity holds for all $\lambda$.
\item  The $M_{r, 1}$ are simple currents and in particular $M_{r, 1} \otimes A = \overline{F}_{r, 1}$. Using Frobenius reciprocity,  that is $ \Hom_{\catM_A}(\induceA(M_{r, 1}), \pi_\lambda)= \Hom_\catM(M_{r, 1}, F_{\lambda}) = \C \delta_{\alpha_{r, 1}, \lambda}$ we observe that 
$\induceA(M_{r, 1}) \cong \pi_{\alpha_{r, 1}}$.
We apply the exact functor $\induceA$ to
\[
0 \rightarrow M_{1, 1} \rightarrow \overline{F}_{0, p-1} \rightarrow M_{0, p-1} \rightarrow 0, \qquad 
0 \rightarrow M_{0, p-1} \rightarrow \overline{F}_{-1, 1} \rightarrow M_{-1, 1} \rightarrow 0
\]
and use that $\induceA(M_{1, 1}) = \pi_0$, $\overline{F}_{0, p-1} = G_{-(p-1)\alpha_-}$, $\overline{F}_{-1, 1} = G_{-p\alpha_-}$ and $\induceA(M_{-1, 1}) = \pi_{-p\alpha_-}$
giving 
\[
0 \rightarrow \pi_0 \rightarrow \induceA(G_{-(p-1)\alpha_-}) \rightarrow \induceA(M_{0, p-1}) \rightarrow 0
\]
and
\[
0 \rightarrow \induceA(M_{0, p-1}) \rightarrow \induceA(G_{-p\alpha_-}) \rightarrow \pi_{-p\alpha_-} \rightarrow 0
\]
the result follows from exactness of tensoring with simple currents together with the previous point. 
\item 
The short exact sequence 
\[
\ses{H_{\lambda+ p \alpha_-}}{\induceA(G_\lambda)}{\pi_\lambda}
\]
induces the long-exact sequence
\begin{equation}\nonumber
    \begin{split}
 0 &\rightarrow \Hom_{\catM_A}(\pi_\lambda, \pi_\mu) 
 \rightarrow \Hom_{\catM_A}(\induceA(G_\lambda), \pi_\mu)\\ 
&\rightarrow \Hom_{\catM_A}(H_{\lambda + p\alpha_-}, \pi_\mu)
\rightarrow \Ext^1_{\catM_A}(\pi_\lambda, \pi_\mu) \\
 &\rightarrow \Ext^1_{\catM_A}(\induceA(G_\lambda), \pi_\mu)
    \end{split}
\end{equation}
Since $\induceA(G_\lambda)$ is projective $\Ext^1_{\catM_A}(\induceA(G_\lambda), \pi_\mu) = 0$, while \newline  $\Hom_{\catM_A}(H_{\lambda + p\alpha_-}, \pi_\mu) = \C \delta_{\lambda+\alpha_-, \mu}$ by the previous point and\newline  $\Hom_{\catM_A}(\pi_\lambda, \pi_\mu)  = \C \delta_{\lambda, \mu} = \Hom_{\catM_A}(\induceA(G_\lambda), \pi_\mu)$ by point 5 of this Lemma. 
The statement follows. 
\item This point is clear: from point 1 $\induceA(G_\lambda)$ is indecomposable. From point 3, its composition factors are $\pi_\lambda, \pi_{\lambda+\alpha_-}, \dots, \pi_{\lambda+(p-1)\alpha_-}$ and from the previous point, the only possible arrows are from $\pi_{\lambda + s \alpha_-}$ to $\pi_{\lambda+(s+1)\alpha_-}$. If one such arrow were missing, than the module wouldn't be indecomposable. 
\item  $\catM$ is of finite length \cite{CMY4}. By \eqref{ses2} every projective object in $\catM$ has a composition series with all composition factors being $G_\lambda$ for suitable $\lambda$. By exactness of $\induceA$ together with projectivity of the $\induceA(G_\lambda)$ this means that the induction of any projective module is a direct sum of suitable $\induceA(G_\lambda)$. 
Let $X$ be an arbitrary object in $\catM_A$, the surjection $\forgetA(X) \rightarrow \forgetA(X)$ induces a surjection of $\induceA(\forgetA(X)$ onto $X$, i.e.~$X$ is a quotient of an induced module and so in particular of the induction of the projective cover of $\forgetA(X)$, but the lattere, as just remarked, is a direct sum of suitable $\induceA(G_\lambda)$. 
\item Recall \eqref{ses1} 
\begin{equation}\nonumber
0 \rightarrow M_{r, s} \rightarrow F_{r, s} \rightarrow M_{r+1, p-s} \rightarrow 0, \qquad 
0 \rightarrow M_{r, s} \rightarrow \overline{F}_{r-1, p-s} \rightarrow M_{r-1, p-s} \rightarrow 0
\end{equation}
Let $\lambda = \alpha_{r-1, p-s}$, so that $G_\lambda = \overline{F}_{r-1, p-s}$.
Since $\cU$ rigid \cite{CMY4} induction is exact and so $\induceA(M_{r, s})$ is a submodule of $\induceA(G_\lambda)$. Since $\Hom_{\cU_A}(\induceA(M_{r, s}), \pi_\mu) \cong \Hom_\cU(M_{r, s}, F_\mu) = \delta_{\mu, \alpha_{r, s}} \mathbb C$ the only possibility is that $\induceA(M_{r, s})$ is the indecomposable submodule of $\induceA(G_\lambda)$ whose top is $\pi_{\alpha_{r, s}}$. We compute that $\alpha_{r, s} = \alpha_{r-1, p-s} + (p-s) \alpha_-= \lambda + (p-s) \alpha_-$ and so the Loewy diagram of $\induceA(M_{r, s})$ is 
\[
\pi_{\alpha_{r, s}} \rightarrow \pi_{\alpha_{r, s} + \alpha_-} \rightarrow  \dots \rightarrow 
\pi_{\alpha_{r, s} + (s-1)\alpha_-}. 
\]
In particular $\induceA(M_{r, s})$ has length $s$ and as an induced module it is rigid. 
By exactness of $\induceA$ the sequence
\[
0 \rightarrow M_{r+1, p-s} \rightarrow \overline{F}_{r, s} \rightarrow M_{r, s} \rightarrow 0
\]
induces to 
\[
0 \rightarrow \induceA(M_{r+1, p-s}) \rightarrow \induceA(G_{\alpha_{r, s}}) \rightarrow \induceA(M_{r, s}) \rightarrow 0
\]
and tensoring with the simple current $\pi_\mu$ for any $\mu$ gives
\[
0 \rightarrow \pi_\mu \otimes \induceA(M_{r+1, p-s}) \rightarrow \induceA(G_{\alpha_{r, s}+\mu }) \rightarrow \pi_\mu \otimes\induceA(M_{r, s}) \rightarrow 0.
\]
Since $\pi_\mu$  and $\induceA(M_{r, s})$ are rigid the same is true for $\pi_\mu \otimes\induceA(M_{r, s})$ and so any quotient of $\induceA(G_\nu)$ for any $\nu$ is rigid.
The statement follows from the previous point.\qedhere
\end{enumerate}
\end{proof}

The information above suffices to establish an explicit equivalence of abelian categories $$F:\catM_A\to \Rep(\Nichols)(\Vect^{Q}_\C)$$
where $\Nichols=\C[x]/x^p$ is an algebra in $\Vect^{Q}_\C$ described explicitly in Example \ref{exm_NicholsRank1}, with $x$ in degree $\alpha$ having a self-braiding $Q(\alpha)=e^{2\pi\i/p}$. Namely, the simple modules in $\catM_A$ are by Lemma \ref{lm_singletAbelianSteps} (2) the Heisenberg modules $\pi_\lambda$ (which are also the local modules) and the simple modules in $\Rep(\Nichols)(\Vect^{Q}_\C)$ are the modules $\C_{\lambda,1}$, which are the $1$-dimensional graded vector space  $\C_\lambda$ with the zero action of $x$, and we have an equivalence of braided tensor categories $\CC=\Rep(\pi)\cong \Vect^{Q}_\C$ with $F:\pi_\lambda \mapsto \C_{\lambda}$. So we set
$$F:\pi_\lambda \mapsto \C_{\lambda,1}$$
The indecomposable projective modules in $\catM_A$ are by Lemma \ref{lm_singletAbelianSteps} (10) the induced modules $\induceA(G_\lambda)$ and the indecomposable projective modules in $\Rep(\Nichols)(\Vect^{Q}_\C)$ are the modules $\C_{\lambda,p}$ of dimension $p$, and by  by Lemma \ref{lm_singletAbelianSteps} (9) these modules have coinciding Loewy diagram. To reduce choices in the upcoming proof we will set
$$F:\;\C_{\lambda,p}\boxtimes \induceA(G_0)\boxtimes_{\catM_A} \pi_\lambda$$
which is (non-canonically) isomorphic to $\induceA(G_\lambda)$ by Lemma \ref{lm_singletAbelianSteps} (6). 

\begin{theorem}\label{thm_SingletTwistedAbelian}
    There is an equivalence of abelian categories 
    $$F:\,\Rep(\Nichols)(\CC)\to \catM_A$$    
    for $\CC=\Rep(\pi)\cong \Vect_{\C}$ compatible with the splitting tensor functor $\Localization:\catM_A\to \Rep(\pi)$ from the socle filtration and the forgetful functor to $\CC$, and compatible with the right $\CC$-module structures in the sense described in Lemma \ref{lm_infiniteSplittingIsHopfAlgB} or the proof below. 
\end{theorem}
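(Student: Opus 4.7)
The plan is to build $F$ on projective generators and extend to the whole category by projective presentations. Both $\catM_A$ and $\Rep(\Nichols)(\CC)$ are abelian with enough projectives: by Lemma \ref{lm_singletAbelianSteps}(10), the indecomposable projectives of $\catM_A$ are $\induceA(G_\lambda)$ for $\lambda \in \C$, while the indecomposable projectives of $\Rep(\Nichols)(\CC)$ are the length-$p$ modules $\C_{\lambda, p}$ from Example \ref{exm_NicholsRank1}. Both families share the same Loewy diagram (Lemma \ref{lm_singletAbelianSteps}(9)): a uniserial chain with composition factors indexed by $k = 0, \ldots, p-1$ and top in degree $\lambda$. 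This structural match is the reason to expect an equivalence.

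Concretely, set $F(\C_{\lambda, p}) := \induceA(G_0) \boxtimes_{\catM_A} \pi_\lambda$, which is isomorphic to $\induceA(G_\lambda)$ by Lemma \ref{lm_singletAbelianSteps}(6). To match morphisms, Frobenius reciprocity together with Corollary \ref{cor:someHoms} gives
\[
\Hom_{\catM_A}\bigl(\induceA(G_\lambda), \induceA(G_\mu)\bigr) \cong \C \quad \text{if and only if} \quad \lambda - \mu \in \{0, \alpha_-, \ldots, (p-1)\alpha_-\},
\]
matching exactly $\Hom_{\Rep(\Nichols)(\CC)}(\C_{\lambda, p}, \C_{\mu, p})$, which is nonzero precisely in the same cases (spanned by multiplication by $x^\ell$). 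To fix scalars coherently, I would pick a single nonzero $\xi : \induceA(G_0) \boxtimes_{\catM_A} \pi_{-\alpha_-} \to \induceA(G_0)$, declare this to be the image of $x$, and propagate to all morphism spaces between projectives by tensoring with the Heisenberg simple currents $\pi_\nu$, which by Lemma \ref{lm_singletAbelianSteps}(6) act as exact autoequivalences of $\catM_A$. Compositions then match automatically: an $a$-step map composed with a $b$-step map lies in a Hom-space that is nonzero only when $a + b < p$, in which case both sides reduce to the unique (up to scalar) $(a+b)$-step map, exactly mirroring $x^a \cdot x^b = x^{a+b}$ with $x^p = 0$.

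Extension to the whole category is then routine: for $M \in \Rep(\Nichols)(\CC)$ take a projective presentation $P_1 \to P_0 \to M \to 0$ by direct sums of $\C_{\lambda, p}$'s, define $F(M) := \mathrm{coker}(F(P_1) \to F(P_0))$, and check independence of presentation and functoriality on morphisms by standard abelian-category arguments. Essential surjectivity is exactly Lemma \ref{lm_singletAbelianSteps}(10), while fullness and faithfulness reduce to the Hom-matching on projectives. Compatibility with the splitting tensor functor $\Localization$ is verified on simples ($F(\C_{\lambda, 1}) = \pi_\lambda$ and $\Localization(\pi_\lambda) = \pi_\lambda$) and extends to all objects because the socle filtration on $\catM_A$ is precisely the $\Nichols$-filtration on the Nichols side. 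Compatibility with the right $\CC$-module structure is again Lemma \ref{lm_singletAbelianSteps}(6) at the level of projectives and propagates by right-exactness of $\bullet \otimes_{\catM_A} \pi_\nu$.

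The main obstacle is the consistent propagation of the single scalar choice $\xi$ across atypical weights $\lambda \in L$, where the Hom-structure is a priori less rigid and the generic argument via Hom-dimension counting must be supplemented. The key tool is the translation symmetry $\bullet \otimes_{\catM_A} \pi_\nu$: since this is an exact autoequivalence that commutes with $A\boxtimes$ by Lemma \ref{lm_singletAbelianSteps}(6), any Hom or composition computation at an atypical $\lambda$ reduces to a single generic base case, where Theorem \ref{thm:fusion} and Corollary \ref{cor:someHoms} provide explicit fusion and Hom-space information. Once this propagation is verified, the abelian equivalence is complete and the module-category structure falls out formally.
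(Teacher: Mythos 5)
Your approach is essentially the same as the paper's: both match indecomposable projectives $\induceA(G_\lambda)\leftrightarrow \C_{\lambda,p}$ via their common Loewy structure, define $F$ on projectives by $F(\C_{\lambda,p}):=\induceA(G_0)\boxtimes_{\catM_A}\pi_\lambda$, propagate a single scalar choice by tensoring with the Heisenberg simple currents, and extend to the whole category. However, there is a genuine gap in the way you handle the last clause of the theorem. The statement demands that $F$ be compatible with the right $\CC$-module structures \emph{in the sense of Lemma \ref{lm_infiniteSplittingIsHopfAlgB}}, which is the specific commuting diagram \eqref{ModCatCondition} involving the tensor structures $F^T$, $\Localization^T$, $\forgetNichols^T$ and the natural isomorphism $\eta:\Localization\circ F\Rightarrow\forgetNichols$. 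You write that this "propagates by right-exactness of $\bullet\otimes_{\catM_A}\pi_\nu$," but that only gives agreement of the module-category actions up to some unspecified isomorphism; it does not verify the required coherence among the chosen isomorphisms. The paper carries this out explicitly: it fixes $\eta_\Nichols:\Localization(\induceA(A^*))\to\bigoplus_k\pi_{k\alpha_-}$ (the role played by your $\xi$), derives $\eta_{\C_{\lambda,p}}$ and $F^T$ from it via the tensor structures $e^T,\Localization^T$ and the associator, and then reduces the commutativity of the diagrams in Lemma \ref{lm_infiniteSplittingIsHopfAlgB} to instances of the pentagon axiom for $e^T$. That pentagon-coherence verification is the actual technical content of the theorem and is absent from your proposal.

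By contrast, your "main obstacle" about atypical weights $\lambda\in L$ is not a real issue once you adopt, as you and the paper both do, the definition $F(\C_{\lambda,p})=\induceA(G_0)\boxtimes_{\catM_A}\pi_\lambda$ anchored at a fixed base point: the non-canonical isomorphism in Lemma \ref{lm_singletAbelianSteps}(6) then never has to be chosen. Two smaller gaps: (i) you assert without proof that the composite of a nonzero $a$-step and a nonzero $b$-step map with $a+b<p$ is nonzero — this follows from the uniserial submodule/quotient structure of $\induceA(G_\lambda)$ (the image of the $a$-step map is the bottom $p-a$ composition factors, the kernel of the $b$-step map is the bottom $b$, and these overlap iff $a+b\geq p$), but you should say so; (ii) you should state explicitly why matching indecomposable projectives, their Hom-spaces, and their compositions suffices for an abelian equivalence (e.g., the endomorphism algebra of a projective generator determines the abelian category), since this is the hinge on which the whole construction rests.
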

By Lemma \ref{lm_infiniteSplittingIsHopfAlgB} this implies
\begin{corollary}\label{cor:84}
On the algebra $\Nichols$ there exists the structure of a Hopf algebra in $\CC$ such that $F$ is an equivalence of tensor categories.     
\end{corollary}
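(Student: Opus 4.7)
The plan is to construct an explicit functor $F^{-1}: \catM_A \to \Rep(\Nichols)(\CC)$ by matching projective generators on both sides and appealing to a Morita-type argument, then verify the compatibility conditions required by Lemma~\ref{lm_infiniteSplittingIsHopfAlgB}.

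First, I would identify and match projective generators. On the $\catM_A$ side, Lemma~\ref{lm_singletAbelianSteps} (items~(5), (6), (10)) establishes that $\{\induceA(G_\lambda) \mid \lambda \in \C\}$ is a complete family of indecomposable projectives, each of length~$p$ with uniserial Loewy structure $\pi_\lambda \to \pi_{\lambda+\alpha_-} \to \cdots \to \pi_{\lambda+(p-1)\alpha_-}$ (item~(9)), and Hom-spaces computed via Frobenius reciprocity (item~(1)) giving $\Hom(\induceA(G_\lambda), \induceA(G_\mu)) = \C$ precisely when $\lambda = \mu + \ell\alpha_-$ for $0 \le \ell < p$, else $0$. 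On the Nichols side, the length-$p$ projective covers of the simples $\C_{\lambda,1}$ are the modules $\C_{\lambda,p}$ of Example~\ref{exm_NicholsRank1}, with $\Hom(\C_{\lambda,p}, \C_{\mu,p}) = \C$ iff $\mu = \lambda - \ell\alpha$ for $0 \le \ell < p$ (a graded-preserving $\Nichols$-module map is determined by the image of $1 \in \C_\lambda$, which must be some $x^\ell$). Identifying $\alpha = \alpha_-$ and checking that composition at distances $\ell$ and $m$ is non-zero iff $\ell + m < p$ on both sides yields an isomorphism of $\Hom$-algebras.

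Second, I would extend the matching to a full equivalence. Set $F^{-1}(\induceA(G_\lambda)) := \C_{\lambda,p}$ and extend additively to the subcategories of projectives using the identification of $\Hom$-algebras. Since every object in $\catM_A$ is a quotient of a finite direct sum of $\induceA(G_\lambda)$'s (item~(10)) and analogously on the Nichols side, passing to cokernels yields a well-defined equivalence of abelian categories; in particular $F^{-1}(\pi_\lambda) = \C_{\lambda,1}$. An Ext-quiver sanity check ($\Ext^1(\pi_\lambda, \pi_\mu) = \C\delta_{\lambda+\alpha_-, \mu}$ by item~(8), and the analogous statement for $\C_{\lambda,1}$ from the projective resolution $\C_{\lambda+\alpha,p} \to \C_{\lambda,p} \to \C_{\lambda,1} \to 0$) confirms the matching.

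Third, I would verify the compatibility with the forgetful/splitting functors and with the $\CC$-module structures. The natural transformation $\eta : \Localization \circ F \to \forgetNichols$ is defined on the generator $\induceA(G_\lambda)$ by identifying the associated graded of the socle filtration, $\pi_\lambda \oplus \cdots \oplus \pi_{\lambda+(p-1)\alpha_-}$, with $\forgetNichols(\C_{\lambda,p}) = \C_\lambda \oplus \cdots \oplus \C_{\lambda+(p-1)\alpha}$ under $\CC = \Rep(\pi) \cong \Vect_\C^Q$, and extended to all objects by right exactness. The $\CC$-module structure $F^T$ comes from Lemma~\ref{lm_singletAbelianSteps}(6) giving $\induceA(G_0) \otimes_A \pi_\lambda \cong \induceA(G_\lambda)$ matching $\C_{0,p} \cdot \C_\lambda = \C_{\lambda,p}$. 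The commutativity of diagram~\eqref{ModCatCondition} holds because tensoring with the simple current $\pi_\lambda$ preserves the socle filtration strictly (shifting indices by $\lambda$), so the tensor structures $\Localization^T$ and $\forgetNichols^T$ reduce to identities on the graded pieces.

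The main obstacle will be carefully verifying the coherence of all these natural transformations and tensor structures simultaneously, especially diagram~\eqref{ModCatCondition} of Lemma~\ref{lm_infiniteSplittingIsHopfAlgB}. The key point is that the splitting functor $\Localization$ arises canonically from the socle filtration and so behaves strictly with respect to tensoring by invertible local modules $\pi_\lambda$, which is essential for the ``triviality'' of $\Localization(F^T)$ required by the hypothesis. A secondary subtlety is the coherent choice of scalars for the generators of the one-dimensional Hom spaces so that all structural isomorphisms are compatible simultaneously rather than merely up to scalar.
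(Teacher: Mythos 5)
Your overall plan coincides with the paper's: establish an equivalence of abelian categories $F$ with compatible $\CC$-module structures (this is Theorem~\ref{thm_SingletTwistedAbelian}), then invoke Lemma~\ref{lm_infiniteSplittingIsHopfAlgB} to produce the Hopf algebra structure on $\Nichols$ and upgrade $F$ to a tensor equivalence. The Morita-type matching of projectives, the Ext-quiver check, and the identification of $\Localization$ with the socle-filtration functor are all exactly what the paper does. However, your third step — the verification of the compatibility hypotheses — contains one incorrect assertion and one genuine gap, and that step is where the real work of the paper's proof lies.

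The incorrect assertion is that ``the tensor structures $\Localization^T$ and $\forgetNichols^T$ reduce to identities on the graded pieces.'' This is true for $\Localization^T$ (the paper declares $\Localization(M)=M$ as vector spaces and $\Localization^T$ trivial), but it is \emph{not} true for $\forgetNichols^T$: the paper sets $\forgetNichols^T=e^T$, the standard intertwining-operator tensor structure on $\Rep(\pi)\cong\Vect^Q_\C$, which is decidedly non-trivial. The commutativity of diagram~\eqref{ModCatCondition} does not come from tensor structures being identities but from the fact that both routes around the diagram yield compositions of $e^T_{\bullet,\bullet}$ that agree by the pentagon axiom, concretely $e^T_{k\alpha_-+\lambda,\mu}\circ(e^T_{k\alpha_-,\lambda+\mu})^{-1}=(e^T_{k\alpha_-,\lambda})^{-1}\circ e^T_{\lambda,\mu}$. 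Your stated mechanism would lead you astray if you tried to write the verification out.

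The gap is that you assign $F^{-1}(\induceA(G_\lambda)):=\C_{\lambda,p}$ at the object level and relegate the coherent choice of scalars to a ``secondary subtlety.'' It is not secondary: that coherence is the whole content of the proof. Defining $F^T$ with your object assignment forces you to pick a family of isomorphisms $\induceA(G_0)\otimes_A\pi_\lambda\cong\induceA(G_\lambda)$ (these exist by Lemma~\ref{lm_singletAbelianSteps}(6), but not canonically) and then to verify the module-category pentagon for those choices by hand, which you do not do. The paper evades this entirely by \emph{defining} $F(\C_{\lambda,p}):=\induceA(A^*)\boxtimes_{\catM_A}\iota\pi_\lambda$ rather than $\induceA(G_\lambda)$; then $F^T$ is a composition of $\iota^T$, $e^T$, and the associator of $\catM_A$, so all coherence is inherited from the pentagon axiom for $e^T$, and the only genuine choice left is the single isomorphism $\eta_\Nichols:\Localization(\induceA(A^*))\to\bigoplus_{k=0}^{p-1}\pi_{k\alpha_-}$. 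Without this anchoring device you would have to carry out the coherence verification from scratch, and your proposal does not supply that argument.
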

\begin{proof}[Proof of Theorem \ref{thm_SingletTwistedAbelian}]
We have described in the previous paragraph a bijection $F$ between simple objects and indecomposable projective objects, and coinciding Loewy diagrams (including the information which extensions split). This is sufficient to establish an abelian equivalence, because the abelian category is fixed by the algebra of endomorphisms of a projective generator. In our case, more directly, all indecomposable objects are quotients of indecomposable projective objects, so we can map them to the respective quotients. We now discuss the functor in more details and fix choices, in particular an explicit action on $Hom$-spaces, and the additional functors and constraints.

Since the non-strictness is mainly on the vertex algebra side, it is in the following more convenient to fix $\CC=\Rep(\pi)$ and regard $\Vect_\C^Q$ as equivalent tensor category 
$$e:\Vect_\C^Q\stackrel{\sim}{\longrightarrow} \Rep(\pi)$$
by setting $e(\C_\lambda)=\pi_\lambda$ and 
with  tensor structure $e^T_{\lambda,\mu}: \pi_{\lambda+\mu}\to \pi_\lambda\boxtimes_\CC \pi_\mu$ given by the  standard choice of intertwiners $\Y_{\lambda,\mu}(a,z)b$.

The functors $\varepsilon:\CC\to \Rep(\Nichols)(\Vect_\C^Q)$ and $\forgetNichols:\Rep(\Nichols)(\Vect_\C^Q)\to \CC$ are the functors endowing a graded vector space with the trivial action resp. forgetting the action, composed with the tensor equivalence $e:\Vect_\C^Q\to \Rep(\pi)$. In particular $\varepsilon(\pi_\lambda)=\C_{\lambda,1}$ and $\forgetNichols(\C_{\lambda,l})=\bigoplus_{k=0}^{l-1} \pi_{k\alpha_- +\lambda}$ and $\forgetNichols^T=e^T$. We suppress the natural transformation $\forgetNichols\to \varepsilon=\id$.

The tensor functor $\iota:\CC=\UU_0^ A \to \UU_A$ the embedding of local modules into all modules, with trivial tensor structure $\iota^T=\id$. Since we are in the case of an extension by a commutative algebra, the embedding lifts to an embedding into the center by Definition \ref{def_cent2}. For the duration of this proof we will not suppress this, but denote $\pi_\lambda\in \UU$ and $\iota\pi_\lambda\in \UU_A$. 

The tensor functor $\Localization:\catM_A\to \CC$ sends a module in $\UU_A$ to its associated graded module (with respect to the socle filtration). The image of $\induceA(G_\lambda)=A\boxtimes_\catM G_\lambda$ is isomorphic to $\bigoplus_{k=0}^{p-1} \pi_{k\alpha_- +\lambda}$ by the Loewy series in Lemma \ref{lm_singletAbelianSteps} (9), the respective $\Hom$-spaces were obtained in Corollary \ref{cor:someHoms}. Note that on the top component $k=0$ there is an obvious preferred isomorphism, but not for general $k$, and we will choose such a family of isomorphisms $\eta_\bullet$ below. As vector spaces we have $M=\Localization(M)$ and $\Localization^T$ is trivial. We suppress the natural transformation $\Localization\circ \iota=\id$.

We have a  commuting diagram of functors 
    \begin{align*}
          \xymatrix{   
          \Rep(\Nichols)(\CC)\ar[rr]^F \ar[dr]_{\forgetNichols}
          &&
          \catM_A \ar[dl]^{\Localization} 
          \ar@{}[dll]^(.25){}="a"^(.55){}="b" 
          \ar@{=>}_\eta "a";"b"\\
          &
          \CC
          &
          }
    \end{align*}
    which requires us to fix a natural isomorphism $\eta_\bullet: \Localization\circ F\to \forgetNichols$. The central choice we have to make arbitrarily is for $A^*=\overline{F_{1,1}}=G_0$ in $\UU$, a coalgebra in $\UU$, which is the image under $F$ of $\Nichols=\C_{0,p}$, the regular representation of $\Nichols$, a coalgebra in $\Rep(\Nichols)(\CC)$. 

    $$\eta_{\Nichols}:\;\Localization(F(\Nichols))= \Localization(\induceA(A^*)) \longrightarrow \forgetNichols(\Nichols)=\bigoplus_{k=0}^{p-1} \pi_{k\alpha_-}$$

    From this we obtain the other projective indecomposables $\eta_{\C_{\lambda,p}}$ by right-multiplication with $\pi_\lambda$ as follows:
\begin{align*}
\eta_{\C_{\lambda,p}}:\;\Localization(\induceA(A^*)\boxtimes_{\catM_A} \iota\pi_\lambda)
&\stackrel{\Localization^T}{\longrightarrow}
\Localization(\induceA(A^*))\boxtimes_{\CC} \pi_\lambda\\
&\stackrel{\eta_{\Nichols}\otimes \id}{\longrightarrow}
\big( \bigoplus_{k=0}^{p-1} \pi_{k\alpha_-} \big)
\boxtimes_\CC \pi_\lambda\\
&\stackrel{ (e^T_{k\alpha_-, \lambda})^{-1}}{\longrightarrow}
\bigoplus_{k=0}^{p-1} \pi_{k\alpha_-+\lambda+\mu} 
\end{align*}
Having this commuting diagram of functors we can define the functor $F$ on Hom-spaces, i.e.~define $F(f):F(M)\to F(N)$ for $f:M\to N$ by
 \begin{align*}
          \xymatrix{   
            \Localization(F(M))
            \ar[rr]^{\Localization(F(f))} 
            \ar[d]^ {\eta_M}
            &&
            \Localization(F(N))
            \ar[d]^ {\eta_N} \\
            \forgetNichols(M)
            \ar[rr]^{\forgetNichols(f)} 
            &&
            \forgetNichols(N)
          }
\end{align*}
With this choice $\eta_\bullet$ is natural by definition.

We now discuss the equivalence of module categories and the compatibility with $\Localization,\forgetNichols$ in Lemma \ref{lm_infiniteSplittingIsHopfAlgB}:  The right $\CC$-module category action on $\Rep(\Nichols)(\CC)$ is given by 
$\C_{\lambda,l}.\pi_\mu=\C_{\lambda+\mu,l}$, the right $\CC$-module category action on $\catM_A$ is is given by right-tensoring in $\catM_A$ with the local module $\pi_\mu$. We extend the functor $F$  to a module category equivalence by fixing a tensor structure $F^T$
\begin{align*}
F(\oneside{\forgetNichols(M)\otimes_\CC V})
&\stackrel{F^T}{\longrightarrow} F(M)\boxtimes_{\catM_A} \iota(V)\\
\intertext{for which again it suffices to fix the case $M=\C_{\lambda,p}$, $V=\C_\mu$, which read explicitly }
F(\C_{\lambda+\mu,p})
&\stackrel{F^T}{\longrightarrow} F(\C_{\lambda,p})\boxtimes_{\catM_A} \iota\pi_\mu\\
%\induceA(A^*)\boxtimes_{\catM_A} \iota\pi_{\lambda+\mu}
%&\stackrel{F^T}{\longrightarrow} (\induceA(A^*) \boxtimes_{\catM_A} \iota\pi_\lambda)\boxtimes_{\catM_A} \iota\pi_\mu
\end{align*}
With our choice $F(\C_{\lambda,p})=\induceA(A^*) \boxtimes_{\catM_A} \iota\pi_\lambda$ there is the following canonical choice (note that if we would have taken $F(\C_{\lambda,p})=\induceA(G_\lambda)$  we would require instead a coherent choices for the isomorphism in Lemma \ref{lm_singletAbelianSteps} (6))

\begin{align*}
F^T:\;F(\C_{\lambda+\mu,p})
&=\induceA(A^*)\boxtimes_{\catM_A} \iota\pi_{\lambda+\mu}\\
& \stackrel{\iota^ T e^T_{\lambda,\mu}}{\longrightarrow} 
\induceA(A^*)\boxtimes_{\catM_A} \big(\iota\pi_{\lambda}\boxtimes_{\catM_A}\iota\pi_{\mu}\big)\\
&\stackrel{assoz}{\longrightarrow}  \big(\induceA(A^*))\boxtimes_{\catM_A} \iota\pi_{\lambda}\big)\boxtimes_{\catM_A}\iota\pi_{\mu}
=F(\C_{\lambda,p})\boxtimes_{\catM_A} \iota\pi_\mu
\end{align*}

%The isomorphisms $\iota_{\lambda,\mu}$ satisfies the cohoerence condition that the following two morphisms are equal
%\begin{align*}
%    \induceA(G_{\lambda+\mu+\nu}) 
%    &\stackrel{i_{\lambda+\mu,\nu}}{\longrightarrow}
%    \induceA(G_{\lambda+\mu})\boxtimes_{\catM_A} \iota\pi_\nu\\
%    &\stackrel{i_{\lambda,\mu}\otimes \id }{\longrightarrow}
%    \big(\induceA(G_{\lambda})\boxtimes_{\catM_A} \iota\pi_\mu\big )\boxtimes_{\catM_A} \iota\pi_\nu\\
%    \induceA(G_{\lambda,\mu+\nu}) 
%    &\stackrel{i_{\lambda,\mu+\nu}}{\longrightarrow}
%    \induceA(G_{\lambda})\boxtimes_{\catM_A} \iota\pi_{\mu+\nu}\\
%    & \stackrel{\iota^ T e^T_{\mu,\nu}}{\longrightarrow} 
%    \induceA(G_{\lambda})\boxtimes_{\catM_A} \big(\iota\pi_\mu \boxtimes_{\catM_A} \iota\pi_\nu\big)\\
%    &\stackrel{assoz}{\longrightarrow} 
%    \big(\induceA(G_{\lambda})\boxtimes_{\catM_A} \iota\pi_\mu\big )\boxtimes_{\catM_A} \iota\pi_\nu\\
%    \\
%\end{align*}
%because both morphisms are essentially $i_{0,\lambda}^{-1}\circ i_{0,\lambda+\mu+\nu}$ together with $e_{\lambda,\mu}^T\circ e_{\lambda+\mu,\nu}^T$ resp. $e_{\mu,\nu}^T\circ e_{\lambda,\mu+\nu}^T$, which agree by the pentagon axiom for tensor structures.

We check naturality of $F^T$ with the definition above via $\eta$ of $F(f)$ for $f:\C_{\lambda,p}\to \C_{\lambda',p}$ a morphism in $\Rep(\Nichols)(\CC)$ - of which there exists a nontrivial one for $\lambda'-\lambda=0,\alpha_-,\cdots, (p-1)\alpha_-$.
\begin{equation}\nonumber
\resizebox{\textwidth}{!}{
$\begin{split}
\xymatrix{  
    &
            \bigoplus\limits_{k=0}^{p-1} \pi_{k\alpha_-+\lambda+\mu}
            \ar[dd]^{\substack{\vphantom{\bigoplus_0^\infty} \\ \vphantom{\bigoplus\limits_0^\infty} \\ \forgetNichols(f\otimes \id) }}
    && 
            \bigoplus\limits_{k=0}^{p-1} \pi_{k\alpha_-+\lambda} \boxtimes_{\CC}\pi_{\mu}
            \ar[dd]^{\forgetNichols(f)\otimes \id}
    \\
    \Localization(\induceA(A^*)\boxtimes_{\catM_A} \iota\pi_{\lambda+\mu})
    \ar[rr]^{\Localization^T\circ\Localization(F^T_{\C_{\lambda,p},\C_\mu})\qquad\qquad\qquad}
    \ar@{-->}[dd]^{\Localization(F(f\otimes \id))}
    \ar[ur]^{\eta_{\C_{\lambda+\mu,p}}}
    &&
    \Localization(\induceA(A^*) \boxtimes_{\catM_A}\iota\pi_{\lambda})\boxtimes_{\CC}\pi_{\mu}
    \ar@{-->}[dd]^{\Localization(F(f))}
    \ar[ur]^{\eta_{\C_{\lambda,p}}\otimes \id}
    &
    \\
    &
            \bigoplus\limits_{k=0}^{p-1} \pi_{k\alpha_-+\lambda'+\mu}
    && 
            \bigoplus\limits_{k=0}^{p-1} \pi_{k\alpha_-+\lambda'} \boxtimes_{\CC}\pi_{\mu}
    \\    
    \Localization(\induceA(A^*)\boxtimes_{\catM_A} \iota\pi_{\lambda'+\mu})
    \ar[rr]^{\Localization^T\circ\Localization(F^T_{\C_{\lambda',p},\C_\mu})\qquad\qquad\qquad}
    \ar[ur]^{\eta_{\C_{\lambda'+\mu,p}}}
    &&
    \Localization(\induceA(A^*) \boxtimes_{\catM_A}\iota\pi_{\lambda'})\boxtimes_{\CC}\pi_{\mu}
    \ar[ur]^{\eta_{\C_{\lambda',p}}\otimes \id}
    & \\
}
\end{split}$
}
\end{equation}

 Following the diagram from $\Localization(\induceA(A^*)\boxtimes_{\catM_A} \iota\pi_{\lambda+\mu})$ to 
 $\big(\Localization(\induceA(A^*) \boxtimes_{\catM_A}\iota\pi_{\lambda'})\big)\boxtimes_{\CC}\pi_{\mu}$ in the two possible ways gives in each cases only various $e^T$ which are overall equal by the pentagon axiom of a tensor structure.

    We finally check that $F^T$ becomes trivial on $\CC$ after application of $\Localization,\forgetNichols$ in the sense of condition (\ref{ModCatCondition}), which is precisely the missing part in the previous diagram. 
\begin{equation}\nonumber
\resizebox{\textwidth}{!}{
$\begin{split}
\xymatrix{  
        &
                \bigoplus\limits_{k=0}^{p-1} \pi_{k\alpha_-+\lambda+\mu}        
                \ar[rr]^{\forgetNichols^T}
        &&
                \left(\bigoplus\limits_{k=0}^{p-1} \pi_{k\alpha_-+\lambda}\right)\boxtimes_\CC \pi_\mu
        \\
        \Localization(\induceA(A^*)\boxtimes_{\catM_A} \iota\pi_{\lambda+\mu})          \ar[rr]^{\Localization^T \circ \Localization(F^T_{\C_{\lambda,p},\C_\mu})} 
          \ar[ur]^{\eta_{\C_{\lambda+\mu,p}}}
          &&
        \Localization(\induceA(A^*) \boxtimes_{\catM_A}\iota\pi_{\lambda})\boxtimes_{\CC}\pi_{\mu}
        \ar[ur]^{\eta_{\C_{\lambda,p}}\otimes \id}
        &
        \\
}
\end{split}$
}
\end{equation}
This diagram commutes again because the both ways give essentially the following morphisms wich again agree by the pentagon axiom 
$$e^T_{k\alpha_-+\lambda,\mu}\circ (e^T_{k\alpha_-,\lambda+\mu})^{-1}
=(e^T_{k\alpha_-,\lambda})^{-1} \circ e^T_{\lambda,\mu}. $$\qedhere
\end{proof}

\begin{remark}
We remark that the only actual choice we made in the proof above was the identification of the underlying coalgebras 
  $$\eta_{\Nichols}:\;\Localization(F(\Nichols))= \Localization(\induceA(A^*)) \longrightarrow \forgetNichols(\Nichols)=\bigoplus_{k=0}^{p-1} \pi_{k\alpha_-}$$
and all the other structures followed readily from the definition of the projectives $\UU_A$ as $\induceA(A^*)\otimes \pi_\lambda$. Since this is in general the shape of the projectives for a nilpotent algebra, we expect that the proof carries over to more complicated situations.
\end{remark}

\subsection{Recognizing the tensor structures.}

We now apply the characterization result Theorem \ref{thm_CharacterizingUsingB}  and the explicit calculations in the present section for certain categories of modules $\UU=\mathcal O^T_{\mathcal M(p)}$ of the singlet vertex algebra $\mathcal{M}(p)$ to prove as our main application. 

\begin{theorem}\label{thm_singletEquivalence}
We have equivalences of braided tensor categories
\begin{align*}
\mathcal O^T_{\mathcal M(p)} &\cong \Rep_{\mathrm{wt}}(u_q^ H(\sl_2))
\end{align*}
\end{theorem}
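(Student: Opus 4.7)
The plan is to apply the characterization result Theorem \ref{thm_CharacterizingUsingB} with $\UU = \mathcal O^T_{\mathcal M(p)}$, the commutative algebra $A = \pi_0 = F_{1,1}$ (the Heisenberg VOA as an algebra object coming from the embedding $\cM(p) \hookrightarrow \pi$), the base category $\CC = \Vect_\C^Q$ with quadratic form $Q(\lambda) = e^{\pi\i\lambda^2/2}$ (equivalently $q^{\lambda^2}$ with $q=e^{\pi\i/p}$), the generating object $X = \C_{\alpha_-}$ with $\alpha_- = -\sqrt{2/p}$, the Nichols algebra $\Nichols = \NicholsOf(X) = \C[x]/x^p$ from Example \ref{exm_NicholsRank1}, and the realizing quantum group $U_q = u_q^H(\sl_2)$ coming from Lemma \ref{lm_realizedQuantumGroup} (compare Section \ref{sec_quasiquantum}).

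First I would verify that Assumption \ref{assumption} holds in this situation. Local finiteness, existence of a vertex tensor category structure, rigidity of $\UU$, and existence of projective covers and injective hulls are established in \cite{CMY, CMY4}. Commutativity of $A$ as an algebra in $\UU$ follows from \cite{HKL} applied to the VOA embedding, and haploidness from $\Hom_\UU(\mathbf{1}, A) = \Hom_\catM(M_{1,1}, \pi_0) = \C$, which is visible from the non-split sequence $0 \to M_{1,1} \to A \to M_{2,p-1} \to 0$ and Corollary \ref{cor:hom}. Rigidity of $\UU_A$ is Lemma \ref{lm_singletAbelianSteps}(11). Triviality of the Müger center of $\UU$ should follow from the explicit description of the braiding on simple modules via the monodromy data in \cite{CMY4} together with the nondegeneracy of the Heisenberg pairing on local modules.

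Next, I would check the three categorical inputs of Theorem \ref{thm_CharacterizingUsingB}. The abelian equivalence $\UU \cong \Rep^{wt}(u_q^H(\sl_2))$ is exactly Theorem \ref{thm_SingletAbelian}. The braided equivalence $\UU_A^0 \cong \Vect_\C^Q$ follows from the free field realization: local $A$-modules in $\UU$ are precisely $\pi$-modules (the simple Fock modules $\pi_\lambda$), which form the Heisenberg category $\Rep(\pi)$, and this is braided equivalent to $\Vect_\C^Q$ with the quadratic form read off from the standard Fock-module OPEs, matching the target quadratic form on $\CC$ used to define $\Nichols$. Finally, the abelian equivalence $\UU_A \cong \Rep(\Nichols)(\Vect_\C^Q)$ compatible with right $\CC$-module structures is Theorem \ref{thm_SingletTwistedAbelian}, whose Corollary \ref{cor:84} upgrades the compatibility to the form demanded by Lemma \ref{lm_infiniteSplittingIsHopfAlgB}.

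It then remains to check the mild degree hypothesis for the Recognition Lemma \ref{lm_isNichols}: here $X$ has a single generator in degree $\alpha_- \in \C$, which is neither zero nor self-doubling in $\Gamma = \C$, so the hypothesis is trivially satisfied; and $\Nichols$ is finite-dimensional since $p < \infty$. With all inputs in place, Theorem \ref{thm_CharacterizingUsingB} upgrades the abelian equivalences to equivalences of tensor and braided tensor categories, yielding $\mathcal O^T_{\mathcal M(p)} \cong \Rep_{\mathrm{wt}}(u_q^H(\sl_2))$. The main obstacle, apart from invoking the heavy vertex-algebraic input of \cite{CMY4}, is the bookkeeping needed to confirm that the Heisenberg braiding really produces the same quadratic form $Q$ that governs the self-braiding $e^{2\pi\i/p}$ of the generator $x \in \Nichols$; this however reduces to a direct computation with the conformal weight of $\pi_{\alpha_-}$.
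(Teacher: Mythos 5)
Your proof follows exactly the paper's own argument for Theorem \ref{thm_singletEquivalence}: it invokes Theorem \ref{thm_CharacterizingUsingB} with inputs provided by the free-field realization ($\UU_A^0 \cong \Vect_\C^Q$), the abelian equivalence of Theorem \ref{thm_SingletAbelian}, and the module-category-compatible abelian equivalence $\UU_A \cong \Rep(\Nichols)(\Vect_\C^Q)$ of Theorem \ref{thm_SingletTwistedAbelian}/Corollary \ref{cor:84}. The extra verifications you supply (Assumption \ref{assumption}, the degree hypothesis for Lemma \ref{lm_isNichols}) are sound; the only blemish is a small notational slip in the quadratic form, which should be $Q(\lambda)=e^{\pi\i\lambda^2}$ so that $Q(\alpha_-)=e^{2\pi\i/p}$, not $e^{\pi\i\lambda^2/2}$.
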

\begin{proof} We proceed in the following steps, that could be similary undertaken for more complicated examples.  
\begin{enumerate}
    \item We have a free field realization in the Heisenberg algebra $\pi\supset \mathcal{M}(p)$. 
    %respectively the lattice algebra  $\mathcal{V}_{\sqrt{p}\Z}\supset \mathcal{W}(p)$. 
    Correspondingly we have a commutative algebra $A$ in $\UU$, namely $\pi$ over $\mathcal{M}(p)$, such that $\UU_A^0=\Rep(\pi)=\Vect^Q_\C$.
    %respectively $\UU_A^0=\Rep(\mathcal{V}_{\sqrt{p}\Z})=\Vect_{\Z_{2p}}$.  
    \item In Theorem \ref{thm_SingletAbelian} we determined explicitly for the singlet that the abelian category $\UU$ is equivalent to the abelian category of representations of the unrolled quantum group of $\sl_2$ as an algebra, finite-dimensional and with semisimple action of the Cartan part. 
    \item In Theorem \ref{thm_SingletTwistedAbelian} we have used Frobenius reciprocity to determine explicitly for the singlet that the abelian category $\UU_A$ is equivalent to the abelian category $\Rep(\Nichols)(\CC)$ for the algebra $\Nichols=\C[x]/x^p$ in $\CC=\Vect^Q_\C$.
   \item Thus Theorem \ref{thm_CharacterizingUsingB}  shows the assertion. \qedhere
\end{enumerate}
\end{proof}

\begin{remark}\label{rem_triplet}
    We will obtain in the next section the analogous result 
    $$\mathcal O^T_{\mathcal W(p)} \cong \Rep(u_q(\sl_2))$$
    by a simple current extension, but it is instructive to briefly discuss how the previous proof would change if applied directly  to the triplet vertex algebra $\cW(p)$: 
    \begin{enumerate}
    \item[1']
    The triplet  has a free field realization in a lattice vertex algebra $\cW(p)\subset \V_{\sqrt{p}\Z}$. In consequence hus the category of representations $\UU=\Rep(\cW(p))$ contains a commutative algebra $A$ with $\UU_A^0=\Vect_{\Z_{2p}}$.
    \item[2'] The computation analogous to Theorem \ref{thm_SingletAbelian} can be found in \textup{\cite{McRY}}.
    \item[3'] The computations in Theorem \ref{thm_SingletTwistedAbelian} could be easily repeated for the triplet vertex algebra, but some care has to be taken because of cyclicity.
    \item[4'] Then again Theorem \ref{thm_CharacterizingUsingB}  shows the assertion. Note that we now really need the stronger Lemma \ref{lm_isNichols} instead of Lemma \ref{lm_isNicholsFree} to recognize the Nichols algebra, because it is not a-priori $\Z$-graded but only $\Z_{2p}$-graded. 
    \end{enumerate}
\end{remark}

%\begin{remark}
%    Note that instead of using the nilpotence in 4) to show splitting and concentrate on $\Nichols$ we could realize $\UU_A$ altoghether as representations of a smash product $\Nichols\rtimes C$ and then determine that this is already a biproduct of Nichols algebra and $C$ using the grading, see  Lemma \ref{lm_isRadfordNichols}. 
%\end{remark}

\section{Characterization of general quantum groups}

In this section we prove a general version of the characterization of $u_q(\sl_2)$ in the last section for general $\g$, and beyond for arbitrary diagonal Nichols algebras, see Definition \ref{def_HechenbergerQG}. For simplicity we now restrict ourselves to the non-quasi-case, since the quasi-Hopf algebra cases can be recovered by uprolling as we will explain in Section \ref{sec_uprolling}.

\subsection{Relative Hopf modules}

\renewcommand{\c}{c}

The theory of Hopf modules, developed notably in \cite{Tak79} 
%Proposition 2 and Theorem 2 and Theorem 6.1. and Corollary 7.7.
and \cite{Skry06}, can be used to compute the category of modules $\UU_A$ for a certain class of algebras $A$ in  the tensor category $\UU$ of representations of a Hopf algebra $U$. 

More precisely, assume $A$ is of the following form: Let $\pi: U\to A^*$ be a coalgebra surjection, which turns $A^*$ into a coalgebra in the category $\Rep(U)$, and let $A$ be the dual algebra, which we assume to be finite-dimensional. There is an adjunction between the linear categories 
$\UU_A$ and $\Rep(B)$ for the algebra $B$ of $\pi$-coinvariants
$$B=U^{coin(\pi)}=\{h\in U\mid \pi(h^{(1)})h^{(2)}=1\otimes h\}$$
Here, an object  $X\in \UU_A$ is sent to the subspace of $A$-invariants $X^A$, which is a module over $B \subset U$. Conversely, an object $Y\in \Rep(B)$ is sent to the induced module $U\otimes_B Y$.\\

The main technical assertion in \cite{Skry06} is the following: Recall that an algebra $U$ is weakly finite if a surjective endomorphism of a free module of finite rank is automatically surjective, as in Corollary \ref{cor_SchauenburgInfinite}. Assume that $U$ is weakly finite and $A$ is simple, then by \cite{Skry06} Thm 7.6 the locally finite modules in $\UU_A$ are projective (and frequently free) as $A$-modules. Using this, the dual statement to \cite{Skry06} Lemma 6.2 (see also \cite{Tak79} Theorem 2) shows that the previous adjoint functors in fact give an equivalence of abelian  categories:

\begin{theorem}\label{thm_skry}
Let $U$ be weakly finite and $A$ a finite-dimensional simple algebra in $\Rep(U)$ given by a surjection of coalgebras $\pi: U\to A^*$ and $B$ the algebra of coinvariants as above, then the category of locally finite modules is
$$\UU_A^{loc. fin.}\cong \Rep(B)$$
\end{theorem}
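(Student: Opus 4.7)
The plan is to exhibit the asserted equivalence via an explicit adjoint pair of functors between $\UU_A^{loc.fin.}$ and $\Rep(B)$, and then verify on both sides that the unit and counit are natural isomorphisms. The functors are the obvious ones from the adjunction already set up in the paragraph preceding the theorem: the right adjoint $F:\UU_A^{loc.fin.}\to\Rep(B)$ sends $X\mapsto X^A:=\{x\in X\mid a.x=\varepsilon_A(a)x\}$ (which inherits a $B$-action since $B\subset U$ commutes with the $A$-invariants by the coinvariants definition $\pi(h^{(1)})\otimes h^{(2)}=1\otimes h$), and the left adjoint $G:\Rep(B)\to \UU_A^{loc.fin.}$ sends $Y\mapsto U\otimes_B Y$, whose $A$-action comes from the coalgebra surjection $\pi:U\to A^*$ via $a.(h\otimes y)=\langle a,\pi(h^{(1)})\rangle\,h^{(2)}\otimes y$.

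First I would verify that both functors are well-defined, in particular that $U\otimes_B Y$ is locally finite whenever $Y$ is, using that $A$ is finite-dimensional. Second, I would construct the unit $\eta_Y:Y\to (U\otimes_B Y)^A$ and the counit $\varepsilon_X:U\otimes_B X^A\to X$ in the standard way and check the triangle identities to confirm the adjunction. At this point everything is formal and available without the simplicity and weak-finiteness hypotheses.

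The key step is to show $\eta$ and $\varepsilon$ are isomorphisms on $\UU_A^{loc.fin.}$, and this is where the two cited results enter. By Skryabin's Theorem 7.6, under the assumptions that $U$ is weakly finite and $A$ is finite-dimensional simple, every locally finite object $X\in\UU_A$ is projective (indeed frequently free) as an $A$-module. With this projectivity in hand, the dual of Skryabin's Lemma 6.2 (in the Takeuchi form, \cite{Tak79} Theorem 2) asserts precisely that $\varepsilon_X$ is an isomorphism: locally, $X\cong A\otimes X^A$ as an $A$-module, and the $U$-action intertwines this decomposition with $U\otimes_B X^A$ via the coaction along $\pi$. For the unit, one uses faithful flatness of $U$ over $B$ (a standard consequence of the Hopf-algebraic structure, which in the finite case is Nichols--Zoeller and in the coideal/weakly finite setting is part of Skryabin's setup) to conclude $\eta_Y$ is an isomorphism by a flat base change argument against $A\otimes (-)$.

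The main obstacle is the projectivity statement: freeness of $X$ over $A$ in the locally finite setting is the substantive point which genuinely uses \emph{both} that $A$ is simple (so modules decompose via idempotents) \emph{and} that $U$ is weakly finite (to rule out pathological infinite rank phenomena and to deduce freeness from surjectivity of certain natural maps). Once this is granted, the equivalence becomes a formal consequence of base change along the faithfully flat Hopf-module extension $B\subset U$, and the remaining verifications are bookkeeping checking that the $A$-module and $U$-module structures on $U\otimes_B X^A$ and on $X$ match under $\varepsilon_X$, which is dictated by the compatibility $\pi(h^{(1)})\otimes h^{(2)}=1\otimes h$ defining $B$.
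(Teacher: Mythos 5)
Your proposal follows exactly the paper's route: the paper, too, cites Skryabin's Theorem 7.6 to get projectivity of locally finite objects over $A$ from weak finiteness and simplicity, and then invokes the dual of Skryabin's Lemma 6.2 (equivalently Takeuchi's Theorem 2) to promote the induction/invariants adjunction to an equivalence. The only imprecision worth flagging is the casual appeal to Nichols--Zoeller for faithful flatness of $U$ over $B$: since $B = U^{coin(\pi)}$ is in general only a right coideal subalgebra rather than a Hopf subalgebra, one should rely on Skryabin's or Masuoka-type flatness results for coideal subalgebras rather than Nichols--Zoeller, though you do acknowledge this in passing.
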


We need a relative version: Assume there is a commutative Hopf subalgebra $C\subset U$ of finite index, such that $\pi|_C=\epsilon_C$ and so $B\supset C$. Denote the restriction of the one-sided quotient $U/UC^+$ with the adjoint $C$-action to a $C$-module by $\c(U/UC^+)$ and the restriction of $A$ to a $C$-module by $\c(A)$. Denote on the other hand the restriction of the one-sided quotient $B/BC^+$ by $\c(B/BC^+)$. We interprete $\c(X)$ as a $C$-graded dimension. From the induction functor in the previous abelian equivalence we have $A\cong U\otimes_B \C_\epsilon$, so 

\begin{corollary}\label{cor_gradeddim}
We have an isomorphism of $C$-modules
$$\c(U/UC^+) \cong \c(B/BC^+) \otimes \c(A)$$
\end{corollary}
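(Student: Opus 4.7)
\emph{Plan.} I would establish a vector-space isomorphism $\phi\colon A\otimes B \xrightarrow{\sim} U$ that respects the right $B$-module structure and the left adjoint $C$-action, and then take the quotient by the right ideal $UC^+$.

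\emph{Constructing $\phi$ equivariantly.} The equivalence of Theorem~\ref{thm_skry}, combined with the identification $A \cong U\otimes_B\C_\epsilon = U/UB^+$, shows that $U$ is free as a right $B$-module with a basis indexed by $A$: any vector-space section $s\colon A \hookrightarrow U$ of the projection $U \twoheadrightarrow U/UB^+$ yields an isomorphism of right $B$-modules via $\phi(a\otimes b) = s(a)\cdot b$. I would choose $s$ to be equivariant for the adjoint $C$-action. This is possible because: (a) $\ad(C)$ preserves $UB^+$, since for $c\in C$, $u\in U$, $b^+\in B^+$ one has $\ad(c)(ub^+) = c_{(1)} u\cdot\bigl(b^+ S(c_{(2)})\bigr) \in U\cdot B^+$, using $S(c_{(2)})\in C\subset B$ and $B^+\cdot B\subseteq B^+$; (b) on the quotient $A$ the induced adjoint action agrees with the restriction of the $\Rep(U)$-action, because $S(c_{(2)}) \equiv \epsilon(c_{(2)}) \pmod{B^+}$ gives $\overline{c_{(1)} u S(c_{(2)})} = \overline{cu}$; and (c) the adjoint action of the commutative Hopf algebra $C$ on weight modules is semisimple over $\C$, so the short exact sequence $0\to UB^+\to U\to A\to 0$ of $C$-modules splits $C$-equivariantly. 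The Hopf-algebra identity $cu = \ad(c_{(1)})(u)\cdot c_{(2)}$ (verified via $\sum S(c_{(2,1)}) c_{(2,2)} = \epsilon(c_{(2)})\cdot 1$) then yields
\[
c\cdot\phi(a\otimes b) \;=\; c\, s(a)\, b \;=\; s(c_{(1)}\!\cdot a)\cdot(c_{(2)} b) \;=\; \phi\bigl((c_{(1)}\!\cdot a)\otimes (c_{(2)} b)\bigr),
\]
so $\phi$ is equivariant for the diagonal left $C$-action on $A\otimes B$.

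\emph{Quotient and conclusion.} Since $C^+\subset B$ and the right $B$-action on $A\otimes B$ acts only on the $B$-factor, one has $\phi^{-1}(UC^+) = A\otimes BC^+$. Passing to quotients, and using that on both $U/UC^+$ and $B/BC^+$ the adjoint $C$-action coincides with left multiplication (again by $S(c_{(2)})\equiv\epsilon(c_{(2)})\pmod{C^+}$), we obtain the claimed $C$-module isomorphism
\[
\c(U/UC^+) \;\cong\; A\otimes \c(B/BC^+) \;\cong\; \c(B/BC^+)\otimes \c(A).
\]

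\emph{Main obstacle.} The main technical point is the $C$-equivariance of the section $s$ in the first step; granted the semisimplicity of $C$-representations on weight modules---automatic because $C$ is a commutative Hopf algebra over $\C$---this is routine. All remaining steps are formal manipulations with the coproduct and antipode, and the identification $\ad \equiv \text{left multiplication}$ modulo $C^+$.
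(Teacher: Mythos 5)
Your proof is correct and fills in, with welcome detail, the argument that the paper disposes of in a single clause (``From the induction functor\ldots we have $A\cong U\otimes_B\C_\epsilon$, so\ldots''). Both proofs rest on the same decomposition $U\cong A\otimes B$ as a right $B$-module (with $A$ as generator space) and then quotient by the right ideal $UC^+$; you additionally make explicit the point the paper glosses over, namely that one needs a \emph{$C$-equivariant} section $s\colon A\to U$ so that $\phi(a\otimes b)=s(a)b$ is compatible with the adjoint $C$-action, and the Hopf-algebra identity $cu=\ad(c_{(1)})(u)\,c_{(2)}$ you use for this is verified correctly. Two small caveats. First, your justification for the existence of the equivariant section --- ``semisimplicity of $C$-representations on weight modules is automatic because $C$ is a commutative Hopf algebra over $\C$'' --- overstates matters: a commutative Hopf algebra certainly has non-semisimple modules (e.g.\ Jordan blocks over $\C[x]$). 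What actually grants the splitting here is that in the paper's setting $U=U_q$ carries a weight (PBW) decomposition under $\ad(C)$ in which $UB^+$ is a graded subspace, so a homogeneous section exists; this is built into the construction of $U_q$ in Lemma \ref{lm_realizedQuantumGroup}, but is worth stating rather than attributing it to commutativity of $C$. Second, the surjectivity of $\phi$ (equivalently, freeness of $U$ over $B$ with $A$ as a space of generators) is not a direct consequence of the Takeuchi--Skryabin equivalence as you cite it; it rests on a separate freeness result (Radford for pointed Hopf algebras, or Arkhipov--Gaitsgory for $C$ commutative), which the paper does invoke elsewhere --- but since this is also implicit in the paper's own half-sentence proof, it is not a gap on your part so much as a shared reliance on that background fact.
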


Note also that $C$-semisimple representations in $\UU_A$ are mapped to $C$-semisimple representations in $\Rep(B)$.

\begin{problem}[Takeuchi-Skryabin categorically]
    We restrict ourselves to $\UU=\Rep(U)$ for a Hopf algebra, but the ideas should actually apply in a general situation where $\UU\to \CC$ is some tensor functor to the local modules of $A$ in $\UU$. However, the method relies on the theorems for relative Hopf moduels by Takeuchi and Skryabin in the next section, which are very algebraically formulated (similar to our relative finiteness), which have to our knowledge never been worked out in a categorical context, although this seems very promising. A good strarting point should be the characterization in \cite{Bi23}.

    In particular, the different niceness conditions on an algebra $A$ in $\UU$ should be related under sufficient finiteness asusmptions: $A$ Frobenius, $A$ simple algebra, $\forgetA(X)$ projective implies $X$ projective.  
\end{problem}

%\begin{remark}
%Lemma \ref{lm_Skryabin_2} tells us that $B$ as an algebra is the subalgebra of $\pi$-coinvariant, but since we do not know the coproduct in $U$ we cannot compute it arbitrary element in $u_q(\sl)$ in degree say $\alpha_i$ is of the form $\bar{E}=E\cdot f(C,K)$ for $f(C)$ some polynomial in the central Casimir element $C$ and $K$. For example, we know directly that the coinvariants are $\C[\bar{E}]/(\bar{E}^p)$... 
%\end{remark}

\subsection{Fixing the twist}

Let $\UU=\Rep(U)$ for a Hopf algebra $U$. Let $A^*$ be a coalgebra in $\UU$, which as a left $U$-module admits a surjection $\pi:U\to A^*$ (typically an induced module). %The following shows that up to twist (i.e.~equivalence of thensor categories) we can assume $A^*$ is a coalgebra quotient

\begin{lemma}\label{lm_fixingtwist}
    There exists an invertible element $J\in U\otimes U$ such that the image under the tensor $F^J:\Rep(U)\to \Rep(U^J)$, given by the identity on objects and morphisms and the tensor structure $J$, the surjection $\pi$ is a surjection of coalgebras:
    $$F^J(\pi):U^J\to F^J(A^*)$$  
\end{lemma}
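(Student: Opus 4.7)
The plan is to reduce the coalgebra compatibility for $F^J(\pi)$ to a single equation in $U\otimes U$ evaluated at the image of $1_U$, and then to produce an invertible solution. Set $a := \pi(1_U) \in A^*$; since $\pi$ is a surjective left $U$-module map out of the regular representation, it is determined by $a$ via $\pi(h)=h\cdot a$, and $a$ generates $A^*$. With the convention that the tensor structure of $F^J$ is $\phi^J(v) = J\cdot v$, the twisted structures are $\Delta^J_U(h) = J\Delta(h)J^{-1}$ and $\Delta^J_{A^*} = J^{-1}\cdot \Delta_{A^*}$, where $J^{\pm 1}$ acts on $A^*\otimes A^*$ through the $U$-action. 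Both sides of the required identity $\Delta^J_{A^*}\circ \pi = (\pi\otimes \pi)\circ \Delta^J_U$ are $U^J$-module maps from the regular module $U^J$, so it suffices to test at $1_U$. Since $\Delta^J_U(1_U) = 1\otimes 1$, the equation collapses to the single condition
$$(\pi\otimes \pi)(J) \;=\; \Delta_{A^*}(a).$$

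Next I would construct a lift. Surjectivity of $\pi$ makes $\pi\otimes \pi$ surjective, so some set-theoretic lift exists. To control its form I would use counitality of $A^*$: by haploidness of $A$ in \textup{Assumption \ref{assumption}}, the counit $\epsilon_{A^*}$ is unique up to scalar, and one may normalize $\epsilon_{A^*}(a) = 1$; the counit axioms then give $\Delta_{A^*}(a) = a\otimes a + \delta$ with $\delta \in \ker(\epsilon_{A^*})\otimes \ker(\epsilon_{A^*})$. Compatibility $\epsilon_{A^*}\circ \pi = \epsilon_U$ (forced by $U$-linearity of $\epsilon_{A^*}$ together with the normalization at $a$) then allows $\delta$ to be lifted to an element $\tilde\delta \in \ker(\epsilon_U)\otimes \ker(\epsilon_U)$. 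Setting $J := 1\otimes 1 + \tilde\delta$ produces a solution to the displayed equation satisfying $(\epsilon_U\otimes \epsilon_U)(J) = 1$, so if $J$ is invertible the first paragraph already gives the claim.

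The substantive obstacle is precisely this invertibility of $J$, and the natural route is to use filtration data to force $\tilde\delta$ to be nilpotent. In the motivating quantum group example $U = u_q(\g)$ with $A = \bV_0^*$, the Verma module $A^* = \bV_0$ carries a length grading compatible with the triangular decomposition $U = U^-U^0U^+$, and $\tilde\delta$ can be chosen in the strictly positive part of the corresponding filtration on $U\otimes U$, making it nilpotent so that $J = 1\otimes 1 + \tilde\delta$ is invertible by a Neumann series. In the general setup of \textup{Assumption \ref{assumption}}, the Takeuchi--Skryabin description (\textup{Theorem \ref{thm_skry}}) identifies $A^*$ with a quotient $U/UB^+$ for a coideal subalgebra $B\subset U$; the resulting coradical or degree filtration on $U$ descends through $\pi$ to a filtration on $A^*$ whose strictly positive part equals $\ker(\epsilon_{A^*})$, so lifting $\delta$ into the strictly positive part of $U\otimes U$ produces the required invertible $J$. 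Once this is achieved, the reduction at $1_U$ immediately promotes to full coalgebra compatibility of $F^J(\pi)$, completing the argument.
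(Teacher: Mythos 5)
Your reduction step is correct and matches the paper's approach: both sides of the coalgebra compatibility are $U^J$-module maps out of the cyclic module $U^J$, so it suffices to verify the identity at $1_U$, and (up to a convention/sign choice in how the twist acts, about which your $\Delta^J_{A^*}=J^{-1}\cdot\Delta_{A^*}$ is inconsistent with $\Delta^J_U=J\Delta J^{-1}$, but this only swaps $J$ with $J^{-1}$) the equation collapses to arranging that the twisted coproduct makes $a=\pi(1_U)$ grouplike. Where you diverge from the paper, and where there is a genuine gap, is in the production of an \emph{invertible} $J$. You write $\Delta_{A^*}(a)=a\otimes a+\delta$ with $\delta\in\ker(\epsilon_{A^*})^{\otimes 2}$, lift $\delta$ to $\tilde\delta\in\ker(\epsilon_U)^{\otimes 2}$, and set $J=1\otimes 1+\tilde\delta$; but $1\otimes 1+\tilde\delta$ has no reason to be invertible: the augmentation ideal of a Hopf algebra is generally not nilpotent (already for a group algebra $\C[\Z_2]$, $(1-g)^2=2(1-g)\neq 0$), so there is no Neumann series. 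Your proposed repair via Theorem \ref{thm_skry} is also circular: the Takeuchi--Skryabin identification of $A^*$ with a quotient $U/UB^+$ presupposes that $\pi$ is already a coalgebra surjection, which is precisely what we are trying to arrange by twisting. The paper avoids this by a different mechanism: it shows the element $\bar{J}=\Delta_{A^*}(\pi(1_U))$ \emph{generates} $A^*\otimes A^*$ as a $U\otimes U$-module (because counitality keeps $\bar J$ out of the codimension-one submodule $\ker\epsilon_{A^*}\otimes A^*+A^*\otimes\ker\epsilon_{A^*}$), and then chooses $J$ with $J.\bar J=\pi(1_U)\otimes\pi(1_U)$, so $J$ is found by moving one cyclic generator of the module to another rather than by lifting and inverting a perturbation of the identity. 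Your direct-lift construction misses this generator observation, which is the load-bearing step, and without it the invertibility of $J$ is not delivered by the filtration argument you sketch.
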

\begin{proof}
   % The main observation is that $F^J$ applied to a coalgebra composes the coproduct with $J$, whereas the coproduct of $U^J$ itself is conjugated by $J$. In particular the image of the regular representation $U$ is not the regular representation $U^J$, as coalgebras.\\

    We define $\bar{J}=\Delta_{A^*}(\pi(1_U))$ and claim that this element generates $A^*\otimes A^*$ as $U\otimes U$-module: Indeed we have the codimension 1 submodule $\ker(\epsilon_{A^*})\otimes A^* + A^*\otimes \ker(\epsilon_{A^*})$ which does not contain $\bar{J}$ because of counitality of $A^*$, so $\bar{J}$ is a generator.
    
    Choose $J\in U\otimes U$ with $J.\bar{J}=\pi(1_U) \otimes \pi(1_U)$. Then the coproduct on $F^J(A^*)$, by definition of $F^J$, is $\Delta_{A^*}^J(a)=J.\Delta_{A^*}(a)$, in particular we have by construction 
    \[
    \Delta_{A^*}^J(\pi(1_U))= J.\Delta_{A^*}(a) =  J.\bar{J} =  \pi(1_U)\otimes \pi(1_U).
    \]
 In consequence 
    $$\Delta_{A^*}^J(\pi(u))=\Delta^{U^J}(u).\Delta_{A^*}^J(\pi(1_U))
    =\Delta^{U^J}(u).(\pi(1_U)\otimes\pi(1_U))=\pi(u^{(1)})\otimes \pi(u^{(2)})$$
\end{proof}

%\begin{corollary}
%    The assertion in Theorem \ref{thm_CharacterizeQG} holds if we only assume that $A^*$ as a module is of the form $\pi:U\to A^*$. Hence it does not require knowledge of the coproduct in $U$.
%\end{corollary}

\subsection{Main theorem}

%\begin{theorem}\label{thm_characterizeQuantumGroup}
%    Assume now that $C\subset U$ are quasitriangular Hopf algebras with nondegenerate braiding and that $A$ is a commutative algebra in the braided tensor category $\UU$, such that the category of local modules $\UU_A^0$ is equivalent to $\Rep(C)$ as a braided tensor category with the induction functor. Then the algebra $B$ is a Hopf algebra such that $\Rep^{wt}(B)\cong \UU_A$ is a tensor equivalence. \marginpar{braidings compatible} 
   
%   Assume that in the set of $C$-eigenvalues of $\c(U/UC^+)$ there exists a hyperplane such that the $C$-eigenvalues and eigenspace dimensions in one halfspace coincide with $\c(A)$. Then there is a splitting tensor functor $\Localization \UU_A\to \UU_A^0$ and $\UU_A$ is the category of modules over an algebra $\Nichols$ in $\CC$. 
   
%    Assume further %that $\Localization\circ \induceA$ is the restriction functor, 
%    that the $C$-eigenvalues of $c(A)$ are sufficiently unrolled in the sense of Lemma \ref{lm_isNicholsfree}. 
    
%    Then $\Nichols\in \CC$ is a Nichols algebra by Lemma \ref{lm_isNicholsFree} and we have an equivalence of braided tensor categories 
%    $$\Rep^{wt}(U)\cong \YD{\Nichols}(\Rep^{wt}(c))$$
%    \end{enumerate}
%\end{theorem}

 Let $\NicholsOf(X)$ be a finite-dimensional Nichols algebra in $\CC=\Rep^{wt}(C)$, the finite-dimensional semisimple representations of a (possibly infinite) Hopf algebra $C$. Let $U_q\supset C$ be the corresponding quantum group constructed in Example \ref{exm_realizedQuantumGroup} with the property that the category of finite-dimensional $C$-semisimple representations $\Rep^{wt}(U_q)$ is equivalent to $\UU_q=\YD{\Nichols}(\CC)$, which is a braided tensor category with trivial M\"uger center and locally finite.
 
\begin{theorem}\label{thm_characterizeQG}
  Let $U\supset C$ be a Hopf algebra with $\UU=\Rep^{wt}(U)$ a braided tensor category with trivial M\"uger center and $A$ is a finite-dimensional commutative simple algebra object in $\UU$. Assume that 
  \begin{itemize}
  \item $U=U_q$ as an algebra.
  \item $A$ as an algebra in $\Rep^{wt}(U)$ is the dual Verma module $\bV_0^*$, see Definition \ref{def_Verma}.
  \item $\NicholsOf(X)\in\CC$ is sufficiently unrolled in the sense of Definition \ref{def_sufficientlyUnrolled}.
  \item the category $\UU_A^0$ of local $A$-modules is equivalent as a braided tensor category to $\CC$, compatible with the inclusion $C\subset U$ in the sense that the following  commutes
  \begin{center}
$$
%\hspace{0.8cm}
 \begin{tikzcd}[row sep=10ex, column sep=15ex]
   \Rep^{wt}(U) \arrow{r}{\induceA}  
   \arrow{rd}{\mathrm{res}_C}
   & %\arrow[shift left=2]{l}{\forgetA}   
   \Rep(A)(\Rep^{wt}(U))  \arrow{d}{A-\mathrm{inv}}\\
   & %\arrow[hookrightarrow]{u}{\iota}\arrow[below, dotted, shift left=2]{ul}{} %{\coVerma}
   \Rep^{wt}(C)
    \end{tikzcd}
$$
\end{center}

  \item the categories $\UU$ and $\UU_A$ are rigid.
  \end{itemize}
  Then we have $B\cong \NicholsOf(X)\rtimes C$ and $U\cong U_q$ as Hopf algebras, and accordingly an equivalence of braided tensor categories $\UU=\UU_q$.
  %Then there is a splitting functor $\Localization:\, \UU_A\to \UU_A^0$ and $\UU_A$ is equivalent as tensor category to the category of modules over 
  %an algebra $\Nichols$ in $\CC$. Assume further that $\Localization\circ \induceA$ coincides with the restriction functor from $U$ to $C$, 
  %then the Nichols algebra $\B(X)$ and $\UU$ is equivalent as braided tensor category to 
\end{theorem}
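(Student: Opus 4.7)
The plan is to reduce the theorem to an application of the recognition results of Section \ref{sec_Nichols}  combined with the relative Drinfeld center description of Section \ref{sec:RelativeDrinfeldCentersGiveLocalization}. First, since $A$ is a finite-dimensional simple algebra in $\UU=\Rep^{wt}(U)$ and is dual to $\bV_0$ as an $U$-module (in particular comes from a coalgebra surjection $\pi\colon U \twoheadrightarrow A^*$), and since $U=U_q$ is weakly finite as an algebra (being free of finite rank over the commutative Hopf subalgebra $C$), the Takeuchi--Skryabin theorem \ref{thm_skry} yields an equivalence of abelian categories $\UU_A \cong \Rep(B)$ where $B = U^{coin(\pi)}$ is the algebra of right coinvariants. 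By construction $C \subset B \subset U$, and the compatibility with $C$-action means that $C$-semisimple modules on one side correspond to $C$-semisimple modules on the other, so $\UU_A^{wt}\cong \Rep^{wt}(B)$.

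Second, I would compute $B$ as a $C$-module using the graded dimension identity in Corollary \ref{cor_gradeddim}: $\c(U/UC^+)\cong \c(B/BC^+)\otimes \c(A)$. Since $U=U_q = \NicholsOf(X)\otimes \NicholsOf(X^*)\otimes C$ as a $C$-bimodule, and $A\cong \bV_0^*$ has the $C$-graded character of $\NicholsOf(X^*)$, we obtain that $B/BC^+\cong \NicholsOf(X)$ as a $C$-module, and hence $B\cong \NicholsOf(X)\otimes C$ as a free $C$-module. Because $U=U_q$ as an algebra, $B$ is literally the subalgebra $\NicholsOf(X)\cdot C$ of $U_q$, which is an algebra isomorphic to $\NicholsOf(X)\rtimes C$ as algebra in $\CC$.

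Third, I upgrade this algebra isomorphism to a Hopf algebra isomorphism. The key input is that the braided tensor equivalence $\UU_A^0\cong \CC$ is compatible with the inclusion $C\subset U$ via the commuting square of the statement: concretely, the composition ``$A$-invariants $\circ$ $\mathrm{ind}_A$'' recovers restriction to $C$. This produces a Hopf algebra projection $B\twoheadrightarrow C$ splitting the inclusion $C\hookrightarrow B$, equivalently a splitting tensor functor $\UU_A \to \UU_A^0=\CC$ at the level of the underlying tensor categories. The sufficiently unrolled hypothesis on $\NicholsOf(X)$, together with the fact that $B$ is algebraically a Radford biproduct $\NicholsOf(X)\rtimes C$, places us in the hypotheses of Lemma \ref{lm_isRadfordNichols} (or its Corollary \ref{cor_isNicholsRadfordFree}  in the sufficiently unrolled case), which promotes the algebra identification to a Hopf algebra identification $B\cong \NicholsOf(X)\rtimes C$. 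The main obstacle here is verifying that the Hopf projection to $C$ is compatible with the tensor structure coming from $\UU_A$; this will follow by chasing the compatibility square and using rigidity of $\UU_A$, but needs care because we only know $U=U_q$ as algebras and the two coproducts might a priori differ.

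Finally, with $\UU_A=\Rep(\NicholsOf(X)\rtimes C)=\Rep(\NicholsOf(X))(\CC)$ and $\UU_A^0=\CC$ as braided tensor categories and the splitting functor in place, Corollary \ref{cor_schauenburgSplit} (applied in the relatively finite setting of Corollary \ref{cor_SchauenburgInfinite}, using that $U_q$ is free of the required rank over $C$) identifies $\UU$ with the relative Drinfeld center $\cZ_\CC(\UU_A)=\YD{\NicholsOf(X)}(\CC)=\UU_q$ as braided tensor categories. Translating this equivalence back through Lemma \ref{lm_realizedQuantumGroup} and using that the algebra structure on $U$ is already $U_q$, one obtains that the coproduct on $U$ coincides with that of $U_q$, giving the asserted Hopf algebra isomorphism $U\cong U_q$.
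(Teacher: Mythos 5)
Your proposal follows the same overall strategy as the paper's proof: apply the Takeuchi--Skryabin theorem to realize $\UU_A\cong \Rep(B)$ for $B=U^{coin(\pi)}$, identify $B$ with a Radford biproduct $\NicholsOf(X)\rtimes C$, and finish by the relative Drinfeld center / Yetter--Drinfeld description. However, there is one genuine gap in your second step.

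You assert that ``because $U=U_q$ as an algebra, $B$ is literally the subalgebra $\NicholsOf(X)\cdot C$ of $U_q$.'' This does not follow. The coinvariant subalgebra $B=U^{coin(\pi)}=\{h\in U:\pi(h^{(1)})\otimes h^{(2)}=\pi(1)\otimes h\}$ depends on the coproduct of $U$, which is precisely the piece of data the theorem does \emph{not} assume known (we only know $U=U_q$ as an algebra). Different coproducts on $U$ compatible with $\pi$ could yield different coinvariant subspaces, all having the same $C$-graded dimension by Corollary \ref{cor_gradeddim}. So you cannot identify $B$ with a specific subalgebra of $U_q$, only its $C$-graded character. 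The paper's proof carefully avoids this overreach: it works only with the graded identity $\c(U/UC^+)\cong \c(B/BC^+)\otimes\c(A)$ to conclude that $B/BC^+\cong \NicholsOf(X)$ as $C$-modules. This is enough. Under the sufficiently-unrolled hypothesis, Corollary \ref{cor_isNicholsRadfordFree} needs only the adjoint $C$-module structure on $B$ (the positive grading) to produce the splitting functor and the Radford decomposition $B\cong\Nichols\rtimes C$; and Lemma \ref{lm_isNicholsFree} then needs only the isomorphism $\Nichols\cong\NicholsOf(X)$ as objects of $\Vect_\Gamma$ to upgrade to a Hopf algebra isomorphism. At no point is an algebra identification with a subalgebra of $U_q$ required. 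You do parenthetically mention Corollary \ref{cor_isNicholsRadfordFree} as a fallback, and that is in fact the route the paper takes; your primary framing via Lemma \ref{lm_isRadfordNichols} implicitly assumes more (the algebra structure of $B$) than is available. The remainder of your argument — the final step via Corollary \ref{cor_schauenburgSplit} and the relatively finite Corollary \ref{cor_SchauenburgInfinite} — matches the paper's step (d) (which invokes Lemma \ref{lm_infiniteSplittingIsHopfAlgB} together with the finite-index hypotheses) and is fine.
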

\begin{corollary}\label{cor_fixingtwist}
With Lemma \ref{lm_fixingtwist} it is sufficient to assume that  $A$ as a $U$-module comes from a module surjection $\pi:U\to A^*$, up to a possible $2$-cocycle twist of $U$. The condition does then not depend on knowing the coalgebra structure of $U$.
\end{corollary}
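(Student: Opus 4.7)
The plan is to reduce the corollary directly to Theorem \ref{thm_characterizeQG} by using Lemma \ref{lm_fixingtwist} to upgrade the weaker module-theoretic hypothesis to the coalgebra-theoretic one, at the cost of replacing $U$ by a twist.

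First, starting from the weakened data---only a $U$-module surjection $\pi\colon U \twoheadrightarrow A^*$, with no a priori coalgebra compatibility---I would invoke Lemma \ref{lm_fixingtwist} to produce an invertible $J \in U \otimes U$ such that the induced map $F^J(\pi)\colon U^J \twoheadrightarrow F^J(A^*)$ is a surjection of coalgebras. The twist functor $F^J\colon \Rep^{wt}(U) \to \Rep^{wt}(U^J)$ is the identity on underlying vector spaces and on morphisms, but carries the nontrivial tensor structure $J$; in particular it is a braided tensor equivalence, so all categorical data transports from $\UU = \Rep^{wt}(U)$ to $\UU^J := \Rep^{wt}(U^J)$. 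Crucially, twisting preserves the algebra structure, so $U^J = U_q$ as algebras, and (after possibly rescaling $J$ so that it restricts trivially on $C \otimes C$, which one can arrange since $C$ is commutative cocommutative) the Hopf subalgebra $C \subset U^J$ is unchanged.

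Next, I would check that every remaining hypothesis of Theorem \ref{thm_characterizeQG} transfers to $(\UU^J, F^J(A))$: the Nichols algebra $\NicholsOf(X) \in \CC$ is unchanged and still sufficiently unrolled (the category $\CC$ is intrinsically defined); $F^J(A)$ is again a finite-dimensional, commutative, simple algebra object (these are categorical notions, preserved by $F^J$); the category $(\UU^J)^0_{F^J(A)}$ is braidedly equivalent to $\CC$ with the requisite compatibility diagram with $C \subset U^J$; and rigidity of $\UU$ and $\UU_A$ passes to $\UU^J$ and $\UU^J_{F^J(A)}$. The key new input is that $F^J(A^*)$ is now a coalgebra quotient of $U^J$: this identifies $F^J(A)$ as a simple subalgebra of $(U^J)^*$. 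Combined with the compatibility diagram and the Takeuchi--Skryabin equivalence of Theorem \ref{thm_skry} (applied to $B = (U^J)^{\mathrm{coin}(F^J(\pi))}$), the graded-dimension count of Corollary \ref{cor_gradeddim} together with the sufficiently-unrolled hypothesis pins down $B \cong \Nichols^* \rtimes C$, which is precisely the Borel realizing $F^J(A) \cong \bV_0^*$ as an algebra in $\Rep^{wt}(U^J)$.

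Having verified that the full hypotheses of Theorem \ref{thm_characterizeQG} hold for $(\UU^J, F^J(A))$, I would apply that theorem to conclude $U^J \cong U_q$ as Hopf algebras and an equivalence of braided tensor categories $\UU^J \simeq \UU_q$. Pulling back along the equivalence $F^J$ gives $\UU \simeq \UU_q$. The last sentence of the corollary is then automatic: the weakened hypothesis $\pi\colon U \twoheadrightarrow A^*$ only references the algebra structure of $U$ and the $U$-module structure of $A^*$, and never invokes $\Delta_U$. The main obstacle is the identification of $F^J(A)$ with $\bV_0^*$ as an algebra in the twisted setup; this is where one must combine the Takeuchi--Skryabin computation with the compatibility diagram to rule out exotic coideal subalgebras, a step whose subtlety was already anticipated in the ``Exotic Borel'' remark of Section \ref{sec_exmQG}.
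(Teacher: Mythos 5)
Your proposal follows the paper's (implicit) argument exactly: the paper states Corollary \ref{cor_fixingtwist} without a separate proof, the intended justification being precisely to apply Lemma \ref{lm_fixingtwist} to twist $U$ by $J$ so that $\pi$ becomes a coalgebra surjection, to note that the twist changes neither the algebra $U=U_q$ nor the braided tensor category $\UU$ nor the remaining hypotheses, and then to run the proof of Theorem \ref{thm_characterizeQG} for $U^J$. Your extra care (choosing $J$ compatibly with $C$ so that hypothesis d) and Corollary \ref{cor_gradeddim} still apply) is a fleshed-out version of the same route rather than a different one; the only slip is notational, as the coinvariant algebra produced in steps a)--c) of the theorem's proof is $B\cong\NicholsOf(X)\rtimes C$ rather than $\Nichols^{*}\rtimes C$.
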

\begin{proof}[Proof of Theorem \ref{thm_characterizeQG}]~

a) We apply Theorem \ref{thm_skry} to compute $\UU_A$ in terms of $U$ with unknown coalgebra structure: The prerequisite that $U$ is weakly finite holds, because $C\subset U$ has finite index. The prerequisite that the algebra $A$ comes from a coalgebra surjection $\pi:U\to A^*$ is true by assumption. Hence we can apply Theorem \ref{thm_skry} which yields $\Rep(U)_A\cong \Rep(B)$ for the Hopf algebra $B=U^{coinv(\pi)}$ and by Corollary  \ref{cor_gradeddim} we have $C\subset B$ and $\Rep^{wt}(U)_A\cong \Rep^{wt}(B)$ and $B/BC^+$ under the adjoint action of $C$ (to be interpreted as a graded dimension) is isomorphic to  $\NicholsOf(X)$.

b) From the assumed sufficient unrolling we have by Corollary \ref{cor_isNicholsRadfordFree} that  $C$-module structure on $B$ (to be interpreted as strictly positive grading) implies the existence of a splitting functor, so that $B\cong \Nichols\rtimes C$ for some Hopf algebra $\Nichols$ in $\CC$ which is isomorphic to $\NicholsOf(X)$ as a $C$-module. 

c) From the assumed sufficient unrolling we have by Lemma \ref{lm_isNicholsFree} that the isomorphism of $\Nichols\cong \NicholsOf(X)$ as $C$-modules (to be interpreted as a matching graded dimension) implies that $\Nichols\cong \NicholsOf(X)$ as Hopf algebras in $\CC$.

d) The braided tensor category equivalence is a consequence of Lemma \ref{lm_infiniteSplittingIsHopfAlgB}. The rigidity holds by assumption, being haploid holds for the Verma module by construction, the relative finiteness conditions hold since $U\supset C$ has finite index. 
\end{proof}

 \begin{problem}[Skryabin with tensor structure]
     Suppose now that $U$ is quasitriangular and $A$ is a commutative algebra. Are there conditions under which $B\subset H$ is in fact a Hopf subalgebra and $\UU_A\cong \Rep(B)$ as tensor categories?  
 \end{problem}

 \subsection{Application to Vertex algebras and free-field realizations}

        Theorem \ref{thm_characterizeQG} and the improvement in Corollary \ref{lm_fixingtwist} is taylored to the situation of a free-field realization of a vertex algebra $\V\supset \cW$ with known tensor category $\Rep(\V)=\CC$ and known abelian category $\UU$. We now summarize our general assumptions:

        \begin{assumption}
            Let $\V$ be a vertex operator algebra, which is simple over itself and its own contragradient dual, and let $\CC$ be a vertex tensor category of $\V$-modules, which is rigid and has trivial M\"uger center. Assume that as tensor category $\CC\cong \Rep^{wt}(C)$, the finite-dimensional semisimple modules over a commutative Hopf algebra $C$.

            Let $\cW\subset \V$ be a vertex operator subalgebra, which is simple over itself and its own contragradient dual, and let $\UU$ be a vertex tensor category of $\W$-modules, which is rigid and has trivial M\"uger center. Assume that as abelian  category $\UU\cong \Rep^{wt}(U)$, the finite-dimensional $C$-semisimple modules over an algebra $U\supset C$.
            
            Assume that $\V$ as $\cW$-module is in $\UU$ and that $A=\V$ is simple, haploid and finite-dimensional as an algebra in $\UU$ and assume the tensor category $\UU_A$ is rigid. 
        \end{assumption}

        \begin{theorem}
            In this situation, assume that $U=U_q$ as an algebra, so $\UU\cong \UU_q$ as abelian categories, and $A=\bV_0^*$ as an object in $\UU$, as in Theorem \ref{thm_characterizeQG}. Assume that we have a quasi-tensor functor $\c:\UU\to \CC$, meaning there is a weights-space associated to every object, which is compatible under fusion product. Assume further that we have a second commutative algebra $\bar{A}$ fulfilling all the assumptions above, with $\bar{A}=\bar{\bV}^*_0$ the Verma module for the opposite Borel. Assume that the tensor product $\bV_\lambda \otimes \bar{\bV}_0$ in $\UU$ contain the projective cover of the simple module $\mathbb{L}_\lambda$ in $\UU$ for all weights $\lambda$. Then $\UU\cong\UU_q$ as braided tensor categories.
        \end{theorem}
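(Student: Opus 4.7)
My approach is to apply Theorem \ref{thm_characterizeQG}, in the relaxed form of Corollary \ref{cor_fixingtwist}, to the commutative algebra $A = \V$, and then use the existence of the opposite algebra $\bar A = \bar{\V}$, together with the quasi-tensor functor $\c$ and the projective-cover fusion condition, to eliminate the residual $2$-cocycle ambiguity in the identification of $U$ as a Hopf algebra with $U_q$.

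First, I would verify the hypotheses of Theorem \ref{thm_characterizeQG}. Rigidity of $\UU$ and $\UU_A$, triviality of the M\"uger center, the identification $U = U_q$ as an algebra, and the finite-dimensionality, simplicity, haploidness, and commutativity of $A$ are all given. The braided equivalence $\UU_A^0 \cong \CC$ compatible with the Cartan inclusion $C \subset U$ follows from the conformal inclusion $\cW \subset \V$ together with \cite{HKL} and \cite{CKM} as reviewed in Section \ref{sec_exampleVOA}. Since $\V$ is a cyclic $\cW$-module generated by the vacuum, there is a $\cW$-module surjection $\cW \to \V$ lifting to a surjection of $U$-modules $\pi \colon U \to A^*$; sufficient unrolling of $\NicholsOf(X)$ in $\CC = \Rep^{wt}(C)$ is automatic by construction of $U_q$ in Lemma \ref{lm_realizedQuantumGroup}. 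Then Lemma \ref{lm_fixingtwist} produces a twisting element $J \in U \otimes U$ turning $\pi$ into a coalgebra surjection $F^J(\pi)$, and Theorem \ref{thm_characterizeQG} applied to $U^J$ yields a braided tensor equivalence $\UU \cong \UU_q^J$.

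Next I would run the same argument for $\bar A$. Under its hypotheses, this produces a second twist $J'$ associated to the opposite Borel with $\UU \cong \UU_q^{J'}$. The quasi-tensor functor $\c \colon \UU \to \CC$ identifies weight spaces and is compatible with fusion, so both realizations of $\UU$ factor through the same $\CC$-structure; this already forces the Cartan part of the coproduct on $U$ to coincide with that of $U_q$. The fusion condition that $\bV_\lambda \otimes \bar{\bV}_0$ contains $\mathrm{Proj}(\mathbb{L}_\lambda)$ is the crucial rigidity input: in the untwisted $\UU_q$ this encodes the BGG-type decomposition, pinning down how $E_i$- and $F_i$-weight components are glued across the Borel factors. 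A $2$-cocycle twist $J$ that simultaneously preserves the weight-space decomposition given by $\c$ and the isomorphism type of $\mathrm{Proj}(\mathbb{L}_\lambda)$ inside $\bV_\lambda \otimes \bar{\bV}_0$ must act trivially on the Verma and dual Verma generators, hence be a coboundary; twisting by a coboundary yields an inner, hence trivial, change of Hopf structure.

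The main obstacle is the last step: executing the cohomological calculation that shows only the trivial $2$-cocycle preserves the combined data of $\c$ and the projective-cover fusion rule. Concretely, one must classify normalized $2$-cocycles on $U_q$ that are $C$-linear (to respect $\c$) and that leave the Loewy structure of $\bV_\lambda \boxtimes \bar{\bV}_0$ intact, and verify that these are all coboundaries. For the diagonal Nichols algebras covered in Section \ref{sec:RelativeDrinfeldCentersGiveLocalization}, this reduces to a finite combinatorial check on the primitive generators $E_i, F_i$ and their linking relations. Once this is established, the identification $\UU \cong \UU_q$ as braided tensor categories follows immediately, completing the proof.
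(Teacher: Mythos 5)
Your proposal misidentifies what the theorem is actually asked to supply, and as a consequence has a genuine gap. Theorem \ref{thm_characterizeQG} requires as an input that $\UU\cong\Rep^{wt}(U)$ as a \emph{tensor} category for a Hopf algebra $U\supset C$, and Corollary \ref{cor_fixingtwist} (and Lemma \ref{lm_fixingtwist}) presuppose the same setup; they only relax the knowledge of the coalgebra structure, not the existence of one. In the present VOA situation, however, the given data is only a \emph{quasi}-tensor functor $\c:\UU\to\CC$, so a priori $\UU$ is realized only as modules over a quasi-Hopf algebra with a possibly nontrivial associator. You cannot begin by invoking Lemma \ref{lm_fixingtwist} and Theorem \ref{thm_characterizeQG} for $A$ and then again for $\bar A$, because the hypothesis "$\UU=\Rep^{wt}(U)$ as a tensor category for a Hopf algebra $U$" has not yet been established. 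The entire point of the theorem you are proving is precisely to upgrade $\c$ from a quasi-tensor functor to an honest tensor functor; only after that step can Theorem \ref{thm_characterizeQG} be applied once.

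The paper's proof does exactly this, by a direct and quite different mechanism. It uses the oplax tensor functor $\coVerma=\forgetA\circ\iota:\CC\to\UU$ and the observation that $\coVerma(1_\CC)=A^*$ is a coalgebra in $\UU$. After normalizing via Corollary \ref{cor_fixingtwist} so that $\delta(1)=1\otimes 1$ on the highest-weight vector, strict coassociativity of this element-level coproduct forces $\assoz_{A^*,A^*,A^*}=\id$. The same argument, applied to the coalgebras $\bV_\lambda^*$ (pushed forward from $\C_\lambda\in\CC$) and then to the products $\bV_\lambda^*\otimes\bar{\bV}_0^*$, kills the associator on all such triples. The hypothesis that $\bV_\lambda\otimes\bar{\bV}_0$ contains the projective cover of every $\mathbb{L}_\lambda$ then shows that these objects generate, so the associator is trivial everywhere and $\c$ is a tensor functor. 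No classification of $2$-cocycles on $U_q$ is needed, and no finite combinatorial check on generators is performed. Your last step, which you yourself flag as incomplete, is indeed where your argument would get stuck: you would need to classify $C$-linear normalized $2$-cocycles preserving Loewy structure of $\bV_\lambda\otimes\bar{\bV}_0$ and prove they are all coboundaries, which is neither carried out nor obviously tractable, and in any case addresses the wrong ambiguity (a $2$-cocycle twist of the coproduct) rather than the one the theorem actually needs to control (the $3$-cocycle-type associator of a quasi-tensor functor).
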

        \begin{proof}
        Using Theorem \ref{thm_characterizeQG} and Corollary \ref{lm_fixingtwist} it is sufficient to prove that  $\c:\UU\to \CC$ is an actual tensor functor for which we proceed as we did in \cite{CLR} Proposition 3.6: We have an oplax tensor functor 
        $$\coVerma:\,\CC=\UU_A^0\stackrel{\iota}{\longrightarrow} \UU_A \stackrel{\forgetA}{\longrightarrow} \UU$$
        Since $1_\CC$ is a coalgebra in $\CC$ its image $\coVerma(1_\CC)=A^*$ is a coalgebra $(A^*,\delta)$ in $\UU$. The coassociativity reads
        $$A^*\stackrel{\delta}{\longrightarrow} A^*\otimes_\UU A^*
        \stackrel{\delta\otimes \id}{\longrightarrow} (A^*\otimes_\UU A^* )\otimes_\UU A^*
        \xrightarrow{a_{A^*,A^*,A^*}} A^*\otimes_\UU (A^* \otimes_\UU A^*)
        \stackrel{\id \otimes \delta}{\longleftarrow} A^*\otimes_\UU A^*
        \stackrel{ \delta}{\longleftarrow} A^*
        $$
        From Corollary \ref{lm_fixingtwist} we can already assume up to twist that $\delta(1)=1\otimes 1$ where $1$ denotes the highest-weight vector of $\bV_0$. But since this coproduct is strictly coassociative, we have in this case $a_{A^*,A^*,A^*}=\id$.\\

        We now repeat the steps above for larger coalgebras in $\UU$: By assumption we have the coalgebras $A^*,\,\bar{A}^*$, and since $\UU$ is braided the tensor product $A^* \otimes_\UU \bar{A}^*$ becomes again a coalgebra, and up to twist we can assume $\delta(1\otimes \bar{1})=(1\otimes \bar{1})\otimes (1\otimes \bar{1})$ and hence we find that $a$ on a triple of these modules is trivial.\\

        We now consider the family of $\CC$-objects $\C_\lambda$ indexed by $\lambda\in\mathrm{Spec}(C)$ and $\CC$-morphisms   $\C_\lambda\to \C_{\lambda'}\otimes_\CC \C_{\lambda''}$  for all $\lambda=\lambda'+\lambda''$. The family fulfills a graded version of coassociativity - however for infinite $C$ is is not a coalgebra in the strict sense, but in an ind-completion sense. Nevertheless, applying the oplax tensor functor $\coVerma$ we obtain a 
        family of $\UU$-objects $\bV_\lambda$ and $\UU$-morphisms $\delta_{\lambda',\lambda''}:\,\bV_\lambda^*\to \bV_{\lambda'}^*\otimes_\UU \bV_{\lambda''}^*$ again with a version of coassociativity. Again we can assume up to twist that on the highest weight vectors $\delta(1_\lambda)=1_{\lambda'}\otimes 1_{\lambda''}$, which is strictly coassociative, so the argument above proves in this case  for all $\lambda,\mu,\nu$
        $$a_{\bar{\bV}_\lambda^*,\,
        \bar{\bV}_\mu^*,\,
        \bar{\bV}_\nu^*}=\id$$    
        Similarly we can again tensor 
        with $\bar{\bV}_0^*=\bar{A}^*$ and the argument above proves for all $\lambda,\mu,\nu$
        $$a_{\bar{\bV}_\lambda^*\otimes \bar{\bV}_0^*,\,
        \bar{\bV}_\mu^*\otimes \bar{\bV}_0^*,\,
        \bar{\bV}_\nu^*\otimes \bar{\bV}_0^*}=\id$$ 
        
        But since we assumed that ${\bV}_\lambda^*\otimes \bar{\bV}_0^*$ contains the projective covers of all simple modules, this means that $a_{\bullet,\bullet,\bullet}$ is trivial altogether and $\c$ is a tensor functor.
        \end{proof}
        \begin{example}\label{ex_singlet}
        For $u_q^H(\sl_2)$ we have checked   the assumptions on the fusion product \textup{\cite[Section 2.1]{CLR}} and the two free-field realizations \textup{\cite[Section 2.2]{CLR}} (and then proceeded to determine the tensor category for $p=2$ in a more ad-hoc fashion). These calculations together with the preceding theorem give now an independent proof of Theorem \ref{thm_singletEquivalence}.
        \end{example}

    Under certain convergence conditions, the algebra of screenings is the Nichols algebra \cite{Len21}. The question which lattices realize $\CC$ in a way compatible with reflection operators was treated in \cite{FL22}, which also addreses the questions for which choices the convergence conditions holds.

\begin{problem}[\cite{FL22}]
For any vertex algebra $\V$ and a set of non-local screening operators, what is the braided category of representations of the kernel of screenings $\cW$? A good candidate in general would be the relative center of the representations of the algebra of screenings $\Nichols$ inside $\CC=\Rep(\V)$.
\end{problem}
%\begin{problem}
%To treat the small quantum group directly has many finiteness advantages, on the other hand $C$ then gives only grading in some abelian quotient group. Using the knowledge of the Nichols algebras in say $\Vect_\Gamma$, can this problem be settled, or are there additional (lifted) examples appearing, that do not admit unrolling.
%\end{problem}

\section{Uprolling}\label{sec_uprolling}

We want to describe how the results of our articles commute with a second extension with an algebra $S$, in applications typically a simple-current extension.

\subsection{Commutative algebra statement}

Let $A,S$ be commutative algebras in a braided tensor category, such that $c_{A,S}\circ c_{S,A}=\id$, in particular $A\otimes S$ is again a commutative algebra in $\UU$. Moreover, the induction functors $\mathrm{ind}_A$ resp. $\mathrm{ind}_S$ map $S$ resp. $A$ to the local modules and hence to commutative algebras in $\mathrm{ind}_A(S)\in \UU_A^0$ resp $\mathrm{ind}_S(A)\in \UU_S^0$. Clearly

\begin{lemma}
    We have an equivalence of braided tensor categories 
    $$\big(\UU_A^0\big)_{\mathrm{ind}_A(S)}^0 \cong 
    \big(\UU_S^0\big)_{\mathrm{ind}_S(A)}^0 \cong
    \UU_{A\otimes S}^0 $$
\end{lemma}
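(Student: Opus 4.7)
The plan is to identify all three categories with the category of triples $(M, m_A, m_S)$ where $M \in \UU$ carries compatible commuting actions of $A$ and $S$, satisfying a double-locality condition with respect to both. The proof proceeds in four steps.

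First, I would check that $A\otimes S$ is a commutative algebra in $\UU$ with multiplication
\[
m_{A\otimes S} := (m_A\otimes m_S)\circ(\id_A\otimes c_{S,A}\otimes\id_S),
\]
using commutativity of $A$ and $S$ together with $c_{A,S}\circ c_{S,A}=\id$. The hypothesis $c_{A,S}\circ c_{S,A}=\id$ also implies that $\mathrm{ind}_A(S)=A\otimes S\in \UU_A^0$ (locality with respect to $A$ is automatic since $A$ acts on the $A$-factor only, and $S$ centralizes $A$), and the induced multiplication makes it into a commutative algebra in the braided tensor category $\UU_A^0$. The analogous statement holds with the roles of $A$ and $S$ exchanged.

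Second, I would construct an equivalence of tensor categories $\Phi:\UU_{A\otimes S}\xrightarrow{\sim}(\UU_A)_{\mathrm{ind}_A(S)}$. Given $(M, m_{A\otimes S})\in \UU_{A\otimes S}$, restriction along the algebra maps $A\to A\otimes S$ and $S\to A\otimes S$ (unit on the other factor) yields commuting actions $m_A$ and $m_S$ on $M$. The $A$-action makes $M$ an object of $\UU_A$, and the $S$-action, being $A$-linear on the underlying object, factors uniquely through $\mathrm{ind}_A(S)\otimes_A M \to M$, by the universal property of $\otimes_A$ and Lemma \ref{lm_trivialActionToInducedModule}. Conversely, an $\mathrm{ind}_A(S)$-module in $\UU_A$ lifts to a compatible pair of commuting actions of $A$ and $S$, hence to an $A\otimes S$-action. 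These assignments are mutually inverse, and one checks that the tensor products $\otimes_{A\otimes S}$ and $\otimes_{\mathrm{ind}_A(S)}$ coincide under $\Phi$ because both are defined as coequalizers enforcing the same relations.

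Third, I would verify that $\Phi$ restricts to an equivalence on local modules. A module $M\in\UU_{A\otimes S}$ is local iff the double braiding with $A\otimes S$ is trivial, which (using $c_{A,S}c_{S,A}=\id$ and the explicit form of the braiding with a tensor product) is equivalent to $M$ being local with respect to both $A$ and $S$ separately. On the other side, an $\mathrm{ind}_A(S)$-module $M$ in $\UU_A$ is local iff the $\UU_A$-double-braiding with $\mathrm{ind}_A(S)$ is trivial; translating via Remark \ref{rem:halfbraiding} (the half-braiding $\overline{c}_{\bullet,\mathrm{ind}_A(S)}$ is induced from $c_{\bullet,A\otimes S}$ in $\UU$), this is equivalent to $M$ being local with respect to $S$ as an object of $\UU$. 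Combined with the automatic $A$-locality of $M\in\UU_A^0$, both conditions reduce to the same double locality with respect to $A$ and $S$. Thus $\Phi$ induces $\UU_{A\otimes S}^0\cong (\UU_A^0)_{\mathrm{ind}_A(S)}^0$ as braided tensor categories, the braiding being induced from that of $\UU$ in both cases. The same argument with $A$ and $S$ swapped gives the other equivalence.

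The main obstacle I expect is bookkeeping: keeping track of associators, the distinction between $\otimes$, $\otimes_A$, and $\otimes_{A\otimes S}$, and checking that the half-braidings and double-braidings really do match up as claimed in Step 3. Everything else is a fairly mechanical unpacking of universal properties, and the construction is essentially the braided analogue of the classical fact that $R\otimes_k S$-modules are the same as $R$-modules with a commuting $S$-action.
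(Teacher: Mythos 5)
The paper itself offers no proof of this lemma — it is introduced by the word ``Clearly'' and stated as self-evident, resting implicitly on the preceding sentence that $A\otimes S$ is a commutative algebra and that $\mathrm{ind}_A(S)$, $\mathrm{ind}_S(A)$ are commutative algebras in the respective local-module categories. Your proposal supplies the argument the paper declines to write, and it does so by the right route: identifying all three categories with $\UU$-objects carrying commuting $A$- and $S$-actions satisfying a double locality condition, built via the universal property of the coequalizer $\otimes_A$, Lemma \ref{lm_trivialActionToInducedModule}, Remark \ref{rem:halfbraiding}, and the hexagon factorization of the double braiding $c_{M,A\otimes S}\circ c_{A\otimes S,M}$. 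One small wording slip: in Step 3 you say $M\in \UU_{A\otimes S}$ is local ``iff the double braiding with $A\otimes S$ is trivial''; locality actually says the action is invariant under precomposition with the double braiding, not that the double braiding itself is the identity — but your subsequent reasoning correctly works with the action-equality form, so this is only a phrasing issue. The hexagon decomposition $c_{M,A\otimes S}\circ c_{A\otimes S,M} = (\id_A\otimes c_{M,S})\circ(c_{M,A}\circ c_{A,M}\otimes\id_S)\circ(\id_A\otimes c_{S,M})$ is exactly what makes the equivalence of double locality with $A\otimes S$-locality go through, modulo the naturality shuffling you rightly flag as the main bookkeeping burden. Overall the plan is sound and, if anything, more explicit than the paper.
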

In the notation of the paper for the $A$-extension this could be rewritten 
$$\CC=\UU_A^0 \quad \Longrightarrow \quad \tilde{\CC}=\tilde{\UU}_{\tilde{A}}^0$$ 
where we use tilde for the $S$-extension $\tilde{\CC}=\CC_{\mathrm{ind}_A(S)}^0$ and $\tilde{\UU}=\UU_S^0$ and $\tilde{A}=\mathrm{ind}_S(A)$.

\subsection{Yetter-Drinfeld module statememt}

Let $\Nichols$ be a Hopf algebra in $\CC$ and $R$ be a commutative algebra in $\CC$, such that $c_{\Nichols,R}\circ c_{R,\Nichols}=\id$. Then since induction is a tensor functor it defines a Hopf algebra  
$$\tilde{\Nichols}:=\mathrm{ind}_R(\Nichols) \in \CC_R^0$$
On the other hand we can endow $R$ with trivial $\Nichols$-action and -coaction via the counit $\epsilon:\Nichols\to 1$, which is by Definition \ref{def_YD} precisely a Yetter-Drinfeld module iff $c_{\Nichols,R}\circ c_{R,\Nichols}=\id$. Denote this object by 
$$S:=R_\epsilon^1$$

\begin{theorem}\label{thm_uprolling}
    We have an equivalence of braided tensor categories
    $$\big(\YD{\Nichols}(\CC)\big)_{S}^0
    \cong \YD{\tilde{\Nichols}}(\CC_R^0) $$
\end{theorem}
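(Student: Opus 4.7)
The strategy is to apply Corollary~\ref{cor_schauenburgSplit} to $\tilde\UU := (\YD{\Nichols}(\CC))_S^0$ together with a suitable commutative algebra $\tilde A$ inside it, realizing the splitting setup of Sections~\ref{sec_splitting}--\ref{sec:RelativeDrinfeldCentersGiveLocalization} over the new base $\tilde\CC$ with Hopf algebra $\tilde\Nichols$. Let $A := \bV_0^* \in \YD{\Nichols}(\CC)$ be the dual Verma module of Definition~\ref{def_Verma}, so that by the results of that section $(\YD{\Nichols}(\CC))_A \cong \Rep(\Nichols)(\CC)$ and $(\YD{\Nichols}(\CC))_A^0 \cong \CC$ with the splitting functor forgetting the $\Nichols$-action. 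Set $\tilde A := \mathrm{ind}_S(A) \in \tilde\UU$. Because $S = R_\epsilon^1$ has trivial YD structure, the YD-braidings $c^{YD}_{A,S}$ and $c^{YD}_{S,A}$ reduce (by counitality/unitality) to the $\CC$-braidings between $\Nichols^*$ and $R$, so the assumption $c_{\Nichols,R}\circ c_{R,\Nichols}=\id$ guarantees that $A$ and $S$ are mutually local in $\YD{\Nichols}(\CC)$; hence $\tilde A$ is a commutative algebra in $\tilde\UU$.

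For the local modules, the lemma at the start of this section gives directly
\[
\tilde\UU_{\tilde A}^0 \;=\; (\UU_S^0)_{\mathrm{ind}_S(A)}^0 \;=\; \UU_{A\otimes S}^0 \;=\; (\UU_A^0)_{\mathrm{ind}_A(S)}^0 \;=\; \CC_R^0 \;=\; \tilde\CC
\]
as braided tensor categories. For the full category $\tilde\UU_{\tilde A}$, the plan is to identify it with the subcategory of $\UU_{A\otimes S}$ consisting of objects whose $S$-action is local, and then translate through $\UU_A \cong \Rep(\Nichols)(\CC)$: under this equivalence $\mathrm{ind}_A(S)$ corresponds to $R$ equipped with the trivial $\Nichols$-action (reflecting that $S$ lies in the essential image of $\CC \hookrightarrow \YD{\Nichols}(\CC)$), so modules over $\mathrm{ind}_A(S)$ in $\Rep(\Nichols)(\CC)$ are $\Nichols$-modules in $\CC$ together with a commuting $R$-action, and the $S$-locality condition translates into $R$-locality in $\CC$. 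Combined with the isomorphism $\tilde\Nichols\otimes_R Y \cong \Nichols \otimes Y$ for $Y \in \tilde\CC$ coming from $\tilde\Nichols = \mathrm{ind}_R(\Nichols)$, this yields $\tilde\UU_{\tilde A} \cong \Rep(\tilde{\Nichols})(\tilde\CC)$ as tensor categories, and the splitting functor $\Rep(\Nichols)(\CC) \to \CC$ descends to a splitting functor $\Rep(\tilde{\Nichols})(\tilde\CC) \to \tilde\CC$.

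These identifications place $(\tilde\UU, \tilde A)$ in exactly the splitting setup of Corollary~\ref{cor_schauenburgSplit} and Lemma~\ref{lm_infiniteSplittingIsHopfAlgB} with data $(\tilde\CC, \tilde\Nichols)$, which then yield
\[
\tilde\UU \;\cong\; \cZ_{\tilde\CC}\bigl(\Rep(\tilde{\Nichols})(\tilde\CC)\bigr) \;=\; \YD{\tilde{\Nichols}}(\tilde\CC)
\]
as braided tensor categories, finishing the proof. The main technical obstacle is verifying that Assumption~\ref{assumption} and the hypotheses of Lemma~\ref{lm_infiniteSplittingIsHopfAlgB} persist after passing from $(\UU, A)$ to $(\tilde\UU, \tilde A)$: rigidity and local finiteness of $\tilde\UU$ and $\tilde\UU_{\tilde A}$, haploidness of $\tilde A$, triviality of the M\"uger center of $\tilde\UU$, and most delicately the compatibility of the induced splitting functor with the right $\tilde\CC$-module structure in the sense of diagram~\eqref{ModCatCondition}. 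Most of these should follow formally from exactness and faithfulness of $\forgetA$, preservation of projectives under $\mathrm{ind}$, and nondegeneracy of the $R$-$\Nichols$ interaction, but tracing the naturality of $\tilde\Nichols\otimes_R(-) \cong \Nichols\otimes(-)$ and the induced associators $\overline{\assoz}$ through the module-category coherence is the fiddly part one cannot avoid.
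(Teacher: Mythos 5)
Your proposal takes a genuinely different route from the paper. The paper's proof is a direct unpacking of both sides: an object of $(\YD{\Nichols}(\CC))_S^0$ is a $\Nichols$-Yetter--Drinfeld module in $\CC$ with a local $R$-action commuting with the $\Nichols$-(co)action, while an object of $\YD{\tilde\Nichols}(\CC_R^0)$ is a local $R$-module equipped with $R$-linear $\tilde\Nichols$-(co)actions, which via Lemma~\ref{lm_trivialActionToInducedModule} and $\tilde\Nichols=\mathrm{ind}_R(\Nichols)$ are exactly $\CC$-morphisms $\Nichols\otimes M\to M$ and $M\to\Nichols\otimes M$ commuting with the $R$-action. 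Once both sides are recognized as the same data, the braidings and tensor products match for free. This is purely definitional and works at the level of generality the theorem actually claims (no rigidity, no local finiteness, no trivial M\"uger center, no finite dimensionality of $\Nichols$).

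Your route instead passes through the Schauenburg machinery: construct $\tilde A=\mathrm{ind}_S(\bV_0^*)$, show $\tilde\UU_{\tilde A}\cong\Rep(\tilde\Nichols)(\tilde\CC)$ and $\tilde\UU_{\tilde A}^0\cong\tilde\CC$, then invoke Corollary~\ref{cor_schauenburgSplit}. This would give the result, but only under substantially stronger hypotheses than the theorem states. Corollary~\ref{cor_schauenburgSplit} rests on Theorem~\ref{relcent} and its finiteness upgrades (Corollaries \ref{cor_SchauenburgFinite}, \ref{cor_SchauenburgInfinite}), which require Assumption~\ref{assumption} for $\tilde\UU$: rigidity, local finiteness, enough projectives/injectives, trivial M\"uger center, rigidity of $\tilde\UU_{\tilde A}$, and a finiteness argument to turn the fully faithful functor $\Schauenburg$ into an equivalence. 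None of these are hypotheses of Theorem~\ref{thm_uprolling}, and some (trivial M\"uger center of $(\YD{\Nichols}(\CC))_S^0$, rigidity when $\CC$ is not rigid) are simply false in the generality claimed. Your approach also leans on the forward-referenced theorem from \cite{GLR23} (``in preparation'') to identify $\UU_A\cong\Rep(\Nichols)(\CC)$, whereas the paper deliberately keeps Theorem~\ref{thm_uprolling} independent of that. Finally, you flag but do not resolve the verification of diagram~\eqref{ModCatCondition} and the coherence of $\tilde\Nichols\otimes_R(-)\cong\Nichols\otimes(-)$; in the paper's direct approach none of this arises because one never leaves the level of objects and morphisms in $\CC$. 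In short: your argument, when it applies, is consistent with the result but is a heavy detour with genuine unresolved gaps, while the intended proof is an elementary definitional check valid in full generality.
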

\begin{proof}
An object on the left-hand side is a $\Nichols$-Yetter Drinfeld module in $\CC$ with a  $\Nichols$-linear and  $\Nichols$-colinear action of $S$, which is local. By definition of $S$ this is a local $R$-action commuting with  $\Nichols$-action and  $\Nichols$-coaction.

An object on the right-hand side is a local $R$-module $N$ with an $R$-linear action and coaction by $\tilde{\Nichols}=\mathrm{ind}_R(\Nichols)$. By definition of the tensor product this means
$$\mathrm{ind}_R(\Nichols)  \otimes_R M  \to M 
\qquad M  \to \mathrm{ind}_R(\Nichols)  \otimes_R  M $$
Composing these with the natural transformation in  Lemma \ref{lm_trivialActionToInducedModule} gives equivalently 
 $\CC$-morphisms 
$$\Nichols  \otimes M  \to M,\qquad M  \to \Nichols  \otimes  M $$
which are $R$-linear with respect to the action on the tensor factor $M$ only, or differently spoken: $\Nichols$-action and $\Nichols$-coaction must commute with the $R$-action on $M$. The Yetter-Drinfeld condition in $\CC_R^0$ in this way becomes the usual Yetter-Drinfeld condition, because the braiding in $\CC_R^0$ comes from the braiding on $\CC$.

But now we have unpacked the definition on both sides to the very same objects, namely objects $M\in \CC$ with local $R$-action and $\Nichols$-action and  $\Nichols$-coaction fulfilling the Yetter-Drinfeld-condition, such that the $R$-action commutes with the  $\Nichols$-action and  $\Nichols$-coaction.
\end{proof}

\begin{corollary}\label{cor:uprolling}
    Suppose we identified $\UU\cong \YD{\Nichols}(\CC)$ and we have a commutative algebra $S\in \UU$, whose image in $\YD{\Nichols}(\CC)$ is of the form $S=R_\epsilon^1$ for some $R\in\CC$ with $c_{\Nichols,R}\circ c_{R,\Nichols}=\id$, then the extension of $\UU$ can be described in terms of the extension $\tilde{\CC}=\CC_R^0$ as
    $$\UU_S^0=\YD{\tilde{\Nichols}}(\tilde{\CC})$$ 
    where $\tilde{\Nichols}$ is the induction of $\Nichols$ in $\CC$ to $\tilde{\CC}$.
\end{corollary}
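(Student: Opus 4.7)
The plan is to deduce the corollary as essentially a direct repackaging of Theorem \ref{thm_uprolling} under the given identification $\UU \cong \YD{\Nichols}(\CC)$. Since the formation of $S$-local modules for a commutative algebra $S$ is an intrinsic construction in any braided tensor category, any equivalence of braided tensor categories is preserved when passing to local modules. Hence one has an immediate equivalence of braided tensor categories
\[
\UU_S^0 \;\cong\; \big(\YD{\Nichols}(\CC)\big)_{S'}^0,
\]
where $S'\in \YD{\Nichols}(\CC)$ denotes the image of the commutative algebra $S\in\UU$. The hypothesis is precisely that $S' = R_\epsilon^1$ for some commutative algebra $R\in\CC$ satisfying $c_{\Nichols,R}\circ c_{R,\Nichols}=\id$.

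Next I would apply Theorem \ref{thm_uprolling} verbatim with this data. It yields
\[
\big(\YD{\Nichols}(\CC)\big)_{R_\epsilon^1}^0 \;\cong\; \YD{\mathrm{ind}_R(\Nichols)}(\CC_R^0),
\]
and by the notation fixed at the start of Section \ref{sec_uprolling} one has $\tilde{\CC} = \CC_R^0$ and $\tilde{\Nichols} = \mathrm{ind}_R(\Nichols)$. Composing the two equivalences gives $\UU_S^0 \cong \YD{\tilde{\Nichols}}(\tilde{\CC})$ as asserted.

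The only small verification I would want to record is that $R_\epsilon^1$ really is a commutative algebra in $\YD{\Nichols}(\CC)$: the unit and multiplication of $R$ lift through the trivial $\Nichols$-action/coaction because $\epsilon$ is both an algebra and coalgebra map, while commutativity in $\YD{\Nichols}(\CC)$ is tested against the Yetter--Drinfeld braiding. On two copies of $R_\epsilon^1$ this braiding collapses to the underlying $\CC$-braiding precisely because the $\Nichols$-coaction is trivial, and the centralizing assumption $c_{\Nichols,R}\circ c_{R,\Nichols}=\id$ ensures all internal compatibilities. Thus Theorem \ref{thm_uprolling} is literally applicable, and there is no substantive obstacle beyond the one already handled in the proof of that theorem. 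In effect the corollary is a translation step: it takes the abstract Yetter--Drinfeld statement and re-reads it in the equivalent geometric language of local modules over a commutative algebra in $\UU$, which is the form in which the result is applied in the VOA examples of the paper.
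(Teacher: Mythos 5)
Your proof is correct and is essentially the same (implicit) argument as the paper's: the corollary is a direct repackaging of Theorem~\ref{thm_uprolling} via transport of $S$ along the given braided tensor equivalence $\UU\cong\YD{\Nichols}(\CC)$, using that passing to local modules over a commutative algebra is functorial under braided tensor equivalences. The paper states the corollary without proof, and your note that $R_\epsilon^1$ is indeed a commutative algebra in $\YD{\Nichols}(\CC)$ (with commutativity seen because the Yetter--Drinfeld braiding on two copies of $R_\epsilon^1$ reduces to the $\CC$-braiding) is exactly the small check one would want to record.
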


\begin{remark}
In general it is difficult for a commutative algebra $S$ in $\UU=\Rep(U)$ to determine a realizing Hopf algebra $U_S^0$ for $\UU_S$, even for a simple current extension $S=\bigoplus_i \C_{\chi_i}$ with representations $\chi_i:U\to \C$. Note that typically we get only a quasi-splitting tensor functor (and hence a quasi-Hopf algebra) or no splitting tensor functor at all. However, a realizing algebra for the abelian category $\Rep(S)(\UU)$ can be determined again by Skryabins result in Theorem  \ref{thm_skry}, namely
$$U_S=U^{coin(S)}=\{u\in U\mid \chi_i(u^{(1)})u^{(2)}=1\otimes u\}$$
and the local modules are clearly those modules factorizing over the Hopf algebra quotient 
$$U_S^0=U_S/I^+U_S,\quad I=\bigoplus_i(\chi_i\otimes\id)R^2$$
\end{remark}
\begin{problem}
Does the commutativity of $S$ imply that $U^{coin(S)}$ is in fact a Hopf algebra and that $U_S\cong\Rep(U^{coin(S)})$ as tensor categories? Or does this at least hold in presence of a splitting functor?
\end{problem}

\subsection{Example the triplet \texorpdfstring{$\mathcal W(p)$}{W(p)}}

In Theorem \ref{thm_singletEquivalence} we have proven for the singlet vertex algebra $\cM(p)$ for $p\in\Z_{\geq 2}$ that there is an  equivalence of braided tensor categories 
$$\UU=\mathcal{O}_{\cM(p)}^T \cong \YD{\Nichols}(\CC)$$
where $\CC=\Vect_\C^Q$ with quadratic form $Q(\lambda)=e^{\pi\i\, \lambda^2}$ and $\Nichols=\C[x]/x^p$ is the Nichols algebra generated by $x$ in degree $\alpha_-=-\sqrt{2/p}$. The category $\UU$ is realized by the unrolled quantum group $u_q^H(\sl_2)$ at $q=e^{\pi\i/ p}$ .\\

The singlet vertex algebra has a simple current extension by $S=R_1^\epsilon$ described in equation (\ref{eq_TripletAsModule}) to the triplet vertex algebra $\cW(p)$, where 
$$R=\bigoplus_{n\in \Z} \C_{n\alpha_+},\qquad \alpha_+=-p\alpha_-=\sqrt{2p} $$
Hence by Corollary \ref{cor:uprolling} we have 
\begin{corollary}\label{cor_triplet}
There is an an equivalence of braided tensor categories 
$$\tilde{\UU}=\mathcal{O}_{\cW(p)}^T \cong \YD{\tilde{\Nichols}}(\tilde{\CC})$$
where the simple current extension of $\tilde{\CC}=\CC_R^0$ corresponding to the lattice vertex algebra of $\alpha_+\Z$  is explicitly
$$\tilde{\CC}=\CC_R^0=\Vect_{(\alpha_+\Z)'/(\alpha_+\Z)}^{\tilde{Q}}=\Vect_{\frac{1}{\sqrt{2p}}\Z/\sqrt{2p}\Z}^{\tilde{Q}}=\Vect_{\Z_{2p}}^{\tilde{Q}}$$ 
and $\tilde{Q}$ is the corresponding quadratic form on $\Z_p$ and $\tilde{\Nichols}=\C[\tilde{x}]/\tilde{x}^p$ with $\tilde{x}$ in degree $\bar{\alpha_-}\in (\alpha_+\Z)'/(\alpha_+\Z)$ resp. $-2\in \Z_{2p}$.
\end{corollary}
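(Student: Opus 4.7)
The plan is to derive the corollary from Theorem \ref{thm_singletEquivalence} by a direct application of the uprolling machinery of Corollary \ref{cor:uprolling}, with the simple-current inclusion $\cM(p) \subset \cW(p)$ playing the role of the second commutative algebra.

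First, I would identify the commutative algebra $S \in \UU = \mathcal{O}_{\cM(p)}^T$ corresponding by \cite{HKL} to the vertex subalgebra extension, and then locate its image in $\YD{\Nichols}(\CC)$. From the decomposition $\cW(p) = \bigoplus_{n \in \Z} M_{2n+1,1}$ recorded in \eqref{eq_TripletAsModule}, together with Lemma \ref{lm_singletAbelianSteps}(7) which gives $\induceA(M_{2n+1,1}) \cong \pi_{-n\alpha_+}$, each summand is a simple current, and under $\CC \cong \Rep(\pi) \cong \Vect_\C^Q$ the underlying object of $S$ assembles to the lattice object $R = \bigoplus_{n\in\Z} \C_{n\alpha_+} \in \CC$. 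Because each summand is one-dimensional and $\Nichols$ is generated in the nonzero degree $\alpha_-$, the only possible $\Nichols$-action and $\Nichols$-coaction on $R$ are the trivial ones (via $\epsilon$ and the unit respectively), so the image of $S$ in $\YD{\Nichols}(\CC)$ is forced to be $R^1_\epsilon$ in the notation of Theorem \ref{thm_uprolling}.

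Second, I would verify the centrality hypothesis $c_{\Nichols, R}\circ c_{R, \Nichols} = \id$ required by Corollary \ref{cor:uprolling}. It suffices to test this on the generators $\C_{\alpha_-}$ of $\Nichols$ and on the generating summands $\C_{n\alpha_+}$ of $R$, where the double braiding equals $e^{2\pi\i\, n\,(\alpha_-, \alpha_+)} = e^{-4\pi\i n} = 1$ using $(\alpha_-, \alpha_+) = -2$. With both hypotheses in hand, Corollary \ref{cor:uprolling} yields
\[ \mathcal{O}_{\cW(p)}^T \;\cong\; \UU_S^0 \;\cong\; \YD{\tilde\Nichols}(\tilde\CC), \qquad \tilde\CC = \CC_R^0, \qquad \tilde\Nichols = \mathrm{ind}_R(\Nichols). \]

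Third, I would make $\tilde\CC$ and $\tilde\Nichols$ explicit. The lattice $\alpha_+\Z$ is even integral because $\alpha_+^2 = 2p$, so Example \ref{exm_lattice} applies directly and identifies $\tilde\CC = \Vect_{(\alpha_+\Z)^*/\alpha_+\Z}^{\tilde Q} = \Vect_{\frac{1}{\sqrt{2p}}\Z/\sqrt{2p}\Z}^{\tilde Q} = \Vect_{\Z_{2p}}^{\tilde Q}$ with $\tilde Q$ the form descended from $Q$. Since $R$ acts and coacts trivially on $\Nichols$, Lemma \ref{lm_trivialActionToInducedModule} gives $\mathrm{ind}_R(\Nichols) \cong \Nichols$ as an algebra, so $\tilde\Nichols \cong \C[\tilde x]/\tilde x^p$ with $\tilde x$ in the image $\bar\alpha_- \in \Z_{2p}$ of the degree $\alpha_-$.

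The only slightly delicate point is the first step, namely the identification of the Yetter-Drinfeld structure on $S$ as $R^1_\epsilon$; however this is forced by invertibility of each summand and the nontriviality of the $\Nichols$-grading, so no real obstacle arises and the remaining steps are a direct invocation of the uprolling theorem together with the standard lattice computation.
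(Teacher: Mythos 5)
Your proposal is correct and follows essentially the same route as the paper: the paper simply identifies $S=R_1^\epsilon$ with $R=\bigoplus_{n\in\Z}\C_{n\alpha_+}$ from the decomposition \eqref{eq_TripletAsModule} and then invokes Corollary \ref{cor:uprolling} together with the lattice computation of $\CC_R^0$. You supply somewhat more detail than the paper does (in particular, using Lemma \ref{lm_singletAbelianSteps}(7) to identify $\induceA(M_{2n+1,1})\cong\pi_{-n\alpha_+}$ and checking the double-braiding $e^{2\pi\i n(\alpha_-,\alpha_+)}=1$ explicitly), but these are the implicit verifications the paper leaves to the reader rather than a genuinely different argument.
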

Note however that while $\tilde{Q}(\bar{\lambda})=e^{\pi\i\, \bar{\lambda}^2}$ and the associated bimultiplicative form $\tilde{B}(\bar{\lambda},\bar{\mu})=e^{2\pi\i\, \bar{\lambda}\bar{\mu}}$ is well-defined up to $\frac{1}{\sqrt{2p}}$, the braiding 
$\tilde{\sigma}(\bar{\lambda},\bar{\mu})=e^{\pi\i\, \bar{\lambda}\bar{\mu}}$ is not multiplicative any more and depends on choices. This means that $\tilde{\CC}$ requires a non-trivial associator, a ubiquitous phenomenon for modular tensor categories associated to quadratic spaces, see for example \cite{FRS02} Section 2.5.1. In turn $\tilde{\UU}$ cannot be realized by a Hopf algebra as in  Example \ref{exm_realizedQuantumGroup}, but as a quasi-Hopf algebra. Such a factorizable quasi-variant of a quantum group $u_q(\sl_2)$ for an even-order root of unity $q=e^{\pi\i/ p}$ has been constructed in \cite{CGR20} using precisely the simple-current extension above, and the corresponding construction for $u_q(\g)$ using Yetter-Drinfeld modules in $\tilde{\CC}$ can be found in \cite{GLO18}.

\subsection{Example \texorpdfstring{$\mathcal S(p)$}{S(p)} and \texorpdfstring{$\mathfrak{gl}_{1|1}$}{gl(1|1)}}

We consider the vertex algebra $\mathcal S(p)$ for $p \in \Z_{\geq 3}$ in Section \ref{catMA}. 
Its associated free field algebra is a Heisenberg VOA times a pair of free fermions and so the category $\tilde{\CC}$ is $\text{sVect} \boxtimes \Vect$. The corresponding Hopf algebra is $\C[(-1)^F, K, K^{-1}, H]$ with the relation $K = q^{(2-p)H}$ on modules and $q= e^{\frac{\pi i }{p}}$. 
The braiding is given by \eqref{braidingSp} and so in particular the $R$-matrix is $R = q^{(2-p) H \otimes H} (-1)^{F \otimes F}$.
The Nichols algebra is $\C[x]/x^p$ with $x$ having degree $(-1, 1)$ with self-braiding $q^{(2-p)}(-1)=q^2$.  We have  $\sigma((-1,+1),(\pm1,\lambda))=\pm q^{(2-p)\lambda}$  so $g=\bar{g}=(-1)^F K$. %Check g.x=-q^{2-p}=q^2

Using the method of Example \ref{exm_realizedQuantumGroup}, the corresponding Hopf algebra $U(S(p))$ is generated by the previous ones together with $x, x^*$, subject to the relations 
\begin{equation}\label{usp}
\begin{split}
    x^p &= 0 = (x^*)^p \\
    (-1)^F x (-1)^F &= -x, \qquad  (-1)^F x^* (-1)^F = -x^* \\
    [H,x] &= x,\qquad [H,x^*]=-x^* \\
    KxK^{-1} &= -q^{2} x, \qquad Kx^*K^{-1} =  -q^{-2}x^*\\
    xx^*-q^2 x^*x &= 1- K^{2} \\
    &\\
    \Delta((-1)^F)&=(-1)^F\otimes (-1)^F \\
    \Delta(H)&=1\otimes H+H\otimes 1 \\
    \Delta(x)&=(-1)^F K\otimes x+x\otimes 1 \\
    \Delta(x^*)&=(-1)^F K\otimes x^*+x^*\otimes 1 \\
\end{split}
\end{equation}

\begin{lemma}\label{lem_sp}
 The vertex tensor category of representations of the vertex superalgebra $\mathcal S(p)$ that are in $\mathcal O^T_{\mathcal M(p)}$ is braided equivalent to the category of weight modules of $U(S(p))$
\end{lemma}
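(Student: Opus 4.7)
The plan is to deduce the statement by applying the uprolling procedure (Corollary \ref{cor:uprolling}) to the singlet correspondence of Theorem \ref{thm_singletEquivalence}, promoted by the trivial Heisenberg factor $\pi^\beta$, and then to identify the resulting realizing Hopf algebra with $U(S(p))$ via the construction of Example \ref{exm_realizedQuantumGroup}.

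\textbf{Step 1: Product with the Heisenberg.} Since $\Rep(\pi^\beta)\cong\Vect_\C^{Q^\beta}$ as braided tensor categories with $Q^\beta(\lambda)=e^{\pi\i\lambda^2}$, and since the $\pi^\beta$-factor commutes with the $\Nichols$-action and coaction on the singlet side, Theorem \ref{thm_singletEquivalence} upgrades Deligne-product-wise to a braided equivalence
\[
\mathcal{O}^T_{\mathcal M(p)}\,\boxtimes\,\Rep(\pi^\beta)\;\cong\;\YD{\Nichols}\bigl(\CC\,\boxtimes\,\Vect_\C^{Q^\beta}\bigr),
\]
where $\Nichols=\C[x]/x^p$ with $x$ of degree $(\alpha_-,0)$.

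\textbf{Step 2: Identifying $\mathcal S(p)$ as an algebra $S=R_\epsilon^1$.} Using the decomposition \eqref{Sp}, the vertex subalgebra $\mathcal S(p)$ corresponds to the commutative algebra
\[
S\;=\;\bigoplus_{n\in\Z}\,M_{1-n,1}\,\boxtimes\,\pi^\beta_{n\mu_p}.
\]
Each $M_{r,1}$ is a simple current in $\mathcal O^T_{\cM(p)}$ which under the equivalence of Step 1 corresponds to a one-dimensional object of $\CC\boxtimes\Vect_\C^{Q^\beta}$ carrying trivial $\Nichols$-action and coaction (it is a local $A$-module, hence an object of $\CC$ and a simple current there). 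Thus $S=R_\epsilon^1$ for the commutative algebra
\[
R\;=\;\bigoplus_{n\in\Z}\,\C_{\alpha_{1-n,1}}\,\boxtimes\,\C_{n\mu_p}\;\in\;\CC\,\boxtimes\,\Vect_\C^{Q^\beta}.
\]
The hypothesis $c_{\Nichols,R}\circ c_{R,\Nichols}=\id$ must now be verified, and this is precisely the condition that $x$ has trivial monodromy with the generator of $R$: the monodromy factor is $\exp(2\pi\i(\alpha_-\cdot\frac{1}{2}\alpha_+))=e^{2\pi\i(-1)}=1$, with no contribution from the $\pi^\beta$-factor since $x$ has zero $\beta$-weight.

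\textbf{Step 3: Apply uprolling.} Corollary \ref{cor:uprolling} then gives a braided equivalence
\[
\bigl(\mathcal{O}^T_{\mathcal M(p)}\boxtimes\Rep(\pi^\beta)\bigr)_S^0\;\cong\;\YD{\tilde\Nichols}(\tilde\CC),
\]
where $\tilde\CC=(\CC\boxtimes\Vect_\C^{Q^\beta})_R^0$ and $\tilde\Nichols=\mathrm{ind}_R(\Nichols)$. By the free-field realization in \eqref{Rp}, the lattice-and-Heisenberg extension $\tilde\CC$ is braided equivalent to $\mathrm{sVect}\boxtimes\Vect_\C$ with Heisenberg generator $\gamma$, $\gamma^2=\tfrac{p}{p-2}$, and with braiding \eqref{braidingSp}. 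Under this identification, $\mathrm{ind}_R$ sends $x\in\Nichols$ to a generator $\tilde x$ of degree $(-1,+1)$ in $\mathrm{sVect}\boxtimes\Vect_\C^{\tilde Q}$, whose self-braiding is $(-1)\cdot q^{(2-p)}=q^{2}$ since $q^p=-1$. Since $q^2$ is a primitive $p$-th root of unity for $p\geq 3$, Example \ref{exm_NicholsRank1} (or more precisely the recognition Lemma \ref{lm_isNicholsFree} applied in this sufficiently unrolled setting) identifies $\tilde\Nichols\cong\C[\tilde x]/\tilde x^p$ as Hopf algebras in $\tilde\CC$.

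\textbf{Step 4: Realization as $U(S(p))$.} It remains to identify $\YD{\tilde\Nichols}(\tilde\CC)$ with $\Rep^{wt}(U(S(p)))$. For this we apply the general construction of Example \ref{exm_realizedQuantumGroup} with $C=\C[(-1)^F,K^{\pm1},H]$, $\alpha=(-1,+1)$, and the grouplike $g=\bar g=(-1)^FK$ (which is dictated by the requirement $\sigma(\alpha,\lambda)=\lambda(g)$ with the braiding \eqref{braidingSp}). Substituting in the generic relations and coproduct of \textit{loc.\ cit.} produces exactly the presentation \eqref{usp} of $U(S(p))$, so Lemma \ref{lm_realizedQuantumGroup} yields a braided equivalence $\Rep^{wt}(U(S(p)))\cong\YD{\tilde\Nichols}(\tilde\CC)$. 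Combined with Steps 1--3 this proves the lemma.

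\textbf{Main obstacles.} The conceptual parts are routine given Corollary \ref{cor:uprolling}; the genuine work is in Step 2, namely the clean identification of the summands of $\mathcal S(p)$ as a lattice-type commutative algebra $R_\epsilon^1$ inside the uprolled Yetter-Drinfeld category (using that the $M_{1-n,1}$ are simple currents, hence local of the prescribed form), and in Step 4, the bookkeeping that matches the abstract generators $\tilde x,\tilde x^*,g$ with the explicit relations and coproducts of $U(S(p))$. The convergence of levels/degrees in the self-braiding computation $(-1)\cdot q^{(2-p)}=q^2$ is the arithmetic crux.
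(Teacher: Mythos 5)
Your proof follows the same strategy as the paper's: view $\mathcal S(p)$ as a simple current extension of $\cM(p)\otimes\pi^\beta$, invoke Theorem \ref{thm_singletEquivalence} to identify the base as a Yetter--Drinfeld category, apply the uprolling Corollary \ref{cor:uprolling}, and match the realizing Hopf algebra from Example \ref{exm_realizedQuantumGroup} with $U(S(p))$. The only difference is that you spell out the trivial-monodromy hypothesis $c_{\Nichols,R}\circ c_{R,\Nichols}=\id$ in Step 2 (a useful check the paper leaves implicit), so the two arguments are substantively identical.
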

\begin{proof}
 $\mathcal S(p)$ is a simple current extension of $\cM(p) \otimes \pi^\beta$ \eqref{Sp}.  By Theorem \ref{thm_singletEquivalence} the latter is equivalent to    $\YD{\Nichols}(\CC)$ for $\CC = \Vect_{\C^2}^Q$ and $\Nichols = \C[x]/x^p$ with $x$ in degree $(1, 0)$ and quadratic form 
 $$
 Q(\lambda)=q^{\lambda^T\begin{pmatrix} \frac{2}{p} & 0 \\ 0 & 1\end{pmatrix}\lambda}
 $$
 $\mathcal S(p) = \mathcal R(p)_1^\epsilon$ for $\mathcal R(p)$ in \eqref{Rp} by construction. The simple current extension $\CC_R^0$ is precisely the previous braided tensor category $\tilde{\CC}=\text{sVect} \boxtimes \Vect$ after the coordinate change $(\epsilon,\gamma)= (\frac{\alpha_+}{2}\alpha + \mu_p \beta,    \;-\frac{\alpha_+}{2} \alpha + \frac{p}{2\mu_p}\beta)$. 
 The degree of $x$ is the same as the one of the Fock module $\pi^\alpha_{\alpha_-} \otimes \pi^\beta$ and so in these new coordinates it is $(-1, 1)$.
We are exactly in the situation of Corollary \ref{cor:uprolling} and so the Lemma follows. 
\end{proof}

We also consider the vertex algebra $V^k(\mathfrak{gl}_{1|1})$, which can be considered as the degenerate case of the previous family for $p=2$. The free-field realization in Section \ref{catMA} in $\cM(2)\otimes \pi^c \otimes \pi^d$ has a braided tensor category equivalent to $\Rep^{wt}(u_\i^H)(\sl_2)\boxtimes \Vect_{\C^2}$, or equivalently $\YD{\Nichols}(\CC)$ for $\CC=\Vect_{\C^3}$ with $\Nichols=\C[x]/x^2$ with $x$ in degree $(-1,0,0)$. The vertex algebra $V^k(\mathfrak{gl}_{1|1})$ is a simple current extension thereof by $S=R_1^\epsilon$ with $R$ given in equation (\ref{gl11}). With the coordinate transformation given after the cited equation we have $\tilde{\CC}=\CC_R^0=\mathrm{sVect}\boxtimes \Vect_{\C^2}$ with braiding given by equation {\ref{braidingGl11}}. Hence by Corollary \ref{cor:uprolling} we have
\begin{corollary}\label{cor_gl11}
The braided tensor category of weight representations of $V^k(\mathfrak{gl}_{1|1})$ is equivalent to $\YD{\tilde{\Nichols}}(\tilde{\CC})$. The realizing quantum group for this braided tensor category is $u_q^B({\mathfrak{gl}_{1|1}})$ constructed in Example \ref{exm_gl11QG}.
\end{corollary}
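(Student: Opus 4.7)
The plan is to follow exactly the template used for Lemma~\ref{lem_sp}: reduce the statement to a single application of the uprolling Corollary~\ref{cor:uprolling} applied to an appropriate base category, and then compare generators and relations to match the resulting realizing quantum group with the one constructed in Example~\ref{exm_gl11QG}.

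First, I would fix the starting datum. The free-field realization \eqref{gl11} exhibits $V^k(\mathfrak{gl}_{1|1})$ as a simple-current extension of $\cM(2)\otimes\pi^c\otimes\pi^d$ by the algebra object
\[
S \;=\; \bigoplus_{n\in\Z} F_{1-n,1}\otimes\pi^d_n\otimes\pi^c,
\]
whose underlying $\cM(2)\otimes\pi^c\otimes\pi^d$-module is precisely $R$ in equation~\eqref{gl11}. By Theorem~\ref{thm_singletEquivalence} together with $\Rep(\pi^c)\cong \Vect_\C$ and $\Rep(\pi^d)\cong\Vect_\C$, I identify the base category as
$\UU \;=\; \mathcal{O}^T_{\cM(2)}\boxtimes\Rep(\pi^c)\boxtimes\Rep(\pi^d) \;\cong\; \YD{\Nichols}(\CC)$
with $\CC=\Vect_{\C^3}$ and $\Nichols=\C[x]/x^2$, $x$ in degree $(-1,0,0)$ with self-braiding $-1$. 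This puts us in the hypotheses of Corollary~\ref{cor:uprolling}, provided that $S$ is of the form $R_\epsilon^1$ for some commutative algebra $R\in\CC$ satisfying $c_{\Nichols,R}\circ c_{R,\Nichols}=\id$.

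Next I would check that condition. The summands of $S$ carry the $\cM(2)$-action on the Fock factor $\pi^\alpha_{\alpha_{1-n,1}}$, which under the equivalence of Theorem~\ref{thm_singletEquivalence} is a simple local object, i.e.~lies in the image of $\varepsilon:\CC\to \UU_A$ --- in particular the Nichols generator $x$ acts trivially. Hence $S$ is of the form $R_\epsilon^1$ with
$R=\bigoplus_{n\in\Z}\C_{(n,-n,n)}$ in the $(\alpha,d,c)$-coordinates, and the $\Nichols$-coaction is trivial since $R\subset\CC\subset \YD{\Nichols}(\CC)$ lies in the trivial $\Nichols$-comodule summand. Commutativity of $R$ with respect to the braiding on $\CC$ and the double-braiding condition with $\Nichols$ are then bilinear in the degrees and reduce to a short direct computation, equivalent to the statement that $\cM(2)\otimes\pi^c\otimes\pi^d\hookrightarrow V^k(\mathfrak{gl}_{1|1})$ extends a vertex subalgebra, which is given. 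The coordinate change recalled right after \eqref{gl11}, together with the braiding formula~\eqref{braidingGl11}, identifies $\tilde{\CC}=\CC_R^0$ with $\mathrm{sVect}\boxtimes \Vect_{\C^2}$ endowed with the quadratic form underlying Example~\ref{exm_gl11QG}; the induced Nichols algebra $\tilde\Nichols$ is again $\C[x]/x^2$ with $x$ now of degree $(-1,0,-1)$ in the $(A,B,(-1)^F)$-coordinates, matching Example~\ref{exm_gl11QG}. Applying Corollary~\ref{cor:uprolling} then yields the equivalence
\[
\Rep^{wt}\bigl(V^k(\mathfrak{gl}_{1|1})\bigr) \;\cong\; \YD{\tilde\Nichols}(\tilde\CC).
\]

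Finally, for the second assertion I would apply the explicit realization procedure of Example~\ref{exm_realizedQuantumGroup} and Lemma~\ref{lm_realizedQuantumGroup} to $(\tilde\CC,\tilde\Nichols)$. The Cartan Hopf algebra of $\tilde\CC$ is $C=\C[A,B,(-1)^F,g]$ with $g=(-1)^F e^{\pi\i\hbar B}$, and the grouplikes $g_i,\bar g_i$ associated to the single generator $x$ of degree $\gamma=(-1,0,-1)$ come out to be $g_i=\bar g_i=g$, exactly as set up in Example~\ref{exm_gl11QG}; the linking relation of Lemma~\ref{lm_realizedQuantumGroup} specializes to $xx^*+x^*x=1-g^2$, matching the relation listed there. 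Reading off the remaining commutation and coproduct relations from Lemma~\ref{lm_realizedQuantumGroup} and comparing term by term with the generators and relations displayed in Example~\ref{exm_gl11QG} completes the identification with $u_q^B(\mathfrak{gl}_{1|1})$. I expect the main technical obstacle to be verifying carefully that the free-field decomposition in~\eqref{gl11} really identifies $S$ as $R_\epsilon^1$ in the braided category sense --- i.e.~that the $\Nichols$-coaction vanishes on every summand of $R$ --- since this is what converts the VOA-level simple-current extension into the categorical hypothesis of Corollary~\ref{cor:uprolling}; everything after that point is a bookkeeping calculation within Example~\ref{exm_realizedQuantumGroup}.
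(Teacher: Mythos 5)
Your overall strategy is exactly the paper's: follow the Lemma~\ref{lem_sp} template, apply the uprolling Corollary~\ref{cor:uprolling} to the $p=2$ singlet datum, identify $\tilde\CC=\mathrm{sVect}\boxtimes\Vect_{\C^2}$ via the coordinate change after \eqref{gl11}, and match the resulting realizing Hopf algebra with Example~\ref{exm_gl11QG} through Lemma~\ref{lm_realizedQuantumGroup}. The last part, reading off the grouplike $g=\bar g=(-1)^Fe^{\pi\i\hbar B}$ and the linking relation $xx^*+x^*x=1-g^2$, is fine.

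There is, however, a genuine error in the step you yourself flag as the crux: identifying $S$ as $R_\epsilon^1$. You write $S=\bigoplus_{n\in\Z} F_{1-n,1}\otimes\pi^d_n\otimes\pi^c$, but the decomposition of $V^k(\mathfrak{gl}_{1|1})$ as a $\cM(2)\otimes\pi^c\otimes\pi^d$-module (display preceding \eqref{gl11}) is $\bigoplus_{n\in\Z} M_{1-n,1}\otimes\pi^d_n\otimes\pi^c$: the summands are the simple singlet modules $M_{1-n,1}$, not the Fock modules $F_{1-n,1}$. What you wrote is instead the restriction to $\cM(2)\otimes\pi^c\otimes\pi^d$ of the free-field algebra $V_{\epsilon\Z}\otimes\pi^B\otimes\pi^A$ of \eqref{gl11} --- that is, the image in $\UU$ of the datum $R\in\CC$, not the extension algebra $S\in\UU$. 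This conflation makes the subsequent verification fail: $F_{1-n,1}$ is \emph{not} a simple local object of $\UU$; by \eqref{ses1} (with $p=2$, $s=1$) it is an indecomposable length-two $\cM(2)$-module, and under the equivalence of Theorem~\ref{thm_singletEquivalence} it carries a nontrivial Nichols ($E$-)action. Your appeal to $\varepsilon:\CC\to\UU_A$ also targets the wrong category --- being local as an $A$-module in $\UU_A$ is not the same as having trivial $\Nichols$-action under $\UU\cong\YD{\Nichols}(\CC)$. The correct check is that the simple currents $M_{1-n,1}$ are one-dimensional $u_q^H(\sl_2)$-modules with trivial $E,F$-action, hence trivial $\Nichols$-action and -coaction in $\YD{\Nichols}(\CC)$, so that indeed $S=R_\epsilon^1$. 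With that repair (and fixing the $(\alpha,d,c)$-degrees of $R$, which should be $(n,n,0)$) the rest of your argument goes through.
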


\section{Appendix: Some categorical assertions}

\subsection{Simples, indecomposables and projectives over the tensor product of rings}

Suppose that we are over an algebraically closed field

\begin{lemma} \label{From Nikolaus for Thomas} 
Let $A, B$ be algebras, and $M$ an $A$-module, $N$ a $B$-module. If $M,N$ have both the following property  over $A,B$, then we assert that $M \otimes N$ has the same property over $A\otimes B$: 
\begin{itemize}
\item[a)] simple, under the assumption that $M,N$ are finite-dimensional. Conversely, every finite-dimensional simple is of the form $M\otimes N$.  
\item[b)] indecomposable, under the assumption that a) holds and that the Jordan H\"older length is finite.
%(note e.g. $U(\mathfrak{h})$ as coalgebra has infinite Jordan H\"older length)
\item[c)] free projective and injective 
\item[d)] projective cover and injective hull
\end{itemize}
\end{lemma}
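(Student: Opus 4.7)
The plan is to handle each part in order, relying throughout on the hypothesis that the ground field $k$ is algebraically closed and on the resulting versions of Schur's lemma and Burnside/Jacobson density. For (a), I would start from the density theorem: since $M$ is finite-dimensional simple over $A$ with $k$ algebraically closed, the structural map $A \to \End_k(M)$ is surjective, and similarly for $B \to \End_k(N)$. Hence the composite $A \otimes B \to \End_k(M) \otimes \End_k(N) = \End_k(M \otimes N)$ is surjective, making $M \otimes N$ a simple $A \otimes B$-module. For the converse, given a finite-dimensional simple $S$ over $A \otimes B$, I would use that each $A$-isotypic component of $S|_A$ is automatically $A \otimes B$-stable since $B$ commutes with $A$; by simplicity $S|_A$ is $M$-isotypic for a single simple $A$-module $M$, giving a canonical $A \otimes B$-isomorphism $S \cong M \otimes V$ with $V = \Hom_A(M, S)$ equipped with its natural $B$-action, and $V$ must be $B$-simple by the same argument in reverse.

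For (b), the main step is to establish the natural identification
\[
 \End_{A \otimes B}(M \otimes N) \cong \End_A(M) \otimes \End_B(N),
\]
which holds once $\End_A(M)$ and $\End_B(N)$ are finite-dimensional over $k$. Finite Jordan-H\"older length together with (a) ensures the composition factors are finite-dimensional, and Fitting's lemma gives $\End_A(M)$ a finite-dimensional local $k$-algebra with residue field $k$ (since $k$ is algebraically closed); the same for $\End_B(N)$. The tensor product of two such local algebras has a nilpotent ideal $\mathrm{rad}(\End_A(M)) \otimes \End_B(N) + \End_A(M) \otimes \mathrm{rad}(\End_B(N))$ whose quotient is $k \otimes k = k$, so the tensor product is again local, and thus $M \otimes N$ is indecomposable.

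For (c), the free case reduces to $A^I \otimes B^J \cong (A \otimes B)^{I \times J}$, and the projective case then follows by writing projectives as summands of frees and tensoring the splittings. Injectivity is more delicate: I would pass through duality in a finite-dimensional (or Frobenius/rigid) setting, identifying injectives with the $k$-linear duals of projectives and then using the projective statement. For (d), if $\pi_M: P(M) \twoheadrightarrow M$ and $\pi_N: P(N) \twoheadrightarrow N$ are the projective covers of simples $M$ and $N$, then $P(M) \otimes P(N)$ is projective by (c), $M \otimes N$ is simple by (a), and $\pi_M \otimes \pi_N$ is surjective; it remains to verify essentiality, i.e.~that its kernel is contained in $\mathrm{rad}(P(M) \otimes P(N))$. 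This reduces to the identification
\[
 \mathrm{rad}(P(M) \otimes P(N)) = \mathrm{rad}(P(M)) \otimes P(N) + P(M) \otimes \mathrm{rad}(P(N)),
\]
which follows from the local endomorphism analysis of (b) applied to the indecomposable projectives. Injective hulls are treated by the dual argument.

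The central technical obstacle is the identification $\End_{A \otimes B}(M \otimes N) \cong \End_A(M) \otimes \End_B(N)$ and the analogous identity for radicals: both are easy in the finite-dimensional case but require care under the weaker hypothesis of finite Jordan-H\"older length. The algebraically closed base field is essential, as it forces residue fields of local endomorphism rings to equal $k$, so that their tensor product remains local and the computation of the radical of the tensor product reduces to a pure nilpotency argument.
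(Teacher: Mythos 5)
Your proposal is correct and takes a genuinely different route on the two non-trivial parts. For (b), where the paper proceeds by induction on the composition length of $M$, reducing a putative decomposition $M \otimes N = V \oplus W$ along the short exact sequence $0 \to M' \otimes N \to M \otimes N \to M'' \otimes N \to 0$ and extracting a splitting of $0 \to M' \to M \to M'' \to 0$ by pairing with a functional $\nu \in N^*$, you instead compute the endomorphism ring directly: the canonical map $\End_A(M) \otimes_k \End_B(N) \to \End_{A\otimes B}(M\otimes N)$ is an isomorphism for finite-dimensional $M,N$ (the standard tensor-rank argument shows a $A\otimes 1$- and $1\otimes B$-linear element of $\End_k(M)\otimes\End_k(N)$ lands in the subspace), and the tensor product of two finite-dimensional local $k$-algebras with residue field $k$ is again local because the sum of radicals is nilpotent with quotient $k$. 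This is cleaner and more structural; the paper's induction avoids the endomorphism-ring computation but is arguably harder to read. For (c), your summand-of-free argument for projectives is the standard one and avoids the lifting-diagram argument that the paper sketches (which, as written, is garbled — a single morphism $P \otimes P' \to Y''$ does not restrict to morphisms $P \to Y''$ and $P' \to Y''$ without additional choices). For (d), the paper derives the projective-cover statement as a formal consequence of (b) and (c): $P(M)\otimes P(N)$ is projective, indecomposable, and surjects onto the simple $M\otimes N$, hence is its projective cover; your route via the radical identity $\mathrm{rad}(P(M)\otimes P(N)) = \mathrm{rad}(P(M))\otimes P(N) + P(M)\otimes\mathrm{rad}(P(N))$ reaches the same conclusion and is somewhat more explicit, though both arguments implicitly assume the covers are finite-dimensional. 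One caveat you flag yourself is real: your treatment of injectives through $k$-linear duality needs a finiteness or Frobenius hypothesis, whereas the free and projective cases go through without it; the paper's proof of the injective case is symmetric to its projective argument and inherits the same opacity.
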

\begin{proof}
a)  For finite-dimensional modules the statment is \cite{EGHLSVYG11} Section 3.10. Note the standard counterexmaple $A=B=M=N=\mathbb{C}(x)$, where any polynomial not in product form cannot be inverted, for example $(x-y)$ is an ideal in $\mathbb{C}(x)\otimes \mathbb{C}(y)$. 

b) We proceed by induction along the length of the composition series of $M,N$. If both are simple, then $M\otimes N$ is simple by a). Suppose now $M$ is an indecomposable extension
$$0 \to M'\stackrel{f}{\to} M\stackrel{g}{\to} M''\to 0$$
Assume that our assertion were wrong, that is $M\otimes N$ decomposes into a direct sum $V\oplus W$ as a $A\otimes B$-module, but by induction $M'\otimes N$ and $M''\otimes N$ are indecomposable. We have a short exact sequence of $A\otimes B$-modules
$$0 \to M'\otimes N \stackrel{f\otimes \id}{\to} M \otimes N \stackrel{g\otimes \id}{\to} M''\otimes N\to 0$$
The image under $g\otimes \id$ of the decomposition $V\otimes W$ is a decomposition of $M''\otimes N$, which is by assumption indecomposable, so the image of $V$ (or $W$) is zero and the kernel of $g\otimes \id$ is $V\oplus W'$ for some $W'\subset W$. By exactness and since $M''\otimes N$ is indecomposable by assumption, we have $W'=0$ and our decomposition $V\oplus W$ coicides with our exact sequence, which hence splits
$$0 \to \underbracket{M'\otimes N}_{V} \stackrel{f\otimes \id}{\to} M \otimes N \stackrel{s}{\leftrightarrows} \underbracket{M''\otimes N}_{W}\to 0$$
Since $s$ is a section of $g\otimes \id_N$ it preserves the projection to $N$ (note that $s$ does not have to be of the form $q\otimes \id_N$). For any fixed $\nu\in N^*$ we have hence a split short exact sequence
$$0 \to M' \stackrel{f\otimes \id}{\to} M \stackrel{s|\nu}{\leftrightarrows} M''\to 0$$
which is a contradiction to the assumed indecomposablility of $M$.

c) Suppose $M,N$ are free with an $A,B$-basis $m_i,n_i$ of any cardinality, then $m_i\otimes n_i$ is a $A\otimes B$-basis of $M\otimes N$.

Now we turn to the tensor product of projective $A,B$-modules.
 An $A$-module $P$ is projective iff for every surjection of $A$-modules $X\stackrel{f}{\to} Y$ and every map $P\stackrel{g}{\to} X$ we have a lift to $P\stackrel{\tilde{g}}{\to} Y$

$$\begin{tikzcd}
   & P \arrow{d}{g}\arrow[dotted, swap]{dl}{\tilde{g}} \\
    X \arrow{r}{f} & Y \arrow{r} & 0
  \end{tikzcd}$$
  
Now suppose that the $B$-module $P'$ is projective, meaning that for any $B$-module surjection $X'\stackrel{f'}{\to} Y'$ and morphism $P'\stackrel{g'}{\to} X'$ we have a lift $\tilde{g}$. 

We now prove that $P\otimes P'$ is a projective $A\otimes B$-module: Assume that we have a  $A\otimes B$-module surjection $X''\stackrel{f''}{\to} Y''$ and morphism $P\otimes P'\stackrel{g''}{\to} X''$. By restriction, $g''$ is a surjection of $A$-modules and of $B$-modules. Hence by the assumed projectivity we have lifts $P\stackrel{\tilde{g}}{\to} Y''$ and $P'\stackrel{\tilde{g}'}{\to} Y''$. Simply taking their tensor product gives a lift $P\otimes P'\stackrel{\tilde{g}\otimes \tilde{g}'}{\to} Y''$.

e) Completely analogously we see that the tensor product of injective $A,B$-modules $I,I'$ is an injective $A\otimes B$-module, via the universal property 

$$\begin{tikzcd}
   & I\otimes I' &\\
    0\arrow{r} & X''\arrow{u}{g''}\arrow{r}{f}  & Y'' \arrow[dotted, swap]{ul}{\tilde{g}\otimes \tilde{g}'} 
  \end{tikzcd}$$
and again using the lift $\tilde{g}\otimes \tilde{g}'$

f) This follows from b) and c).
\end{proof}

 The finiteness theorem for comodules states that any finitely generated comodule over a finite field is finite-dimensional,
 %(e.g. [Wisbauer, R., Semiperfect coalgebras over rings, 4.9]), 
 hence any simple comodule is finite-dimensional.

\begin{corollary}
The tensor product of simple $A,B$-comodules $M,N$ is a simple $A\otimes B$-comodule, and the  projective cover and injective hull is the tensor product of the projective covers or injective hulls of $M,N$. 
\end{corollary}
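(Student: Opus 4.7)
My strategy is to reduce all three assertions to Lemma \ref{From Nikolaus for Thomas} via the standard duality between finite-dimensional comodules over a finite-dimensional coalgebra and finite-dimensional modules over its dual algebra, combined with the fundamental theorem of comodules that makes every simple comodule finite-dimensional and supported on a finite-dimensional subcoalgebra.

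For simplicity of $M \otimes N$: since $M$ and $N$ are simple, they are finite-dimensional and sit inside their coefficient subcoalgebras $A_0 \subseteq A$ and $B_0 \subseteq B$, each finite-dimensional. Passing to dual algebras turns $M, N$ into simple modules over $A_0^*, B_0^*$, so Lemma \ref{From Nikolaus for Thomas}(a) applied to $A_0^* \otimes B_0^* = (A_0 \otimes B_0)^*$ yields that $M \otimes N$ is a simple $(A_0 \otimes B_0)^*$-module, equivalently a simple $A_0 \otimes B_0$-comodule. Because the $A \otimes B$-coaction on the finite-dimensional space $M \otimes N$ factors through $A_0 \otimes B_0$ by construction, the $A \otimes B$- and $A_0 \otimes B_0$-subcomodule structures coincide, which upgrades simplicity to the full coalgebra $A \otimes B$.

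For the injective hull I verify the three defining properties of $I_A(M) \otimes I_B(N)$ as the injective hull of $M \otimes N$ in $\mathrm{Comod}\text{-}(A \otimes B)$. Containment is immediate. Injectivity uses that $\mathrm{Comod}\text{-}A$ is a locally finite Grothendieck category, so the regular comodule decomposes as $A = \bigoplus_S I_A(S)^{(n_S)}$ into indecomposable injectives; this exhibits $I_A(M)$ as a direct summand of $A$, and likewise $I_B(N)$ of $B$, so $I_A(M) \otimes I_B(N)$ is a direct summand of the regular $A \otimes B$-comodule and hence injective. For essentiality (equivalently, the socle being $M \otimes N$), the first part gives $M \otimes N \subseteq \mathrm{soc}$; conversely, any simple subcomodule $S \hookrightarrow I_A(M) \otimes I_B(N)$ is finite-dimensional and supported on some $A_0 \otimes B_0$, where Lemma \ref{From Nikolaus for Thomas}(d) applied to the finite-dimensional dual algebras identifies $I_{A_0}(M) \otimes I_{B_0}(N)$ as an injective hull of $M \otimes N$ with socle exactly $M \otimes N$, forcing $S \subseteq M \otimes N$.

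The projective cover statement is dual, under the implicit hypothesis that the coalgebras are left semiperfect so that projective covers exist in their comodule categories; the same reduction to finite-dimensional subcoalgebras plus Lemma \ref{From Nikolaus for Thomas}(d) handles it symmetrically. The main technical obstacle is injectivity of the tensor product of injective comodules, which in the module setting would require flatness hypotheses but which here follows cleanly from the decomposition of the regular comodule into indecomposable injective summands; this is automatic over a field.
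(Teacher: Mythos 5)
Your overall strategy matches the paper's: the paper gives no explicit proof but offers the corollary as an immediate consequence of Lemma~\ref{From Nikolaus for Thomas} once one knows that simple comodules are finite-dimensional (the sentence preceding the corollary). You are filling in the details of that implied reduction, and most of what you write is correct. Your argument for simplicity is clean and right: the coaction on $M\otimes N$ factors through a finite-dimensional subcoalgebra $A_0\otimes B_0$, a subcomodule over $A\otimes B$ is the same as one over $A_0\otimes B_0$, and duality plus Lemma~\ref{From Nikolaus for Thomas}(a) finishes it. Your injectivity argument via the decomposition of the regular comodule into indecomposable injectives is actually cleaner than invoking the Lemma directly, and your observation that projective covers require a semiperfectness hypothesis is a legitimate caveat the paper glosses over.

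There is, however, a genuine gap in the essentiality step. You write that a simple subcomodule $S\hookrightarrow I_A(M)\otimes I_B(N)$ is supported on some $A_0\otimes B_0$, and then want to apply Lemma~\ref{From Nikolaus for Thomas}(d) to $I_{A_0}(M)\otimes I_{B_0}(N)$. But $I_{A_0}(M)$ is the injective hull of $M$ computed in the category of $A_0$-comodules; it is not naturally a subcomodule of $I_A(M)$, and there is no reason $S$ should sit inside $I_{A_0}(M)\otimes I_{B_0}(N)$ viewed anywhere in particular. To fix this, pick a basis $s_1,\dots,s_k$ of $S$ and write $s_i=\sum_j m_{ij}\otimes n_{ij}$ with $m_{ij}\in I_A(M)$, $n_{ij}\in I_B(N)$; let $M_0\subseteq I_A(M)$ and $N_0\subseteq I_B(N)$ be the finite-dimensional subcomodules generated by the $m_{ij}$ and $n_{ij}$, so that $S\subseteq M_0\otimes N_0$. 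Since $M$ is essential in $I_A(M)$, every nonzero subcomodule of $I_A(M)$ contains $M$, so $M\subseteq M_0$ with $\mathrm{soc}(M_0)=M$, and likewise $\mathrm{soc}(N_0)=N$. Now dualize to finite-dimensional algebras and use $\mathrm{rad}(A_0^*\otimes B_0^*)=\mathrm{rad}(A_0^*)\otimes B_0^*+A_0^*\otimes\mathrm{rad}(B_0^*)$ (valid over $\C$) to get $\mathrm{soc}(M_0\otimes N_0)=\mathrm{soc}(M_0)\otimes\mathrm{soc}(N_0)=M\otimes N$, forcing $S\subseteq M\otimes N$. This is a socle computation rather than an appeal to part (d) of the Lemma, which is really a statement about the finite-dimensional injective hulls themselves and does not apply to the auxiliary modules $M_0,N_0$.
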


\subsection{A criterion for fully faithfulness}

\begin{lemma}\label{ff}
Let $\UU$ be an abelian category with the property that for any object $X$, there exists a projective object $P_X$ and an injective object $I_X$ with
a surjection $\pi_X : P_X \twoheadrightarrow X$ and an embedding $\iota_X:  X \hookrightarrow I_X$. 
 Let $\mathrm{Proj}$ and $\mathrm{Inj}$ denote the sets of projective and injective objects in $\UU$. Let $\mathcal F : \UU \rightarrow \mathcal V$ be an exact functor with the property that $\mathcal F: \text{Hom}_\UU(X, Y) \xrightarrow{\cong} \text{Hom}_{\mathcal V}(\mathcal F(X), \mathcal F(Y))$ for all $X, Y  \in \mathrm{Proj} \cup \mathrm{Inj}$. 
Then $\mathcal F$ is fully faithfull.
\end{lemma}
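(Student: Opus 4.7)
The plan is to express $\Hom_\UU(X,Y)$ as an iterated kernel of $\Hom$-spaces between a projective presentation of $X$ and an injective copresentation of $Y$, and to transport this description to $\cV$ via exactness of $\mathcal F$.

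First I would invoke the assumption on $\UU$ twice to obtain a projective presentation
$$P_1 \xrightarrow{d} P_0 \twoheadrightarrow X \to 0 \qquad (P_0, P_1 \in \mathrm{Proj}),$$
taking $P_0 = P_X$ and $P_1$ a projective object surjecting onto $\ker(\pi_X)$, and dually an injective copresentation
$$0 \to Y \hookrightarrow I^0 \xrightarrow{d'} I^1 \qquad (I^0, I^1 \in \mathrm{Inj}).$$
Left-exactness of the contravariant $\Hom_\UU(-,Y)$ and of $\Hom_\UU(P_i,-)$ then gives four-term exact sequences
$$0 \to \Hom_\UU(X,Y) \to \Hom_\UU(P_0,Y) \to \Hom_\UU(P_1,Y),$$
$$0 \to \Hom_\UU(P_i,Y) \to \Hom_\UU(P_i,I^0) \to \Hom_\UU(P_i,I^1) \qquad (i=0,1).$$
Since $\mathcal F$ is exact, it carries the projective presentation and injective copresentation to exact sequences in $\cV$, so the same construction applied to $\mathcal F(X), \mathcal F(Y), \mathcal F(P_i), \mathcal F(I^j)$ produces analogous four-term exact sequences in $\cV$.

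Second, the natural maps induced by $\mathcal F$ assemble into a commuting ladder between the $\UU$- and $\cV$-sequences. The Proj-Inj hypothesis gives directly that the rightmost terms
$$\mathcal F : \Hom_\UU(P_i, I^j) \xrightarrow{\ \cong\ } \Hom_\cV(\mathcal F P_i, \mathcal F I^j) \qquad (i,j \in \{0,1\})$$
are isomorphisms. A standard kernel comparison (equivalently the five-lemma applied to the augmented sequences) then propagates the isomorphism leftwards to yield $\mathcal F : \Hom_\UU(P_i, Y) \xrightarrow{\cong} \Hom_\cV(\mathcal F P_i, \mathcal F Y)$ for $i=0,1$. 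A second application of the same comparison, this time to the $\Hom(-,Y)$-sequences, promotes it to $\mathcal F : \Hom_\UU(X, Y) \xrightarrow{\cong} \Hom_\cV(\mathcal F X, \mathcal F Y)$, which is exactly full faithfulness of $\mathcal F$.

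The argument is essentially formal once the two resolutions are chosen. The only genuine inputs are exactness of $\mathcal F$ (to move resolutions from $\UU$ to $\cV$), left-exactness of $\Hom$ (automatic in any abelian category), and the Proj-Inj hypothesis as the base case. I do not anticipate any substantive obstacle; the main care needed is bookkeeping, as full faithfulness on $\Hom(P_i,Y)$ must be established before attacking $\Hom(X,Y)$.
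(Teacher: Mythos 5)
Your proof is correct and the underlying ingredients coincide with those in the paper, but the packaging is genuinely different. The paper's proof is a direct lifting argument: given $g : \mathcal F(X) \to \mathcal F(Y)$ it forms the composite $\mathcal F(P_X) \to \mathcal F(X) \to \mathcal F(Y) \to \mathcal F(I_Y)$, pulls it back to $h : P_X \to I_Y$ via the Proj--Inj hypothesis, and then argues in two separate steps that $h$ kills $\ker \pi_X$ and lands inside $Y$ --- each step invoking a further projective cover $P_{\ker\pi_X}$, respectively a further injective hull $I_{\mathrm{coker}\,\iota_Y}$. Surjectivity of $\mathcal F$ on $\Hom$ is established first, then injectivity by a parallel chase. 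Your approach packages the exact same two-step projective presentation $P_1 \to P_0 \to X \to 0$ and injective copresentation $0 \to Y \to I^0 \to I^1$ into commutative ladders of left-exact $\Hom$-sequences and deduces the isomorphism on the leftmost term by kernel comparison (or the five lemma), obtaining injectivity and surjectivity in a single stroke, and with the order of operations made explicit (first on $\Hom(P_i,Y)$, then on $\Hom(X,Y)$). This is a cleaner, more standard homological formulation of the same idea; it buys conciseness and symmetry at the cost of a slightly higher level of abstraction, whereas the paper's version is more elementary and self-contained in its diagram chasing. One small remark: you should, as the paper implicitly does, note that $\mathcal V$ is taken to be abelian so that exactness of $\mathcal F$ and left-exactness of $\Hom_{\mathcal V}$ make sense; with that understood, the argument goes through without further adjustment.
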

\begin{proof}
%For an object $X$ we denote by $P_X$ its projective cover and by $I_X$ its injective hull.
%There is then a surjection $\pi_X : P_X \twoheadrightarrow X$ and an embedding $\iota_X:  X \hookrightarrow I_X$ and for a morphism $f: X \rightarrow Y$ there exist morphisms from $P_Y$ to $X$ and from $Y$ to $I_X$ that we denote by $\bar f$ and $\tilde f$, that is we have the commuting diagram
%$$\begin{tikzcd}[row sep=10ex, column sep=15ex]
%   & \arrow[]{d}{f}     \arrow[hookrightarrow]{r}{\iota_X} X & I_X \\
 %  P_Y \arrow[twoheadrightarrow]{r}{\pi_Y} \arrow{ru}{\bar f}   & Y \arrow[]{ur}{\tilde f}
  %  \end{tikzcd}$$
%This can be extended to the commutative diagram
 %   $$
%\begin{tikzcd}[row sep=10ex, column sep=15ex]
 % P_X \arrow[twoheadrightarrow]{r}{\pi_X} \arrow{d}{\bar{\bar f}} & \arrow[]{d}{f}     \arrow[hookrightarrow]{r}{\iota_X} X & I_X \arrow[]{d}{\tilde{\tilde f}} \\
  % P_Y \arrow[twoheadrightarrow]{r}{\pi_Y} \arrow{ru}{\bar f}   & Y \arrow[]{ur}{\tilde f} \arrow[hookrightarrow]{r}{\iota_Y} &  I_Y
   % \end{tikzcd}$$

    Let $X, Y$ be objects in $\UU$  and
    consider $g: \mathcal F(X) \rightarrow \mathcal F(Y)$. 
Then
\[
\mathcal F(P_X) \xrightarrow{\mathcal F(\pi_X)} \mathcal F(X) \xrightarrow{g} 
\mathcal F(Y) \xrightarrow{\mathcal F(\iota_Y)} \mathcal F(I_Y) \quad \in \ \text{Hom}_{\mathcal V}\left(\mathcal F(P_X), \mathcal F(I_Y)\right).
\]
By assumption there exists $h \in \text{Hom}_\UU(P_X, I_Y)$ with $\mathcal F(h) = \mathcal F(\iota_Y) \circ g \circ \mathcal F(\pi_X)$.

Let $K_X$ be the kernel of $\pi_X$. We now prove that $K_X$ is in the kernel of $h$, so that $h$ factors through $X$. 
Since $\mathcal F$ is exact, the short exact sequence
\[
0 \rightarrow K_X \xrightarrow{\iota} P_X \xrightarrow{\pi_X} X \rightarrow 0
\]
gives the short exact sequence 
\[
0 \rightarrow \mathcal F(K_X) \xrightarrow{\mathcal F(\iota)} \mathcal F(P_X) \xrightarrow{\pi_X} \mathcal F(X) \rightarrow 0.
\]
Since $\mathcal F(h) = \mathcal F(\iota_Y) \circ g \circ \mathcal F(\pi_X)$ the object $\mathcal F(K_X)$ must be in the kernel of $\mathcal F(h) \circ \mathcal F(\iota)$.
We show that  $K_X$ not being in the kernel of $h$ is impossible: If $K_X$ were not in the kernel of $h$, then the map
\[
P_{K_X} \xrightarrow{\pi_{K_X}} K_X \xrightarrow{\iota} P_X \xrightarrow{h} I_Y 
\]
would be non-zero and hence 
\[
\mathcal F(P_{K_X}) \xrightarrow{\mathcal F(\pi_{K_X})} \mathcal F(K_X) \xrightarrow{\mathcal F(\iota)} \mathcal F(P_X) \xrightarrow{\mathcal F(h)} \mathcal F(I_Y) 
\]
would be non-zero as well, a contradiction. 

Next we show that the image of $h$ is contained in $Y$, so that $h$ factors through $Y$ as well. Let $coK_Y$ be the cokernel of $\iota_Y$, so that there is the short exact sequence
\[
0 \rightarrow  Y \xrightarrow{\iota_Y} I_Y \xrightarrow{\pi} coK_Y \rightarrow  0.
\]
We show that the image of $h$ not contained in $Y$ is impossible: If the image of $h$ is not in contained in $Y$, then the morphism
\[
P_X \xrightarrow{h} I_Y \xrightarrow{\pi} coK_Y 
\]
would be non-zero and hence
\[
P_X \xrightarrow{h} I_Y \xrightarrow{\pi} coK_Y  \xrightarrow{\iota_{coK_Y}} I_{coK_Y}
\]
as well. But then
\[
\mathcal F(P_X) \xrightarrow{\mathcal F(h)} \mathcal F(I_Y) \xrightarrow{\mathcal F(\pi)} \mathcal F(coK_Y)  \xrightarrow{\mathcal F(\iota_{coK_Y})} \mathcal F(I_{coK_Y})
\]
would also be non-zero, a contradiction. 

We have shown that there exists $\hat{h} \in \text{Hom}_\UU(X, Y)$ such that
$h = \iota_Y \circ \hat h \circ \pi_X$ and hence
\[
\mathcal F(\iota_Y) \circ \mathcal F(\hat h) \circ \mathcal F(\pi_X) = \mathcal F(h) = 
\mathcal F(\iota_Y) \circ g \circ \mathcal F(\pi_X).
\]
The functor $\mathcal F$ is exact and so $\mathcal F(\iota_Y)$ is an embedding and $\mathcal F(\pi_X)$ a surjection. Hence $\mathcal F(\hat h) =g$ and thus 
\[
\mathcal F: \text{Hom}_\UU(X, Y) \rightarrow \text{Hom}_{\mathcal V}(\mathcal F(X), \mathcal F(Y))
\]
is surjective. 
The same reasoning gives injectivity: Consider $f, g \in \text{Hom}_\UU(X, Y)$ with $\mathcal F(f) = \mathcal F(g)$, which  implies that 
\[
\mathcal F(\iota_Y) \circ \mathcal F(f) \circ \mathcal F(\pi_X)=
\mathcal F(\iota_Y) \circ \mathcal F(g) \circ \mathcal F(\pi_X)
\]
but then there exists $h \in \text{Hom}_\UU(P_X, I_Y)$ with $\mathcal F(h) = \mathcal F(\iota_Y) \circ \mathcal F(f) \circ \mathcal F(\pi_X)$ as before. The same argument as before then gives that $f=g$. 
\end{proof}

\begin{remark}\label{rem:ff} We actually proved the slightly stronger statement: 

    Let $\UU$ be an abelian category with 
    a subcategory $\CC$ with 
    the property that for any object $X$, there exists objects $P_X, I_X$ with
a surjection $\pi_X : P_X \twoheadrightarrow X$ and an embedding $\iota_X:  X \hookrightarrow I_X$. 
  Let $\mathcal F : \UU \rightarrow \mathcal V$ be an exact functor with the property that $\mathcal F: \text{Hom}_\UU(X, Y) \xrightarrow{\cong} \text{Hom}_{\mathcal V}(\mathcal F(X), \mathcal F(Y))$ for all $X, Y  \in \CC$. 
Then $\mathcal F$ is fully faithfull.
\end{remark}

\newcommand\arxiv[2]{\href{http://arXiv.org/abs/#1}{#2}}

\end{document}